\definecolor{newcolor}{rgb}{.8,.349,.1}
\newcommand{\Rmnum}[1]{\uppercase\expandafter{\romannumeral #1}}
\newtheorem{theorem}{Theorem}[section]
\theoremstyle{definition}
\newtheorem{expl}{Example}[section]
\theoremstyle{remark}
\newtheorem{remark}{Remark}[]
\newcommand{\dd}{\mathrm{d}}
\newcommand{\halfone}{\frac{1}{2}}
\def\average#1{\lbrace\!\lbrace #1  \rbrace\!\rbrace }
\tikzstyle{startstop} = [ellipse, rounded corners, minimum width=0.5cm, minimum height=0.5cm, text centered, draw=black, fill=orange!20]
\tikzstyle{process} = [rectangle, minimum width=2cm, minimum height=0.7cm, text centered, draw=blue!50, fill=blue!20]
\tikzstyle{decision} = [diamond, minimum width=1cm, minimum height=1cm, text centered, draw=black, fill=gray!30]
\tikzstyle{arrow} = [thick,->,>=stealth]
\begin{document}

	\begin{frontmatter}
		
		\title{{\bf An active-flux-type scheme for ideal MHD with provable positivity and discrete divergence-free property}}
		
		\tnotetext[tnote1]{The work of K.~Wu,  M.~Liu, and D.~Pang  was partially supported by Shenzhen Science and Technology Program (Grant No.~RCJC20221008092757098), National Natural Science Foundation of China (Grant No.~12171227), and the fund of the Guangdong Provincial Key Laboratory of Computational Science and Material Design (No.~2019B030301001). 
			The work of M.~Liu was also partially supported by China Postdoctoral Science Foundation under Grant Number 2024M761270. 
        The work of R.~Abgrall was partially supported by
        SNSF grant 200020$\_$204917. 
        }
		
		\author[1,2]{Mengqing {Liu}}
		\ead{liumq@sustech.edu.cn}
		\author[1]{Dongwen {Pang}}
		\ead{12331010@mail.sustech.edu.cn}
		\author[3]{R\'{e}mi {Abgrall}}
		\ead{remi.abgrall@math.uzh.ch}
		\author[1,2,4]{Kailiang {Wu}\corref{cor1}}
		\cortext[cor1]{Corresponding author.}
		\ead{wukl@sustech.edu.cn}
		
		\address[1]{Department of Mathematics, Southern University of Science and Technology, Shenzhen 518055, China}
		\address[2]{Shenzhen International Center for Mathematics, Southern University of Science and Technology, Shenzhen 518055, China}
		\address[3]{Institute of Mathematics, University of Z\"urich 
			8057 Z\"urich, Switzerland }
		\address[4]{Guangdong Provincial Key Laboratory of Computational Science and Material Design, Southern University of Science and Technology, Shenzhen 518055, China}
		
\begin{abstract}
	The PAMPA (Point-Average-Moment PolynomiAl-interpreted) method, proposed in [R.~Abgrall, {\it Commun.~Appl.~Math.~Comput.}, 5:370--402, 2023], is a compact numerical framework that combines conservative and nonconservative formulations of hyperbolic conservation laws.
	In this paper, we present a novel {\it positivity-preserving} (PP) PAMPA scheme that enforces a {\it discrete divergence-free} (DDF) magnetic field for the ideal magnetohydrodynamics (MHD) equations on Cartesian grids. Building on our recent one-dimensional invariant-domain-preserving (IDP) PAMPA framework [R.~Abgrall, M.~Jiao, Y.~Liu, and K.~Wu, {\it SIAM J.~Sci.~Comput.}, to appear], we extend the methodology to the multidimensional, multiwave MHD system. The proposed scheme features a {\it limiter-free} PP update of interface point values via a new nonconservative reformulation, together with a local DDF projection that maintains a DDF magnetic field. The cell-average update is {\it provably} PP under a mild apriori positivity condition on a single cell-centered value and combines four ingredients: (i) a DDF constraint at interface point values, (ii) a PP limiter applied only to the cell-centered value, (iii) a PP numerical flux with properly estimated wave speeds, and (iv) a suitable discretization of the Godunov--Powell source term. The PP proof for cell averages is conducted within the geometric quasi-linearization (GQL) framework [K.~Wu  and C.-W.~Shu, {\it SIAM Review}, 65:1031--1073, 2023], which transforms the nonlinear pressure-positivity constraint into an equivalent linear form. The resulting scheme avoids explicit polynomial reconstructions, is compatible with arbitrarily high-order strong-stability-preserving time discretizations, and is straightforward to implement. To enhance robustness and resolution, we introduce a problem-independent troubled-cell indicator based on a Lax-type entropy criterion that uses only two characteristic speeds computed from cell averages and preserves symmetry, as well as a {\it convex oscillation elimination} (COE) mechanism with a new norm measuring solution differences between target and neighboring cells. Extensive tests---including a blast wave with plasma beta as low as $2.51\times10^{-6}$ and a jet with Mach numbers up to $10^{4}$---demonstrate high-order accuracy, sharp resolution of complex MHD structures, strong-shock robustness, and reliable troubled-cell detection. To the best of our knowledge, this is the first active-flux-type method for ideal MHD that is rigorously PP for both cell averages and interface point values while maintaining a DDF constraint throughout the evolution.
\end{abstract}

		\begin{keyword}
			{\bf Keywords:} 
			compressible magnetohydrodynamics (MHD), active flux, positivity-preserving (PP), discrete divergence-free (DDF), convex oscillation-eliminating (COE)
			\vspace{-11mm}
		\end{keyword}

	\end{frontmatter}

	\section{Introduction}
	The active flux (AF) method is a compact, high-order finite volume scheme, originally proposed in~\cite{Eymann2011active, Eymann2012active}. Unlike Godunov-type methods, the AF method simultaneously evolves both cell averages and point values located at cell interfaces. The cell averages are updated conservatively using numerical fluxes computed directly from point values, thereby eliminating the need for Riemann solvers.
	Two main strategies have been developed for updating the point values. The first employs exact or approximate evolution operators~\cite{Eymann2013multi, Wasilij2019active, Helzel2019ADER}, which solve local initial value problems over a time step. The second approach, known as the semi-discrete AF method~\cite{abgrall2023active11271, Remi2024newapp, AbgrallAF2025}, first discretizes the spatial derivatives to obtain a semi-discrete system, which is then integrated in time using standard strong-stability-preserving Runge--Kutta (SSP–RK) schemes.

 The Point-Average-Moment PolynomiAl-interpreted (PAMPA) method~\cite{Abgrall2023Combination, Abgrall2023Extensions, Abgrall2023hybrid} is a novel variant of the AF method. By incorporating higher-order moments and avoiding the need for exact or approximate evolution operators—which are often difficult to construct for nonlinear or multidimensional systems—PAMPA attains higher than third-order accuracy within a purely semi-discrete framework. It has been successfully applied to a range of problems, including the Euler equations on Cartesian grids~\cite{AbgrallAF2025}, compressible flows on triangular meshes~\cite{abgrall2023active11271}, hyperbolic balance laws~\cite{Remi2024newapp, liu2025wellbalance, liu2025arbitrarily}, and generalized high-order formulations~\cite{barsukow2025generalized}.
 Another notable strength of the PAMPA method lies in its versatility. Unlike the traditional AF method, the evolution of point values is not restricted to conservative formulations; instead, it permits customized updates based on arbitrary  non-conservative formulations. This added flexibility makes PAMPA particularly suitable for complex hyperbolic systems.

The ideal magnetohydrodynamic (MHD) equations serve as a fundamental model for describing the dynamics of inviscid, perfectly conducting plasmas. These equations play a central role in modeling a broad range of phenomena in astrophysics, space physics, and plasma physics, such as solar flares, coronal mass ejections, and magnetospheric dynamics.
Simulating the ideal MHD system presents substantial challenges. Due to its hyperbolic and nonlinear nature, even smooth initial data may evolve into discontinuities such as shock waves. In addition to accurately capturing these complex wave phenomena, it is also crucial to preserve the system’s intrinsic physical structures during numerical simulation. Failure to preserve these properties can lead to spurious oscillations, violation of physical bounds, or even simulation breakdown.

One fundamental property is the divergence-free (DF) constraint on the magnetic field, reflecting the physical law of the nonexistence of magnetic monopoles. Even slight violations of this condition can lead to nonphysical artifacts or severe numerical instabilities~\cite{Balsara1999A, Brackbill1980The, Li2005, Toth2000}. Over the past decades, various strategies have been proposed to control divergence errors in simulations of MHD, including projection methods based on Hodge decomposition~\cite{Brackbill1980The}, the constrained transport (CT) method~\cite{helzel2011unstaggered, Evans1988Simulation} and its variants~\cite{Londrillo2004divergence, GARDINER2005509, Christlieb2015PP}, and divergence-cleaning techniques involving auxiliary hyperbolic–parabolic equations~\cite{Dedner2002Hyperbolic}. Additional approaches include locally and globally divergence-free schemes within the discontinuous Galerkin and finite volume frameworks~\cite{Li2005, BALSARA20095040, Li2011, Fu2018, Balsara2021Globally}. Another important method is the eight-wave formulation~\cite{Powell1994, Powell1995AnUS}, which augments the MHD system with non-conservative source terms proportional to the magnetic divergence. These terms restore symmetric hyperbolicity, ensuring well-posedness even in the presence of small divergence errors, and allow such errors to be advected with the flow rather than accumulating locally.

In addition to the DF constraint, physical solutions of the MHD system must also satisfy essential algebraic constraints—specifically, the positivity of density and internal energy—which are critical for maintaining both the system’s hyperbolicity and its physical admissibility.
However, preserving these positivity conditions in numerical simulations remained a longstanding challenge, particularly in the presence of strong shocks, high Mach numbers, low internal energy, low density, or strong magnetic fields. Violation of these algebraic constraints may cause the system to lose hyperbolicity, leading to severe numerical instabilities and even simulation breakdown. Initial efforts to address this challenge employed robust one-dimensional Riemann solvers~\cite{Bouchut2007, Bouchut2010A, JANHUNEN2000A}. These were later followed by second-order MUSCL–Hancock schemes~\cite{Waagan2009, WAAGAN20113331}, which achieve positivity through the use of a relaxation Riemann solver \cite{Bouchut2007, Bouchut2010A}. More recently, a variety of high-order positivity-preserving (PP) limiters have been proposed by adapting techniques developed for the compressible Euler equations~\cite{Xiong2016parametrized, ZHANG20108918} to ideal  MHD~\cite{BALSARA2012Self, CHENG2013255,  Christlieb2015PP, CHRISTLIEB2016218}. However, the majority of these methods have been validated primarily through numerical experiments, with rigorous multidimensional proofs of their positivity preservation largely absent.

Recent theoretical studies have revealed a deep connection between the PP property and the DF condition. Although the PP property is an algebraic, pointwise constraint and the DF condition is a differential one, they are in fact strongly linked~\cite{Wu2018SINUpositivity,Wu2023provab}. In particular, it has been rigorously proven that, for conservative MHD schemes, enforcing a discrete DF (DDF) condition is essential to fully maintain positivity~\cite{Wu2018SINUpositivity}. Even small violations of the DF constraint can lead to negative pressures, not only in numerical solutions~\cite{Wu2018SINUpositivity} but even in exact smooth solutions of the standard MHD equations~\cite{WuShu2018}.
This strong coupling poses a significant challenge for constructing high-order PP schemes, especially in multiple dimensions where simultaneously enforcing both properties is highly nontrivial. An effective remedy is the modified MHD system with Godunov--Powell source terms~\cite{Powell1994, Powell1995AnUS}. These terms keep the system well-posed even in the presence of small divergence errors and weakly decouple the PP property from the globally DF constraint, allowing it to be enforced via a locally DF condition compatible with standard local scaling PP limiters \cite{WuShu2018,WuShu2019}. Building on this symmetric MHD formulation, provably PP and locally DF discontinuous Galerkin methods were proposed in \cite{WuShu2018,Wu2023provab} on Cartesian meshes, and a general framework of constructing provably PP finite volume and discontinuous Galerkin schemes on general meshes was established in \cite{WuShu2019}.
More recently, building on these works,
\cite{DingWu2024SISCMHD} developed a novel discrete locally DF projection to ensure provably PP finite volume WENO methods, and \cite{liu2025structure} proposed a locally DF oscillation-eliminating structure-preserving discontinuous Galerkin scheme for ideal MHD based on \cite{peng2023oedg}. Similar findings and provably structure-preserving schemes were also provided for the relativistic version of compressible MHD equations \cite{WuTangM3AS,Wu2021RMHD}.

This paper aims to develop PP and DDF PAMPA schemes for the ideal MHD system.
Compared with standard finite-volume or discontinuous Galerkin methods, the development of structure-preserving PAMPA schemes is still in its infancy, and several strategies have been explored in recent years.
The work in \cite{Chudzik2021125501} modified the original active-flux framework by adding a limiter to interface point values, thereby ensuring bound preservation for linear advection.
Another study \cite{Abgrall2023Combination} embedded the MOOD procedure into PAMPA schemes to maintain positive density and pressure for the Euler equations.
In \cite{Duan2025active}, the authors proposed bound-preserving schemes for both cell averages and point values, using convex limiting and scaling limiting together with multiple flux–vector–splitting techniques designed to handle transonic nonlinear flows.
A different approach was taken in \cite{abgrall2024boundpreserving}, where the residuals of a high-order scheme and a low-order local Lax–Friedrichs-type scheme were blended, yielding bound preservation for both cell averages and point values; this strategy has also been extended to other hyperbolic systems such as the Euler equations.
In the context of one-dimensional conservation laws, \cite{abgrall2024novel} developed an invariant-domain-preserving (IDP) PAMPA formulation that couples a provably IDP finite-volume update for cell averages with an automatic reformulation for point values, eliminating the need for additional limiters.
Very recently, \cite{duan2025AFMHD} presented a third-order positivity-preserving active-flux scheme for ideal MHD, which blends high-order AF discretization with a standard finite-volume method, together with a parametrized flux limiter for cell averages and a scaling limiter for interface point values.
However, the DF constraint of magnetic field was not enforced in the scheme of \cite{duan2025AFMHD}.

The contribution of this paper is to present a new PAMPA scheme that is provably PP and enforces a DDF magnetic field for the ideal MHD equations on Cartesian grids, extending our recent one-dimensional (1D) IDP PAMPA framework~\cite{abgrall2024novel} to a highly complex multidimensional hyperbolic system.
The main contributions are:
\begin{itemize}
	\item \textbf{Rigorous positivity for both cell averages and point values.}
	The scheme {\it automatically} ensures positivity at interface point values without any limiters; moreover, the magnetic field at these points satisfies a DDF constraint.
	For cell averages, the update is also provably PP under a mild a~priori positivity condition on a single cell-centered value, which is the only quantity requiring a PP limiter.
	To the best of our knowledge, this is the first active-flux-type approach for MHD that guarantees positivity for both cell averages and interface point values while maintaining a DDF constraint throughout the evolution.
	
	\item \textbf{Limiter-free, automatically PP update of point values.}
	Leveraging the flexibility of the PAMPA framework to use nonconservative formulations for point-value updates, we design specialized MHD reformulations that render the point-value update {\it automatically PP}, without any PP limiters.
	A highly efficient local projection enforces a DDF condition on the magnetic field at point values, which in turn is critical for ensuring positivity in the subsequent cell-average update.
	The discretization avoids explicit polynomial reconstructions, yielding an efficient and implementation-friendly method.
	
	\item \textbf{Provably PP update of cell averages.}
	The updated cell averages are guaranteed to remain positive by combining four ingredients:
	(i) a DDF constraint at interface point values;
	(ii) a PP limiter applied only to the single cell-centered value to enforce the apriori positivity condition;
	(iii) a PP numerical flux with properly estimated wave speeds; and
	(iv) a suitable discretization of the Godunov–Powell source term.
	Under (i)–(iv), we provide a rigorous PP analysis for the cell-average update within the geometric quasi-linearization (GQL)  framework \cite{Wu2018Positivity, Wu2023Geometric}.
	
	\item \textbf{A novel, problem-independent troubled-cell indicator.}
	Inspired by \cite{LiuFeng2021}, we introduce a Lax-type entropy criterion for detecting discontinuities.
	The indicator uses only two characteristic wave speeds computed from cell averages, making it both efficient and robust, and it preserves symmetry properties—an attribute not guaranteed by several alternative indicators~\cite{LiuFeng2021, DEEPRAY2019}.
	
	\item \textbf{Convex Oscillation Elimination (COE) technique.}
	To suppress spurious oscillations while retaining high-order accuracy in smooth regions, we propose a COE procedure that forms convex combinations of limited point values and cell averages.
	A key ingredient is a new norm that measures discrepancies between neighboring polynomials and effectively identifies oscillatory behavior.
	While conceptually distinct from the modal-filtering OE method of~\cite{peng2023oedg}, the proposed COE retains two essential structural properties—scale invariance and evolution invariance—ensuring consistency and robustness across a broad range of problem scales and wave speeds.
	The procedure is non-intrusive and can be readily extended to other systems.
	
	\item \textbf{Extensive numerical validation.}
	We validate the proposed PP PAMPA schemes on a comprehensive suite of benchmark and challenging ideal-MHD problems, including a blast wave with plasma beta as low as $2.51\times10^{-6}$ and a jet with Mach numbers up to $10^{4}$.
	The results confirm high-order accuracy, sharp resolution of complex flow and magnetic structures, robustness under extremely strong shocks, and reliable troubled-cell detection, highlighting both the effectiveness and broad applicability of the scheme.
\end{itemize}

	The remainder of this paper is organized as follows. In \Cref{sec:govequ}, we outline the ideal MHD system, its physical constraints, and the Godunov–Powell formulation. In \Cref{S.2}, we present the construction of the proposed PP PAMPA scheme for the MHD system. We begin by presenting the key components of the method, followed by a detailed description of schemes for point values and cell averages, along with a rigorous PP analysis. The Lax-type entropy criterion and the COE procedure are discussed in \Cref{trouble-cell} and \Cref{OE}, respectively. In \Cref{S.5}, numerical examples are presented to demonstrate the accuracy, robustness, and resolution of the scheme. Finally, \Cref{conclusion} summarizes the main results of this work.

	\section{Governing Equations}\label{sec:govequ}
	Without loss of generality, we focus on the two-dimensional (2D) ideal magnetohydrodynamics (MHD) system. However, the numerical scheme and analysis presented in this paper naturally extend to the three-dimensional case. 
	In 2D, the governing equations of ideal MHD can be expressed as a hyperbolic system of conservation laws:
	\begin{equation}\label{eq:MHDadd2D}
		\partial_{t}	\bm{U}+\nabla \cdot \bm{F} (\bm{U})= \bm{0},
	\end{equation}
	where $\bm{U}$ denotes the vector of conservative variables, and $\bm{F}(\bm{U}) = (\bm{F}_1(\bm{U}), \bm{F}_2(\bm{U}))$ represents the fluxes in the $x$- and $y$-directions, respectively:
	\begin{equation*}
		\bm{U}=\left(\begin{array}{c}
			\rho \\
			m_1 \\
			m_2 \\
			m_3 \\
			B_1 \\
			B_2 \\
			B_3 \\
			E \\
		\end{array}\right), \quad
		\bm{F}_1(\bm{U})=
		\left(\begin{array}{c}
			m_1 \\
			m_1 v_1 - B_1^2 + p_{tot} \\
			m_1 v_2 - B_1 B_2 \\
			m_1 v_3 - B_1 B_3 \\
			0 \\
			v_1 B_2 - v_2 B_1 \\
			v_1 B_3 - v_3 B_1 \\
			v_1(E + p_{tot}) - B_1(\bm{v} \cdot \bm{B}) \\
		\end{array}\right), \quad
		\bm{F}_2(\bm{U})=
		\left(\begin{array}{c}
			m_2 \\
			m_2 v_1 - B_2 B_1 \\
			m_2 v_2 - B_2^2 + p_{tot} \\
			m_2 v_3 - B_2 B_3 \\
			v_2 B_1 - v_1 B_2 \\
			0 \\
			v_2 B_3 - v_3 B_2 \\
			v_2(E + p_{tot}) - B_2(\bm{v} \cdot \bm{B}) \\
		\end{array}\right).
	\end{equation*}
	Here, $\rho$ is the density, $p$ is the thermal pressure, $\bm{m}=(m_1, m_2, m_3)$ is the momentum vector, and $\bm{v}=(v_1, v_2, v_3)=\bm{m}/\rho$ is the velocity field. The magnetic field is given by $\bm{B}=(B_1, B_2, B_3)$, and the total pressure is defined as $p_{tot} = p + \frac{1}{2}|\bm{B}|^2$. The total energy $E$ satisfies
	\[
	E = \frac{p}{\gamma - 1} + \frac{1}{2} \rho |\bm{v}|^2 + \frac{1}{2} |\bm{B}|^2,
	\]
	where $\gamma$ is the adiabatic index (ratio of specific heats).
	
	The physical admissibility of a solution to \eqref{eq:MHDadd2D} requires the density and internal energy (or, equivalently, the pressure) to be positive. That is,
	\begin{equation}\label{eq:ppcondi}
		\bm{U} \in \mathcal{G} := \left\{ \bm{U} = (\rho, \bm{m}, \bm{B}, E)^{\top} : \rho > 0, ~ \mathcal{E}(\bm{U}) := E - \frac{|\bm{m}|^2}{2\rho} - \frac{|\bm{B}|^2}{2} > 0 \right\},
	\end{equation}
	where $\mathcal{E}(\bm{U})$ denotes the internal energy, and $\mathcal{G}$ is referred to as the admissible state set. Beyond its physical significance, the condition $\bm{U} \in \mathcal{G}$ is also essential for the hyperbolicity of the MHD system \eqref{eq:MHDadd2D}; failure to satisfy it may lead to imaginary sound speeds and eigenvalues, rendering the system ill-posed. Therefore, it is crucial for any numerical MHD scheme to maintain solutions within $\mathcal{G}$.
	
	In addition, the magnetic field must satisfy the divergence-free constraint:
	\begin{equation}\label{eq:divB}
		\nabla \cdot \bm{B} = 0.
	\end{equation}
	While the continuous form of \eqref{eq:MHDadd2D} ensures that this condition remains satisfied for all time if it holds initially, standard numerical discretizations do not automatically preserve \eqref{eq:divB}, leading to non-zero divergence errors. Furthermore, the enforcement of the positivity condition \eqref{eq:ppcondi} is closely tied to the numerical preservation of a discrete divergence-free condition, as demonstrated in \cite{Wu2018Positivity, WuShu2019}.
	
	One effective approach for controlling divergence errors is the eight-wave formulation \cite{Powell1994}, which augments the conservative system \eqref{eq:MHDadd2D} with the Godunov–Powell source terms \cite{Godunov1972SymmetricFO, Powell1994}, resulting in the modified MHD system:
	\begin{equation}\label{eq:MHDadd2DPowell}
		\partial_{t}	\bm{U} + \nabla \cdot \bm{F}(\bm{U}) = -(\nabla \cdot \bm{B}) \bm{S}(\bm{U}),
	\end{equation}
	where $\bm{S}(\bm{U}) = (0, \bm{B}, \bm{v}, \bm{v} \cdot \bm{B})^{\top}$. These source terms restore symmetric hyperbolicity, thereby making the system well-posed even in the presence of small divergence errors. Moreover, they enable divergence errors to be transported with the flow, rather than accumulating locally. 
	Most notably, it has been shown in \cite{WuShu2018, WuShu2019} that the Godunov–Powell source terms decouple the PP property from the divergence-free constraint at the continuous level. At the numerical level, for suitably discretized versions of the modified system \eqref{eq:MHDadd2DPowell}, the PP property depends only on satisfying a {\it locally} divergence-free condition \cite{WuShu2018, WuShu2019}. This condition is compatible with standard PP limiters and can be more readily enforced by numerical schemes.

	\section{Structure-Preserving PAMPA Method for Ideal MHD}\label{S.2}
	This section presents the structure-preserving PAMPA method for the ideal MHD equations.  
	We consider a two-dimensional domain discretized using an $N_x \times N_y$ Cartesian mesh, where each cell is defined as $I_{i,j} = [x_{i-\halfone}, x_{i+\halfone}] \times [y_{j-\halfone}, y_{j+\halfone}]$ with centroid at $(x_i, y_j)$.  
	Without loss of generality, we focus on a uniform mesh, with grid spacings $\Delta x$ and $\Delta y$ in the $x$- and $y$-directions, respectively.

	\subsection{Framework and key components of structure-preserving PAMPA method}
	
	As a variant of the active flux method \cite{Eymann2011active,Eymann2012active},  
	the PAMPA scheme \cite{Abgrall2023Combination} combines both conservative and non-conservative formulations of hyperbolic systems.  
	Unlike traditional finite volume or finite difference methods, PAMPA offers a more compact stencil, lower memory requirements, and greater flexibility.  
	In the classical third-order PAMPA scheme \cite{Abgrall2023Combination}, the degrees of freedom (DoFs), denoted by $\bm{q}$, include both cell averages and point values located at cell interfaces.  
	The cell averages are evolved using the conservative form of the equations, while the point values are evolved via a non-conservative formulation (equivalent to the conservative form in smooth regions):
	\begin{equation}\label{non-conservative}
		\frac{\partial \bm{W}}{\partial t} + \bm{J}^x \frac{\partial \bm{W}}{\partial x} + \bm{J}^y \frac{\partial \bm{W}}{\partial y} = \bm{0}.
	\end{equation}
	Here, $\bm{W} = {\bm \Psi}(\bm{U})$ denotes the non-conservative variables (typically the primitive variables), and ${\bm \Psi}$ is the transformation from conservative to non-conservative variables.  
	The Jacobian matrices $\bm{J}^x$ and $\bm{J}^y$ are given by 
	\begin{equation}
		\bm{J}^x = \left(\frac{\partial {\bm \Psi} (\bm{U})}{\partial \bm{U}}\right) \frac{\partial \bm{F}_1(\bm{U})}{\partial \bm{U}} \left(\frac{\partial {\bm \Psi} (\bm{U})}{\partial \bm{U}}\right)^{-1}, \quad
		\bm{J}^y = \left(\frac{\partial {\bm \Psi} (\bm{U})}{\partial \bm{U}}\right) \frac{\partial \bm{F}_2(\bm{U})}{\partial \bm{U}} \left(\frac{\partial {\bm \Psi} (\bm{U})}{\partial \bm{U}}\right)^{-1}.
	\end{equation}

	Although the PAMAP scheme is constructed based on polynomial representation of the numerical solution, its implementation is actually polynomial-free.   
	In a third-order PAMPA scheme, each cell $I_{i,j}$ involves nine DoFs (with eight point values shared among neighboring cells):
	\begin{itemize}
		\item \textbf{Cell average:} ${\bm q}_{0,i,j} = \overline{{\bm U}}_{i,j} \approx \frac{1}{\Delta x \Delta y}\int_{I_{i,j}}\bm{U}(x, y, t)\, \dd x \dd y$;
		\item \textbf{Four point values at vertices:} 
		${\bm q}_{1,i,j} = {\bm W}_{i+\frac{1}{2},j+\frac{1}{2}}$, 
		${\bm q}_{2,i,j} = {\bm W}_{i+\frac{1}{2},j-\frac{1}{2}}$,  
		${\bm q}_{3,i,j} = {\bm W}_{i-\frac{1}{2},j+\frac{1}{2}}$, 
		${\bm q}_{4,i,j} = {\bm W}_{i-\frac{1}{2},j-\frac{1}{2}}$;
		\item \textbf{Four point values at edge centers:} 
		${\bm q}_{5,i,j} = {\bm W}_{i,j+\frac{1}{2}}$, 
		${\bm q}_{6,i,j} = {\bm W}_{i,j-\frac{1}{2}}$, 
		${\bm q}_{7,i,j} = {\bm W}_{i+\frac{1}{2},j}$, 
		${\bm q}_{8,i,j} = {\bm W}_{i-\frac{1}{2},j}$.
	\end{itemize}
	Here, ${\bm W}_{i+\mu, j+\nu}$ approximates the values of $\bm W$ at the corresponding spatial point $(x_{i+\mu}, y_{j+\nu})$.  
	For clarity, the distribution of DoFs is illustrated in \Cref{fig:DoFs}.

	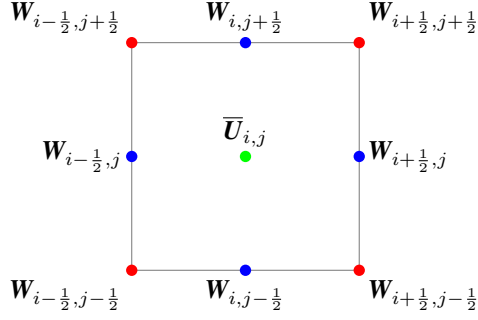
\begin{figure}[htbp]
		\centering
		\begin{tikzpicture}[scale=1.5]
			% Draw cell box
			\draw[gray, thin] (0,0) rectangle (2,2);
			
			% Define node styles
			\tikzstyle{node}=[circle, fill=blue, inner sep=1.5pt]
			\tikzstyle{corner}=[circle, fill=red, inner sep=1.5pt]
			\tikzstyle{special}=[circle, fill=blue, inner sep=1.5pt, cross out, draw=purple, thick]
			\tikzstyle{center}=[circle, fill=green, inner sep=1.5pt]
			
			% Draw corners
			\node[corner] at (0,0) {};
			\node[corner] at (0,2) {};
			\node[corner] at (2,0) {};
			\node[corner] at (2,2) {};
			
			% Draw edge nodes
			\node[node] at (1,0) {};
			\node[node] at (1,2) {};
			\node[node] at (0,1) {};
			\node[node] at (2,1) {};
			
			% Draw center node
			%\node[special] at (1,1) {};
			\node[center] at (1,1) {};
			
			% Labels
			\node[above] at (1,1) {$\overline{{\bm U}}_{i, j}$};
			
			\node[below left] at (0,0) {${\bm W}_{i-\frac{1}{2},j-\frac{1}{2}}$};
			\node[below right] at (2,0) {${\bm W}_{i+\frac{1}{2},j-\frac{1}{2}}$};
			\node[above left] at (0,2) {${\bm W}_{i-\frac{1}{2},j+\frac{1}{2}}$};
			\node[above right] at (2,2) {${\bm W}_{i+\frac{1}{2},j+\frac{1}{2}}$};
			
			\node[left] at (0,1) {${\bm W}_{i-\frac{1}{2},j}$};
			\node[below] at (1,0) {${\bm W}_{i,j-\frac{1}{2}}$};
			\node[right] at (2,1) {${\bm W}_{i+\frac{1}{2},j}$};
			\node[above] at (1,2) {${\bm W}_{i,j+\frac{1}{2}}$};
		\end{tikzpicture}
		\caption{The nine degrees of freedom (DoFs) in the third-order PAMPA method include the cell average, point values at edge centers, and point values at cell vertices.}
		\label{fig:DoFs}
	\end{figure}

	Our goal is to construct a structure-preserving PAMPA scheme that ensures a discrete divergence-free (DDF) condition for the magnetic field  
	and provably preserves the positivity of the updated point values and cell averages.

	To achieve this, we design an unconditionally positivity-preserving (PP) finite difference formulation for updating point values, together with a DDF projection to enforce \eqref{key111}, and combine these with a provably PP finite volume formulation for updating cell averages.  
	Thanks to the flexibility of the PAMPA framework, these two parts can be seamlessly coupled, resulting in a compact and efficient third-order structure-preserving scheme for the ideal MHD system.  
	Specifically, the proposed structure-preserving PAMPA scheme is built upon the following four key components (KC):
	
	\begin{description}
		\item[KC-1: Automatically PP reformulations and unconditionally PP updates of point values.]  
		We propose a non-conservative reformulation of the ideal MHD system such that, for any non-conservative variable \( \bm{W} \in \mathbb{R}^8 \), the corresponding conservative variable \( \bm{U} = {\bf \Psi}^{-1}(\bm{W}) \) always lies in the admissible state set \( \mathcal{G} \).  
		Based on this reformulation, we design an unconditionally PP finite difference scheme for updating point values. All updated point values are guaranteed to remain in \( \mathcal{G} \) without the need for any additional PP limiter.  
		See \Cref{App} for further details.
		
		\item[KC-2: Discrete divergence-free (DDF) projection.]  
		To control divergence errors and ensure the PP property of updated cell averages, we introduce an efficient DDF projection that can be performed locally in each cell.  
		For cell $I_{i,j}$, the eight magnetic field point values are projected into a local DDF space such that the modified values satisfy the DDF constraint:
		\begin{equation}\label{key111}
			\nabla_{i,j} \cdot \tilde{{\bm B}} := \frac{ \overline {\mathcal B}^{x,R}_{i,j} - \overline {\mathcal B}^{x,L}_{i,j}  }{\Delta x}  +  \frac{ \overline {\mathcal B}^{y,U}_{i,j} - \overline {\mathcal B}^{y,D}_{i,j}  }{\Delta y} = 0, 
		\end{equation}
		with 
		\begin{align*}
			\overline {\mathcal B}^{x,R/L}_{i,j} &= \frac{1}{6} (\tilde B_1)^{L/R,U}_{i \pm \frac{1}{2}, j - \frac{1}{2}} + \frac{4}{6} (\tilde B_1)^{L/R}_{i \pm \frac{1}{2}, j} + \frac{1}{6} (\tilde B_1)^{L/R,D}_{i \pm \frac{1}{2}, j + \frac{1}{2}}, \\
			\overline {\mathcal B}^{y,U/D}_{i,j} &= \frac{1}{6} (\tilde B_2)^{R,D/U}_{i - \frac{1}{2}, j \pm \frac{1}{2}} + \frac{4}{6} (\tilde B_2)^{D/U}_{i, j \pm \frac{1}{2}} + \frac{1}{6} (\tilde B_2)^{L,D/U}_{i + \frac{1}{2}, j \pm \frac{1}{2}},
		\end{align*}
		where $\tilde{\bm{B}}^{L/R, D/U}$ denotes the result of applying the DDF projection to $\bm{B}$.  
		Thanks to its local nature, the DDF projection is both efficient and easy to implement.  
		However, it breaks continuity of the point values, resulting in two modified values at edge centers and four at cell vertices.  
		Note that these modified values will be used only for updating cell averages---not for updating point values themselves.  
		Details of the DDF projection are provided in \Cref{IDPCell}.
		
		\item[KC-3: PP limiter for the point value at the cell center.]  
		In addition to interface point values, the scheme also requires the value ${\bm U}_{i,j}^c$ at the cell center $(x_i, y_j)$, computed as
		\begin{equation}\label{cell-CenterU}
			\begin{aligned}
				\bm{U}_{i,j}^{c} &= \frac{36}{16} \overline{\bm{U}}_{i,j} 
				- \frac{1}{16} (\bm{U}_{i-\halfone, j-\halfone}^{R,U} + \bm{U}_{i+\halfone, j-\halfone}^{L,U} + \bm{U}_{i-\halfone, j+\halfone}^{R,D} + \bm{U}_{i+\halfone, j+\halfone}^{L,D}) \\
				&\quad - \frac{4}{16} (\bm{U}_{i-\halfone, j}^{R} + \bm{U}_{i+\halfone, j}^{L} + \bm{U}_{i, j-\halfone}^{U} + \bm{U}_{i, j+\halfone}^{D}).
			\end{aligned}
		\end{equation}
		This value does not, in general, lie in $\mathcal{G}$. Therefore, a local scaling PP limiter is applied to enforce $\check{\bm{U}}_{i,j}^c \in \mathcal{G}$.  
		This limited value ensures that the corresponding non-conservative variable $\check{\bm{W}}_{i,j}^{c}$ is well-defined.  
		This step is essential not only for the PP update of cell averages, but also for enabling the update of point values using $\check{\bm{W}}_{i,j}^{c}$.  
		Further details are given in \Cref{IDPCell}.
		
		\item[KC-4: Convex-oscillation-eliminating (COE) procedure and a shock indicator.]  
		The conventional PAMPA scheme may produce non-physical oscillations near discontinuities.
		To address this issue, we propose a novel COE procedure that suppresses such oscillations while preserving the compactness of the original scheme, along with the DDF and PP properties.
		A Lax-type entropy criterion is employed to detect potential shocks.
		The COE procedure is then selectively applied only in troubled cells identified by the shock detector.
		This targeted application avoids unnecessary dissipation and reduces computational cost, while maintaining stability near discontinuities.
		Importantly, the COE procedure does not alter the cell averages, thereby preserving conservativeness.
		The point values of the conservative variables after applying the COE procedure are denoted by $\hat{\bm{U}}$.
		Details of the COE procedure and the shock indicator are provided in \Cref{OE} and \Cref{trouble-cell}, respectively.
	\end{description}

	Based on the above key components, we construct a PAMPA scheme that is discrete divergence-free (DDF), essentially non-oscillatory, and PP. 
	For clarity and notational convienience, we consider the forward Euler method for time discretization, while the extension to high-order strong-stability-preserving (SSP) time stepping methods will be discussed in \Cref{sec:time}.   
	The proposed scheme ensures that 
	\begin{itemize}
		\item the updated point values
		\[
		{\bm q}^{n+1}_{r,i,j} = {\bm q}^{n}_{r,i,j}  + \Delta t_n {\bm L}_{r,i,j}({\bm W}^n, \overline{{\bm U}}^n, \hat{{\bm U}}^n), \quad r = 1, \ldots, 8
		\]
		are automatically and unconditionally PP;
		
		\item the updated cell averages
		\[
		{\bm q}^{n+1}_{0,i,j} = {\bm q}^{n}_{0,i,j} + \Delta t_n {\bm L}_{0,i,j}(\hat{{\bm U}}^n)
		\]
		are provably PP under a CFL-type condition on the time step size $\Delta t_n$, as established in \Cref{them:PPcfl};
	\end{itemize}
	Here, ${\bm L}_{r,i,j}({\bm W}^n, \overline{{\bm U}}^n, \hat{{\bm U}}^n)$ for $1 \le r \le 8$, and ${\bm L}_{0,i,j}(\hat{{\bm U}}^n)$ denote the spatial discretization operators corresponding to the non-conservative and conservative MHD formulations, respectively; these will be detailed later. For clarity, a flowchart of the proposed method is shown in \Cref{fig:flowchat}.
	
	\begin{figure}[!ht]
		\centering
		\includegraphics{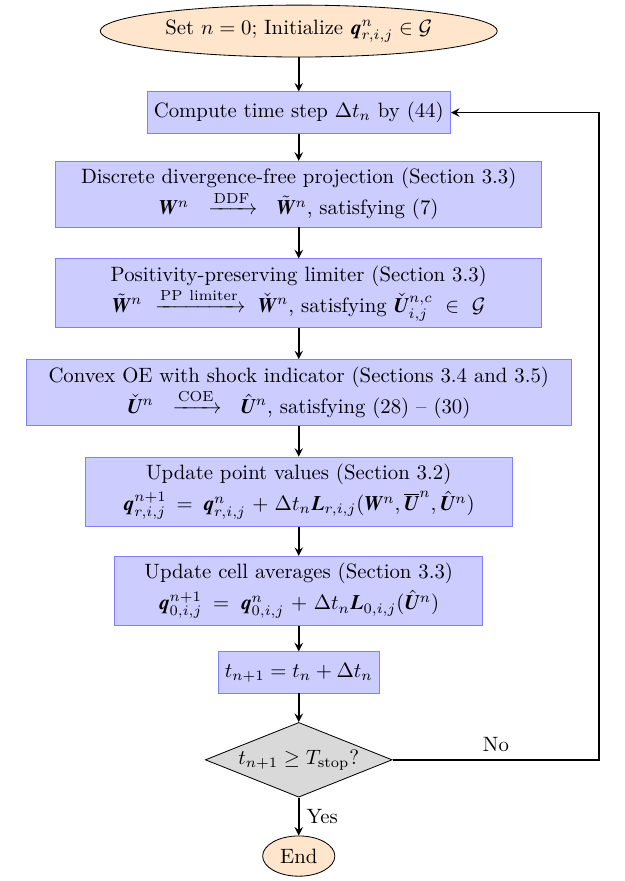}
		\caption{The flowchart of our structure-preserving PAMPA scheme for ideal MHD.}
		\label{fig:flowchat}
	\end{figure}

	\subsection{Automatically PP approach for point-value evolution}\label{App}
	
	Thanks to the flexibility of the PAMPA framework, the point values can be updated using any non-conservative form of the ideal MHD system.  
	The core idea of our automatically PP approach is to select a special set of non-conservative variables ${\bm W}$ such that the mapping $\bm \Psi^{-1}$ from ${\bm W}$ to $\bm U$ satisfies the following property:
	\begin{equation}\label{eq:WUG}
		{\bm U} = \bm \Psi^{-1} ( {\bm W} ) \in \mathcal{G}, \qquad \forall \, {\bm W} \in \mathbb{R}^8.
	\end{equation}
	That is, regardless of how ${\bm W}$ is computed, the corresponding ${\bm U}$ always lies in the admissible state set $\mathcal{G}$.  
	Therefore, by discretizing the reformulated MHD system in terms of these variables ${\bm W}$, the resulting scheme is unconditionally PP without requiring any additional PP limiters.
	
	In this subsection, we introduce the automatically PP reformulation with such ${\bm W}$, followed by a description of the spatial discretization operator ${\bm L}_{r,i,j}({\bm W}^n, \overline{{\bm U}}^n, \hat{{\bm U}}^n)$ used for the point-value evolution \eqref{non-conservative}.
	
	We define
	\[
	\bm{W} = (q, \bm{v}, \bm{B}, s)^\top, \qquad \text{with} \quad s = \ln p - \gamma \ln \rho,
	\]
	where $s$ is the specific entropy, and $q$ is defined using the inverse of the Softplus activation function commonly used in machine learning:
	\begin{equation}\label{changeform1}
		q = \ln \left( \mathrm{e}^{\rho / \rho_{\mathrm{ref}}} - 1 \right),
	\end{equation}
	with $\rho_{\rm ref} > 0$ being a user-defined reference density. For any $q \in \mathbb{R}$, this yields
	\[
	\rho = \rho_{\rm ref} \ln \left( \mathrm{e}^q + 1 \right) > 0,
	\]
	where $\ln \left( \mathrm{e}^x + 1 \right)$ is the Softplus function---a smooth approximation of ReLU---ensuring positivity in neural network outputs.
	
	\begin{remark}\label{changeform}
		The choice of non-conservative variables ${\bm W}$ is not unique.  
		For instance, another valid form of $q$ that ensures the positivity of density is
		\begin{equation}\label{changeform2}
			q = \frac{\rho}{\rho_{\rm ref}} - \frac{\rho_{\rm ref}}{4 \rho},
		\end{equation}
		which implies
		\[
		\rho = \frac{\rho_{\rm ref}}{2} \left( q + \sqrt{q^2 + 1} \right) > 0, \qquad \forall q \in \mathbb{R}.
		\]
	\end{remark}
	
	For any $\bm{W} \in \mathbb{R}^8$, the density $\rho$ computed using either \eqref{changeform1} or \eqref{changeform2} is strictly positive.  
	The thermal pressure is then reconstructed via $s = \ln p - \gamma \ln \rho$, yielding
	\[
	p = \mathrm{e}^s \rho^\gamma > 0,
	\]
	which is also strictly positive.  
	Hence, the conservative variables ${\bm U} = \bm \Psi({\bm W})$ derived from any ${\bm W} \in \mathbb{R}^8$ are guaranteed to belong to the admissible state set $\mathcal{G}$.
	
	Our numerical experiments show that both choices \eqref{changeform1} and \eqref{changeform2} for $q$ perform well in practice.  
	For clarity, we focus on \eqref{changeform1} in the following discussions.  
	We now derive the non-conservative formulation \eqref{non-conservative} in terms of the variables $\bm{W} = (q, \bm{v}, \bm{B}, s)^\top$ from the MHD system \eqref{eq:MHDadd2DPowell}.  
	The Jacobian matrices $\bm{J}^x$ and $\bm{J}^y$ are given by
	\begin{equation}\label{eq:Jx}
		\bm{J}^x = \begin{pmatrix}
			\frac{m_1}{\rho} & \frac{\rho \mathrm{e}^{\rho / \rho_{\mathrm{ref}}}}{\rho_{\mathrm{ref}} \sigma_2} & 0 & 0 & 0 & 0 & 0 & 0 \\
			-\frac{\gamma \rho_{\mathrm{ref}} \mathrm{e}^{-\rho / \rho_{\mathrm{ref}}} \sigma_2 (\gamma - 1) \sigma_1}{2 \rho^3} & \frac{m_1}{\rho} & 0 & 0 & 0 & \frac{B_2}{\rho} & \frac{B_3}{\rho} & -\frac{(\gamma - 1) \sigma_1}{2 \rho^2} \\
			0 & 0 & \frac{m_1}{\rho} & 0 & 0 & -\frac{B_1}{\rho} & 0 & 0 \\
			0 & 0 & 0 & \frac{m_1}{\rho} & 0 & 0 & -\frac{B_1}{\rho} & 0 \\
			0 & 0 & 0 & 0 & \frac{m_1}{\rho} & 0 & 0 & 0 \\
			0 & B_2 & -B_1 & 0 & 0 & \frac{m_1}{\rho} & 0 & 0 \\
			0 & B_3 & 0 & -B_1 & 0 & 0 & \frac{m_1}{\rho} & 0 \\
			0 & 0 & 0 & 0 & 0 & 0 & 0 & \frac{m_1}{\rho}
		\end{pmatrix},
	\end{equation}
	and 
	\begin{equation}\label{eq:Jy}
		\bm{J}^y = \begin{pmatrix}
			\frac{m_2}{\rho} & 0 & \frac{\rho \mathrm{e}^{\rho / \rho_{\mathrm{ref}}}}{\rho_{\mathrm{ref}} \sigma_2} & 0 & 0 & 0 & 0 & 0 \\
			0 & \frac{m_2}{\rho} & 0 & 0 & -\frac{B_2}{\rho} & 0 & 0 & 0 \\
			-\frac{\gamma \rho_{\mathrm{ref}} \mathrm{e}^{-\rho / \rho_{\mathrm{ref}}} \sigma_2 (\gamma - 1) \sigma_1}{2 \rho^3} & 0 & \frac{m_2}{\rho} & 0 & \frac{B_1}{\rho} & 0 & \frac{B_3}{\rho} & -\frac{(\gamma - 1) \sigma_1}{2 \rho^2} \\
			0 & 0 & 0 & \frac{m_2}{\rho} & 0 & 0 & -\frac{B_2}{\rho} & 0 \\
			0 & -B_2 & B_1 & 0 & \frac{m_2}{\rho} & 0 & 0 & 0 \\
			0 & 0 & 0 & 0 & 0 & \frac{m_2}{\rho} & 0 & 0 \\
			0 & 0 & B_3 & -B_2 & 0 & 0 & \frac{m_2}{\rho} & 0 \\
			0 & 0 & 0 & 0 & 0 & 0 & 0 & \frac{m_2}{\rho}
		\end{pmatrix},
	\end{equation}
	where
	\begin{equation}
		\sigma_1 = \rho |\bm{B}|^2 + |\bm{m}|^2 - 2 \rho E, \qquad 
		\sigma_2 = \mathrm{e}^{\rho / \rho_{\mathrm{ref}}} - 1.
	\end{equation}

	We now introduce a finite difference scheme for updating $\bm{W} = (q, \bm{v}, \bm{B}, s)^\top$  by solving the non-conservative form \eqref{non-conservative} with \eqref{eq:Jx}, \eqref{eq:Jy}.

	For each cell $I_{i,j}$, given $\bm{q}^{n}_{r,i,j} \in \mathcal{G}$ for $r = 0, \dots, 8$,  
	a biparabolic reconstruction  
	\[
	\bm{P}_{i,j}^n(x,y) : \left[ x_{i-\frac{1}{2}}, x_{i+\frac{1}{2}} \right] \times \left[ y_{j-\frac{1}{2}}, y_{j+\frac{1}{2}} \right] \to \mathbb{R}^8
	\]
	can be constructed to achieve third-order accuracy by matching
	\begin{equation}\label{recon}
		\frac{1}{\Delta x \Delta y} \int_{I_{i,j}} \bm{P}_{i,j}^n(x,y)\, \dd x \dd y = \bm{q}^n_{0, i,j}, \qquad 
		\bm{P}_{i,j}^n(x_r,y_r) = {\bm \Psi}^{-1}(\bm{q}^n_{r, i,j}), \quad r = 1, \dots, 8,
	\end{equation}
	where $(x_r, y_r)$ denotes the location of the point corresponding to the degree of freedom $\bm{q}^n_{r, i,j}$ on the cell interface, as illustrated in \Cref{fig:DoFs}.   
	Using the exactness of Simpson’s rule for the integration of $\bm{P}_{i,j}^n(x,y)$ over $I_{i,j}$ gives
	\begin{equation}\label{eq:CAD}
		\begin{aligned}
			\overline{\bm{U}}_{i,j}^n &= \frac{16}{36} {\bm{U}}_{i,j}^{n, c} 
			+ \frac{1}{36} \left( \bm{U}_{i-\frac{1}{2}, j-\frac{1}{2}}^n + \bm{U}_{i+\frac{1}{2}, j-\frac{1}{2}}^n + \bm{U}_{i-\frac{1}{2}, j+\frac{1}{2}}^n + \bm{U}_{i+\frac{1}{2}, j+\frac{1}{2}}^n \right) \\
			&\quad + \frac{4}{36} \left( \bm{U}_{i-\frac{1}{2}, j}^n + \bm{U}_{i+\frac{1}{2}, j}^n + \bm{U}_{i, j-\frac{1}{2}}^n + \bm{U}_{i, j+\frac{1}{2}}^n \right),
		\end{aligned}
	\end{equation}
	where ${\bm{U}}_{i,j}^{n, c} = \bm{P}_{i,j}^n(x_i, y_j)$ denotes the value of the reconstructed polynomial at the cell center.  
	Thus, ${\bm{U}}_{i,j}^{n, c}$ can be computed directly, without explicitly reconstructing the polynomial, via
	\begin{equation}\label{cell-CenterU2}
		\begin{aligned}
			\bm{U}_{i,j}^{n,c} &= \frac{36}{16} \overline{\bm{U}}_{i,j}^n 
			- \frac{1}{16} \left( \bm{U}_{i-\frac{1}{2}, j-\frac{1}{2}}^n + \bm{U}_{i+\frac{1}{2}, j-\frac{1}{2}}^n + \bm{U}_{i-\frac{1}{2}, j+\frac{1}{2}}^n + \bm{U}_{i+\frac{1}{2}, j+\frac{1}{2}}^n \right) \\
			&\quad - \frac{4}{16} \left( \bm{U}_{i-\frac{1}{2}, j}^n + \bm{U}_{i+\frac{1}{2}, j}^n + \bm{U}_{i, j-\frac{1}{2}}^n + \bm{U}_{i, j+\frac{1}{2}}^n \right).
		\end{aligned}
	\end{equation}

	The spatial derivatives $\frac{\partial \bm{W}}{\partial x}$ and $\frac{\partial \bm{W}}{\partial y}$ in the non-conservative formulation \eqref{non-conservative}  
	are approximated by differentiating the reconstructed polynomial \cite{AbgrallAF2025} within each cell, and upwinding is incorporated through Lax--Friedrichs splitting of the Jacobian matrices.  
	We now present the semi-discrete formulation of the non-conservative system \eqref{non-conservative}.  
	Time discretization can be carried out using the SSP Runge--Kutta method to achieve high-order temporal accuracy, as described in \Cref{sec:time}.
	
	The point value $\bm{W}_{i+\halfone, j}$ is updated via
	\begin{equation}\label{eq:pointevolve1}
		\frac{\dd}{\dd t} \bm{W}_{i+\halfone, j} = -\left(\bm{J}^x \bm{D}^x\right)_{i+\halfone, j}^{\mathrm{upw}} - \bm{J}_{i+\halfone, j}^y \bm{D}_{i+\halfone, j}^y,
	\end{equation}
	where
	\begin{equation*}
		\left(\bm{J}^x \bm{D}^x\right)_{i+\halfone, j}^{\mathrm{upw}} := \left(\bm{J}^x\right)_{i+\halfone, j}^{+} \left(\bm{D}^x\right)_{i+\halfone, j}^{+} + \left(\bm{J}^x\right)_{i+\halfone, j}^{-} \left(\bm{D}^x\right)_{i+\halfone, j}^{-}, \quad
		\left(\bm{J}^x\right)_{i+\halfone, j}^{\pm} = \frac{\bm{J}_{i+\halfone, j}^x \pm \alpha_1^\mathrm{LF} \bm{I}}{2},
	\end{equation*}
	\begin{equation*}
		\begin{aligned}
			\left(\bm{D}^x\right)_{i+\halfone, j}^{+} &= \frac{1}{4 \Delta x} \bigg( 4\left(-9 \overline{\bm{W}}_{i,j} + 2(\bm{W}_{i-\halfone, j} + 2\bm{W}_{i+\halfone, j})\right) + 4(\bm{W}_{i, j-\halfone} + \bm{W}_{i, j+\halfone}) \\
			& \quad + \bm{W}_{i-\halfone, j-\halfone} + \bm{W}_{i+\halfone, j-\halfone} + \bm{W}_{i-\halfone, j+\halfone} + \bm{W}_{i+\halfone, j+\halfone} \bigg), \\
			\left(\bm{D}^x\right)_{i+\halfone, j}^{-} &= -\frac{1}{4 \Delta x} \bigg( -36 \overline{\bm{W}}_{i+1,j} + 8(2\bm{W}_{i+\halfone, j} + \bm{W}_{i+\frac{3}{2}, j}) + \bm{W}_{i+\halfone, j-\halfone} \\
			& \quad + 4(\bm{W}_{i+1, j-\halfone} + \bm{W}_{i+1, j+\halfone}) + \bm{W}_{i+\frac{3}{2}, j-\halfone} + \bm{W}_{i+\halfone, j+\halfone} + \bm{W}_{i+\frac{3}{2}, j+\halfone} \bigg), \\
			\left(\bm{D}^y\right)_{i+\halfone, j} &= \frac{\bm{W}_{i+\halfone, j+\halfone} - \bm{W}_{i+\halfone, j-\halfone}}{\Delta y}.
		\end{aligned}
	\end{equation*}
	For the point value $\bm{W}_{i, j+\halfone}$:
	\begin{equation}\label{eq:pointevolve2}
		\frac{\dd}{\dd t} \bm{W}_{i, j+\halfone} = -\bm{J}_{i,j+\halfone}^x \bm{D}_{i,j+\halfone}^x - \left(\bm{J}^y \bm{D}^y\right)_{i,j+\halfone}^{\mathrm{upw}},
	\end{equation}
	where
	\begin{equation*}
		\left(\bm{J}^y \bm{D}^y\right)_{i,j+\halfone}^{\mathrm{upw}} := \left(\bm{J}^y\right)_{i,j+\halfone}^{+} \left(\bm{D}^y\right)_{i,j+\halfone}^{+} + \left(\bm{J}^y\right)_{i,j+\halfone}^{-} \left(\bm{D}^y\right)_{i,j+\halfone}^{-}, \quad
		\left(\bm{J}^y\right)_{i,j+\halfone}^{\pm} = \frac{\bm{J}_{i,j+\halfone}^y \pm \alpha_2^\mathrm{LF} \bm{I}}{2},
	\end{equation*}
	\begin{equation*}
		\begin{aligned}
			\left(\bm{D}^y\right)_{i, j+\halfone}^{+} &= \frac{1}{4 \Delta y} \bigg( 4(\bm{W}_{i-\halfone, j} - 9\overline{\bm{W}}_{i,j} + \bm{W}_{i+\halfone, j}) + \bm{W}_{i-\halfone, j\pm\halfone} + \bm{W}_{i+\halfone, j\pm\halfone} \\
			& \quad + 8(\bm{W}_{i, j-\halfone} + 2\bm{W}_{i, j+\halfone}) \bigg), \\
			\left(\bm{D}^y\right)_{i, j+\halfone}^{-} &= -\frac{1}{4 \Delta y} \bigg( 4(\bm{W}_{i-\halfone, j+1} - 9\overline{\bm{W}}_{i,j+1} + \bm{W}_{i+\halfone, j+1}) + \bm{W}_{i\pm\halfone, j+\halfone} \\
			& \quad + \bm{W}_{i\pm\halfone, j+\frac{3}{2}} + 8(2\bm{W}_{i,j+\halfone} + \bm{W}_{i,j+\frac{3}{2}}) \bigg), \\
			\left(\bm{D}^x\right)_{i, j+\halfone} &= \frac{\bm{W}_{i+\halfone, j+\halfone} - \bm{W}_{i-\halfone, j+\halfone}}{\Delta x}.
		\end{aligned}
	\end{equation*}
	For the vertex point value $\bm{W}_{i+\halfone, j+\halfone}$:
	\begin{equation}\label{eq:pointevolve3}
		\frac{\dd}{\dd t} \bm{W}_{i+\halfone, j+\halfone} = -\left(\bm{J}^x \bm{D}^x\right)_{i+\halfone, j+\halfone}^{\mathrm{upw}} - \left(\bm{J}^y \bm{D}^y\right)_{i+\halfone,j+\halfone}^{\mathrm{upw}},
	\end{equation}
	with
	\begin{equation*}
		\left(\bm{J}^x\right)_{i+\halfone, j+\halfone}^{\pm} = \frac{\bm{J}_{i+\halfone, j+\halfone}^x \pm \alpha_1^\mathrm{LF} \bm{I}}{2}, \quad
		\left(\bm{J}^y\right)_{i+\halfone, j+\halfone}^{\pm} = \frac{\bm{J}_{i+\halfone, j+\halfone}^y \pm \alpha_2^\mathrm{LF} \bm{I}}{2},
	\end{equation*}
	\begin{equation*}
		\begin{aligned}
			\left(\bm{D}^x\right)_{i+\halfone, j+\halfone}^{+} &= \frac{\bm{W}_{i-\halfone, j+\halfone} - 4\bm{W}_{i, j+\halfone} + 3\bm{W}_{i+\halfone, j+\halfone}}{\Delta x}, \\
			\left(\bm{D}^x\right)_{i+\halfone, j+\halfone}^{-} &= \frac{4\bm{W}_{i+1, j+\halfone} - 3\bm{W}_{i+\halfone, j+\halfone} - \bm{W}_{i+\frac{3}{2}, j+\halfone}}{\Delta x}, \\
			\left(\bm{D}^y\right)_{i+\halfone, j+\halfone}^{+} &= \frac{\bm{W}_{i+\halfone, j-\halfone} - 4\bm{W}_{i+\halfone, j} + 3\bm{W}_{i+\halfone, j+\halfone}}{\Delta y}, \\
			\left(\bm{D}^y\right)_{i+\halfone, j+\halfone}^{-} &= \frac{4\bm{W}_{i+\halfone, j+1} - 3\bm{W}_{i+\halfone, j+\halfone} - \bm{W}_{i+\halfone, j+\frac{3}{2}}}{\Delta y}.
		\end{aligned}
	\end{equation*}
	Here, $\bm{I}$ denotes the identity matrix and $\alpha_i^\mathrm{LF}$ is a numerical viscosity parameter, which will be specified in the next subsection.
	
	Note that the quantity $\overline{\bm{W}}_{i,j}^n$ is not directly available, and the cell-centered value from \eqref{cell-CenterU} may not lie within the admissible state set $\mathcal{G}$, causing the transformation $\bm{W}_{i,j}^{n,c} = {\bf \Psi}(\bm{U}_{i,j}^{n,c})$ to fail.  
	To address both issues, we propose computing $\overline{\bm{W}}_{i,j}^n$ via
	\begin{equation}\label{avg:W}
		\begin{aligned}
			\hat{\bm{U}}_{i,j}^{n, c} &= \frac{36}{16} \overline{\bm{U}}_{i,j}^n - \frac{1}{16} (\hat{\bm{U}}_{i-\halfone, j-\halfone}^{n, RU} + \hat{\bm{U}}_{i+\halfone, j-\halfone}^{n, LU} + \hat{\bm{U}}_{i-\halfone, j+\halfone}^{n, RD} + \hat{\bm{U}}_{i+\halfone, j+\halfone}^{n, LD}) \\
			&\quad - \frac{4}{16} (\hat{\bm{U}}_{i-\halfone, j}^{n, R} + \hat{\bm{U}}_{i+\halfone, j}^{n, L} + \hat{\bm{U}}_{i, j-\halfone}^{n, U} + \hat{\bm{U}}_{i, j+\halfone}^{n, D}), \\
			\overline{\bm{W}}_{i,j}^n &= \frac{16}{36} \hat{\bm{W}}_{i,j}^{n, c} + \frac{1}{36} (\hat{\bm{W}}_{i-\halfone, j-\halfone}^{n, RU} + \hat{\bm{W}}_{i+\halfone, j-\halfone}^{n, LU} + \hat{\bm{W}}_{i-\halfone, j+\halfone}^{n, RD} + \hat{\bm{W}}_{i+\halfone, j+\halfone}^{n, LD}) \\
			&\quad + \frac{4}{36} (\hat{\bm{W}}_{i-\halfone, j}^{n,+} + \hat{\bm{W}}_{i+\halfone, j}^{n,-} + \hat{\bm{W}}_{i, j-\halfone}^{n,+} + \hat{\bm{W}}_{i, j+\halfone}^{n,-}).
		\end{aligned}
	\end{equation}
	The limiting values such as $\hat{\bm{U}}_{i+\halfone, j-\halfone}^{n, LU}$ and $\hat{\bm{U}}_{i-\halfone, j}^{n, R}$ are obtained after applying  
	the PP limiter (see KC-3) and the COE procedure (see KC-4), as detailed in \Cref{IDPCell,OE}, respectively.  
	The PP limiter ensures that $\hat{\bm{U}}_{i,j}^{n,c} \in \mathcal{G}$, so that $\hat{\bm{W}}_{i,j}^{n,c} = \Psi(\hat{\bm{U}}_{i,j}^{n,c})$ is well-defined.

	\begin{remark}
		Although the reconstructed polynomial ${\bm P}_{i,j}^n(x,y)$ is used conceptually in designing the scheme, the final scheme and its actual implementation does not involve it explicitly.  
		Only the point values and the cell-center value (computed via \eqref{avg:W}) are required.  
		Thus, the proposed PAMPA scheme for point-value evolution is indeed {\it polynomial-avoiding}.
	\end{remark}

	\subsection{Provably PP approach for cell-average evolution}\label{IDPCell}
	
	This section presents a provably PP scheme for updating the cell averages of $\bm{U}$ by properly discretizing the modified MHD equations \eqref{eq:MHDadd2DPowell} with the Godunov--Powell source term. 
	As shown in \cite{WuShu2018,WuShu2019}, when this source term is appropriately discretized, the PP property of a finite volume or discontinuous Galerkin scheme for \eqref{eq:MHDadd2DPowell} depends only on a {\it locally}---rather than globally---divergence-free condition, which can be more readily enforced in the PAMPA framework.
	Due to the continuity of point values across cell interfaces, 
	classical PAMPA schemes for cell averages often employ single-state fluxes. However, \cite{abgrall2024novel} demonstrates that such fluxes are generally infeasible to ensure positivity of the cell averages. 
	This motivates the use of {\it two-state} numerical fluxes as a necessary ingredient for achieving a provably PP scheme. 
	
	In our structure-preserving PAMPA scheme for ideal MHD, the cell-average update takes the form:
	\begin{equation}\label{eq:avgGL}
		\begin{aligned}
			\frac{\dd}{\dd t} \overline{\bm{U}}_{i,j}(t) = {\bm L}_{0,i,j}(\hat{{\bm U}}) = & -\frac{1}{\Delta x} \left( 	 \overline{{\bm F}}_{1,i+\frac12,j} - \overline{{\bm F}}_{1,i-\frac12,j} \right) 
			-\frac{1}{\Delta y}  \left( \overline{{\bm F}}_{2,i,j+\frac12} - 	 \overline{{\bm F}}_{2,i,j-\frac12} \right)
			- \bm{S}_{i,j}(\hat{\bm{U}}),
		\end{aligned}
	\end{equation}
	with 
	\begin{equation}\label{eq:numflux}
		\begin{aligned}
			\overline{{\bm F}}_{1,i+\frac12,j}  &:= \frac{1}{6} \hat{\bm{F}}_1( \hat{\bm{U}}_{i+\halfone,j+\halfone}^{L,U}, \hat{\bm{U}}_{i+\halfone,j+\halfone}^{R,U}) + \frac{4}{6} \hat{\bm{F}}_1( \hat{\bm{U}}_{i+\halfone,j}^{L}, \hat{\bm{U}}_{i+\halfone,j}^{R}) + \frac{1}{6} \hat{\bm{F}}_1( \hat{\bm{U}}_{i+\halfone,j-\halfone}^{L,D}, \hat{\bm{U}}_{i+\halfone,j-\halfone}^{R,D}), \\
			\overline{{\bm F}}_{2,i,j+\frac12}  &:= \frac{1}{6} \hat{\bm{F}}_2( \hat{\bm{U}}_{i-\halfone,j+\halfone}^{R,D}, \hat{\bm{U}}_{i-\halfone,j+\halfone}^{R,U}) + \frac{4}{6} \hat{\bm{F}}_2( \hat{\bm{U}}_{i,j+\halfone}^{D}, \hat{\bm{U}}_{i,j+\halfone}^{U}) + \frac{1}{6} \hat{\bm{F}}_2( \hat{\bm{U}}_{i+\halfone,j+\halfone}^{L,D}, \hat{\bm{U}}_{i+\halfone,j+\halfone}^{L,U}).			
		\end{aligned}
	\end{equation}
	Here, $\hat{\bm{F}}_\ell(\cdot,\cdot)$ denotes a suitable two-state numerical flux, and $\bm{S}_{i,j}$ is the discretized Godunov--Powell source term, to be specified later.
	
	The limiting values (e.g., $\hat{\bm{U}}_{i+\halfone,j+\halfone}^{L,U}$ or $\hat{\bm{U}}_{i+\halfone,j}^{L}$) are obtained after applying the DDF projection, the PP limiter, and the COE procedure. Due to their local scaling nature, these operations may break continuity of the point values across cell interfaces. As a result, each edge-center point value splits into two one-sided limits (indicated by superscripts $L/R$ or $U/D$), and each vertex yields four limiting values, labeled $RU$, $LU$, $RD$, and $LD$ to denote their relative positions (Right-Up, Left-Up, Right-Down, Left-Down). 
	See \Cref{fig:grid_dofs} for an illustration.
	\begin{figure}[ht]
		\centering
		\includegraphics{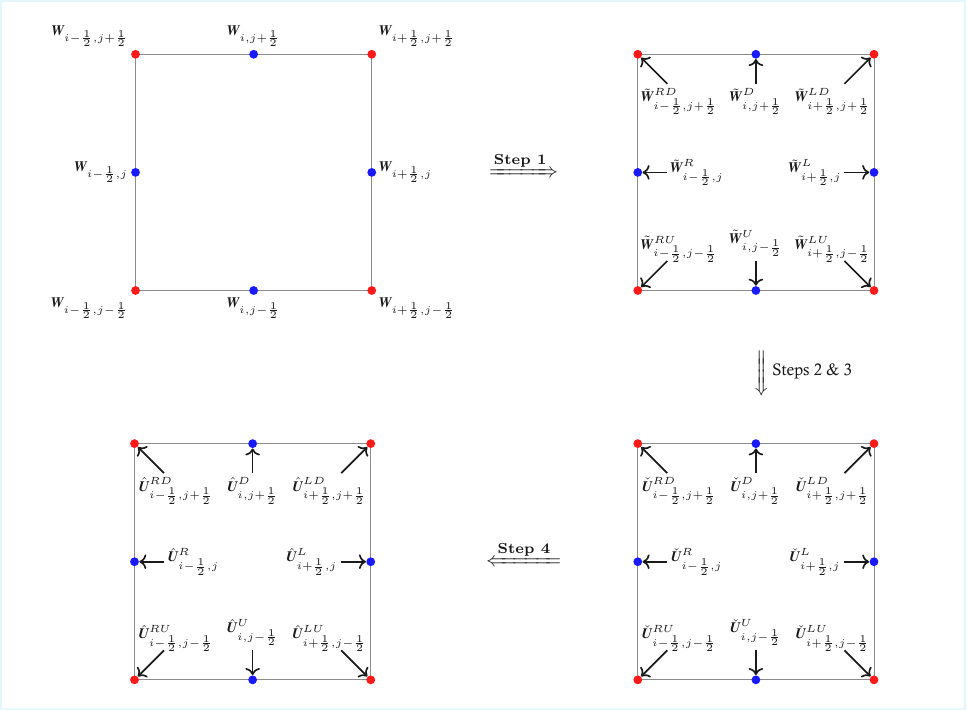}
		\caption{Illustration of the limiting values produced after the DDF projection ({\bf Step 1}), the PP limiter ({\bf Steps 2 and 3}), and the COE procedure ({\bf Step 4}).}
		\label{fig:grid_dofs}
	\end{figure}
	Given the nine degrees of freedom (DoFs) at time $t_n$:
	\[
	\overline{\bm{U}}_{i,j} \in \mathcal{G}, \quad 
	{\bm W}_{i\pm \frac{1}{2},j\pm \frac{1}{2}}, \quad {\bm W}_{i,j\pm \frac{1}{2}}, \quad {\bm W}_{i\pm\frac{1}{2},j},
	\]
	we now describe the four steps as follow to obtain the limiting values used in \eqref{eq:numflux} to ensure the provably PP property of the updated cell averages.  
	For notational convenience, we omit the time-level superscript $n$ from here on.
	
	\subsubsection{Step1: DDF projection} 
	For each cell $I_{i,j}$, we modify the normal magnetic field components at cell interfaces to satisfy the discrete divergence-free (DDF) condition \eqref{key111}, as follows:
	\begin{align*}
		(\tilde B_1)^{L/R,U}_{i \pm \frac{1}{2}, j - \frac{1}{2}} &= (B_1)_{i \pm \frac{1}{2}, j - \frac{1}{2}} \mp A_1, \quad
		(\tilde B_1)^{L/R}_{i \pm \frac{1}{2}, j} = (B_1)_{i \pm \frac{1}{2}, j} \mp A_1, \\
		(\tilde B_1)^{L/R,D}_{i \pm \frac{1}{2}, j + \frac{1}{2}} &= (B_1)_{i \pm \frac{1}{2}, j + \frac{1}{2}} \mp A_1, \\
		(\tilde B_2)^{L,D/U}_{i - \frac{1}{2}, j \pm \frac{1}{2}} &= (B_2)_{i - \frac{1}{2}, j \pm \frac{1}{2}} \mp A_2, \quad
		(\tilde B_2)^{D/U}_{i , j\pm \frac{1}{2}} = (B_2)_{i , j\pm \frac{1}{2}} \mp A_2, \\
		(\tilde B_2)^{R, D/U}_{i + \frac{1}{2}, j \pm \frac{1}{2}} &= (B_2)_{i + \frac{1}{2}, j \pm \frac{1}{2}} \mp A_2,
	\end{align*}
	where
	\[
	A_1 = \frac{\Delta x \nabla_{i,j} \cdot \bm{B}}{2(1 + (\Delta x / \Delta y)^2)}, \qquad
	A_2 = \frac{\Delta y \nabla_{i,j} \cdot \bm{B}}{2(1 + (\Delta y / \Delta x)^2)},
	\]
	and the discrete divergence of $\bm{B}$ is given by
	\begin{equation}\label{key11122}
		\nabla_{i,j} \cdot \bm{B} = \frac{\overline{\mathcal{B}}^{x}_{i+\frac{1}{2},j} - \overline{\mathcal{B}}^{x}_{i-\frac{1}{2},j}}{\Delta x} + \frac{\overline{\mathcal{B}}^{y}_{i,j+\frac{1}{2}} - \overline{\mathcal{B}}^{y}_{i,j-\frac{1}{2}}}{\Delta y},
	\end{equation}
	with
	\begin{align*}
		\overline{\mathcal{B}}^{x}_{i\pm \frac{1}{2},j} = \frac{1}{6}(B_1)_{i \pm \frac{1}{2}, j - \frac{1}{2}} + \frac{4}{6}(B_1)_{i \pm \frac{1}{2}, j} + \frac{1}{6}(B_1)_{i \pm \frac{1}{2}, j + \frac{1}{2}}, 
	\end{align*}
	\begin{align*}
		\overline{\mathcal{B}}^{y}_{i,j\pm \frac{1}{2}} = \frac{1}{6}(B_2)_{i - \frac{1}{2}, j \pm \frac{1}{2}} + \frac{4}{6}(B_2)_{i, j \pm \frac{1}{2}} + \frac{1}{6}(B_2)_{i + \frac{1}{2}, j \pm \frac{1}{2}}.
	\end{align*}
	It can be verified that the modified magnetic field values $\tilde{\bm{B}}$ satisfy the DDF condition $\nabla_{i,j} \cdot \tilde{\bm{B}} = 0$. 
	The point values after this projection are denoted as $\tilde{{\bm{W}}}$, which yield conservative variables $\tilde{\bm{U}} = \bm{\Psi}^{-1}(\tilde{\bm{W}}) \in \mathcal{G}$ by \eqref{eq:WUG}.

	\subsubsection{Step 2: PP limiter for cell-center point value}

	For each cell \( I_{i,j} \), we first compute the value of the conservative state at the cell center: 
	\begin{equation}\label{eq:wkl331}
		\begin{aligned}
			\tilde{\bm{U}}_{i,j}^{c} &= \frac{36}{16} \overline {\bm{U}}_{i,j} - \frac{1}{16} (\tilde{\bm{U}}_{i-\halfone, j-\halfone}^{R,U} + \tilde{\bm{U}}_{i+\halfone, j-\halfone}^{L,U} + \tilde{\bm{U}}_{i-\halfone, j+\halfone}^{R,D} + \tilde{\bm{U}}_{i+\halfone, j+\halfone}^{L,D}) \\
			&\quad - \frac{4}{16} (\tilde{\bm{U}}_{i-\halfone, j}^{R} + \tilde{\bm{U}}_{i+\halfone, j}^{L} + \tilde{\bm{U}}_{i, j-\halfone}^{U} + \tilde{\bm{U}}_{i, j+\halfone}^{D}). \\
		\end{aligned}
	\end{equation}
	Although all terms on the right-hand side of \eqref{eq:wkl331} (cell average and point values) belong to $\mathcal{G}$, the resulting cell-center value $\tilde{\bm{U}}_{i,j}^{c}$ may not stay in  $\mathcal G$. 
	To ensure admissibility, we adapt the PP limiter proposed by Zhang and Shu \cite{ZHANG20108918} to our case. Specifically, we modify $\tilde{\bm{U}}_{i,j}^{c}$ to 
	\begin{equation}\label{eq:PP_Uc}
		\check{\bm{U}}_{i,j}^{c} = \theta_2 \left( \tilde{\tilde{{\bm U}}}^c_{i,j} - \overline{\bm{U}}_{i,j} \right) + \overline{\bm{U}}_{i,j}, \qquad  \theta_2 = \min \left\{ 1, \frac{\mathcal{E}(\overline{\bm{U}}_{i,j}) - \epsilon_2}{\mathcal{E}(\overline{\bm{U}}_{i,j}) -  \mathcal{E}(\tilde{\tilde{\bm{U}}}^c_{i,j})} \right\},
	\end{equation}
	where $\epsilon_2 = \min \left\{ 10^{-13},\, \mathcal{E}(\overline{\bm{U}}_{i,j}) \right\}$ and the intermediate state $\tilde{\tilde{{\bm U}}}^c_{i,j}$ is defined as 
	\begin{equation*}
		\tilde{\tilde{{\bm U}}}^c_{i,j} = \theta_1 \left( \tilde{\bm{U}}_{i,j}^{c} - \overline{\bm{U}}_{i,j} \right) + \overline{\bm{U}}_{i,j}, \qquad
		\theta_1 = \min \left\{ 1, \frac{\overline{\rho}_{i,j} - \epsilon_1}{\overline{\rho}_{i,j} -  \tilde{\rho}^c_{i,j}} \right\}, \qquad
		\epsilon_1 = \min \left\{ 10^{-13},\, \overline{\rho}_{i,j} \right\},
	\end{equation*}
	It can be verified that the limited value satisfies $\check{\rho}_{i,j}^c > 0$ and $\mathcal{E}(\check{\bm{U}}_{i,j}^c) > 0$, ensuring $\check{\bm{U}}_{i,j}^c \in \mathcal{G}$.

	\subsubsection{Step 3: Convex decomposition of cell average} 
	
	To establish the provably PP property for the updated cell averages, we require a convex decomposition \cite{Cui2024On} that exactly expresses $\overline{\bm{U}}_{i,j}$ as a convex combination of point values in $I_{i,j}$. While this holds for the original cell-center value (see \eqref{eq:CAD}), the modified value $\check{\bm{U}}_{i,j}^{c}$ no longer matches $\overline{\bm{U}}_{i,j}$ and the surrounding point values: 
	\begin{equation*}
		\tilde{\mathbb{U}}_{i,j} = \left\{ \tilde{\bm{U}}^{L/R, U}_{i \pm \halfone, j-\halfone},\ \tilde{\bm{U}}^{ L/R }_{i \pm \halfone,j},\ \tilde{\bm{U}}^{L/R, D}_{i \pm \halfone, j + \halfone},\  \tilde{\bm{U}}^{ D/U }_{i, j \pm \halfone}  \right\}
	\end{equation*}
	To maintain the convex decomposition, we modify each point value $\tilde{\bm{U}}$ in $\tilde{\mathbb{U}}_{i,j}$ as
	\[
	\check{\bm{U}} = \theta_2 \theta_1 \left( \tilde{\bm{U}} - \overline{\bm{U}}_{i,j} \right) + \overline{\bm{U}}_{i,j},
	\]
	which ensures $\check{\bm{U}} \in \mathcal{G}$ by convexity. The resulting convex decomposition becomes
	\begin{equation}\label{eq:CAD2}
		\begin{aligned}
			\overline{\bm{U}}_{i,j} &= \frac{16}{36} \check {\bm{U}}_{i,j}^{c} 
			+ \frac{1}{36} \left( \check{\bm{U}}_{i-\frac{1}{2}, j-\frac{1}{2}}^{R,U} + \check{\bm{U}}_{i+\frac{1}{2}, j-\frac{1}{2}}^{L.U} + \check{\bm{U}}_{i-\frac{1}{2}, j+\frac{1}{2}}^{R,D} + \check{\bm{U}}_{i+\frac{1}{2}, j+\frac{1}{2}}^{L,D} \right) \\
			&\quad + \frac{4}{36} \left( \check{\bm{U}}_{i-\frac{1}{2}, j}^{R} + \check{\bm{U}}_{i+\frac{1}{2}, j}^{L} + \check{\bm{U}}_{i, j-\frac{1}{2}}^{U} + \check{\bm{U}}_{i, j+\frac{1}{2}}^{D} \right),
		\end{aligned}
	\end{equation}
	It is worth emphasizing that the modified values continue to satisfy the DDF condition: $\nabla_{i,j} \cdot \check{\bm{B}} = 0$.

	\subsubsection{Step 4: COE procedure}
	
	The above modified values are already sufficient to ensure a provably PP update of the cell averages, given appropriate numerical fluxes and a discretization of the Godunov--Powell source term. However, the PAMPA scheme may still produce spurious oscillations near strong shocks. To mitigate this, we introduce a novel COE procedure that suppresses such oscillations, coupled with a simple yet effective shock indicator to enhance both resolution and efficiency.

	In this step, a shock indicator (see \Cref{trouble-cell}) is first used to detect troubled cells that may contain physical discontinuities.
	For each troubled cell $I_{i,j}$, define
	$$
	\check{\mathbb{U}}_{i,j} = \left\{ \check{\bm{U}}^{L/R, U}_{i \pm \halfone, j-\halfone},\ \check{\bm{U}}^{ L/R }_{i \pm \halfone,j},\ \check{\bm{U}}^{L/R, D}_{i \pm \halfone, j + \halfone},\  \check{\bm{U}}^{ D/U }_{i, j \pm \halfone}  \right\}, 
	$$
	and apply the COE procedure to each $\check{\bm{U}} \in \check{\mathbb{U}}_{i,j} \cup {\check{\bm{U}}^c_{i,j}}$:
	$$
	\hat{\bm{U}} =  \theta^{OE}_{i,j} (\check{\bm{U}} - \overline{\bm{U}}_{i,j}) + \overline{\bm{U}}_{i,j},
	$$
	where the computation of $\theta^{\mathrm{OE}}_{i,j}$ is detailed in \Cref{OE}.
	
	Since $\theta^{\mathrm{OE}}_{i,j} \in [0, 1]$, the COE procedure preserves the DDF condition, the positivity of the point values, and the convex decomposition. Consequently, the modified values satisfy
	\begin{equation}\label{eq:PPcon}
		\hat{\bm{U}}^c_{i,j} \in \mathcal{G} \quad \text{and} \quad
		\hat{\bm{U}} \in \mathcal{G} \quad \forall \hat{\bm{U}} \in \hat{\mathbb{U}}_{i,j},
	\end{equation}
	\begin{equation}\label{eq:DDF3}
		\nabla_{i,j} \cdot \hat{\bm{B}} = 0,
	\end{equation}
	\begin{equation}\label{eq:CAD3}
		\begin{aligned}
			\overline{\bm{U}}_{i,j} &= \frac{16}{36} \hat {\bm{U}}_{i,j}^{c} 
			+ \frac{1}{36} \left( \hat {\bm{U}}_{i-\frac{1}{2}, j-\frac{1}{2}}^{R,U} + \hat {\bm{U}}_{i+\frac{1}{2}, j-\frac{1}{2}}^{L.U} + \hat{\bm{U}}_{i-\frac{1}{2}, j+\frac{1}{2}}^{R,D} + \hat{\bm{U}}_{i+\frac{1}{2}, j+\frac{1}{2}}^{L,D} \right) \\
			&\quad + \frac{4}{36} \left( \hat{\bm{U}}_{i-\frac{1}{2}, j}^{R} + \hat{\bm{U}}_{i+\frac{1}{2}, j}^{L} + \hat{\bm{U}}_{i, j-\frac{1}{2}}^{U} + \hat{\bm{U}}_{i, j+\frac{1}{2}}^{D} \right). 
		\end{aligned}
	\end{equation}

	With the above four steps, the limiting values $\hat{\bm{U}}$ in \eqref{eq:numflux} are determined.

	\subsubsection{PP numerical flux}
	
	We now describe the numerical fluxes in \eqref{eq:avgGL}. 
	To construct a provably positivity-preserving (PP) scheme for the ideal MHD equations, one may use either PP Lax–Friedrichs (LF) or PP HLL fluxes \cite{WuShu2018,WuShu2019}. For simplicity, we adopt the PP LF flux in \eqref{eq:avgGL}:
	\begin{equation}\label{LFflux}
		\hat{\bm{F}_{\ell}}(\bm{U}^{-}, \bm{U}^{+})=\halfone\left(\bm{F}_{\ell}(\bm{U}^{-})+\bm{F}_{\ell}(\bm{U}^{+})-\alpha_{\ell}^{\mathrm{LF}}(\bm{U}^{+}-\bm{U}^{-})\right), \quad \ell=1,2,
	\end{equation}
	where $\alpha_{\ell}^{\mathrm{LF}}$ are the numerical viscosity parameters. To guarantee the PP property, following theoretical estimates in \cite{WuShu2018,DingWu2024SISCMHD}, we take
	\begin{equation*}
		\alpha_1^{\mathrm{LF}} = \hat{\alpha}_1^{\mathrm{LF}}+\beta_1,  \quad \alpha_2^{\mathrm{LF}} = \hat{\alpha}_2^{\mathrm{LF}}+\beta_2
	\end{equation*}
	with
	\begin{equation*}
		\begin{aligned}
			&\hat{\alpha}_1^{\mathrm{LF}} =\max  \left\{\alpha^a_{1}, \alpha^b_{1}, \alpha^c_{1}\right\},\qquad \hat{\alpha}_2^{\mathrm{LF}} =\max  \left\{\alpha^a_{2}, \alpha^b_{2}, \alpha^c_{2}\right\}, \\
			&\alpha^a_{1} = \max_{i, j}  \left\{ \alpha_1(\bm{U}_{i+\halfone, j+\halfone}^{L,D}, \bm{U}_{i-\halfone, j+\halfone}^{R,D}), \alpha_1(\bm{U}_{i+\halfone, j+\halfone}^{R,D}, \bm{U}_{i-\halfone, j+\halfone}^{L,D}) \right\}, \\
			&\alpha^b_{1} = \max_{i, j}  \left\{ \alpha_1(\bm{U}_{i+\halfone, j}^{L}, \bm{U}_{i-\halfone, j}^{R}), \alpha_1(\bm{U}_{i+\halfone, j}^{R}, \bm{U}_{i-\halfone, j}^{L}) \right\}, \\
			&\alpha^c_{1} = \max_{i, j}  \left\{ \alpha_1(\bm{U}_{i+\halfone, j-\halfone}^{L,U}, \bm{U}_{i-\halfone, j-\halfone}^{R,U}), \alpha_1(\bm{U}_{i+\halfone, j-\halfone}^{R,U}, \bm{U}_{i-\halfone, j-\halfone}^{L,U}) \right\}, 
		\end{aligned}
	\end{equation*}
	\begin{equation*}
		\begin{aligned}
			&\alpha^a_{2} = \max_{i, j}  \left\{ \alpha_1(\bm{U}_{i+\halfone, j+\halfone}^{L,D}, \bm{U}_{i+\halfone, j-\halfone}^{L,U}), \alpha_1(\bm{U}_{i+\halfone, j+\halfone}^{L,U}, \bm{U}_{i+\halfone, j-\halfone}^{L,D}) \right\}, \\
			&\alpha^b_{2} = \max_{i, j}  \left\{ \alpha_1(\bm{U}_{i, j+\halfone}^{D}, \bm{U}_{i, j-\halfone}^{U}), \alpha_1(\bm{U}_{i, j+\halfone}^{U}, \bm{U}_{i, j-\halfone}^{D}) \right\}, \\
			&\alpha^c_{2} = \max_{i, j}  \left\{ \alpha_1(\bm{U}_{i-\halfone, j+\halfone}^{R,D}, \bm{U}_{i-\halfone, j-\halfone}^{R,U}), \alpha_1(\bm{U}_{i-\halfone, j+\halfone}^{R,U}, \bm{U}_{i-\halfone, j-\halfone}^{R,D}) \right\}, \\				
			& \beta_1=\max _{i, j}\left\{\frac{\left|[\![B_1]\!]_{i+\halfone, j+\halfone}^D\right|}{2 \sqrt{\average{\rho}_{i+\halfone, j+\halfone}^D}},\frac{\left|[\![B_1]\!]_{i+\halfone, j}\right|}{2 \sqrt{\average{\rho}_{i+\halfone, j}}},\frac{\left|[\![B_1]\!]_{i+\halfone, j-\halfone}^U\right|}{2 \sqrt{\average{\rho}^U_{i+\halfone, j-\halfone}}}\right\}, \\
			&\beta_2=\max _{i, j}\left\{\frac{\left|[\![B_2]\!]_{i+\halfone, j+\halfone}^L \right|}{2 \sqrt{\average{\rho}_{i+\halfone, j+\halfone}^L}}, \frac{\left|[\![B_2]\!]_{i, j+\halfone}\right|}{2 \sqrt{\average{\rho}_{i, j+\halfone}}}, \frac{\left|[\![B_2]\!]_{i-\halfone, j+\halfone}^R\right|}{2 \sqrt{\average{\rho}_{i-\halfone. j+\halfone}^R}}\right\}.
		\end{aligned}
	\end{equation*}
	The local wave speed estimate $\alpha_\ell(\bm{U}, \tilde{\bm{U}})$ is given by
	\begin{equation*}
		\alpha_\ell(\bm{U}, \tilde{\bm{U}})=\max \left\{\left|v_\ell\right|+\mathcal{C}_\ell,\left|\tilde{v}_\ell\right|+\tilde{\mathcal{C}}_\ell, \left|\frac{\sqrt{\rho} v_\ell+\sqrt{\tilde{\rho}} \tilde{v}_\ell}{\sqrt{\rho}+\sqrt{\tilde{\rho}}}\right|+\max \left\{\mathcal{C}_\ell, \tilde{\mathcal{C}}_\ell\right\}\right\}+\frac{|\bm{B}-\tilde{\bm{B}}|}{\sqrt{\rho}+\sqrt{\tilde{\rho}}},
	\end{equation*}
	\begin{equation*}
		\mathcal{C}_\ell=\frac{1}{\sqrt{2}}\left[\mathcal{C}_s^2+\frac{|\bm{B}|^2}{\rho}+\sqrt{\left(\frac{\mathcal{C}_s^2+|\bm{B}|^2}{\rho}\right)^2-
			\frac{4 \mathcal{C}_s^2 B_\ell^2}{\rho}}\right]^{\halfone}, \quad \mathcal{C}_s^2=\frac{p(\gamma-1)}{2\rho},
	\end{equation*}
	and $[\![\cdot]\!]$ denotes the jump of the limiting values across the cell interface
	\begin{equation*}
		\begin{aligned}
			&[\![B_1]\!]_{i+\halfone, j} :=(B_1)_{i+\halfone, j}^{R}-(B_1)_{i+\halfone, j}^{L}, \quad
			[\![B_2]\!]_{i, j+\halfone} :=(B_2)_{i, j+\halfone}^{U}-(B_2)_{i, j+\halfone}^{D}, \\
			&[\![B_1]\!]_{i+\halfone, j+\halfone}^D :=(B_1)_{i+\halfone, j+\halfone}^{R,D}-(B_1)_{i+\halfone, j+\halfone}^{L,D}, \quad
			[\![B_2]\!]_{i+\halfone, j+\halfone}^L :=(B_2)_{i+\halfone, j+\halfone}^{L,U}-(B_2)_{i+\halfone, j+\halfone}^{L,D}, \\
			&[\![B_1]\!]_{i+\halfone, j-\halfone}^U :=(B_1)_{i+\halfone, j-\halfone}^{R,U}-(B_1)_{i+\halfone, j-\halfone}^{L,U}, \quad
			[\![B_2]\!]_{i-\halfone, j+\halfone}^R :=(B_2)_{i-\halfone, j+\halfone}^{R,U}-(B_2)_{i-\halfone, j+\halfone}^{R,D}.		
		\end{aligned}
	\end{equation*}
	The average of limiting values is denoted by $\average{\cdot}$, e.g., 
	\begin{equation*}
		\average{\bm{U}}_{i+\halfone, j} := \frac{1}{2} \left(\bm{U}_{i+\halfone, j}^{R} + \bm{U}_{i+\halfone, j}^{L} \right), \quad
		\average{\bm{U}}_{i, j+\halfone} := \frac{1}{2} \left(\bm{U}_{i, j+\halfone}^{U} + \bm{U}_{i, j+\halfone}^{D} \right).
	\end{equation*}

	\subsubsection{Discretization of the Godunov--Powell source term}
	We now introduce the discretization of the Godunov--Powell source term in \eqref{eq:avgGL}. 
	Although the magnetic field satisfies a locally divergence-free (DF) condition after the DDF projection, divergence errors (i.e., jumps in the normal magnetic field component) may still exist across cell interfaces. The impact of these errors on the PP property can be controlled by incorporating the Godunov--Powell source term \cite{WuShu2018,WuShu2019}.
	
	Discretizing the Godunov--Powell source term requires careful treatment to exactly cancel the residual divergence effects on positivity preservation while preserving high-order accuracy. In this work, we adopt the discretization proposed in~\cite{DingWu2024SISCMHD}, which takes the form
	\begin{equation}\label{Dsource}
		\bm{S}_{i,j}(\hat{\bm{U}})=  \frac{1}{2\Delta x} \left(\mathcal{S}_{i+\halfone,j}+\mathcal{S}_{i-\halfone,j} \right) +\frac{1}{2\Delta y} \left(\mathcal{S}_{i,j+\halfone}+\mathcal{S}_{i,j-\halfone}\right), 
	\end{equation}
	where  
	\begin{equation*}
		\begin{aligned}
			\mathcal{S}_{i+\halfone,j} & :=  \frac{1}{6} [\![ \hat B_1]\!]_{i+\halfone, j+\halfone}^D \bm{S}\left(\average{\hat{\bm{U}}}_{i+\halfone, j+\halfone}^D\right) + \frac{4}{6} [\![\hat B_1]\!]_{i+\halfone, j} \bm{S}\left(\average{\hat{\bm{U}}}_{i+\halfone, j} \right) +\frac{1}{6} [\![ \hat B_1]\!]_{i+\halfone, j-\halfone}^U \bm{S}\left(\average{\hat{\bm{U}}}_{i+\halfone, j-\halfone}^U\right),\\
			\mathcal{S}_{i,j+\halfone} & :=  \frac{1}{6} [\![ \hat B_2]\!]_{i+\halfone, j+\halfone}^L \bm{S}\left(\average{\hat{\bm{U}}}_{i+\halfone, j+\halfone}^L\right) + \frac{4}{6} [\![\hat B_2]\!]_{i, j+\halfone} \bm{S}\left(\average{\hat{\bm{U}}}_{i, j+\halfone} \right) +\frac{1}{6} [\![\hat B_2]\!]_{i-\halfone, j+\halfone}^R \bm{S}\left(\average{\hat{\bm{U}}}_{i-\halfone, j+\halfone}^R \right).			
		\end{aligned}
	\end{equation*}

	\subsubsection{Proof of PP property for updated cell averages}

If the forward Euler method is used for the time discretization of the ODE system~\eqref{eq:avgGL}, then the cell averages are updated by
	\begin{equation}\label{EulerForwardCellAvg}
		\overline{{\bm U}}^{n+1}_{i,j} = \overline{{\bm U}}^{n}_{i,j} + \Delta t_n {\bm L}_{0,i,j}(\hat{{\bm U}}^n).
	\end{equation}
To achieve high-order temporal accuracy, one may adopt SSP Runge--Kutta methods; see \Cref{sec:time} for details. If the forward Euler scheme preserves positivity, then the SSP Runge--Kutta method, being a convex combination of forward Euler stages, also inherits the PP property.

The PP analysis of \eqref{EulerForwardCellAvg} is nontrivial because of the strong nonlinearity of the MHD system and the coupling of point values through the DDF condition. Fortunately, the geometric quasi-linearization (GQL) approach~\cite{Wu2018Positivity,Wu2023Geometric} reformulates the nonlinear admissible set \( \mathcal{G} \) into an equivalent linear form:
	\begin{equation}\label{GQL}
		\mathcal{G}_*=\left\{\bm{U}=(\rho, \boldsymbol{m}, \bm{B}, E)^{\top} \in \mathbb{R}^8: \bm{U} \cdot \bm{n}_1 >0, \bm{U} \cdot \bm{n}_*+\frac{\left|\bm{B}_*\right|^2}{2}>0, \quad \forall \boldsymbol{v}_*, \bm{B}_* \in \mathbb{R}^3\right\},
	\end{equation}
where $\bm{n}_1 := (1,0,0,0,0,0,0,0)^\top$ and $\bm{n}_* := \big(\tfrac{|\boldsymbol{v}_*|^2}{2}, -\boldsymbol{v}_*, -\bm{B}_*, 1\big)^{\top}$.
This converts the nonlinear constraint \( \mathcal{E}(\bm{U})>0 \) into a family of linear inequalities, thereby simplifying the analysis substantially.
With the properly designed numerical viscosity~\eqref{LFflux} and a suitable discretization of the Godunov--Powell source term~\eqref{Dsource}, the scheme~\eqref{EulerForwardCellAvg} can be rigorously proved PP by following the theoretical framework in \cite{Wu2018Positivity,WuShu2019,DingWu2024SISCMHD}.

Let $\overline{\mathcal U}$ and $\overline{\bm F}$ denote, respectively, the Simpson-rule edge averages of $\bm U$ and $\bm F$.  
For the edge $\{ x_{i+\frac{1}{2}} \} \times [y_{j-\frac{1}{2}}, y_{j+\frac{1}{2}}]$, set
	\begin{equation*}
		\begin{aligned}
			\mathcal{\overline{U}}_{i+\halfone,j}^{n,R} &:= \frac{1}{6} {\hat{\bm U}}^{n,RU}_{i+\halfone,j-\halfone} + \frac{4}{6} \hat{{\bm U}}^{n,R}_{i+\halfone,j} + \frac{1}{6} \hat{{\bm U}}^{n,RD}_{i+\halfone,j+\halfone}, \quad
			\mathcal{\overline{U}}_{i+\halfone,j}^{n,L} := \frac{1}{6} \hat{{\bm U}}^{n,LU}_{i+\halfone,j-\halfone} + \frac{4}{6} \hat{{\bm U}}^{n,L}_{i+\halfone,j} + \frac{1}{6} \hat{{\bm U}}^{n,LD}_{i+\halfone,j+\halfone}, \\
			\overline{\bm{F}}_{1,i+\frac{1}{2},j}^{n,R} & := \frac{1}{6} \bm{F}_1(\hat{\bm{U}}^{n,RU}_{i+\frac{1}{2},j-\frac{1}{2}}) + \frac{4}{6} \bm{F}_1(\hat{\bm{U}}^{n,R}_{i+\frac{1}{2},j}) + \frac{1}{6} \bm{F}_1(\hat{\bm{U}}^{n,RD}_{i+\frac{1}{2},j+\frac{1}{2}}), \\
			\overline{\bm{F}}_{1,i+\frac{1}{2},j}^{n,L} & := \frac{1}{6} \bm{F}_1(\hat{\bm{U}}^{n,LU}_{i+\frac{1}{2},j-\frac{1}{2}}) + \frac{4}{6} \bm{F}_1(\hat{\bm{U}}^{n,L}_{i+\frac{1}{2},j}) + \frac{1}{6} \bm{F}_1(\hat{\bm{U}}^{n,LD}_{i+\frac{1}{2},j+\frac{1}{2}}),		
		\end{aligned}	
	\end{equation*}
and similarly in the $y$-direction. Then \eqref{EulerForwardCellAvg} can be rewritten as
	\begin{equation}\label{cellDecom}
		\begin{aligned}
			\overline{{\bm U}}^{n+1}_{i,j} &= \overline{{\bm U}}^{n}_{i,j} 
			+ \Delta t_n \bm{S}_{i,j}(\hat{\bm{U}}^n) 
			+ \frac{1}{2} \kappa_1 \left( \mathcal{\overline{U}}_{i+\frac{1}{2},j}^{n,R} + \mathcal{\overline{U}}_{i-\frac{1}{2},j}^{n,L} - \mathcal{\overline{U}}_{i+\frac{1}{2},j}^{n,L} - \mathcal{\overline{U}}_{i-\frac{1}{2},j}^{n,R}  \right) \\ 
			&+ \frac{1}{2} \kappa_2 \left( \mathcal{\overline{U}}_{i,j+\frac{1}{2}}^{n,U} + \mathcal{\overline{U}}_{i,j-\frac{1}{2}}^{n,D} - \mathcal{\overline{U}}_{i,j+\frac{1}{2}}^{n,D} - \mathcal{\overline{U}}_{i,j-\frac{1}{2}}^{n,U}  \right) 
			+ \bm{N}_f,
		\end{aligned}
	\end{equation}
where $\kappa_1 := \frac{\alpha_1^{\mathrm{LF}} \Delta t_n}{\Delta x}$, $\kappa_2 := \frac{\alpha_2^{\mathrm{LF}} \Delta t_n}{\Delta y}$, and
	\begin{equation*}
		\bm{N}_f = -\frac{1}{2} \frac{\Delta t_n}{\Delta x} \left( \overline{\bm{F}}_{1,i+\frac{1}{2},j}^{n,L} - \overline{\bm{F}}_{1,i-\frac{1}{2},j}^{n,R} + \overline{\bm{F}}_{1,i+\frac{1}{2},j}^{n,R} - \overline{\bm{F}}_{1,i-\frac{1}{2},j}^{n,L} \right)  -\frac{1}{2} \frac{\Delta t_n}{\Delta y} \left( \overline{\bm{F}}_{2,i,j+\frac{1}{2}}^{n,D} - \overline{\bm{F}}_{2,i,j-\frac{1}{2}}^{n,U} + \overline{\bm{F}}_{2,i,j+\frac{1}{2}}^{n,U} - \overline{\bm{F}}_{2,i,j-\frac{1}{2}}^{n,D} \right).
	\end{equation*} 
	
Taking the dot product of \eqref{cellDecom} with $\bm n_1$, we aim to show $\overline{\bm U}^{\,n+1}_{i,j}\!\cdot\!\bm n_1>0$ by carefully estimating the right-hand side. First, note that the first component of $\bm{S}_{i,j}(\hat{\bm U}^{\,n})$ vanishes. Moreover, the standard estimate
	\begin{equation}\label{eq1}
		-\left( \bm{F}_{\ell} (\bm{U}) - \bm{F}_{\ell} (\widetilde{\bm{U}}) \right) \cdot \bm{n}_1 
		> -\alpha_\ell ({\bm U}, \widetilde{\bm{U}})\, ({\bm U} + \widetilde{\bm{U}}) \cdot \bm{n}_1, 
		\quad \ell = 1, 2,
	\end{equation}
holds for all $\bm{U}, \widetilde{\bm{U}} \in \mathcal{G}$.  
Combining \eqref{eq1} with the cell decomposition \eqref{eq:CAD3}, we obtain
	\begin{equation}\label{GQL1}
		\begin{aligned}
			\overline{\bm{U}}^{n+1}_{i,j} \cdot \bm{n}_1 >
			\left( \frac{1}{6} - \kappa \right) 
			\bigg[ \frac{\kappa_1}{\kappa} \left( \mathcal{\overline{U}}_{i-\halfone,j}^{n,R} + \mathcal{\overline{U}}_{i+\halfone,j}^{n,L} \right) \cdot \bm{n}_1 
			+ \frac{\kappa_2}{\kappa} \left( \mathcal{\overline{U}}_{i,j-\halfone}^{n,U} + \mathcal{\overline{U}}_{i,j+\halfone}^{n,D} \right) \cdot \bm{n}_1 \bigg] > 0,
		\end{aligned}
	\end{equation}
where $\kappa := \kappa_1 + \kappa_2$. The last inequality is ensured by the CFL condition~\eqref{eq:PPCFL}.

Next, we take the dot product of \eqref{cellDecom} with $\bm n_*$ and then add $\frac{|\bm B_*|^2}{2}$ to verify that
$\overline{\bm{U}}^{\,n+1}_{i,j} \cdot \bm{n}_* + \frac{|\bm{B}_*|^2}{2} > 0$.
For this estimate we use the two key inequalities \cite{Wu2018Positivity,WuShu2018}: 
		\begin{align} \label{eq:WU1}
			-\left( \bm{F}_{\ell} (\bm{U}) - \bm{F}_{\ell} (\widetilde{{\bm U}}) \right) \cdot \bm{n}_*  &\geq -\alpha_\ell({\bm U}, \widetilde{{\bm U}}) \left( ({\bm U}+ \widetilde{{\bm U}}) \cdot \bm{n}_* + \left| {\bm B}_* \right|^2 \right) - \left(B_\ell - \tilde{B}_\ell \right) \left( {\bm v}_* \cdot {\bm B}_* \right),  \\
			-\xi\left(\bm{S}(\bm{U}) \cdot \bm{n}_*\right) &\geq \xi\left(\bm{v}_* \cdot \bm{B}_*\right) - \frac{|\xi|}{\sqrt{\rho}} \left( \bm{U} \cdot \bm{n}_* + \frac{\left| \bm{B}_* \right|^2}{2} \right), \label{eq:WU2}
		\end{align}
which hold for all $\bm U, \widetilde{\bm U} \in \mathcal G$, $\xi \in \mathbb R$, and $\ell=1,2$.
From \eqref{eq:WU1}, the flux term $\bm N_f$ satisfies
	\begin{equation}\label{estimate:Nf}
		\begin{aligned}
			\bm{N}_f \cdot \bm{n}_* \geq 
			& -\frac{1}{2} \frac{\hat{\alpha}_1^{LF} \Delta t}{\Delta x}\left[\left(\mathcal{\overline{U}}_{i+\frac{1}{2}, j}^{n,R}+\mathcal{\overline{U}}_{i-\frac{1}{2}, j}^{n,L}+\mathcal{\overline{U}}_{i+\frac{1}{2}, j}^{n,L}+\mathcal{\overline{U}}_{i-\frac{1}{2}, j}^{n,R}\right) \cdot \bm{n}_*+2\left|\bm{B}_*\right|^2\right] \\
			& -\frac{1}{2} \frac{\hat{\alpha}_2^{LF} \Delta t}{\Delta x}\left[\left(\mathcal{\overline{U}}_{i, j+\frac{1}{2}}^{n,U}+\mathcal{\overline{U}}_{i, j-\frac{1}{2}}^{n,D}+\mathcal{\overline{U}}_{i, j+\frac{1}{2}}^{n,D}+\mathcal{\overline{U}}_{i, j-\frac{1}{2}}^{n,U}\right) \cdot \bm{n}_*+2\left|\bm{B}_*\right|^2\right] -\Delta t\left(\nabla_{i,j} \cdot \average{\hat{\bm{B}}^n} \right)\left(\bm{v}_* \cdot \bm{B}_*\right),
		\end{aligned}
	\end{equation}
	where 
	\begin{equation*}
		\begin{aligned}
			\nabla_{i,j} \cdot \average{\hat{\bm{B}}^n} &:= \frac{1}{6} ( \frac{\average{\hat{B}_1^n}_{i+\halfone,j+\halfone}^{D} - \average{\hat{B}_1^n}_{i-\halfone,j+\halfone}^{D}}{\Delta x}  +\frac{\average{\hat{B}_2^n}_{i+\halfone,j+\halfone}^{L} - \average{\hat{B}_2^n}_{i+\halfone,j-\halfone}^{L}}{\Delta y} ) \\
			&+ \frac{4}{6} ( \frac{\average{\hat{B}_1^n}_{i+\halfone,j} - \average{\hat{B}_1^n}_{i-\halfone,j}}{\Delta x} +\frac{\average{\hat{B}_2^n}_{i,j+\halfone} - \average{\hat{B}_2^n}_{i,j-\halfone}}{\Delta y} )\\
			&+ \frac{1}{6} (\frac{\average{\hat{B}_1^n}_{i+\halfone,j-\halfone}^U - \average{\hat{B}_1^n}_{i-\halfone,j-\halfone}^U}{\Delta x} +\frac{\average{\hat{B}_2^n}_{i-\halfone,j+\halfone}^R - \average{\hat{B}_2^n}_{i-\halfone,j-\halfone}^R}{\Delta y}).
		\end{aligned}
	\end{equation*}
Using \eqref{eq:WU2} for the source term $\bm{S}_{i,j}(\hat{\bm U}^{\,n})$ gives
	$$
	\begin{aligned}
		-[\![ \hat{B}_{1}^n ]\!]_{i+\frac{1}{2}, j} \bm{S}\left(\average{\hat{\bm{U}}^n}_{i+\frac{1}{2}, j}\right) \cdot \bm{n}_{*} &\geq [\![ \hat{B}_{1}^n ]\!]_{i+\frac{1}{2}, j}\left(\bm{v}_{*} \cdot \bm{B}_{*}\right) -\frac{\left|[\![ \hat{B}_{1}^n ]\!]_{i+\frac{1}{2}, j}\right|}{\sqrt{\average{\hat{\rho}^n}_{i+\frac{1}{2}, j}}}\left(\average{\hat{\bm{U}}^n}_{i+\frac{1}{2}, j} \cdot \bm{n}_{*}+\frac{\left| \bm{B}_* \right|^2}{2}\right) \\
		&\geq [\![ \hat{B}_{1}^n ]\!]_{i+\frac{1}{2}, j}\left(\bm{v}_{*} \cdot \bm{B}_{*}\right)-2 \beta_{1}\left(\average{\hat{\bm{U}}^n}_{i+\frac{1}{2}, j} \cdot \bm{n}_{*}+\frac{\left| \bm{B}_* \right|^2}{2}\right) .
	\end{aligned}
	$$
For the specially designed discretization of the Godunov--Powell source term \eqref{Dsource}, we obtain
	\begin{equation}\label{estimate:S}
		\begin{aligned}
			\Delta t_n \bm{S}_{i,j} (\hat{\bm{U}}^n) \cdot \bm{n}_* \geq 
			& -\frac{\beta_1\Delta t}{2\Delta x}\left[\left(\mathcal{\overline{U}}_{i+\frac{1}{2},j}^{n,R} + \mathcal{\overline{U}}_{i-\frac{1}{2},j}^{n,L} + \mathcal{\overline{U}}_{i+\frac{1}{2},j}^{n,L} + \mathcal{\overline{U}}_{i-\frac{1}{2},j}^{n,R}\right)\cdot\bm{n}_* + 2\left| \bm{B}_* \right|^2\right] \\
			& -\frac{\beta_2\Delta t}{2\Delta y}\left[\left(\mathcal{\overline{U}}_{i,j+\frac{1}{2}}^{n,U} + \mathcal{\overline{U}}_{i,j-\frac{1}{2}}^{n,D} + \mathcal{\overline{U}}_{i,j+\frac{1}{2}}^{n,D} + \mathcal{\overline{U}}_{i,j-\frac{1}{2}}^{n,U}\right)\cdot\bm{n}_* + 2\left| \bm{B}_* \right|^2\right]  +\frac{\Delta t}{2}[\![\hat{\bm{B}}^n]\!]_{i,j}(\bm{v}_*\cdot\bm{B}_*),
		\end{aligned}
	\end{equation}
	where 
	\begin{equation*}
		\begin{aligned}
			[\![\hat{\bm{B}}^n]\!]_{i,j} &:= \frac{1}{6} ( \frac{[\![\hat{B}_1^n]\!]_{i+\halfone,j+\halfone}^{D} + [\![\hat{B}_1^n]\!]_{i-\halfone,j+\halfone}^{D}}{\Delta x}  +\frac{[\![\hat{B}_2^n]\!]_{i+\halfone,j+\halfone}^{L} + [\![\hat{B}_2^n]\!]_{i+\halfone,j-\halfone}^{L}}{\Delta y} ) \\
			&+ \frac{4}{6} (\frac{[\![\hat{B}_1^n]\!]_{i+\halfone,j} + [\![\hat{B}_1^n]\!]_{i-\halfone,j}}{\Delta x} +\frac{[\![\hat{B}_2^n]\!]_{i,j+\halfone} + [\![\hat{B}_2^n]\!]_{i,j-\halfone}}{\Delta y})\\
			&+ \frac{1}{6} (\frac{[\![\hat{B}_1^n]\!]_{i+\halfone,j-\halfone}^U + [\![\hat{B}_1^n]\!]_{i-\halfone,j-\halfone}^U}{\Delta x} +\frac{[\![\hat{B}_2^n]\!]_{i-\halfone,j+\halfone}^R + [\![\hat{B}_2^n]\!]_{i-\halfone,j-\halfone}^R}{\Delta y}).
		\end{aligned} 
	\end{equation*}
Combining \eqref{cellDecom}, \eqref{estimate:Nf}, and \eqref{estimate:S} yields
	\begin{equation}\label{GQL2}
		\begin{aligned}
			\overline{\bm{U}}^{n+1}_{i,j} \cdot \bm{n}_*+\frac{\left|\bm{B}_*\right|^2}{2} 
			&>  \Delta t_n \bm{S}_{i,j}(\hat{\bm{U}}^n) \cdot \bm{n}_*
			+ \frac{1}{2} \kappa_1 \left( \mathcal{\overline{U}}_{i+\frac{1}{2},j}^{n,R} + \mathcal{\overline{U}}_{i-\frac{1}{2},j}^{n,L} - \mathcal{\overline{U}}_{i+\frac{1}{2},j}^{n,L} - \mathcal{\overline{U}}_{i-\frac{1}{2},j}^{n,R}  \right) \cdot \bm{n}_* \\ 
			&+ \frac{1}{2} \kappa_2 \left( \mathcal{\overline{U}}_{i,j+\frac{1}{2}}^{n,U} + \mathcal{\overline{U}}_{i,j-\frac{1}{2}}^{n,D} - \mathcal{\overline{U}}_{i,j+\frac{1}{2}}^{n,D} - \mathcal{\overline{U}}_{i,j-\frac{1}{2}}^{n,U}  \right) \cdot \bm{n}_*
			+ \bm{N}_f \cdot \bm{n}_* \\
			& + \frac{1}{6 \kappa}\left[\kappa_1\left(\mathcal{\overline{U}}_{i-\frac{1}{2}, j}^{n,R}+\mathcal{\overline{U}}_{i+\frac{1}{2}, j}^{n,L}\right)+\kappa_2\left(\mathcal{\overline{U}}_{i, j-\frac{1}{2}}^{n,U}+\mathcal{\overline{U}}_{i, j+\frac{1}{2}}^{n,D}\right)\right] \cdot \bm{n}_*+\frac{1}{6} \left|\bm{B}_*\right|^2 \\
			&\geq  \mathcal{D}  +\Delta t_n \left(\bm{v}_* \cdot \bm{B}_*\right)\left(\frac{1}{2} [\![\hat{\bm{B}}^n]\!]_{i,j} -   \nabla_{i,j} \cdot \average{\hat{\bm{B}}^n}\right) > \Delta t_n ( \bm{v}_* \cdot \bm{B}_* ) \left( \frac{1}{2} [\![\hat{\bm{B}}^n]\!]_{i,j} -   \nabla_{i,j} \cdot \average{\hat{\bm{B}}^n} \right) \\
			&= \Delta t_n ( \bm{v}_* \cdot \bm{B}_* ) \left( -\nabla_{i,j} \cdot \hat{\bm{B}}^n \right) = 0,
		\end{aligned}
	\end{equation}
where the first inequality follows from \eqref{eq:PPcon} and the positivity of
	\begin{align*}
		\mathcal{D} &= \frac{1}{2}\left(\kappa_1-\left(\hat{\alpha}_1^{LF}+\beta_1\right) \frac{\Delta t_n}{\Delta x}\right)\left[\left(\mathcal{\overline{U}}_{i+\frac{1}{2}, j}^{n, R}+\mathcal{\overline{U}}_{i-\frac{1}{2}, j}^{n,L}\right) \cdot \bm{n}_*+\left|\bm{B}_*\right|^2\right] \\
		& +\frac{1}{2}\left(\kappa_2-\left(\hat{\alpha}_2^{LF}+\beta_2\right) \frac{\Delta t_n}{\Delta y}\right)\left[\left(\mathcal{\overline{U}}_{i, j+\frac{1}{2}}^{n,U}+\mathcal{\overline{U}}_{i, j-\frac{1}{2}}^{n,D}\right) \cdot \bm{n}_*+\left|\bm{B}_*\right|^2\right] \\
		& +\frac{1}{2}\left(\frac{ \kappa_1}{3 \kappa}-\left(\kappa_1+\left(\hat{\alpha}_1^{LF}+\beta_1\right) \frac{\Delta t_n}{\Delta x}\right)\right)\left[\left(\mathcal{\overline{U}}_{i+\frac{1}{2}, j}^{n,L}+\mathcal{\overline{U}}_{i-\frac{1}{2}, j}^{n,R}\right) \cdot \bm{n}_*+\left|\bm{B}_*\right|^2\right] \\
		& +\frac{1}{2}\left(\frac{ \kappa_2}{3 \kappa}-\left(\kappa_2+\left(\hat{\alpha}_2^{LF}+\beta_2\right) \frac{\Delta t_n}{\Delta y}\right)\right)\left[\left(\mathcal{\overline{U}}_{i, j+\frac{1}{2}}^{n,D}+\mathcal{\overline{U}}_{i, j-\frac{1}{2}}^{n,U}\right) \cdot \bm{n}_*+\left|\bm{B}_*\right|^2\right] > 0,
	\end{align*}
which is guaranteed by the CFL condition \eqref{eq:PPCFL}.
We emphasize that the last term in \eqref{estimate:S} is crucial: it exactly cancels the additional magnetic-field contribution arising in the flux estimate under the DDF condition \eqref{key11122}, ensured by \textbf{Step~1}.  
With the GQL representation \eqref{GQL}, together with \eqref{GQL1} and \eqref{GQL2}, we obtain the following theorem.

	\begin{theorem}[PP Cell Average Update]\label{them:PPcfl}
Assume that $\overline{\bm U}^{\,n}_{i,j} \in \mathcal G$ and conditions \eqref{eq:PPcon}--\eqref{eq:CAD3} hold.  
Then the updated cell averages produced by \eqref{EulerForwardCellAvg} are PP; i.e.,
		\begin{equation}\label{eq:PPscheme}
			\overline{{\bm U}}^{n+1}_{i,j} = \overline{{\bm U}}^{n}_{i,j} + \Delta t_n {\bm L}_{0,i,j}(\hat{{\bm U}}^n) \in \mathcal{G}, \quad \forall\, i, j,
		\end{equation}
		under the CFL condition
		\begin{equation}\label{eq:PPCFL}
			0<\Delta t_n \left( \frac{\alpha_1^{LF}}{\Delta x}+\frac{\alpha_2^{LF}}{\Delta y}\right)<\frac{1}{6}.
		\end{equation}
	\end{theorem}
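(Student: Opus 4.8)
The plan is to work entirely within the GQL framework \eqref{GQL}, which replaces the nonlinear internal-energy constraint $\mathcal E(\bm U)>0$ by the family of linear inequalities $\bm U\cdot\bm n_*+\tfrac12|\bm B_*|^2>0$ indexed by $\bm v_*,\bm B_*\in\mathbb R^3$, alongside the density constraint $\bm U\cdot\bm n_1>0$. It then suffices to verify these two linear inequalities for $\overline{\bm U}^{\,n+1}_{i,j}$ separately, using the rewritten update \eqref{cellDecom} and the exact convex decomposition \eqref{eq:CAD3}. Because every ingredient has been isolated already, the proof becomes an assembly of the flux estimates \eqref{eq1}, \eqref{eq:WU1} and the source estimate \eqref{eq:WU2}, followed by a CFL-driven sign check.

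First I would establish the density positivity $\overline{\bm U}^{\,n+1}_{i,j}\cdot\bm n_1>0$. Since the first component of the Godunov--Powell source $\bm S_{i,j}(\hat{\bm U}^n)$ vanishes, dotting \eqref{cellDecom} with $\bm n_1$ eliminates the source term entirely. Applying the scalar flux estimate \eqref{eq1} to each one-sided Simpson edge average and substituting the decomposition \eqref{eq:CAD3} collapses the right-hand side into the positive multiple $\big(\tfrac16-\kappa\big)$ of a convex combination of admissible inflow states, as recorded in \eqref{GQL1}; this is strictly positive because the CFL condition \eqref{eq:PPCFL} ensures $\kappa=\kappa_1+\kappa_2<\tfrac16$.

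The substantive part is the energy-type inequality $\overline{\bm U}^{\,n+1}_{i,j}\cdot\bm n_*+\tfrac12|\bm B_*|^2>0$ for arbitrary $\bm v_*,\bm B_*$. Here I would dot \eqref{cellDecom} with $\bm n_*$, add $\tfrac12|\bm B_*|^2$, bound the flux remainder $\bm N_f$ by the matrix estimate \eqref{eq:WU1} to obtain \eqref{estimate:Nf}, and bound the discretized source \eqref{Dsource} via \eqref{eq:WU2} to obtain \eqref{estimate:S}. Both estimates produce a stray term proportional to $\bm v_*\cdot\bm B_*$: the flux contributes $-\Delta t_n(\nabla_{i,j}\cdot\average{\hat{\bm B}^n})(\bm v_*\cdot\bm B_*)$ and the source contributes $+\tfrac{\Delta t_n}{2}\jump{\hat{\bm B}^n}_{i,j}(\bm v_*\cdot\bm B_*)$. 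The crux is to show these combine to $-\Delta t_n(\bm v_*\cdot\bm B_*)(\nabla_{i,j}\cdot\hat{\bm B}^n)$, which vanishes identically by the DDF property \eqref{eq:DDF3} secured in \textbf{Step~1}. Once this cancellation is in place, the leftover quadratic form $\mathcal D$ is a sum of terms of the shape (coefficient)$\times\big[(\cdots)\cdot\bm n_*+|\bm B_*|^2\big]$, each a GQL-admissible positive quantity since every participating point value lies in $\mathcal G$ by \eqref{eq:PPcon}; the coefficients are nonnegative precisely under \eqref{eq:PPCFL}, yielding $\mathcal D>0$ and hence \eqref{GQL2}.

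I expect the main obstacle to be the bookkeeping behind this exact divergence cancellation: one must verify that the Simpson-weighted source discretization \eqref{Dsource} produces jump combinations $\jump{\hat{\bm B}^n}_{i,j}$ whose halves match the averaged normal-field differences $\nabla_{i,j}\cdot\average{\hat{\bm B}^n}$ arising in the flux estimate, so that their difference collapses to the cell divergence $\nabla_{i,j}\cdot\hat{\bm B}^n$. A secondary technical point is confirming that the four coefficients in $\mathcal D$---in particular the ``inflow'' coefficient $\tfrac{\kappa_1}{3\kappa}-\big(\kappa_1+(\hat\alpha_1^{LF}+\beta_1)\tfrac{\Delta t_n}{\Delta x}\big)$ and its $y$-analogue---are simultaneously nonnegative under the single bound \eqref{eq:PPCFL}, which requires tracking how the viscosity splitting $\alpha_\ell^{LF}=\hat\alpha_\ell^{LF}+\beta_\ell$ is apportioned between the flux estimate \eqref{estimate:Nf} and the source estimate \eqref{estimate:S}.
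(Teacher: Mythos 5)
Your proposal is correct and takes essentially the same route as the paper's own proof: the GQL reduction to the two linear inequalities, the density bound \eqref{GQL1} obtained from \eqref{eq1} together with the convex decomposition \eqref{eq:CAD3}, the flux and source estimates \eqref{estimate:Nf}--\eqref{estimate:S}, the exact cancellation $\tfrac12\jump{\hat{\bm B}^n}_{i,j}-\nabla_{i,j}\cdot\average{\hat{\bm B}^n}=-\nabla_{i,j}\cdot\hat{\bm B}^n=0$ supplied by the DDF condition \eqref{eq:DDF3}, and the CFL-driven positivity of $\mathcal D$. The two technical points you flag are precisely where the paper's argument turns (in particular, the viscosity splitting $\alpha_\ell^{\mathrm{LF}}=\hat\alpha_\ell^{\mathrm{LF}}+\beta_\ell$ makes the first two coefficients of $\mathcal D$ vanish identically, while the inflow coefficients reduce to $\tfrac{\kappa_\ell}{3\kappa}-2\kappa_\ell\ge 0$ under \eqref{eq:PPCFL}), so nothing essential is missing.
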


	\subsection{A Lax-type entropy criterion as shock indicator}\label{trouble-cell}

	In this subsection, we introduce a simple yet effective indicator to identify troubled cells near discontinuities. 
	The COE procedure in \Cref{OE} will be applied selectively to these identified cells rather than all cells, reducing both computational cost and numerical dissipation. 
	Inspired by~\cite{LiuFeng2021}, we adopt a Lax-type entropy criterion based on wave speed jumps...
	
	The fast magneto-acoustic speed in the $x$- or $y$-direction is defined as
	\begin{equation}\label{def:cf2}
		c_{f,\ell}( {\bm U} )= \sqrt{\frac{1}{2} \left( \frac{\gamma {p} + |{\bm{B}}|^2}{{\rho}} + \sqrt{ \left( \frac{\gamma {p} + |{\bm{B}}|^2}{{\rho}} \right)^2 - 4 \frac{\gamma {p} ({B_{\ell}})^2}{{\rho}^2} } \right)}, \quad \ell =1,2.
	\end{equation}
	
	This indicator offers two key advantages:
	\begin{itemize}
		\item \textbf{Low computational cost:} Only two characteristic speeds are computed in each direction.
		\item \textbf{Symmetry preservation:} The indicator preserves symmetry by detecting troubled cells in a symmetric fashion.
	\end{itemize}

	\begin{remark}
		The MHD system admits eight characteristic fields. However, our results demonstrate that using only the fast wave speeds is sufficient for reliable detection of discontinuities. Including additional speeds tends to mark unnecessary cells as troubled, without yielding noticeable improvements (see \Cref{Ex:ShockCloud}).
	\end{remark}

	\subsection{COE: Convex Oscillation Elimination} \label{OE}
	
	High-order PAMPA schemes are prone to nonphysical oscillations near discontinuities, especially in the presence of strong shocks or contact discontinuities. To robustly suppress such artifacts while preserving high-order accuracy in smooth regions, we introduce a COE approach. The COE procedure acts as a convex blending limiter, applicable either globally or locally—restricted to cells flagged as troubled by a shock indicator.

	To describe the COE procedure clearly, we begin by defining its key components:
	
	\begin{itemize}
		\item Special $\star$-Norm: To measure the distance between states, we define a new squared norm:
		\begin{equation}
			\label{eq:star-norm}
			\begin{aligned}
				\| \bm{U} - \check{\bm{U}} \|_{\star}^2 &:= (E - \check{E})^2 + \frac{1}{8} \left( |\bm{v}|^4 + |\check{\bm{v}}|^4 \right)(\rho - \check{\rho})^2 \\
				&\quad + \frac{1}{8} \sum_{\ell=1}^{3} (v_\ell^2 + \check{v}_\ell^2)(m_\ell - \check{m}_\ell)^2 + \frac{1}{8} \sum_{\ell=1}^{3} (B_\ell^2 + \check{B}_\ell^2)(B_\ell - \check{B}_\ell)^2.
			\end{aligned}
		\end{equation}

		\item Maximum Local Wave Speed: To ensure the evolution invariance property introduced later, 
		For each spatial direction $\ell \in \{1,2\}$ (corresponding to $x$ and $y$), we define:
		\begin{equation}
			\label{eq:lambda}
			\lambda_{\ell,i,j} := |\overline{v}_{\ell,i,j}| + c_{f,\ell}(\overline{\bm{U}}_{i,j}),
		\end{equation}
		where $c_{f,\ell}$ is the fast magnetoacoustic speed in direction $\ell$ as defined in \eqref{def:cf2}, and $\overline{v}_{\ell,i,j} = \overline{m}_{\ell,i,j} / \overline{\rho}_{i,j}$.

	\end{itemize}
	
	The COE procedure comprises the following steps:
	
	\paragraph{Step 1: Normalization Factor}
	We introduce a global scaling factor to normalize discrepancy measures:
	\[
	\Theta := \max_{i,j} \Theta_{i,j}, \qquad
	\Theta_{i,j} := \left( \int_{I_{i,j}} \left\| \check{\bm{P}}_{i,j}(x,y) - \overline{\bm{U}}^\Omega \right\|_{\star}^2 \,\mathrm{d}x\,\mathrm{d}y \right)^{1/2},
	\]
	where $\overline{\bm{U}}^\Omega := \frac{1}{N_x N_y} \sum_{i,j} \overline{\bm{U}}_{i,j}$ denotes the spatial average over the entire domain.
	
	\paragraph{Step 2: Directional Damping Coefficients}
	We compute directional damping coefficients $\sigma_{1,i,j}$ and $\sigma_{2,i,j}$ as follows:

	\paragraph{Step 3: Convex Blending of States}
	For each state $\check{\bm{U}} \in \check{\mathbb{U}}_{i,j} \cup \check{\bm{U}}^c_{i,j}$, we define the damped state as a convex combination:
	\begin{equation}
		\hat{\bm{U}} := \theta^{\mathrm{OE}}_{i,j} \left(\check{\bm{U}} - \overline{\bm{U}}_{i,j}\right) + \overline{\bm{U}}_{i,j},
	\end{equation}
	where the damping factor is
	\[
	\theta^{\mathrm{OE}}_{i,j} := \exp\left( -C_0 \left[ \sigma_{1,i,j} \frac{\Delta t_n}{\Delta x} + \sigma_{2,i,j} \frac{\Delta t_n}{\Delta y} \right] \right).
	\]
	The scalar $C_0 > 0$ controls the overall strength of dissipation and is chosen empirically. Importantly, due to the scale-invariance (\Cref{thm:scale}) and evolution-invariance (\Cref{thm:evol}) of the COE mechanism, $C_0$ is not problem-specific. In all numerical experiments presented in this paper, we fix $C_0 = 43$.
	
	\begin{remark}
		All cell integrals involved in the COE procedure are approximated using Gauss-type tensor product quadrature. For instance, the directional discrepancy $\eta^{L}_{i,j}$ is computed as
		\[
		\eta^{L}_{i,j} \approx \left( \sum_{\mu=1}^q \sum_{\nu=1}^q \omega_\mu \omega_\nu \left\| \check{\bm{P}}_{i,j}(x_\mu, y_\nu) - \check{\bm{P}}^{L}_{i,j}(x_\mu, y_\nu) \right\|_{\star}^2 \right)^{1/2},
		\]
		where $\{\omega_\mu\}$ are the quadrature weights and $\{(x_\mu, y_\nu)\}$ are the Gauss points in $I_{i,j}$.
	\end{remark}

	%\subsubsection{Scale invariance and evolution invariance}

	Although the proposed COE procedure is conceptually distinct from the original OE method introduced in \cite{peng2023oedg}—which operates via modal filtering—the COE framework retains two critical structural properties: {\it scale invariance} and {\it evolution invariance}. These invariance properties play a key role in ensuring that the scheme behaves consistently under changes of physical units or time rescaling, and that it preserves important symmetries inherent to the governing MHD system.
	
	\subsubsection{Scale Invariance}
	
	We begin by establishing the invariance of the COE procedure under scaling transformations of the conservative variables. Let $\mu > 0$ and define the block-diagonal scaling matrix
	\begin{equation}
		P := \begin{bmatrix}
			\mu \bm{I}_4 & & \\
			& \sqrt{\mu} \bm{I}_3 & \\
			& & \mu
		\end{bmatrix},
	\end{equation}
	which induces the transformation $\bm{U} \mapsto P \bm{U}$, scaling density, momentum, magnetic field, and energy consistently with the known scaling laws of ideal MHD.
	
	\begin{theorem}[Scale Invariance] \label{thm:scale}
		Let $P$ be defined as above. Then for each spatial direction $\ell = 1, 2$, the directional damping coefficient $\sigma_{\ell, i,j}$ is invariant under the scaling $\bm{U} \mapsto P\bm{U}$, i.e.,
		\begin{equation}
			\sigma_{\ell, i,j}(P\bm{U}) = \sigma_{\ell, i,j}(\bm{U}).
		\end{equation}
	\end{theorem}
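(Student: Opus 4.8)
The plan is to reduce the invariance of $\sigma_{\ell,i,j}$ to the homogeneity properties of its two constituent building blocks: the maximum local wave speed $\lambda_{\ell,i,j}$ from \eqref{eq:lambda} and the $\star$-norm discrepancy measures (built from $\Theta_{i,j}$ and the directional quantities such as $\eta^{L}_{i,j}$) that enter the normalized damping. First I would record how the primitive quantities transform under $\bm U\mapsto P\bm U$. From the block structure of $P$ one has $\rho\mapsto\mu\rho$, $\bm m\mapsto\mu\bm m$, $\bm B\mapsto\sqrt{\mu}\,\bm B$, and $E\mapsto\mu E$, whence the velocity $\bm v=\bm m/\rho$ is \emph{invariant} and, using $\mathcal E(\bm U)=E-\tfrac{|\bm m|^2}{2\rho}-\tfrac{|\bm B|^2}{2}$, the internal energy and hence the thermal pressure scale as $\mathcal E\mapsto\mu\mathcal E$ and $p\mapsto\mu p$. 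These elementary scaling laws are the only inputs needed for the rest of the argument.

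The key computational step is to show that the $\star$-norm is homogeneous of degree one, i.e.\ $\|P\bm U-P\check{\bm U}\|_{\star}=\mu\,\|\bm U-\check{\bm U}\|_{\star}$. I would verify this term by term in \eqref{eq:star-norm}: the energy term $(E-\check E)^2$ and the density term $\tfrac18(|\bm v|^4+|\check{\bm v}|^4)(\rho-\check\rho)^2$ each acquire a factor $\mu^2$ because $E,\rho$ scale by $\mu$ while $\bm v$ is invariant; the momentum term inherits $\mu^2$ from $(m_\ell-\check m_\ell)^2$ with the invariant weight $v_\ell^2+\check v_\ell^2$; and the magnetic term $\tfrac18\sum_\ell(B_\ell^2+\check B_\ell^2)(B_\ell-\check B_\ell)^2$ picks up one factor $\mu$ from the weight and one from the squared difference, again totalling $\mu^2$. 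This is precisely the design principle behind the weights in \eqref{eq:star-norm}: all four contributions scale by the \emph{same} power $\mu^2$, so $\|\cdot\|_{\star}^2\mapsto\mu^2\|\cdot\|_{\star}^2$.

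Next I would transfer this to the reconstructed data. Since the biparabolic reconstruction $\check{\bm P}_{i,j}$ depends linearly on the degrees of freedom, scaling them by $P$ yields $\check{\bm P}_{i,j}\mapsto P\check{\bm P}_{i,j}$ pointwise, and likewise for the neighbour reconstructions $\check{\bm P}^{L/R/U/D}_{i,j}$ and the domain mean $\overline{\bm U}^{\Omega}$. Combining this covariance with the degree-one homogeneity of $\|\cdot\|_{\star}$ shows that every $\star$-norm discrepancy arising in the procedure, including $\Theta_{i,j}$, $\Theta$, and the directional quantities $\eta^{L/R/U/D}_{i,j}$, is multiplied by the common factor $\mu$. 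Consequently any \emph{ratio} of such discrepancies, in particular the normalized discrepancy entering the damping coefficient, is invariant under the scaling. For the wave-speed factor, the velocity part $|\overline v_{\ell,i,j}|$ is invariant as noted above, and in $c_{f,\ell}$ of \eqref{def:cf2} each grouping $\tfrac{\gamma p+|\bm B|^2}{\rho}$ and $\tfrac{\gamma p\,B_\ell^2}{\rho^2}$ is a ratio whose numerator and denominator carry matching powers of $\mu$ (namely $\mu$ over $\mu$ and $\mu^2$ over $\mu^2$), so $c_{f,\ell}(P\bm U)=c_{f,\ell}(\bm U)$ and hence $\lambda_{\ell,i,j}$ is invariant.

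Since $\sigma_{\ell,i,j}$ is assembled from the invariant speed $\lambda_{\ell,i,j}$ and invariant ratios of $\star$-norm discrepancies, it follows that $\sigma_{\ell,i,j}(P\bm U)=\sigma_{\ell,i,j}(\bm U)$, completing the proof. I expect the main obstacle to be precisely the role of the normalization: the $\star$-norm itself is only degree-one homogeneous, not invariant, so the argument succeeds only because $\|\cdot\|_{\star}$ enters $\sigma_{\ell,i,j}$ through a scale-free ratio against $\Theta$ rather than as a bare quantity. The careful, uniform $\mu^2$ scaling of the four weighted terms in \eqref{eq:star-norm} is what makes that cancellation exact, so the whole proof hinges on verifying that homogeneity and on confirming that every bare $\star$-norm occurrence is matched by a normalizing denominator.
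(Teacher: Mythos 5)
Your proposal is correct and follows essentially the same route as the paper's proof, which rests on exactly the two observations you verify: the quadratic homogeneity $\|P\bm{U}_1 - P\bm{U}_2\|_{\star}^2 = \mu^2\|\bm{U}_1-\bm{U}_2\|_{\star}^2$ of the $\star$-norm and the invariance of the wave speeds $\lambda_{\ell}(P\bm{U})=\lambda_{\ell}(\bm{U})$, so that $\sigma_{\ell,i,j}$, being a scale-free ratio of these quantities, is unchanged. Your write-up is in fact more detailed than the paper's (term-by-term verification of the norm scaling, linearity of the reconstruction, and the explicit remark that normalization by $\Theta$ is what converts degree-one homogeneity into genuine invariance), but the underlying argument is the same.
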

	
	\begin{proof}
		The result follows from two key observations:
		
		(i) The $\star$-norm defined in \eqref{eq:star-norm} satisfies quadratic homogeneity under the scaling transformation:
		\[
		\| P\bm{U}_1 - P\bm{U}_2 \|^2_{\star} = \mu^2 \| \bm{U}_1 - \bm{U}_2 \|^2_{\star}.
		\]
		
		(ii) The wave speeds defined by \eqref{eq:lambda} are invariant under scaling:
		\[
		\lambda_{\ell}(P\bm{U}) = \lambda_{\ell}(\bm{U}), \qquad \ell = 1, 2.
		\]
		
		Since both the wave speeds and the discrepancy terms scale homogeneously and consistently in numerator and denominator, the damping ratio $\sigma_{\ell, i,j}$ remains unchanged.
	\end{proof}
	
	This scale-invariant design confers several important benefits. First, it guarantees that the damping coefficient $\sigma_{\ell, i,j}$ is dimensionless and thus independent of the units chosen to represent physical quantities (e.g., grams vs. kilograms). Consequently, the strength of the COE damping effect remains consistent across problems of varying physical scales.
	
	More importantly, the COE procedure inherits and preserves the scaling symmetry of the continuous MHD system. Let $\mathcal{S}_t$ denote the exact solution operator of the MHD equations, i.e., $\bm{U}(\bm{x}, t) = \mathcal{S}_t(\bm{U}_0)$, and let $\mathcal{P}$ denote the scaling operator associated with matrix $P$. It is well known that the ideal MHD equations satisfy the commutative scaling relation:
	\[
	\mathcal{S}_t \mathcal{P} = \mathcal{P} \mathcal{S}_t.
	\]
	Thanks to Theorem~\ref{thm:scale}, the numerical solution operator $\mathcal{E}_n$ of the COE-based PAMPA scheme also satisfies a discrete analog:
	\[
	\mathcal{E}_n \mathcal{P} = \mathcal{P} \mathcal{E}_n.
	\]
	This demonstrates that the numerical method respects the scale-commutative structure of the underlying physical system.
	
	\begin{remark}
		The admissible set $\mathcal{G}$ for the MHD system is invariant under the transformation
		\[
		\bm{U}_\mu := (\mu \rho, \mu \bm{m}, \sqrt{\mu} \bm{B}, \mu E)^\top, \qquad \mu > 0.
		\]
		This reflects the intrinsic scale invariance of the MHD equations; see, e.g., \cite{WuTangM3AS}.
	\end{remark}
	
	\subsubsection{Evolution Invariance}
	
	The second important structural property concerns invariance under time-rescaling of the governing equations. Consider the family of modified MHD systems
	\begin{equation}
		\label{eq:muMHD}
		\bm{U}_t + \nabla \cdot \left( \mu \bm{F}(\bm{U}) \right) = 0, \qquad \mu > 0,
	\end{equation}
	which may be interpreted as a time-rescaled version of the original system. Let $\mathcal{S}_t^\mu$ denote the corresponding exact solution operator. It is straightforward to verify that
	\[
	\mathcal{S}_t^\mu = \mathcal{S}_{\mu t}, \qquad \forall\, t > 0,\ \mu > 0.
	\]
	This identity expresses the fact that increasing the flux strength $\mu$ is equivalent to accelerating the time evolution by a factor of $\mu$.
	
	We now show that the COE-based scheme preserves this invariance at the discrete level.
	
	\begin{theorem}[Evolution Invariance] \label{thm:evol}
		Let $\mathcal{E}_n^\mu$ denote the numerical solution operator applied to \eqref{eq:muMHD} using time step $\Delta t_\mu = \Delta t / \mu$, such that the CFL number remains fixed. Then for any initial condition $\bm{U}^0_h$, we have
		\begin{equation}
			\mathcal{E}_n^\mu(\bm{U}^0_h) = \mathcal{E}_n(\bm{U}^0_h), \qquad \forall\, \mu > 0.
		\end{equation}
	\end{theorem}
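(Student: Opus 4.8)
The plan is to track how one full time step of the scheme responds to the simultaneous replacement $\bm{F}\mapsto\mu\bm{F}$ and $\Delta t\mapsto\Delta t_\mu=\Delta t/\mu$, and to show that the two effects cancel stage by stage. Since $\mathcal{E}_n^\mu$ and $\mathcal{E}_n$ act on the \emph{same} initial data $\bm{U}^0_h$, at the beginning of the step all degrees of freedom coincide, so I only need to verify that each elementary operation composing the update returns identical output. The governing observation is that the semi-discrete spatial operators are positively homogeneous of degree one in the flux, whereas the state-dependent limiting operations are homogeneous of degree zero.

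First I would establish the scaling of the spatial operators. Replacing $\bm{F}$ by $\mu\bm{F}$ multiplies the Jacobians $\bm{J}^x,\bm{J}^y$ by $\mu$ (their eigenvalues, hence all wave speeds $c_{f,\ell}$ and $\alpha_\ell$, scale by $\mu$), multiplies the two-state numerical fluxes $\hat{\bm{F}}_\ell$ and the Lax--Friedrichs bounds $\alpha_\ell^{\mathrm{LF}}$ by $\mu$, and---since the Godunov--Powell term in \eqref{eq:MHDadd2DPowell} belongs to the same spatial operator---multiplies the discrete source $\bm{S}_{i,j}$ of \eqref{Dsource} by $\mu$ as well. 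The finite-difference stencils $\bm{D}^x,\bm{D}^y$ and the decomposition weights are built from states alone and are unchanged. Consequently $\bm{L}^\mu_{r,i,j}=\mu\,\bm{L}_{r,i,j}$ for $r=1,\dots,8$ and $\bm{L}^\mu_{0,i,j}=\mu\,\bm{L}_{0,i,j}$, so a single forward-Euler stage gives
\[
\bm{q}^n+\Delta t_\mu\,\bm{L}^\mu(\cdots)=\bm{q}^n+\frac{\Delta t}{\mu}\,\mu\,\bm{L}(\cdots)=\bm{q}^n+\Delta t\,\bm{L}(\cdots),
\]
which is exactly the update produced by $\mathcal{E}_n$.

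Next I would check the limiting, projection, and COE steps interleaved between stages. The DDF projection of \textbf{Step~1} and the PP limiter of \textbf{Steps~2--3} depend only on the current states through $\theta_1,\theta_2$ and the algebraic corrections $A_1,A_2$; they involve neither $\bm{F}$ nor $\Delta t$ and are therefore unchanged. The crucial point is the COE damping factor $\theta^{\mathrm{OE}}_{i,j}$. By its construction in \textbf{Step~2}, $\sigma_{\ell,i,j}$ is the product of the local wave speed $\lambda_{\ell,i,j}$ of \eqref{eq:lambda} and a dimensionless ratio of $\star$-norm discrepancies computed from the common degrees of freedom; the latter is $\mu$-independent, while $\lambda_{\ell,i,j}$ scales by $\mu$ under the flux rescaling. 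Hence $\sigma^\mu_{\ell,i,j}=\mu\,\sigma_{\ell,i,j}$, and the exponent obeys $C_0\,\sigma^\mu_{\ell,i,j}\,\Delta t_\mu/\Delta x=C_0\,\sigma_{\ell,i,j}\,\Delta t/\Delta x$, so $\theta^{\mathrm{OE},\mu}_{i,j}=\theta^{\mathrm{OE}}_{i,j}$. This is precisely the role for which $\lambda_{\ell,i,j}$ was introduced.

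Assembling these facts, one entire forward-Euler step applied to \eqref{eq:muMHD} with step $\Delta t_\mu$ coincides with one step of $\mathcal{E}_n$, giving $\mathcal{E}_n^\mu=\mathcal{E}_n$ for forward Euler. The extension to high-order SSP Runge--Kutta follows by induction on the stages: each stage is a convex combination of forward-Euler updates with step $\Delta t_\mu$, and both the convex weights and the interleaved invariant limiting and COE operations are $\mu$-independent. The main obstacle I anticipate lies in the COE step: one must confirm that the definition of $\sigma_{\ell,i,j}$ in \textbf{Step~2} is exactly first-degree homogeneous in the wave speed $\lambda_{\ell,i,j}$ and otherwise $\mu$-independent, and that the Godunov--Powell source is rescaled consistently by $\mu$; the remaining operator-scaling identities are routine consequences of linearity of the semi-discrete operators in $\bm{F}$.
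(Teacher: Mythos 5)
Your proposal is correct and follows essentially the same route as the paper: the paper's proof consists precisely of your COE step, namely $\sigma_{\ell,i,j}^{\mu}=\mu\,\sigma_{\ell,i,j}$ combined with $\Delta t_\mu=\Delta t/\mu$ so that the exponent $C_0\,\sigma_{\ell,i,j}^{\mu}\,\Delta t_\mu/\Delta x$ is unchanged and hence $\theta^{\mathrm{OE},\mu}_{i,j}=\theta^{\mathrm{OE}}_{i,j}$. Your additional verifications---the degree-one homogeneity in the flux of the spatial operators $\bm{L}_{r,i,j}$ (including the scaled numerical viscosities and the Godunov--Powell source), the $\mu$-independence of the DDF projection and PP limiters, and the stage-by-stage SSP Runge--Kutta argument---are left implicit in the paper, so your write-up is a more complete rendering of the same argument.
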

	
	\begin{proof}
		Let $\lambda_{i,j}^\mu$, $\sigma_{\ell,i,j}^\mu$, and $\theta_{i,j}^{\mathrm{OE},\mu}$ denote the local wave speed, damping coefficient, and blending factor used in the COE procedure when solving \eqref{eq:muMHD}. Note that
		\[
		\lambda_{i,j}^\mu = \mu \lambda_{i,j}, \qquad \sigma_{\ell, i,j}^\mu = \mu \sigma_{\ell, i,j}.
		\]
		However, the effective argument of the exponential blending factor becomes
		\[
		\sigma_{\ell, i,j}^\mu \cdot \frac{\Delta t_\mu}{\Delta x} = \mu \sigma_{\ell, i,j} \cdot \frac{\Delta t}{\mu \Delta x} = \sigma_{\ell, i,j} \cdot \frac{\Delta t}{\Delta x},
		\]
		and similarly for the $y$-direction. Thus,
		\[
		\theta_{i,j}^{{OE},\mu} = \theta_{i,j}^{{OE}}.
		\]
		Therefore, the COE modification remains unchanged under variation of $\mu$, and the discrete evolution operator $\mathcal{E}_n^\mu$ produces the same solution as $\mathcal{E}_n$, establishing the claim.
	\end{proof}
	
	This invariance result confirms that the COE procedure exhibits consistent dynamical behavior for problems with differing characteristic speeds, provided that the time step is scaled accordingly to maintain a fixed CFL condition. This is particularly important in MHD simulations where physical regimes may span several orders of magnitude in wave speeds.

	\subsection{High-order time discretization}\label{sec:time}
	A high-order SSP Runge–Kutta method can be employed for the time discretization of the ODE systems governing the point values, namely~\eqref{eq:pointevolve1}, \eqref{eq:pointevolve2}, and \eqref{eq:pointevolve3}, as well as the cell average update~\eqref{eq:avgGL}.
	For instance, the third-order SSP Runge–Kutta method is given by:
	\begin{equation*}
		\begin{aligned}
			{\bm q}^{(1)}_{0,i,j} &= {\bm q}^{n}_{0,i,j} + \Delta t_n {\bm L}_{0,i,j}(\hat{{\bm U}}^n),  
			& {\bm q}^{(1)}_{r,i,j} &= {\bm q}^{n}_{r,i,j} + \Delta t_n {\bm L}_{r,i,j}({\bm W}^n, \overline{{\bm U}}^n, \hat{{\bm U}}^n), \\
			{\bm q}^{(2)}_{0,i,j} &= \frac{3}{4}{\bm q}^{n}_{0,i,j} + \frac{1}{4}  ({\bm q}^{(1)}_{0,i,j} + \Delta t_n {\bm L}_{0,i,j}(\hat{{\bm U}}^{(1)})),  
			& {\bm q}^{(2)}_{r,i,j} &= \frac{3}{4}{\bm q}^{(1)}_{r,i,j} + \frac{1}{4} ( {\bm q}^{(1)}_{r,i,j}  + \Delta t_n {\bm L}_{r,i,j}({\bm W}^{(1)}, \overline{{\bm U}}^{(1)}, \hat{{\bm U}}^{(1)})), \\
			{\bm q}^{n+1}_{0,i,j} &= \frac{1}{3}{\bm q}^{n}_{0,i,j} + \frac{2}{3}  ({\bm q}^{(2)}_{0,i,j} + \Delta t_n {\bm L}_{0,i,j}(\hat{{\bm U}}^{(2)})),  
			& {\bm q}^{n+1}_{r,i,j} &= \frac{1}{3}{\bm q}^{n}_{r,i,j} + \frac{2}{3} ( {\bm q}^{(2)}_{r,i,j}  + \Delta t_n {\bm L}_{r,i,j}({\bm W}^{(2)}, \overline{{\bm U}}^{(2)}, \hat{{\bm U}}^{(2)})),  
		\end{aligned}
	\end{equation*}
	where $r = 1, \dots, 8$.

	\section{Numerical Experiments}\label{S.5}
	
	This section presents several numerical examples to demonstrate the accuracy, effectiveness, and robustness of our structure-preserving PAMPA method on uniform Cartesian meshes. Unless otherwise specified, the adiabatic index is set to $\gamma = \frac{5}{3}$. For time integration, we employ the third-order SSP Runge--Kutta method described in \Cref{sec:time}, using a CFL number of $C_{\rm CFL} = 0.1$.

	We begin by verifying the accuracy of the proposed scheme using a smooth solution, followed by a series of benchmark problems and challenging test cases to demonstrate its effectiveness and robustness.
	
	\begin{expl}[Alfv\'{e}n Wave]\label{EX:Alfen} \rm
		This test involves the propagation of a circularly polarized Alfv\'{e}n wave traveling at a constant Alfv\'{e}n speed. The exact solution remains smooth for all time, making it well-suited for assessing the convergence properties of the numerical methods for MHD. The initial conditions are specified as
		\begin{align*}
			&\rho = 1, \quad v_1 = -0.1\sin\alpha\sin(2\pi\beta), \quad v_2 = 0.1\cos\alpha\sin(2\pi\beta), \quad v_3 = 0.1\cos(2\pi\beta), \\
			&B_1 = \cos\alpha + v_1, \quad B_2 = \sin\alpha + v_2, \quad B_3 = v_3, \quad p = 0.1,
		\end{align*}
		where $\alpha = \pi/4$ and $\beta = x\cos\alpha + y\sin\alpha$. 
		The computational domain is $[0, 1/\cos\alpha] \times [0, 1/\sin\alpha]$, and the wave returns to its initial configuration at every integer time $t$. The domain is discretized using an $N \times N$ uniform Cartesian mesh with periodic boundary conditions imposed in both directions. 
		We compute the $l^1$, $l^2$, and $l^\infty$ errors of the velocity field $(v_1, v_2)$ and magnetic field $(B_1, B_2)$ at $t = 1$, as reported in \Cref{tab:Ex-Alfen}. The results confirm that the proposed scheme attains the expected third-order accuracy.

		\begin{table}[!thb] 
			\centering
			\belowrulesep=0pt
			\aboverulesep=0pt
			\caption{Errors in $(v_1, v_2)$ and $(B_1, B_2)$ at $t = 1$ on an $N \times N$ mesh for the Alfv\'{e}n wave problem.}
			\label{tab:Ex-Alfen}
			\setlength{\tabcolsep}{3mm}{
				\begin{tabular}{c|cccccccc}
					\toprule[1.5pt]
					\multirow{2}{*}{ } &
					\multirow{2}{*}{$N$} &
					\multicolumn{2}{c}{$l^1$ norm} &
					\multicolumn{2}{c}{$l^2$ norm} &
					\multicolumn{2}{c}{$l^\infty$ norm} \\
					\cmidrule(r){3-4} \cmidrule(r){5-6} \cmidrule(l){7-8}
					& & error & order &  error & order &  error & order \\    
					\midrule[1.5pt]
					\multirow{5}{*}{$(v_1, v_2)$} & $20$ & 2.25e-04 & -- &  1.77e-04 & -- &  1.83e-04 & --   \\
					& $40$ & 2.90e-05 & 2.95 & 2.28e-05 & 2.95 & 2.38e-05 & 2.95 \\
					& $80$ & 3.67e-06 & 2.98 & 2.88e-06 & 2.98 & 3.01e-06 & 2.98 \\
					& $160$ & 4.61e-07 & 2.99 & 3.62e-07 & 2.99 & 3.78e-07 & 2.99 \\
					& $320$ & 5.77e-08 & 3.00 & 4.54e-08 & 3.00 & 4.74e-08 & 3.00 \\
					\midrule
					\multirow{5}{*}{$(B_1, B_2)$} & $20$ & 2.28e-04 & -- & 1.79e-04 & -- & 1.80e-04 & --   \\
					& $40$ & 2.92e-05 & 2.97 & 2.30e-05 & 2.97 & 2.31e-05 & 2.96 \\
					& $80$ & 3.68e-06 & 2.99 & 2.89e-06 & 2.99 & 2.91e-06 & 2.99 \\
					& $160$ & 4.61e-07 & 3.00 & 3.63e-07 & 3.00 & 3.64e-07 & 3.00 \\
					& $320$ & 5.77e-08 & 3.00 & 4.54e-08 & 3.00 & 4.56e-08 & 3.00 \\
					\bottomrule[1.5pt]
				\end{tabular}
			}
		\end{table}
	\end{expl}

	\begin{expl}[Smooth Vortex]\label{Ex:vortex} \rm 
		This widely used benchmark assesses both the accuracy and the PP capability of numerical schemes for MHD. The initial conditions are specified as
		\begin{equation*}\label{initial:vortex}
			\left( \rho, {\bm v}, {\bm B}, p \right) = \left( 1, 1 + \delta v_1, 1 + \delta v_2, 0, \delta B_1, \delta B_2, 0, 1 + \delta p \right),
		\end{equation*}
		where the perturbations are defined by
		\[
		\left( \delta v_1, \delta v_2 \right) = \frac{\mu}{\sqrt{2}\pi} e^{0.5(1 - r^2)}(-y, x), \quad
		\left( \delta B_1, \delta B_2 \right) = \frac{\mu}{2\pi} e^{0.5(1 - r^2)}(-y, x),
		\]
		\[
		\delta p = -\frac{\mu^2(1 + r^2)}{8\pi^2} e^{1 - r^2}, \quad \text{with} \quad r^2 = x^2 + y^2.
		\]
		Here, $\mu$ denotes the vortex strength. As $\mu$ increases, the central pressure decreases rapidly; for example, when $\mu = 5.389489439$, the minimum pressure drops to approximately $5.3 \times 10^{-12}$. At such extreme values, numerical schemes lacking a PP mechanism typically fail due to the generation of nonphysical negative pressures. 
		The problem is solved on an $N \times N$ uniform Cartesian mesh with periodic boundary conditions, and the solution is evolved to time $t = 0.05$. 
		
		\Cref{tab:Ex-Vortex} presents the $l^1$ errors and convergence rates of the velocity and magnetic fields for three vortex strengths: $\mu = 1$, $5.38948943$, and $5.389489439$. Across all cases, the observed convergence rates range from approximately $3.81$ to $4.13$, surpassing the scheme’s formal third-order accuracy. This superconvergence is likely due to the high-order damping effect of the COE procedure dominating the error, as also noted in \cite{peng2023oedg}. 
		It is noted that, for the mild case of $\mu = 1$, the PP limiter remains inactive throughout the simulation. In contrast, for the demanding cases $\mu = 5.38948943$ and $5.389489439$, the simulation fails almost immediately without the PP limiter due to the onset of negative pressures. These results clearly demonstrate that the PP limiter and the COE procedure enhance robustness without compromising the scheme’s accuracy.
		\begin{table}[!thb] 
			\centering
			\belowrulesep=0pt
			\aboverulesep=0pt
			\caption{$l^1$ errors in ${\bm v}$ and ${\bm B}$ at $t = 0.05$ on an $N \times N$ mesh for the vortex problem with vortex strengths $\mu_1 = 1.0$, $\mu_2 = 5.38948943$, and $\mu_3 = 5.389489439$.}
			\label{tab:Ex-Vortex}
			\setlength{\tabcolsep}{3mm}{
				\begin{tabular}{c|cccccccc}
					\toprule[1.5pt]
					\multirow{2}{*}{ } &
					\multirow{2}{*}{$N$} &
					\multicolumn{2}{c}{$ \mu_{1} $} &
					\multicolumn{2}{c}{$ \mu_{2} $} &
					\multicolumn{2}{c}{$ \mu_{3} $} \\
					\cmidrule(r){3-4} \cmidrule(r){5-6} \cmidrule(l){7-8}
					& & error & order &  error & order &  error & order \\    
					\midrule[1.5pt]
					\multirow{5}{*}{${\bm v}_h$} 
					& $20$ & 6.23e-03 & --    & 5.19e-03 & --    & 5.27e-03 & -- \\
					& $40$ & 2.21e-03 & 1.49  & 1.28e-03 & 2.02  & 1.28e-03 & 2.05 \\
					& $80$ & 2.66e-04 & 3.05  & 9.30e-05 & 3.79  & 9.28e-05 & 3.78 \\
					& $160$& 1.52e-05 & 4.13  & 6.58e-06 & 3.82  & 6.58e-06 & 3.82 \\
					& $320$& 9.61e-07 & 3.98  & 4.33e-07 & 3.93  & 4.32e-07 & 3.93 \\
					\midrule
					\multirow{5}{*}{${\bm B}_h$} 
					& $20$ & 4.40e-03 & --    & 3.67e-03 & --    & 3.72e-03 & -- \\
					& $40$ & 1.56e-03 & 1.50  & 9.02e-04 & 2.02  & 8.97e-04 & 2.05 \\
					& $80$ & 1.88e-04 & 3.05  & 6.54e-05 & 3.79  & 6.52e-05 & 3.78 \\
					& $160$& 1.08e-05 & 4.13  & 4.65e-06 & 3.81  & 4.65e-06 & 3.81 \\
					& $320$& 6.93e-07 & 3.96  & 3.08e-07 & 3.92  & 3.08e-07 & 3.92 \\
					\bottomrule[1.5pt]
				\end{tabular}
			}
		\end{table}
	\end{expl}

	\begin{expl}[Orszag--Tang Problem]\label{Ex:OT} \rm  
		The Orszag--Tang vortex is a classical benchmark problem used to assess the shock-capturing capability of numerical methods for MHD. Although the initial conditions are smooth, the solution rapidly evolves into a complex configuration involving multiple interacting shocks and discontinuities. 
		The initial conditions are given by
		\begin{equation*}
			(\rho, {\bm v}, {\bm B}, p) = \left(\gamma^2, -\sin y, \sin x, 0, -\sin y, \sin(2x), 0, \gamma \right).
		\end{equation*}
		The computational domain $\left[0, 2\pi\right]^2$ is discretized using a $400 \times 400$ uniform Cartesian mesh with periodic boundary conditions.
		
		\Cref{fig:Ex-OT-density} shows the density contours at times $t = 3$ and $t = 4$. The top row displays results obtained without the COE procedure, while the bottom row includes COE. As observed, the simulation without COE exhibits noticeable spurious oscillations, whereas the inclusion of COE significantly suppresses such artifacts, leading to cleaner and more physically consistent results. 
		The proposed structure-preserving PAMPA method demonstrates strong performance in capturing both discontinuities and smooth features of the flow. The results are consistent with those reported in~\cite{Li2011,Guillet2019,DingWu2024SISCMHD}. As the solution evolves from $t = 3$ to $t = 4$, the flow becomes increasingly intricate.  Prior studies (e.g.,~\cite{jiang1999high}) have reported the onset of nonphysical negative pressures near $t \approx 3.9$ due to intense shock interactions and steep gradients. In contrast, our structure-preserving PAMPA scheme, equipped with a PP mechanism, successfully avoids such issues and maintains physical admissibility throughout the simulation.

		\begin{figure}[!thb]
			\centering
			\begin{subfigure}{0.48\textwidth}
				\includegraphics[width=\textwidth]{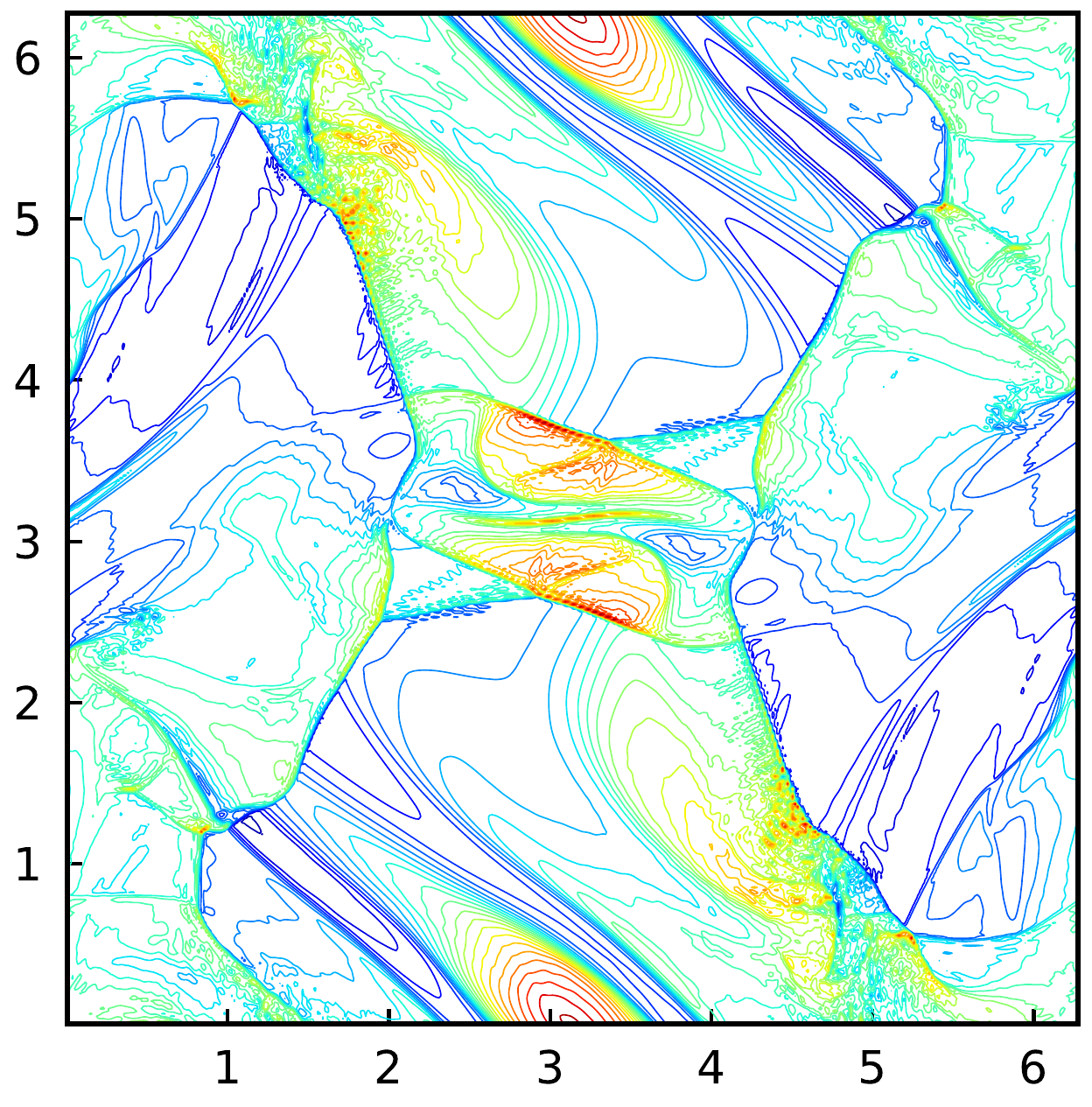}
			\end{subfigure}
			\hfill
			\begin{subfigure}{0.48\textwidth}
				\includegraphics[width=\textwidth]{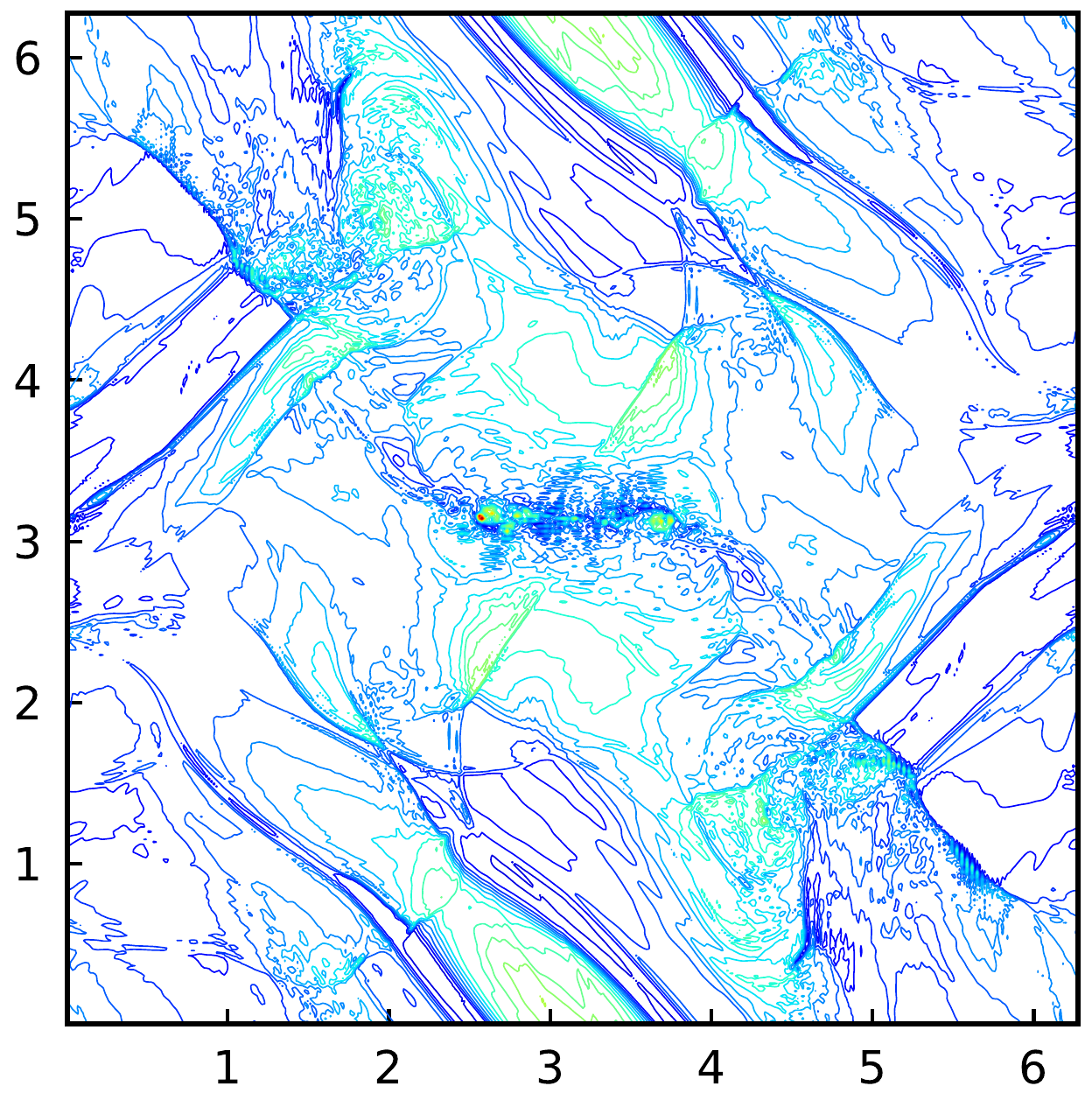}
			\end{subfigure}
			
			\begin{subfigure}{0.48\textwidth}
				\includegraphics[width=\textwidth]{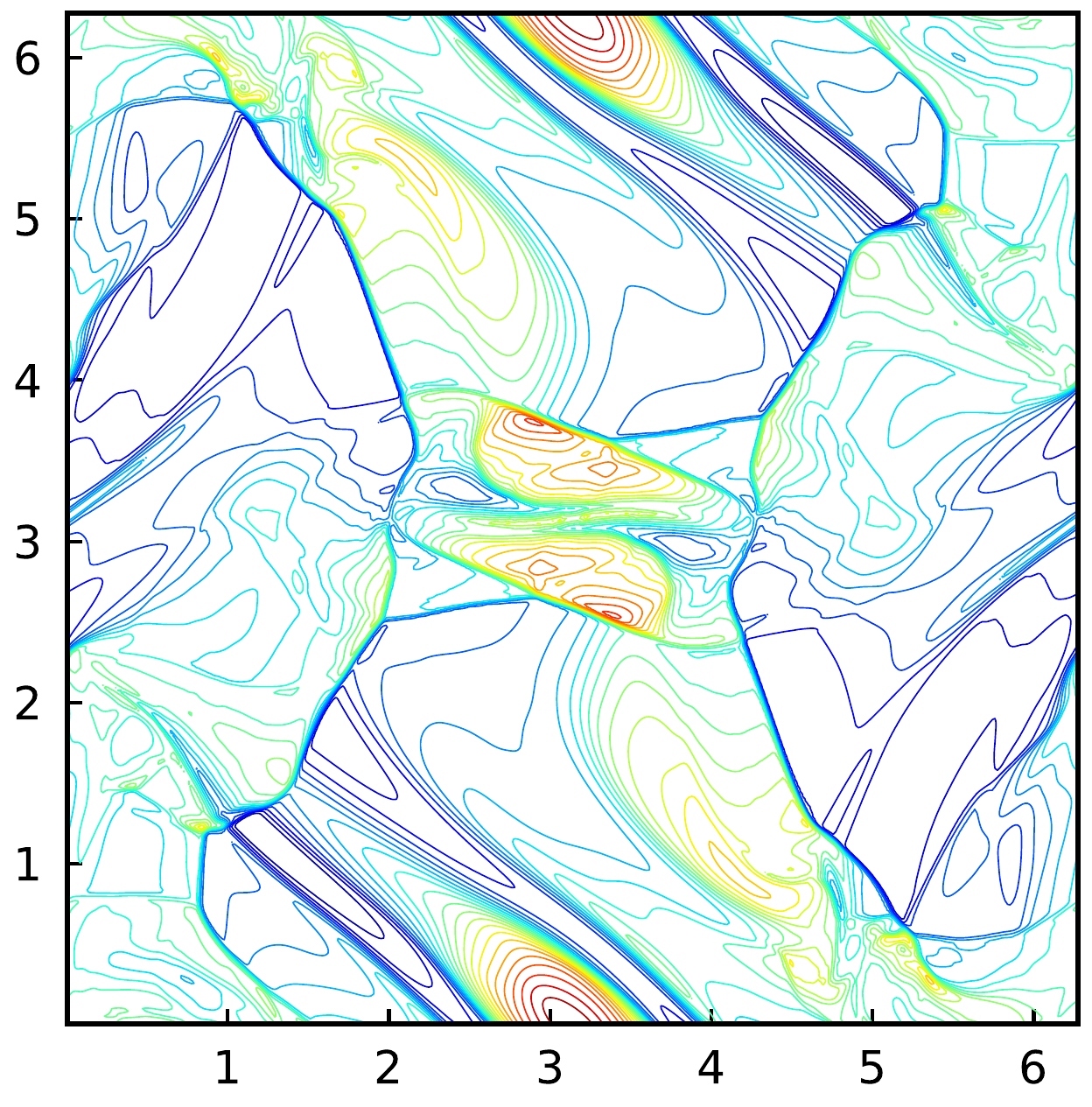}
			\end{subfigure}
			\hfill
			\begin{subfigure}{0.48\textwidth}
				\includegraphics[width=\textwidth]{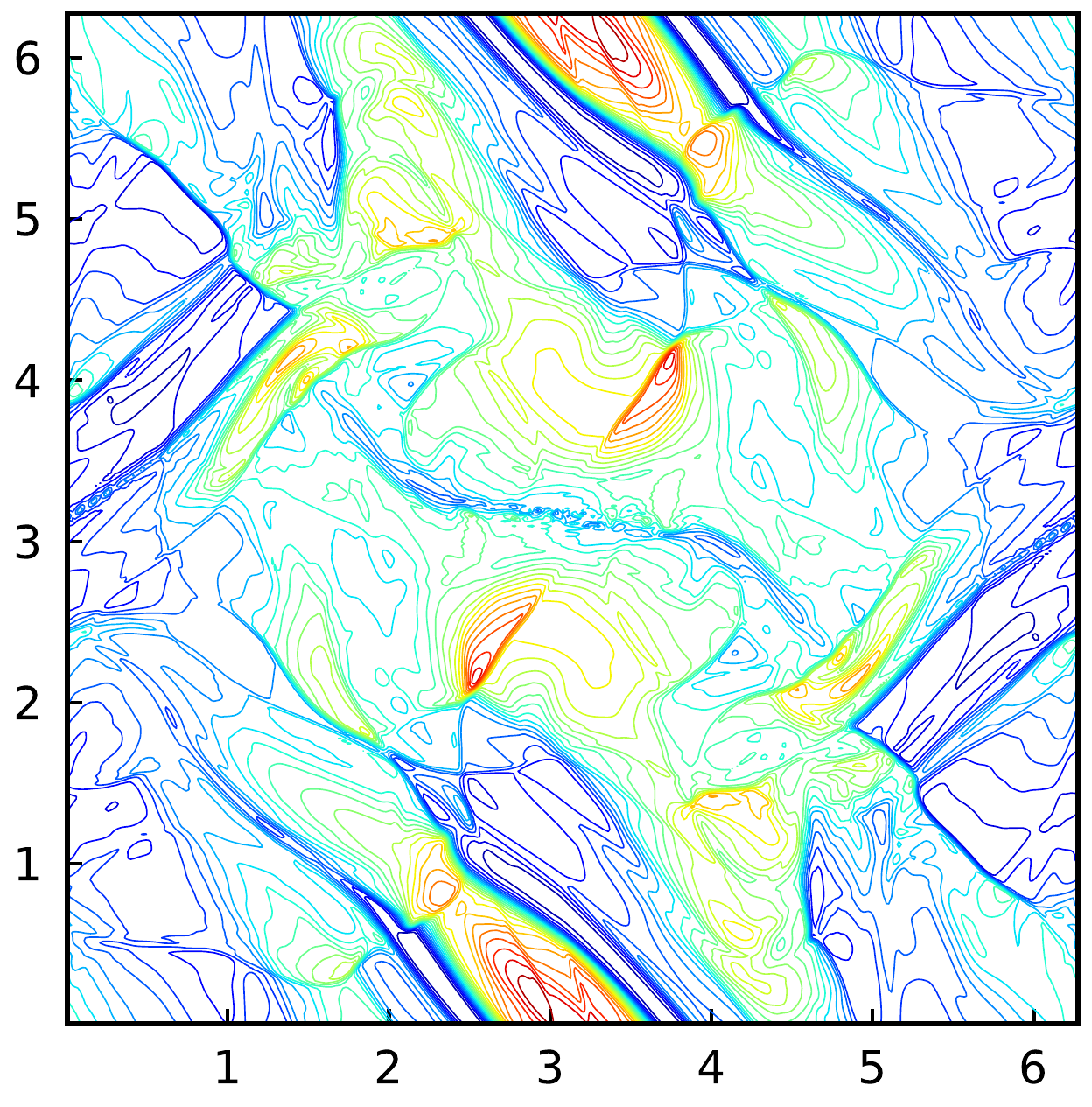}
			\end{subfigure}
			\caption{\Cref{Ex:OT}: Density contours for the Orszag--Tang problem at $t = 3$ (left) and $t = 4$ (right). Twenty-four contour levels are shown. Top: without COE; bottom: with COE.}

			\label{fig:Ex-OT-density}
		\end{figure}
	\end{expl}

	\begin{expl}[Rotor Problem]\label{Ex:Rotor} \rm
		We next consider the rotor problem, a widely used benchmark for evaluating the shock-capturing capability and robustness of MHD schemes. It models a rapidly spinning, high-density core embedded in a uniform, magnetized ambient plasma. The initial conditions are defined as
		\begin{equation*}
			(\rho, {\bm v}, {\bm B}, p)= \begin{cases}
				\left(10,-(y-0.5) / r_1,(x-0.5) / r_1, 0,2.5 / \sqrt{4 \pi}, 0,0,0.5\right), & r \leq r_1, \\
				(1+9 \phi,-\phi(y-0.5) / r, \phi(x-0.5) / r, 0,2.5 / \sqrt{4 \pi}, 0,0,0.5), & r_1<r \leq r_2, \\
				(1,0,0,0,2.5 / \sqrt{4 \pi}, 0,0,0.5), & r_2<r,
			\end{cases}
		\end{equation*}
		where $r = \sqrt{(x - 0.5)^2 + (y - 0.5)^2}$, $\phi = (r_2 - r)/(r_2 - r_1)$, $r_1 = 0.1$, and $r_2 = 0.115$. 
		The computational domain is $[0, 1]^2$, discretized using a $200 \times 200$ uniform Cartesian mesh with outflow boundary conditions. The simulation is advanced in time up to $t = 0.295$.
		
		\begin{figure}[!thb]
			\centering		
			\begin{subfigure}{0.48\textwidth}
				\includegraphics[width=\textwidth]{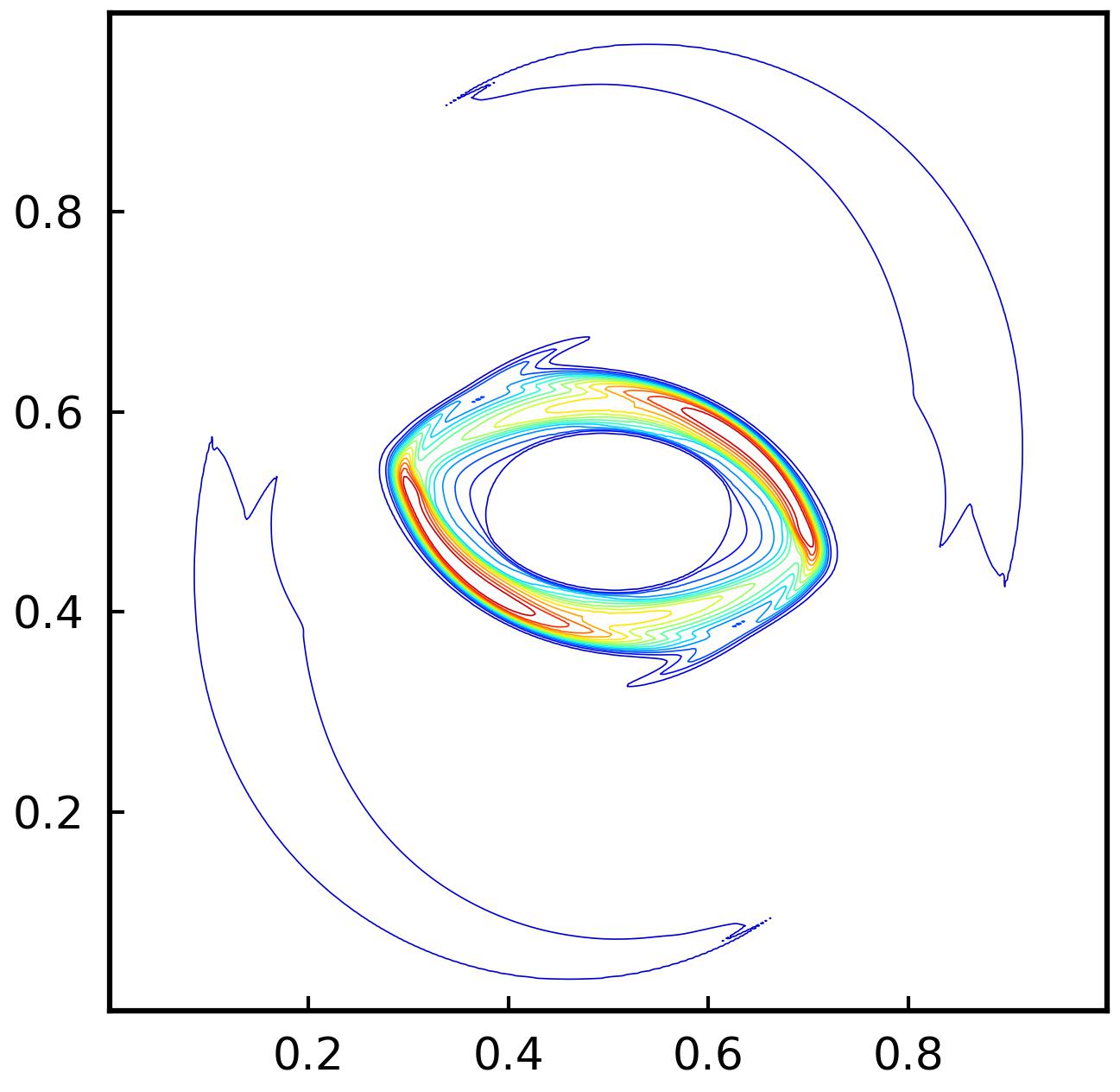}
			\end{subfigure}
			\hfill
			\begin{subfigure}{0.48\textwidth}
				\includegraphics[width=\textwidth]{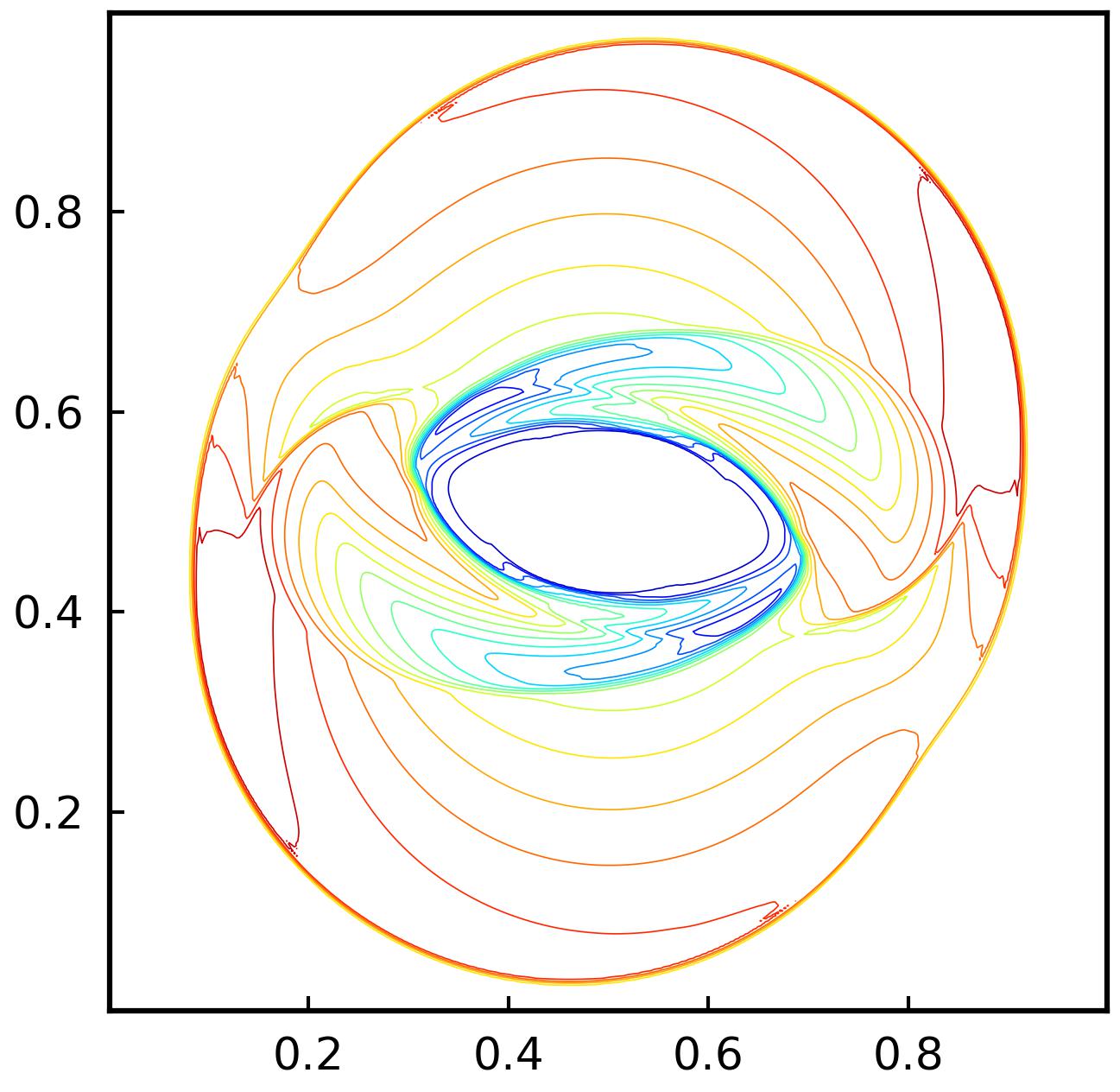}
			\end{subfigure}
			\begin{subfigure}{0.48\textwidth}
				\includegraphics[width=\textwidth]{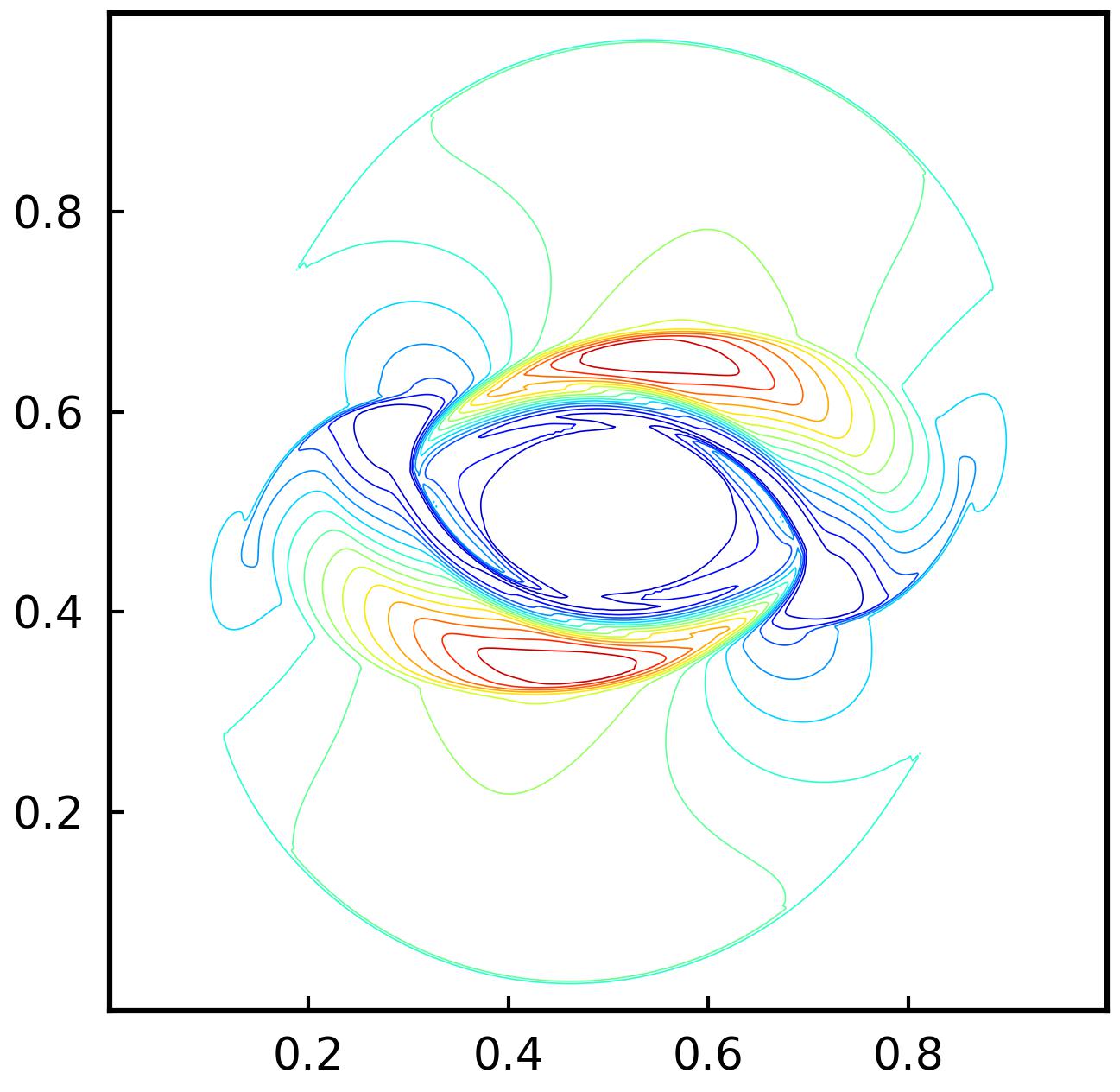}
			\end{subfigure}
			\hfill
			\begin{subfigure}{0.48\textwidth}
				\includegraphics[width=\textwidth]{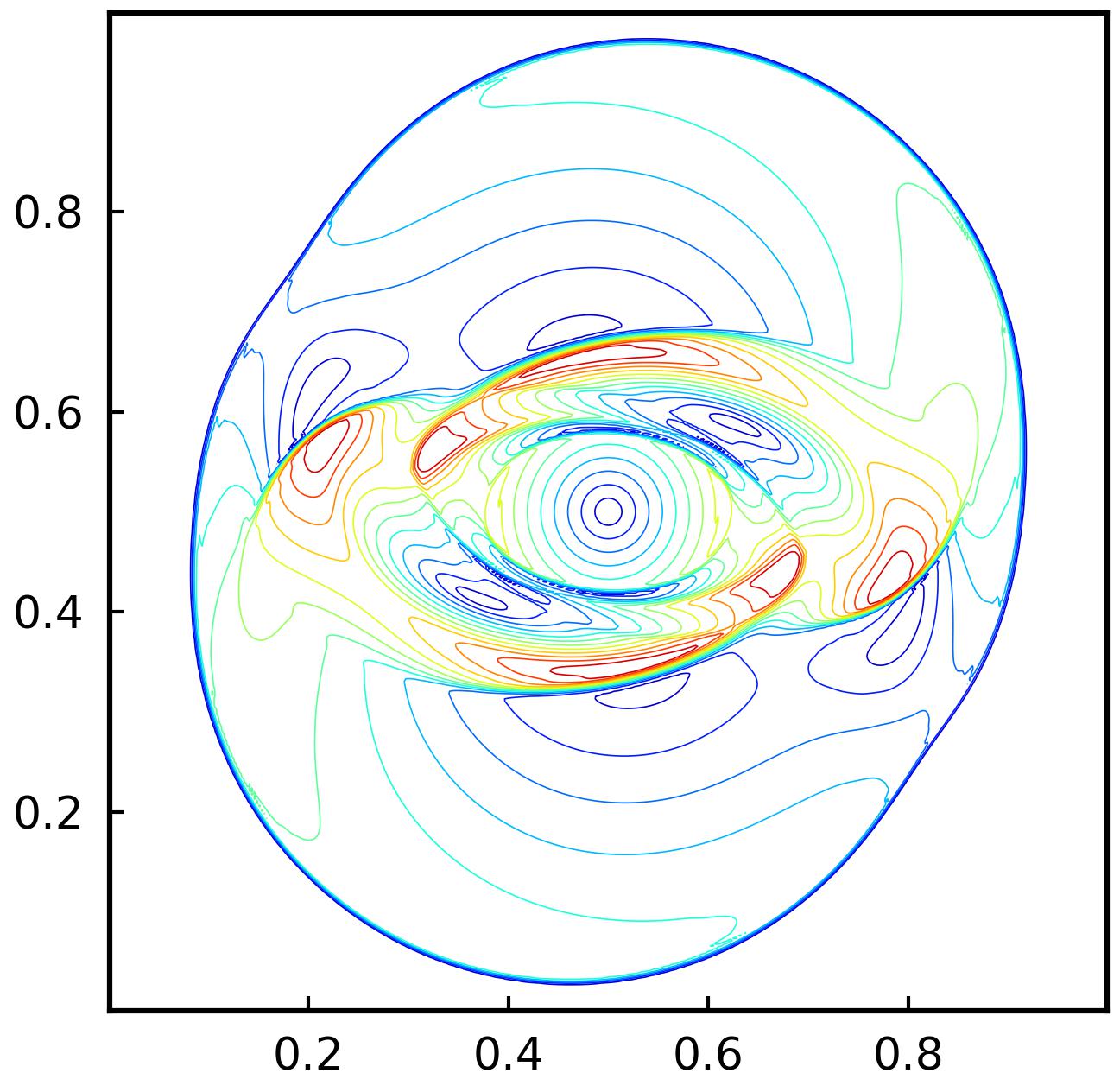}
			\end{subfigure}
			\caption{\Cref{Ex:Rotor}: The density (top-left), thermal pressure (top-right), magnetic pressure (bottom-left), and velocity magnitude (bottom-right) for the rotor problem at $t = 0.295$. 
			}
			\label{fig:Ex-Rotor1}
		\end{figure}

		\Cref{fig:Ex-Rotor1} displays contour plots of the density $\rho$, thermal pressure $p$, magnetic pressure $\frac{\left | {\bm B} \right |^2} 2$, and the velocity magnitude $\left | {\bm v} \right |$. The simulation successfully preserves the expected circular rotational structure, which has been identified by T\'oth~\cite{Toth2000} as a challenging feature for many MHD solvers. 
		\Cref{fig:Ex-RotorDDF} presents Mach number contours at $t = 0.295$. The top row corresponds to results obtained without the DDF projection, while the bottom row shows results with DDF applied. It is well known that significant divergence errors in the magnetic field can introduce visible distortions, particularly near the center of the Mach number profile~\cite{Li2005}. 
		In our results, the solution without DDF exhibits noticeable nonphysical oscillations, especially in the vicinity of the rotating core. In contrast, the application of DDF effectively suppresses these artifacts, demonstrating its critical role in maintaining the stability and accuracy of the PAMPA scheme. 
		Moreover, due to the strong gradients in the solution, negative pressure values may arise near the center. However, no such nonphysical behavior is observed in our simulation, owing to the provably PP property of the proposed PAMPA scheme.

		\begin{figure}[!thb]
			\centering		
			\begin{subfigure}{0.48\textwidth}
				\includegraphics[width=\textwidth]{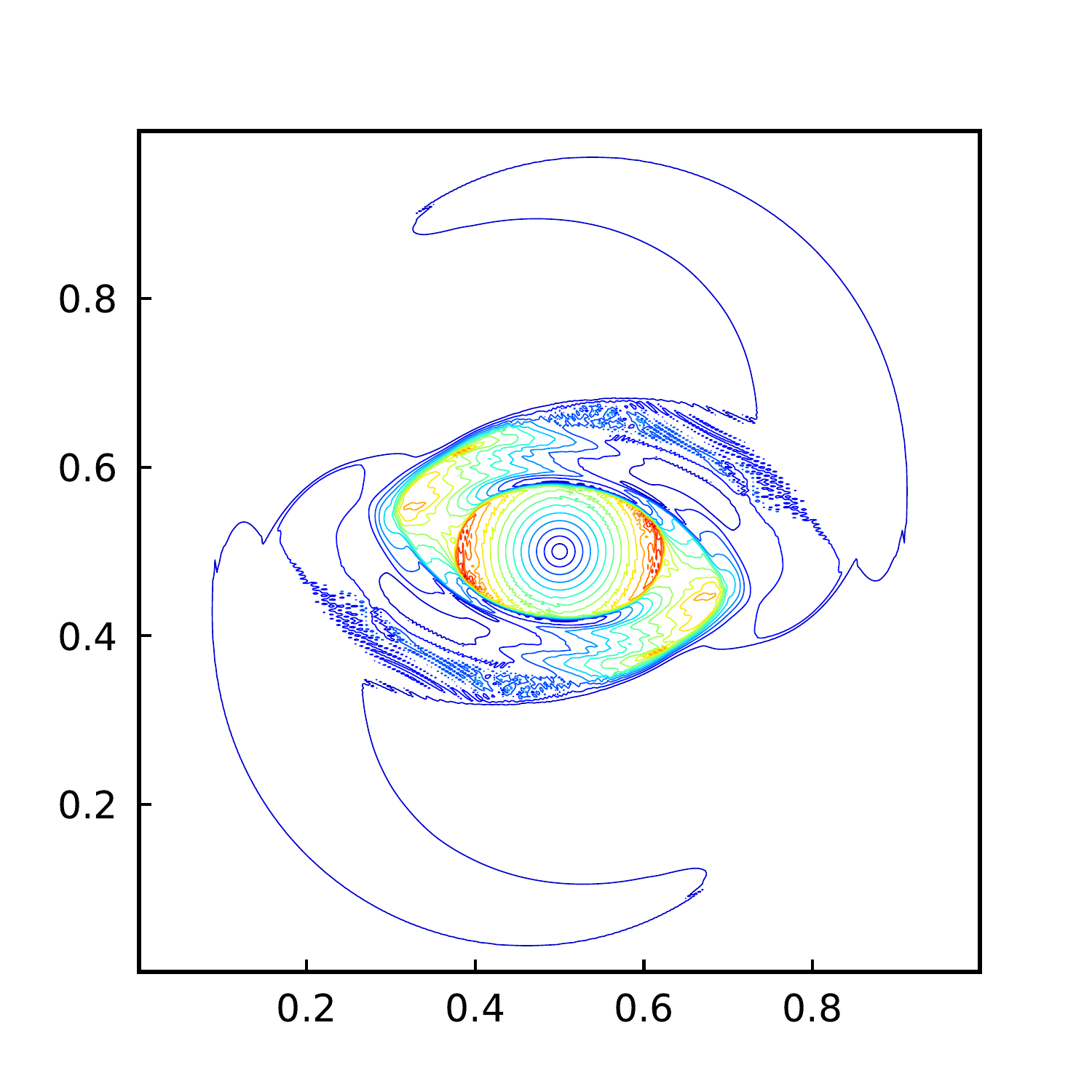}
			\end{subfigure}
			\hfill
			\begin{subfigure}{0.48\textwidth}
				\includegraphics[width=\textwidth]{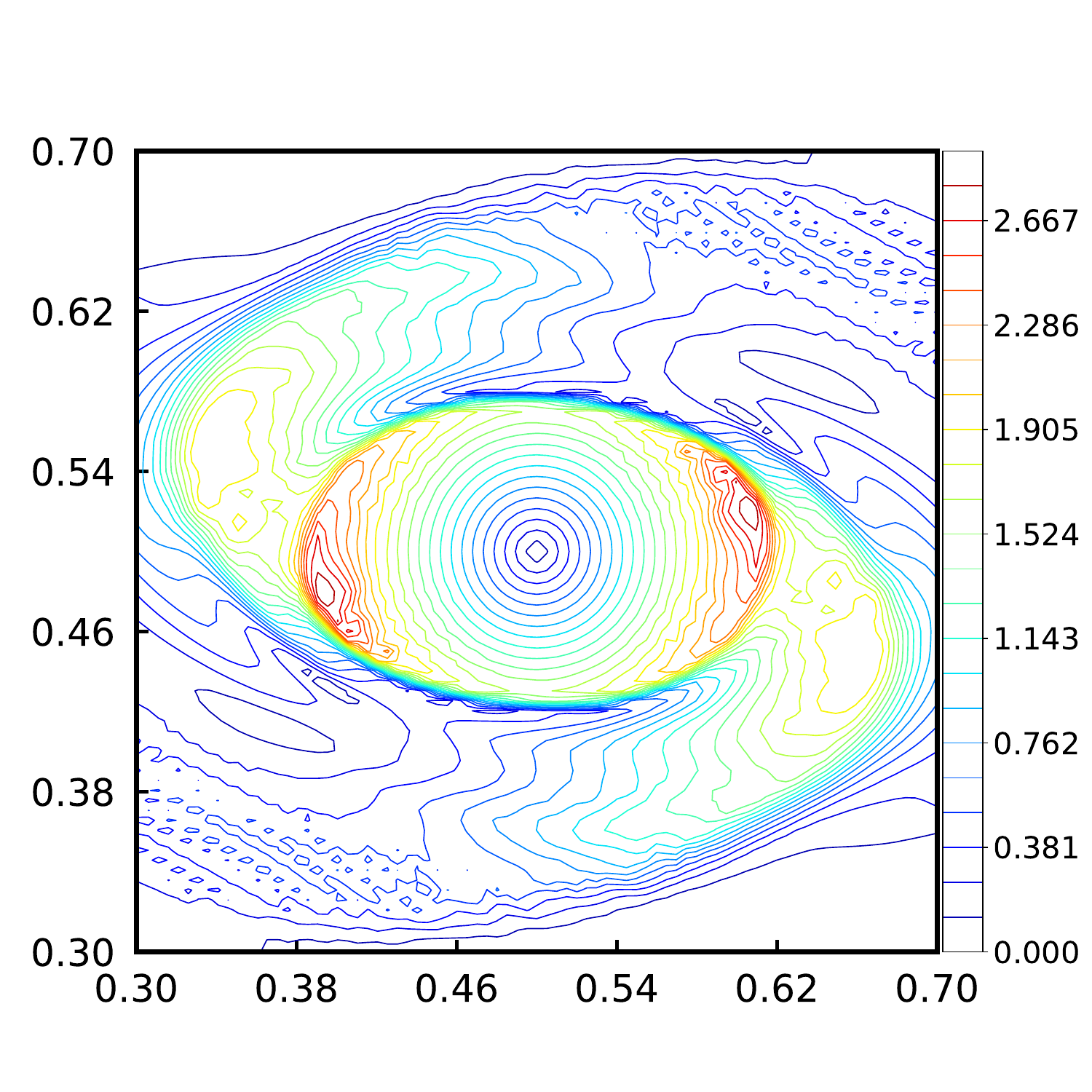}
			\end{subfigure}
			\begin{subfigure}{0.48\textwidth}
				\includegraphics[width=\textwidth]{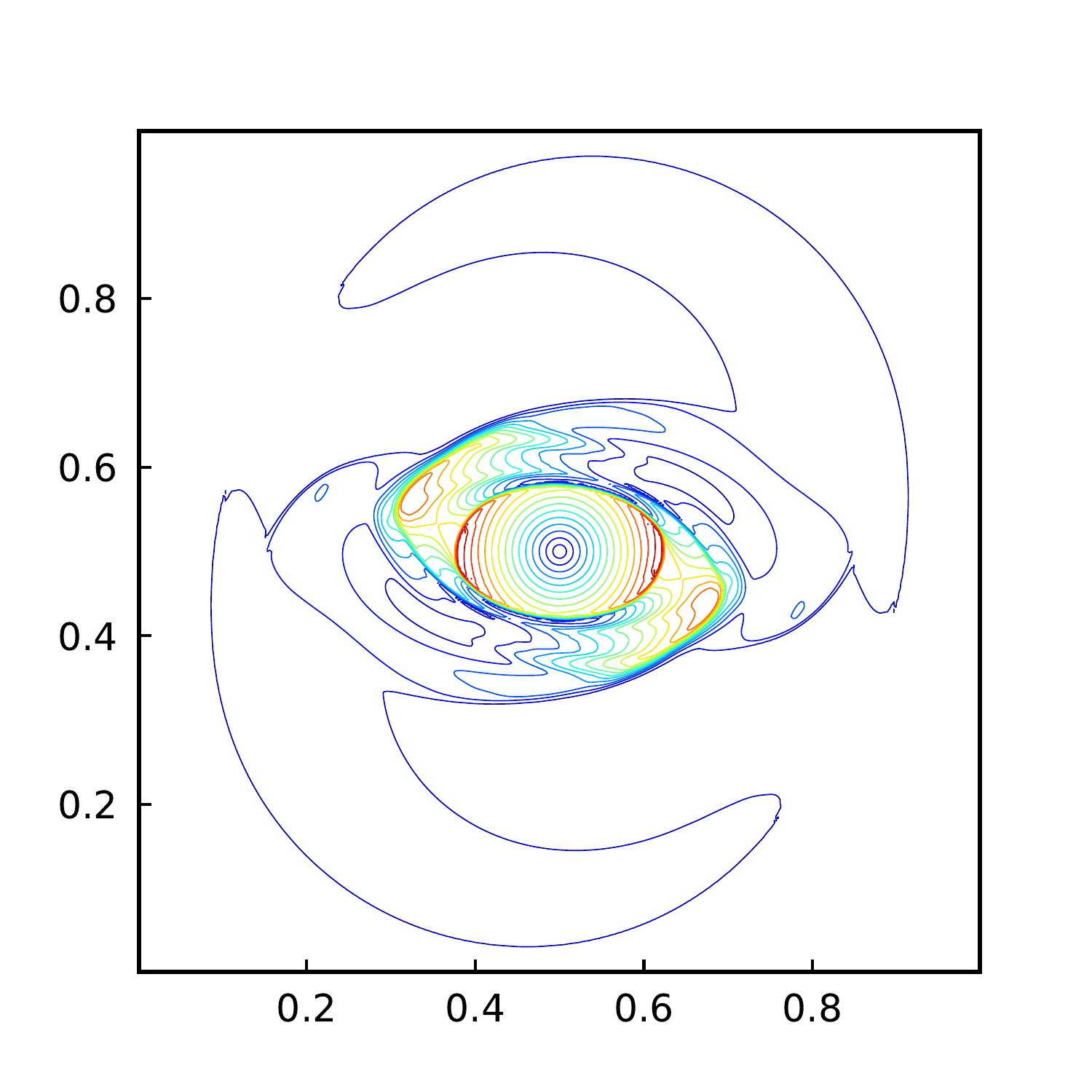}
			\end{subfigure}
			\hfill
			\begin{subfigure}{0.48\textwidth}
				\includegraphics[width=\textwidth]{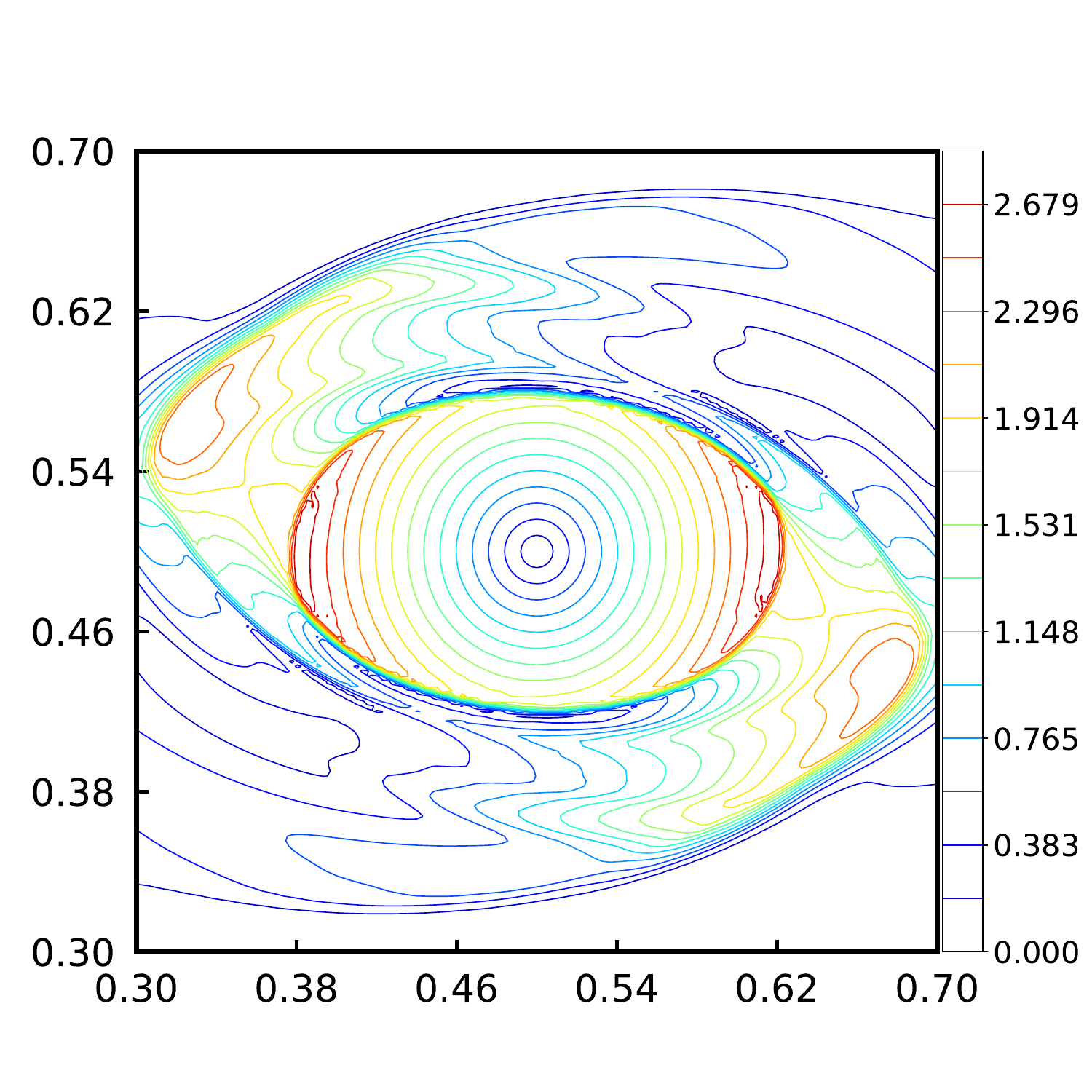}
			\end{subfigure}
			\caption{\Cref{Ex:Rotor}:Mach number contours for the rotor problem at $t = 0.295$. Top: without DDF projection; bottom: with DDF projection. Left: global view; right: zoomed-in view.}
			\label{fig:Ex-RotorDDF}
		\end{figure}
		
		To evaluate the effectiveness of our shock indicator, \Cref{fig:Rotor-BadCells} shows the distribution of detected troubled cells at six representative time instances. It is evident that the identified cells are well correlated with regions of strong discontinuities, such as shock fronts. This demonstrates that the indicator accurately distinguishes between smooth and non-smooth regions of the solution, thereby enabling the localized application of the COE procedure. As a result, computational cost and numerical dissipation are significantly reduced in smooth regions, while ensuring stability near discontinuities.

		\begin{figure}[!thb]
			\centering
			% 第一行
			\begin{subfigure}[b]{0.3\textwidth}
				\includegraphics[width=\linewidth]{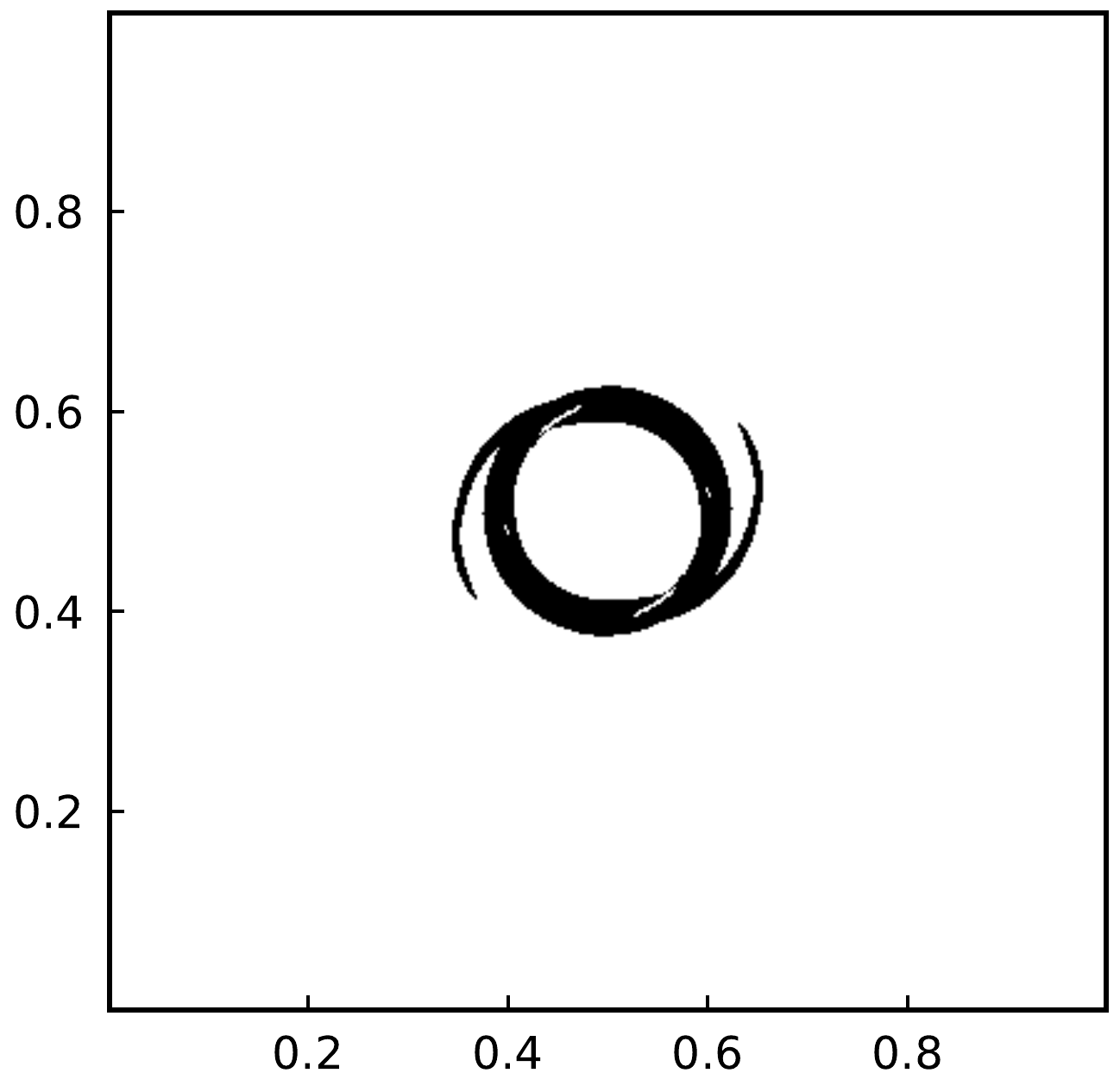}
				%\caption{}
				\label{fig:p1}
			\end{subfigure}
			\hfill
			\begin{subfigure}[b]{0.3\textwidth}
				\includegraphics[width=\linewidth]{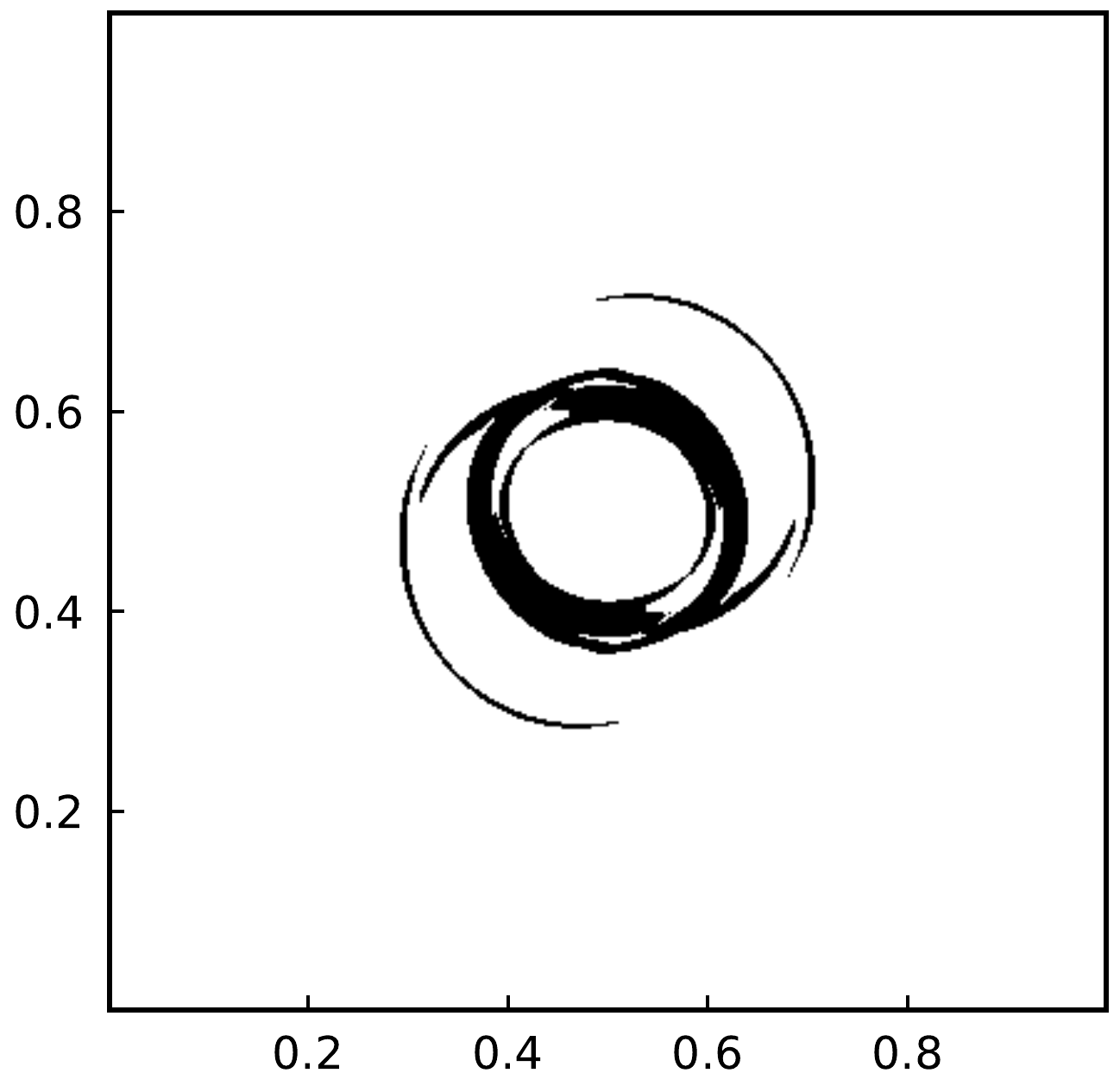}
				%\caption{}
				\label{fig:p2}
			\end{subfigure}
			\hfill
			\begin{subfigure}[b]{0.3\textwidth}
				\includegraphics[width=\linewidth]{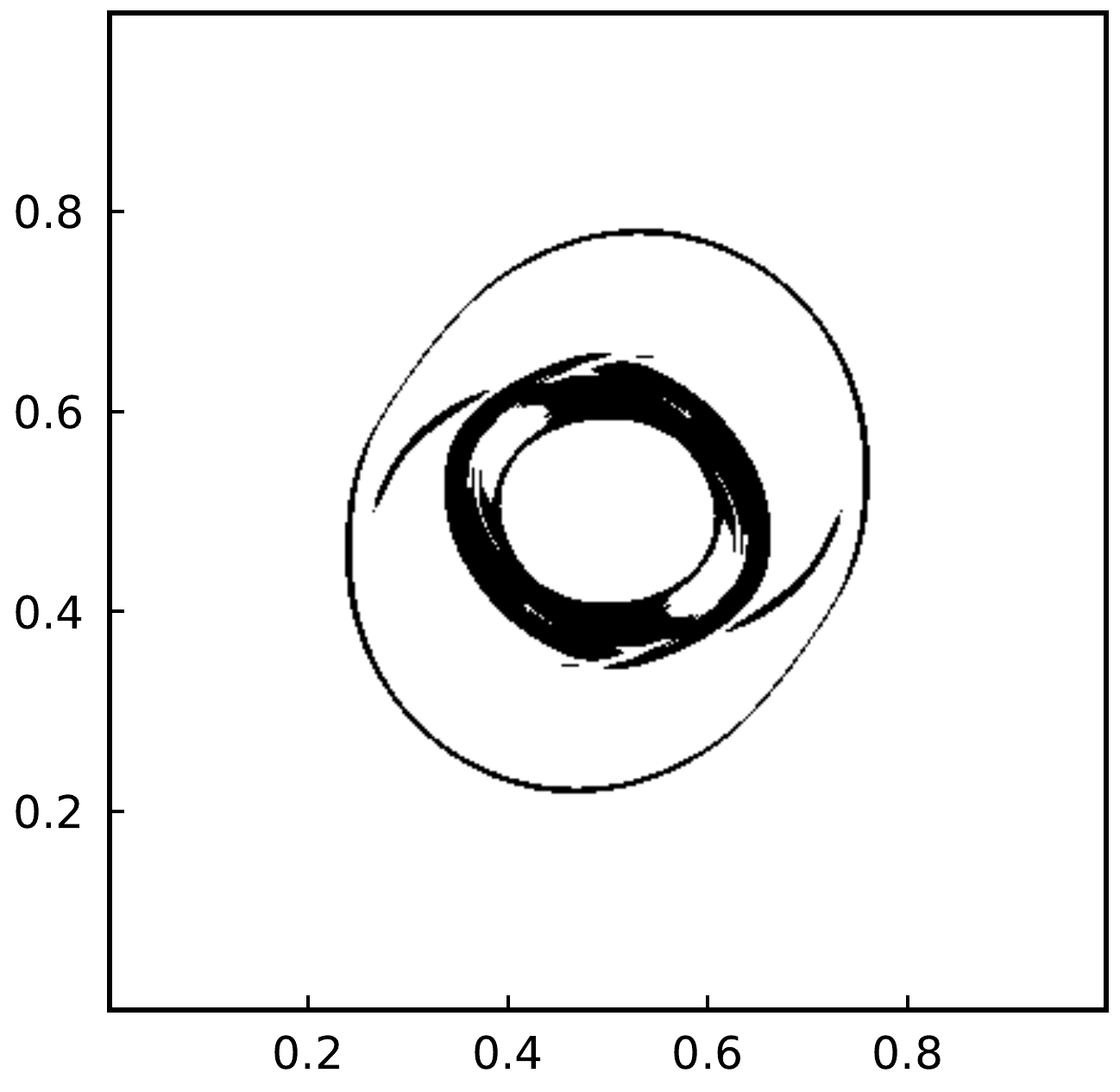}
				%\caption{}
				\label{fig:p3}
			\end{subfigure}
			
			% 行间距调整
			\vspace{0.01cm}
			
			% 第二行
			\begin{subfigure}[b]{0.3\textwidth}
				\includegraphics[width=\linewidth]{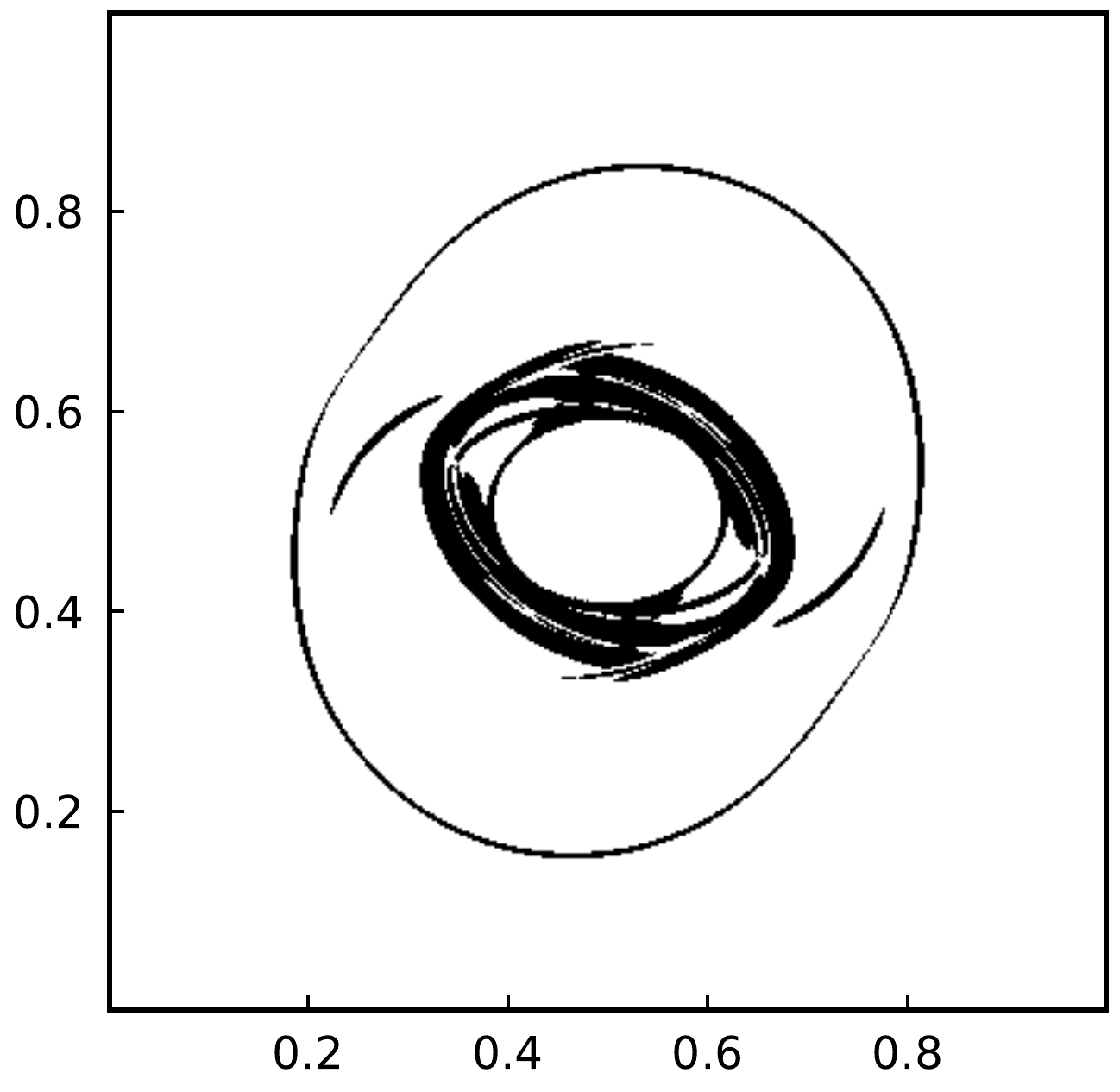}
				%\caption{}
				\label{fig:p4}
			\end{subfigure}
			\hfill
			\begin{subfigure}[b]{0.3\textwidth}
				\includegraphics[width=\linewidth]{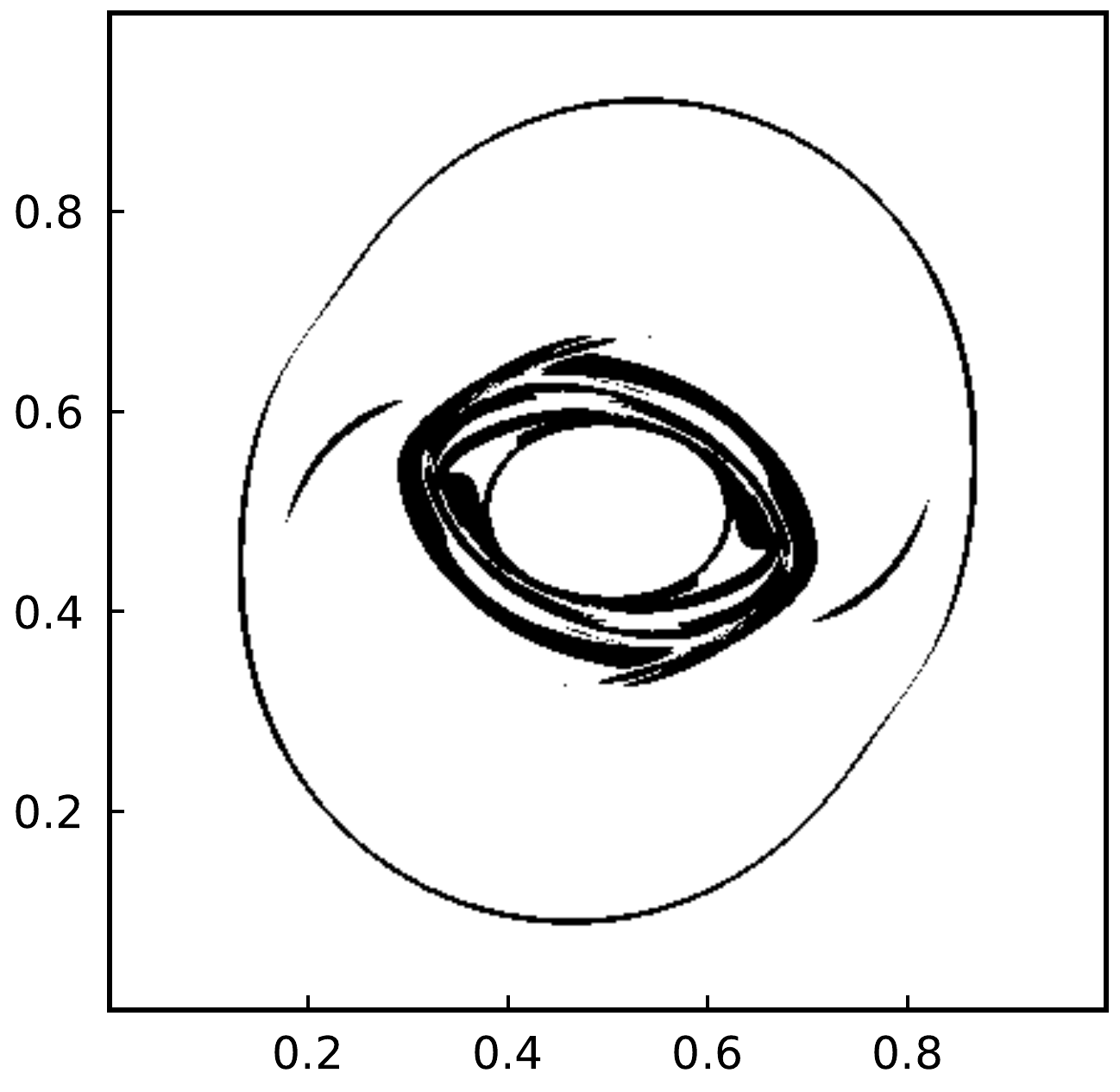}
				%\caption{}
				\label{fig:p5}
			\end{subfigure}
			\hfill
			\begin{subfigure}[b]{0.3\textwidth}
				\includegraphics[width=\linewidth]{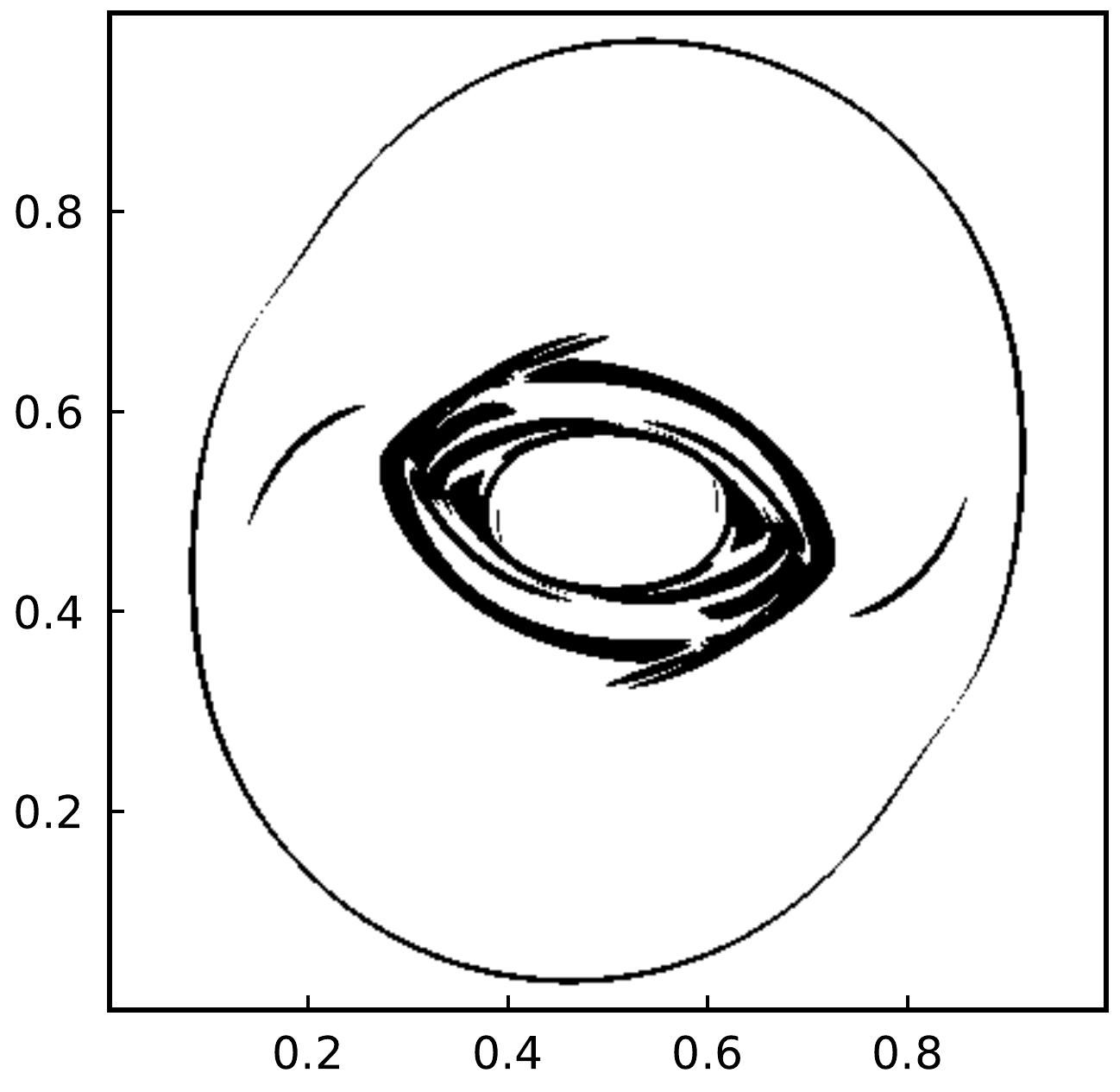}
				%\caption{}
				\label{fig:p6}
			\end{subfigure}
			\caption{\Cref{Ex:Rotor}: Distribution of detected troubled cells at times $t = 0.05$, $0.1$, $0.15$ (top row, left to right) and $t = 0.2$, $0.25$, $0.295$ (bottom row, left to right).  Black regions indicate the locations of troubled cells identified by our shock indicator.}
			\label{fig:Rotor-BadCells}
		\end{figure}	
		
	\end{expl}

	\begin{expl}[Blast Problem \uppercase\expandafter{\romannumeral1}]\label{Ex:Blast} \rm
		A widely studied and notoriously challenging test case in MHD is the blast wave problem, originally proposed by Balsara and Spicer~\cite{BalsaraSpicer1999}. Due to its extreme difficulty, it has been extensively revisited in the literature~\cite{BalsaraSpicer1999,Li2011,Li2012,WuShu2018,WuShu2019}. One of the primary challenges lies in the presence of steep pressure gradients induced by the strong explosion, which can easily lead to nonphysical negative pressures, particularly near shock fronts. Consequently, this problem serves as a stringent benchmark for assessing the robustness and stability of numerical MHD solvers. 
		The simulation is conducted on a square domain $[-0.5, 0.5]^2$, discretized with a uniform $400 \times 400$ Cartesian mesh. Outflow boundary conditions are applied on all sides. The initial conditions follow the classical configuration in~\cite{BalsaraSpicer1999}:
		\begin{equation*}
			(\rho, {\bm v}, {\bm B}, p) = 
			\begin{cases} 
				\left( 1, 0, 0, 0, B_0, 0, 0, p_0 \right), & \text{if } \sqrt{x^2 + y^2} \leq 0.1, \\ 
				\left( 1, 0, 0, 0, B_0, 0, 0, 0.1 \right), & \text{otherwise,} 
			\end{cases}
		\end{equation*}
		where the adiabatic index is set to $\gamma = 1.4$, the central high pressure is $p_0 = 10^3$, and the uniform magnetic field is initialized as $B_0 = \frac{100}{\sqrt{4\pi}}$. This setup results in a very low plasma beta, $\beta \approx 2.51 \times 10^{-4}$, creating a magnetically dominated environment that poses significant challenges for numerical stability.

		\begin{figure}[!thb]
			\centering		
			\begin{subfigure}{0.48\textwidth}
				\includegraphics[width=\textwidth]{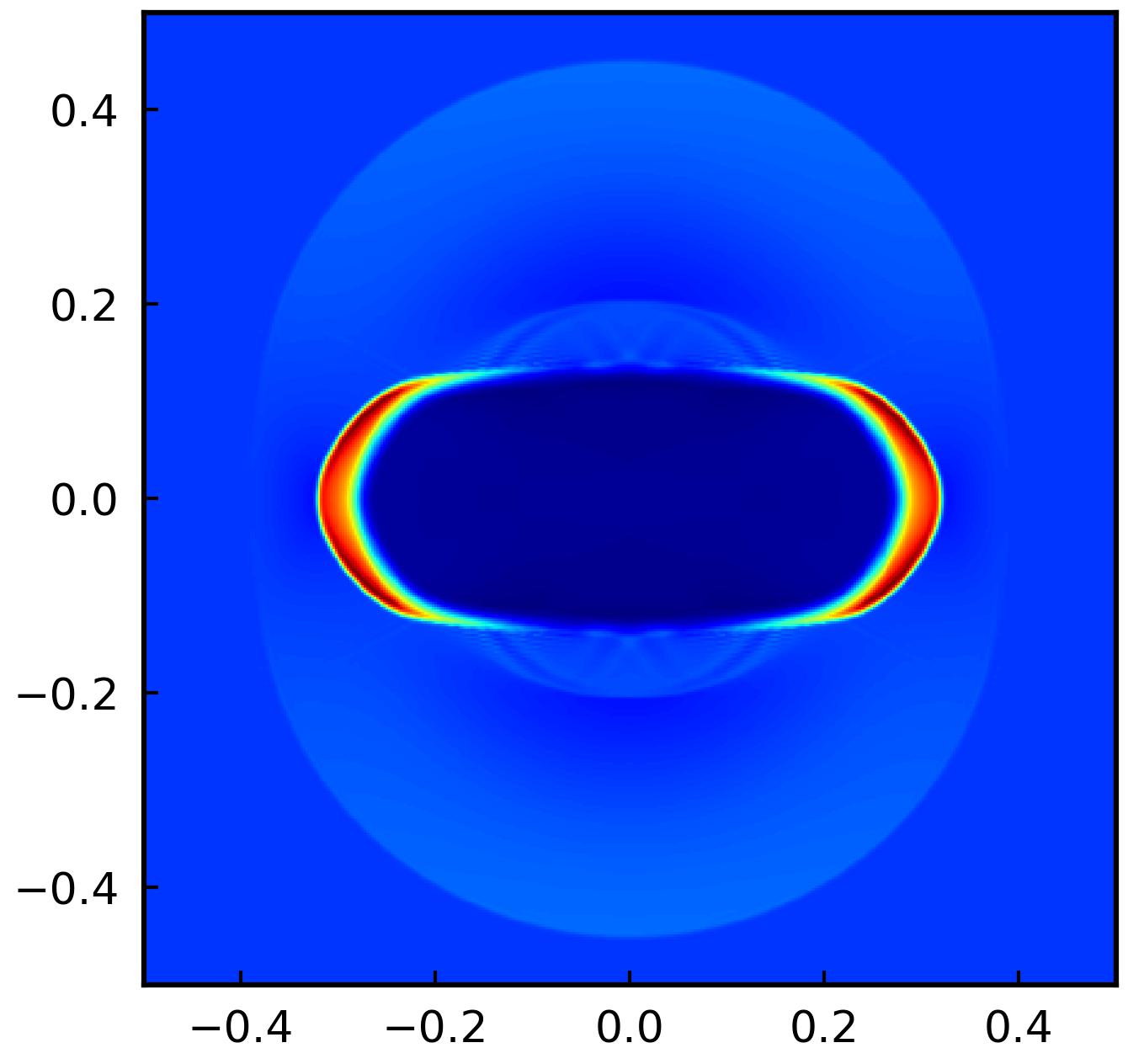}
			\end{subfigure}
			\hfill
			\begin{subfigure}{0.48\textwidth}
				\includegraphics[width=\textwidth]{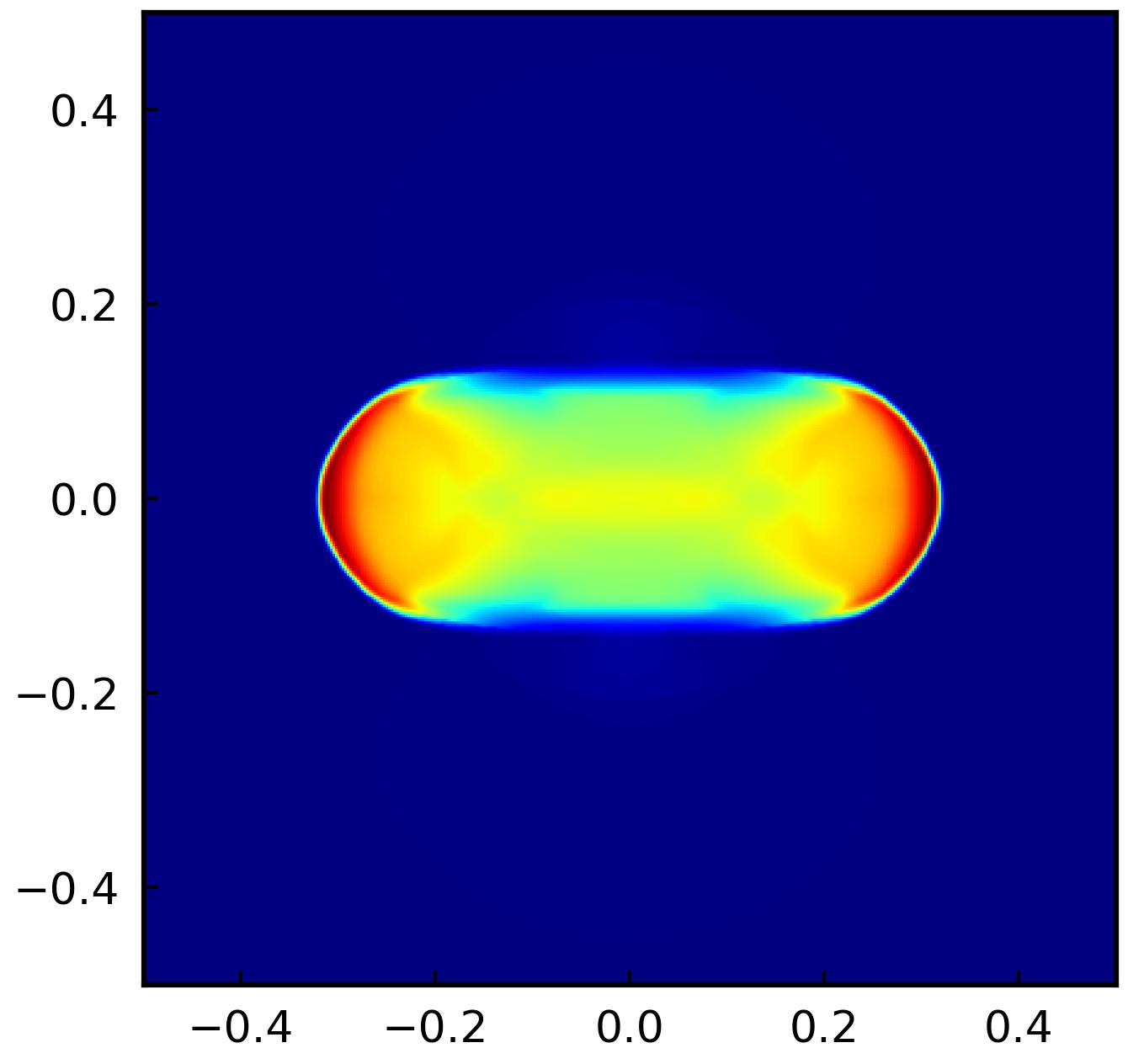}
			\end{subfigure}
			
			\begin{subfigure}{0.48\textwidth}
				\includegraphics[width=\textwidth]{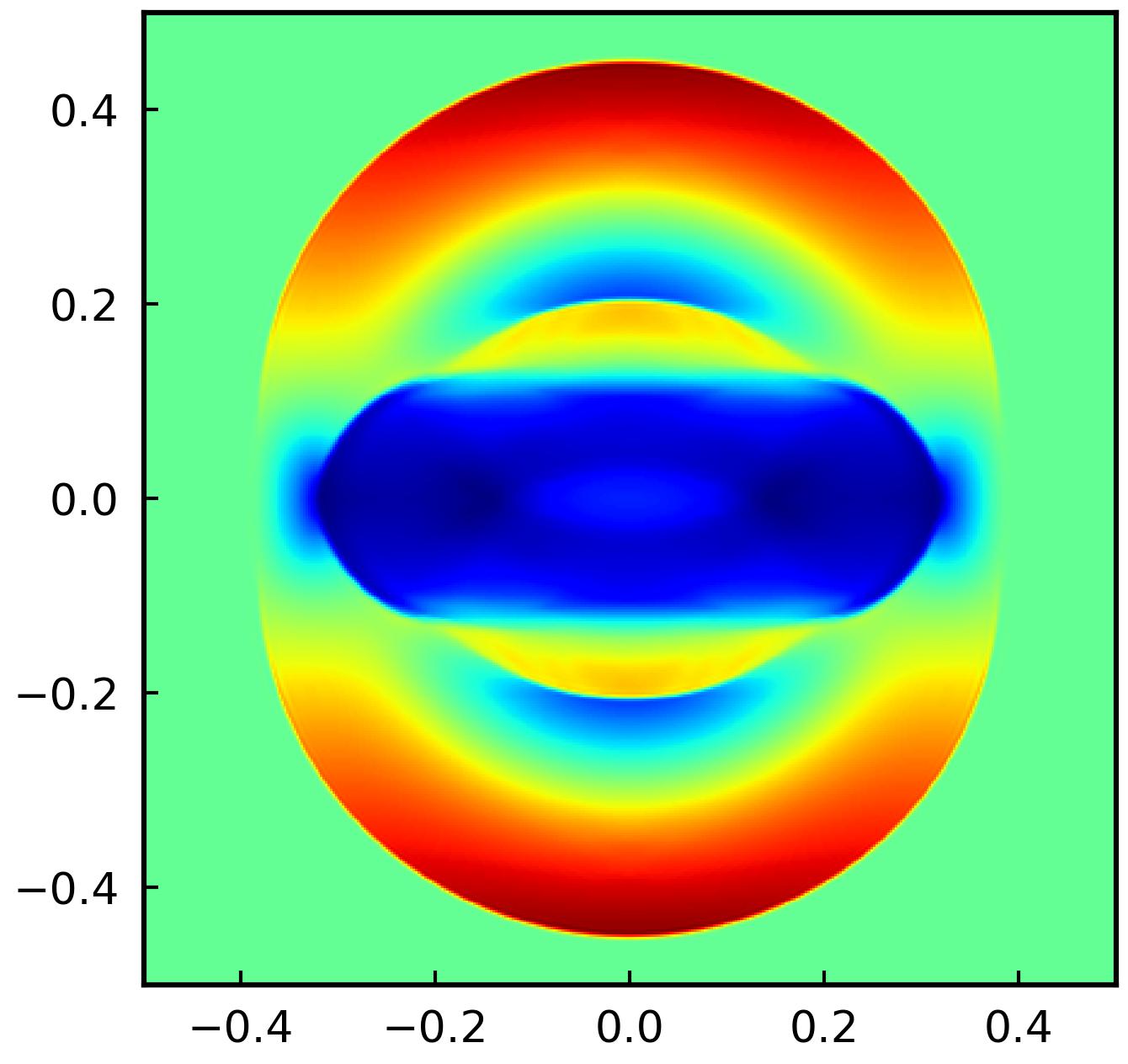}
			\end{subfigure}
			\hfill
			\begin{subfigure}{0.48\textwidth}
				\includegraphics[width=\textwidth]{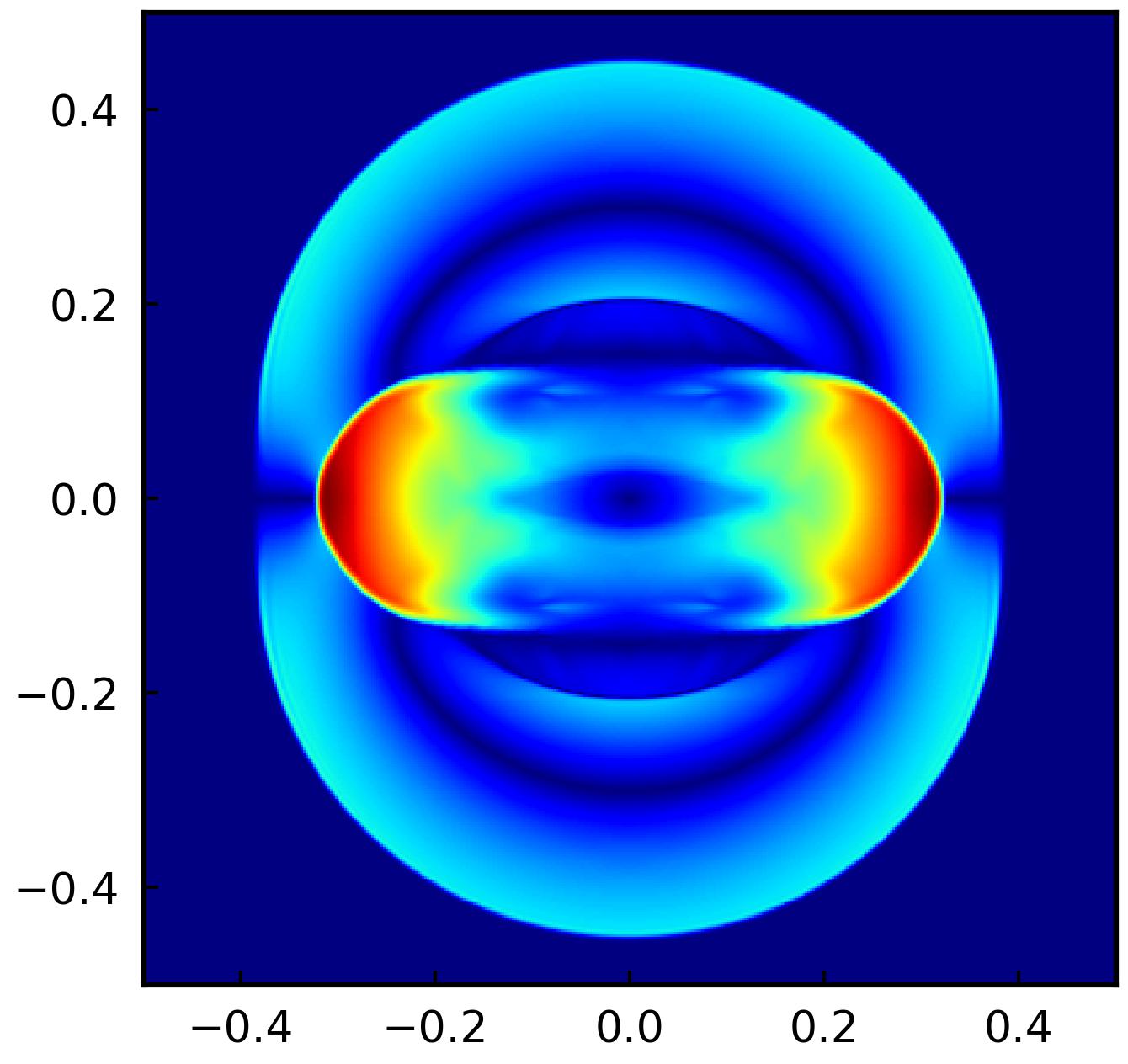}
			\end{subfigure}
			\caption{\Cref{Ex:Blast}: Plots of density (top-left), thermal pressure (top-right), magnetic pressure (bottom-left), and velocity magnitude (bottom-right) for the blast problem at $t = 0.01$.}
			\label{fig:Ex-Blast}
		\end{figure}
		
		\Cref{fig:Ex-Blast} shows the solution contours at time $t = 0.01$. The numerical results are consistent with those reported in previous studies~\cite{BalsaraSpicer1999,Christlieb2015PP,Li2011,WuShu2018,WuShu2019}. As observed, the solution features a radially expanding blast wave accompanied by an inward-propagating rarefaction. The shock fronts are sharply resolved, and no nonphysical negative pressures are encountered during our simulation. 
		These results highlight the robustness and accuracy of the proposed PAMPA scheme in capturing strongly magnetized, shock-dominated MHD flows while preserving physical admissibility.

		To further demonstrate the effectiveness of our shock indicator, we compare it with two existing approaches in \Cref{fig:Ex-Blast-troubled-comparison}. The left panel shows the troubled cells identified by the neural network-based indicator proposed in~\cite{DEEPRAY2019}. For this comparison, we use the publicly available pre-trained model provided by the authors of \cite{DEEPRAY2019} in their GitHub repository.\footnote{\url{https://deepray.github.io/codes}} The center panel displays the results obtained using the original indicator developed by Feng and Liu~\cite{LiuFeng2021}. 
		As observed, both existing methods tend to flag an excessive number of troubled cells, including regions where the solution remains smooth. In contrast, the proposed indicator, shown in the right panel, accurately captures the key discontinuities while substantially reducing false positives in smooth regions. This leads to improved computational efficiency and minimizes unnecessary numerical dissipation. 
		Moreover, we note that the troubled-cell distribution produced by the neural network-based indicator lacks the axial symmetry of the underlying problem---an artifact of its data-driven nature. By contrast, our method preserves the symmetry inherent to the blast wave configuration, further demonstrating its reliability and robustness.

		\begin{figure}[!thb]
			\centering
			% First row: comparison of troubled-cell indicators
			\begin{subfigure}{0.3\textwidth}
				\includegraphics[width=\linewidth]{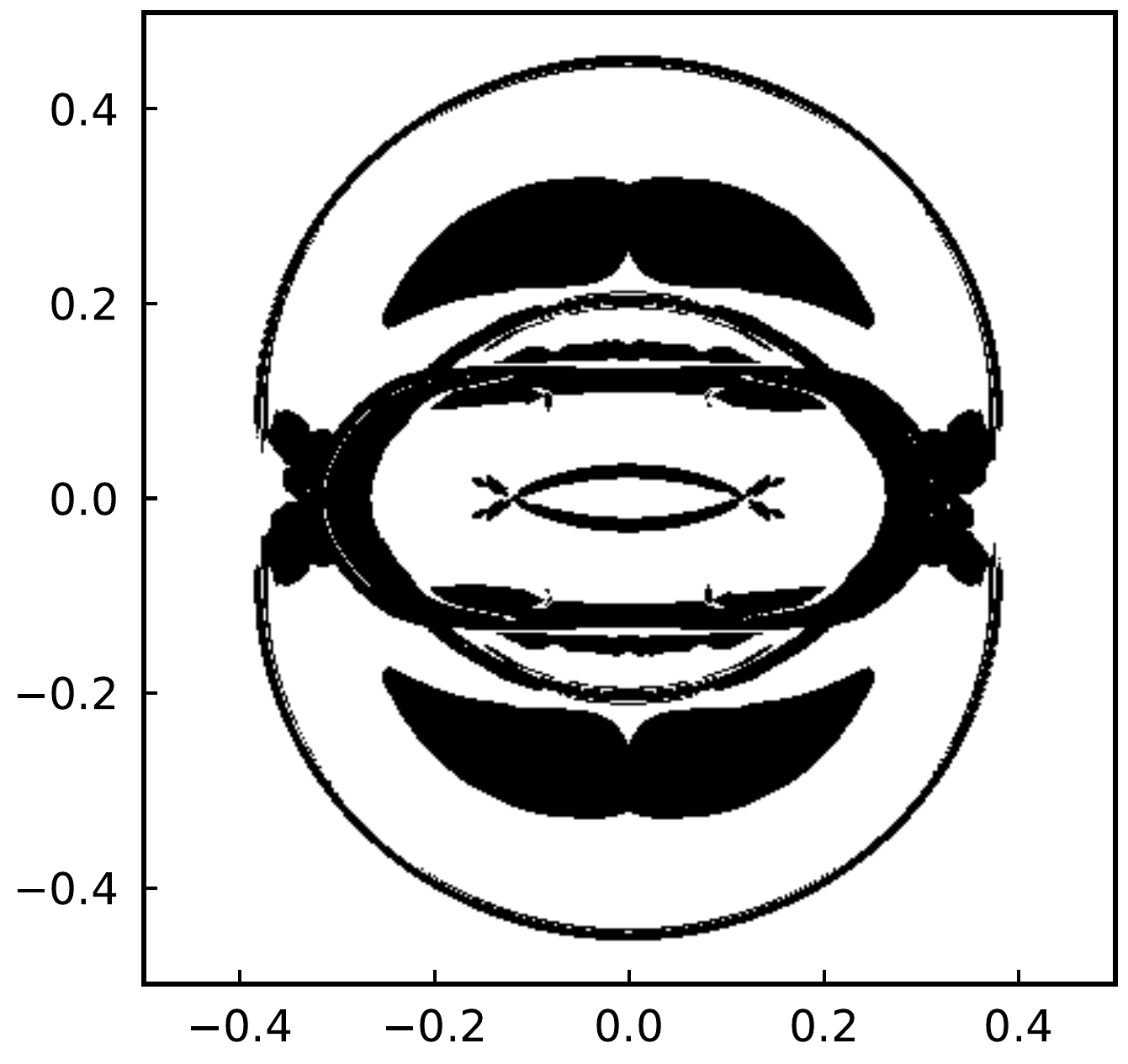}
				\caption{Neural network-based indicator~\cite{DEEPRAY2019}}
				\label{fig:trouble_cell_DeepRay}
			\end{subfigure}
			\hfill
			\begin{subfigure}{0.3\textwidth}
				\includegraphics[width=\linewidth]{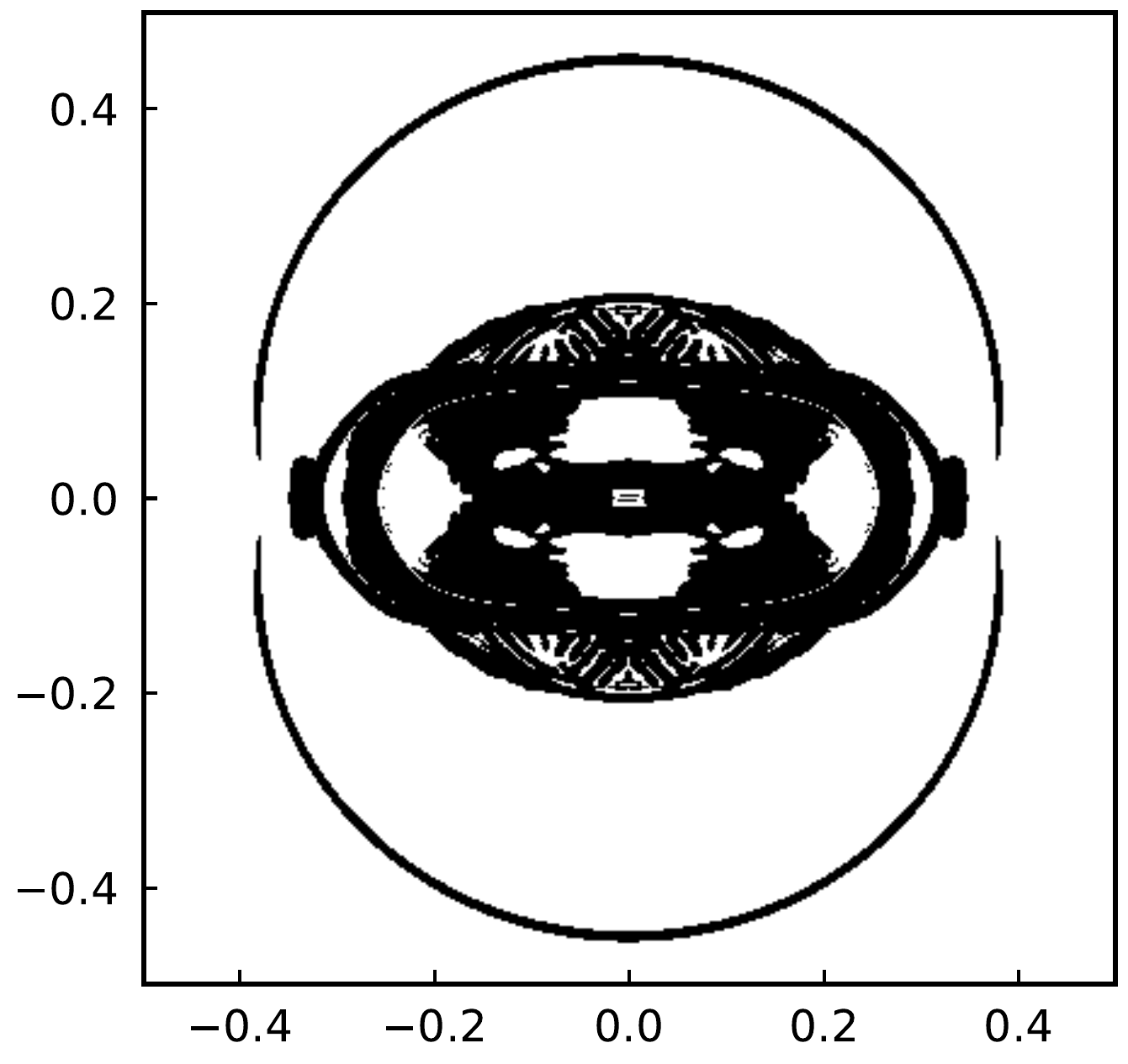}
				\caption{Indicator by Feng and Liu~\cite{LiuFeng2021}}
				\label{fig:trouble_cell_Liu}
			\end{subfigure}
			\hfill
			\begin{subfigure}{0.3\textwidth}
				\includegraphics[width=\linewidth]{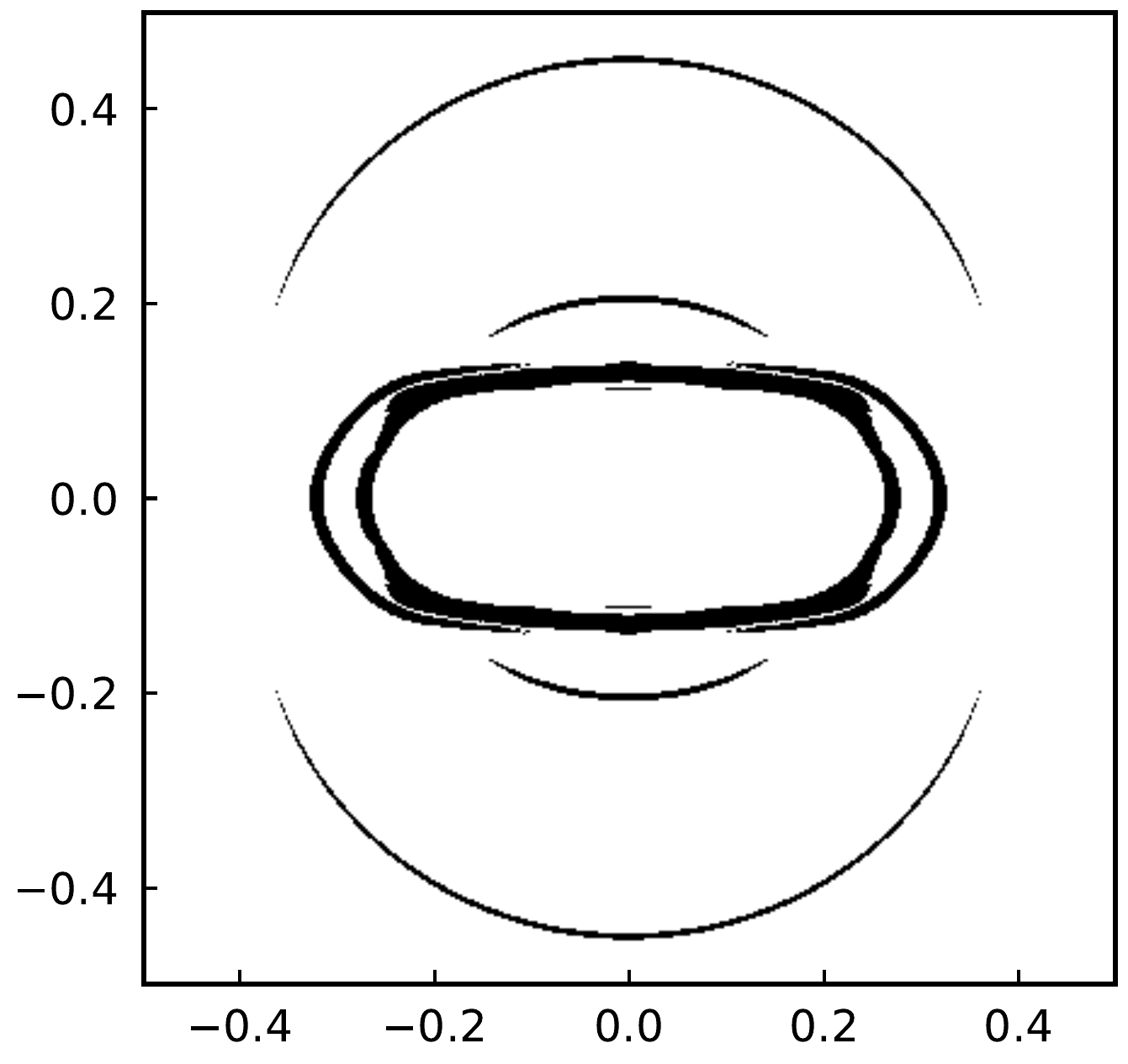}
				\caption{Proposed indicator}
				\label{fig:trouble_cell_Our}
			\end{subfigure}
			\caption{\Cref{Ex:Blast}: Comparison of troubled-cell indicators for the MHD blast wave problem at $t = 0.01$. The proposed indicator accurately identifies key discontinuities while minimizing false positives in smooth regions, demonstrating improved selectivity and symmetry preservation.}
			\label{fig:Ex-Blast-troubled-comparison}
		\end{figure}
		
	\end{expl}

	\begin{expl}[Blast Problem \uppercase\expandafter{\romannumeral2}]\label{Ex:Blast-strong} \rm
		We now consider a more demanding variant of the blast wave problem, originally proposed by Wu and Shu~\cite{WuShu2018}. This setup is similar to that of \Cref{Ex:Blast}, but with significantly increased explosion strength and magnetization: the initial central pressure is raised to $p_0 = 10^4$ and the magnetic field strength to $B_0 = \frac{1000}{\sqrt{4\pi}}$. These changes result in much stronger discontinuities and an extremely low plasma-beta, approximately $\beta \approx 2.51 \times 10^{-6}$---only 1\% of that in the standard configuration in \Cref{Ex:Blast}.
		
		The simulation is performed up to time $t = 0.001$ on a uniform $400 \times 400$ Cartesian grid. \Cref{fig:Ex-Blast-strong} displays the computed results. As shown, the proposed COE procedure effectively suppresses numerical oscillations. No nonphysical values of pressure or density are observed throughout the simulation. 
		Due to the greatly increased magnetization, the external fast shock becomes significantly weaker and is no longer visible in the density contours. This behavior is consistent with previous observations reported in~\cite{WuShu2018,WuShu2019}. This example underscores the robustness of our scheme under strongly magnetized, low-plasma-beta conditions.

		\begin{figure}[!thb]
			\centering		
			\begin{subfigure}{0.48\textwidth}
				\includegraphics[width=\textwidth]{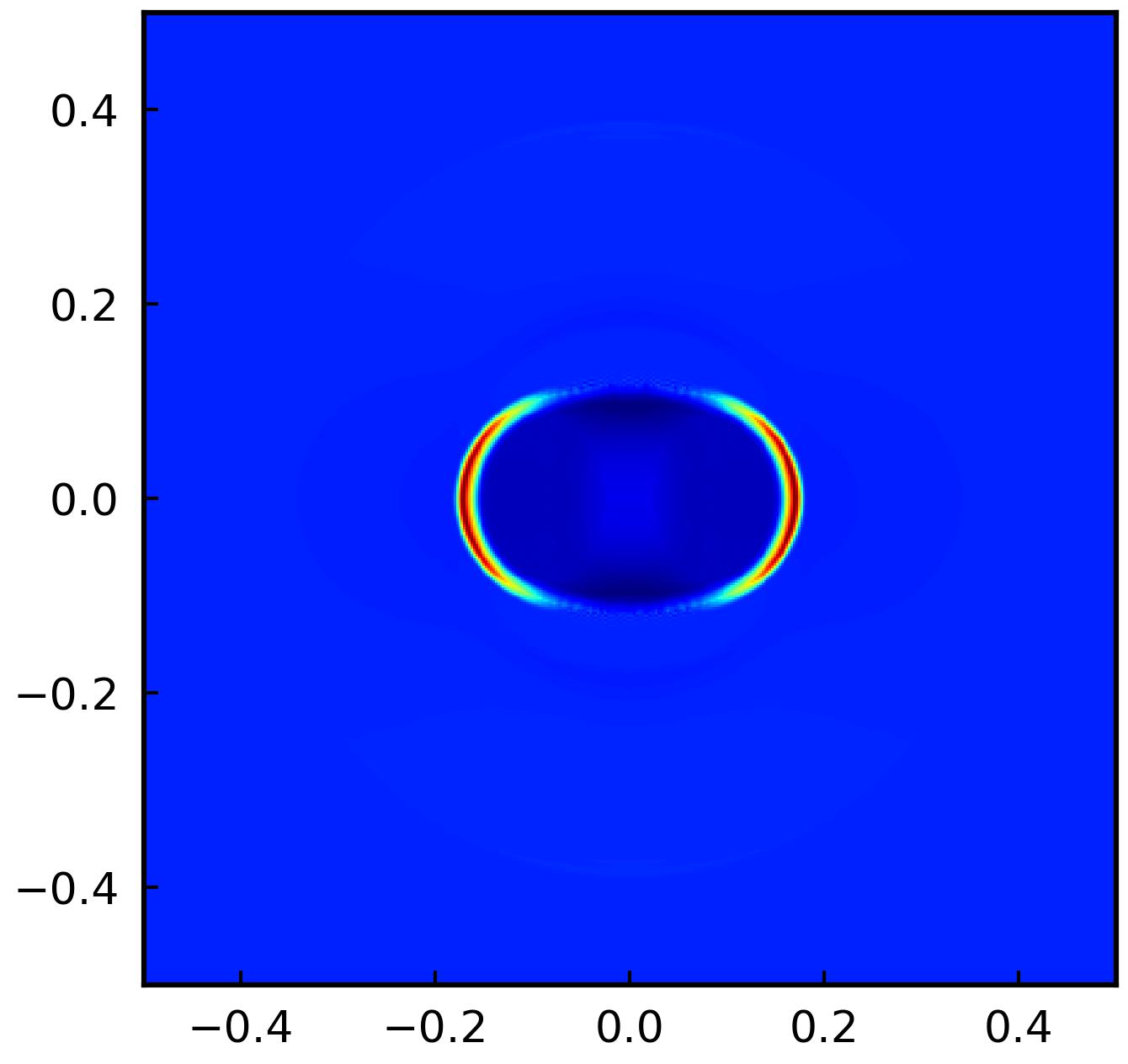}
			\end{subfigure}
			\hfill
			\begin{subfigure}{0.48\textwidth}
				\includegraphics[width=\textwidth]{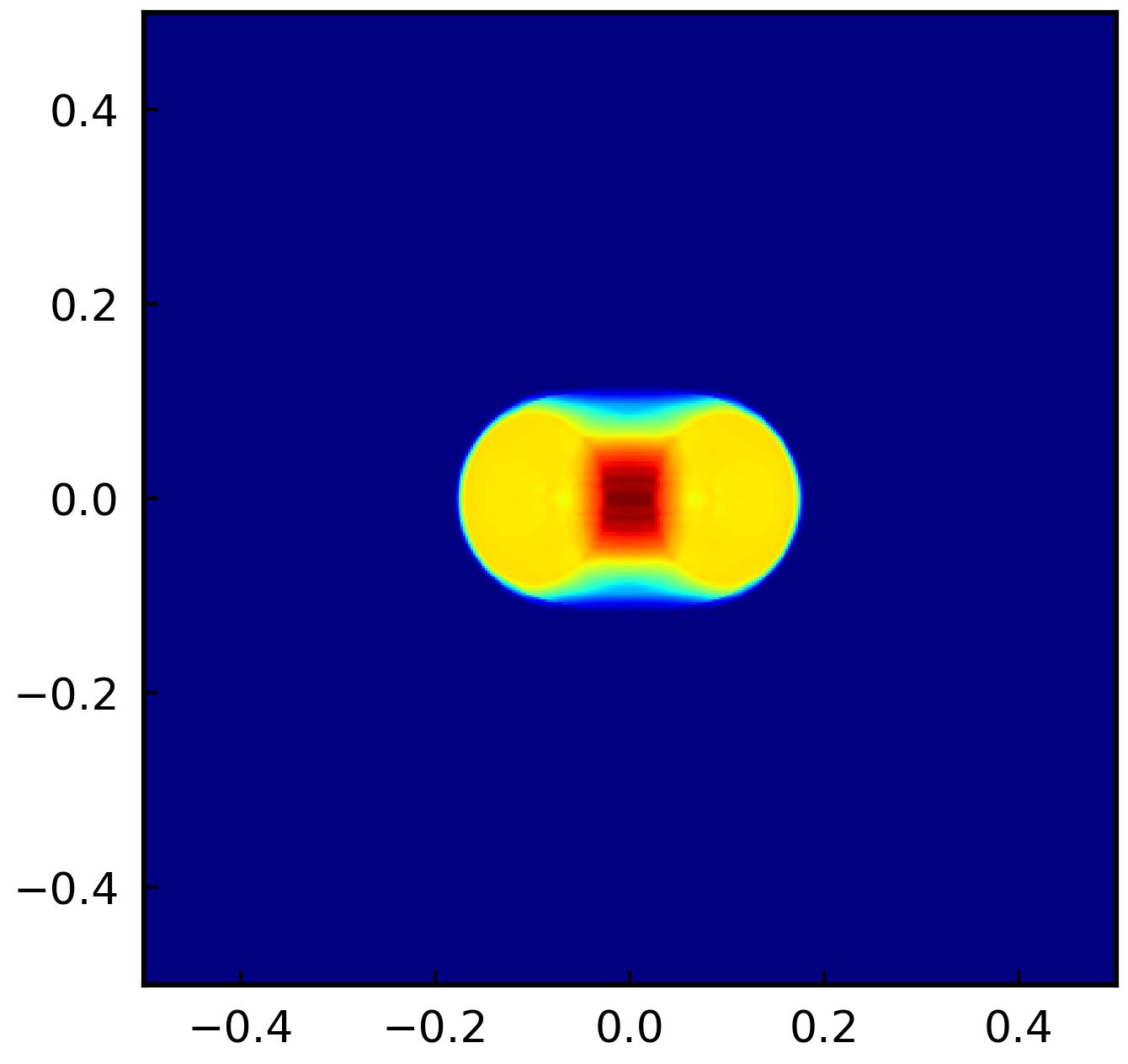}
			\end{subfigure}
			
			\begin{subfigure}{0.48\textwidth}
				\includegraphics[width=\textwidth]{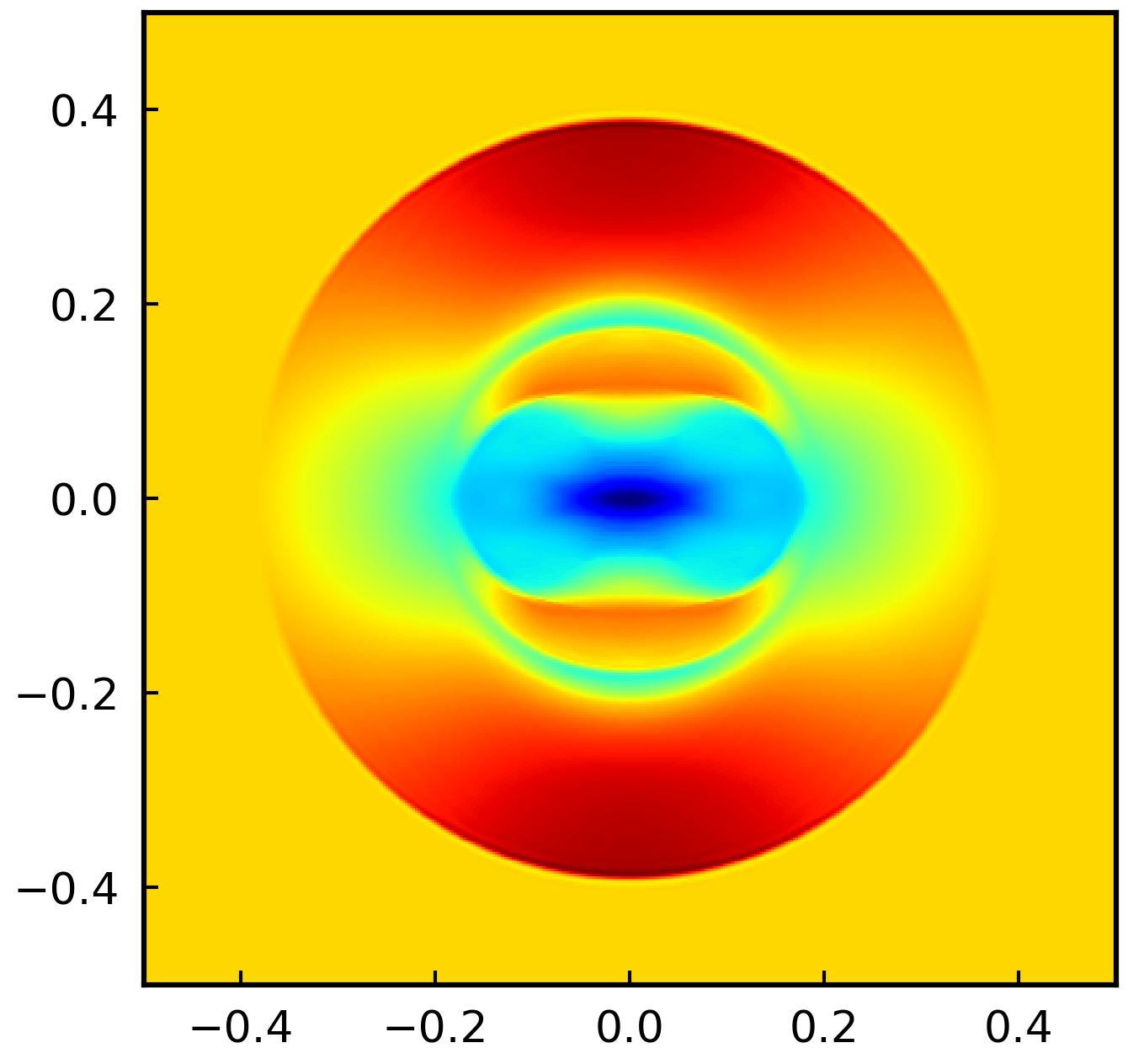}
			\end{subfigure}
			\hfill
			\begin{subfigure}{0.48\textwidth}
				\includegraphics[width=\textwidth]{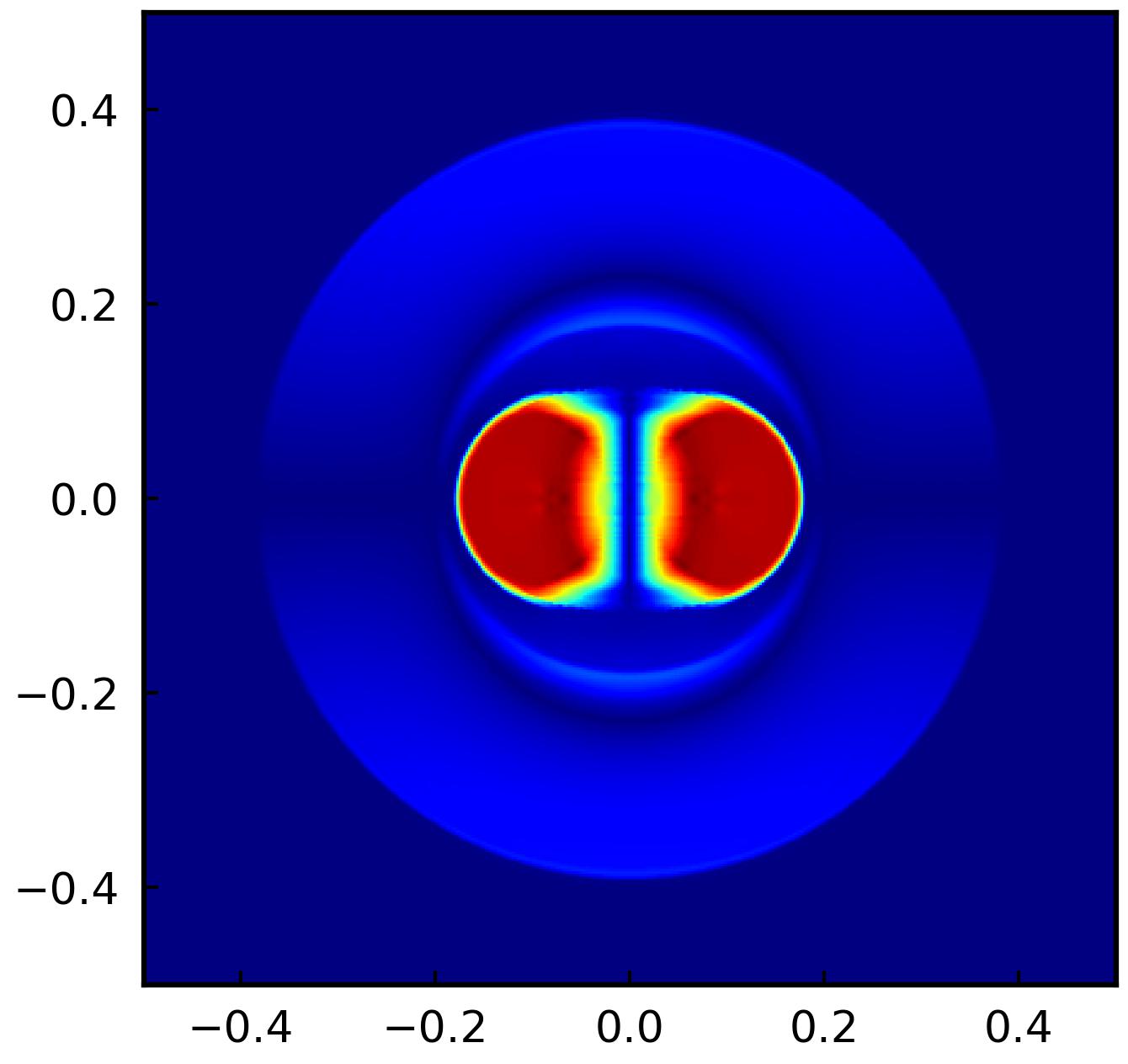}
			\end{subfigure}
			\caption{\Cref{Ex:Blast-strong}: Plots of density (top-left), thermal pressure (top-right), magnetic pressure (bottom-left), and velocity magnitude (bottom-right) for the blast problem \uppercase\expandafter{\romannumeral2} at $t = 0.001$.}
			\label{fig:Ex-Blast-strong}
		\end{figure}
	\end{expl}

	\begin{expl}[Shock–Cloud Interaction]\label{Ex:ShockCloud} \rm
		The shock–cloud interaction problem models the disruption of a high-density cloud by a strong shock wave, resulting in the formation of multiple discontinuities and the development of small-scale flow instabilities. We adopt the configuration from~\cite{Dai1998,jiang1998nonoscillatory,WuShu2018,WuShu2019}. The initial condition consists of left and right states separated by a vertical discontinuity at $x = 0.6$:
		\begin{equation*}
			(\rho, {\bm v}, {\bm B}, p) = 
			\begin{cases}
				(3.86859,\, 0,\, 0,\, 0,\, 0,\, 2.1826182,\ -2.1826182,\, 167.345), & x < 0.6, \\
				(1,\, -11.2536,\, 0,\, 0,\, 0,\, 0.56418958,\ 0.56418958,\, 1), & x \ge 0.6.
			\end{cases}
		\end{equation*}
		A circular cloud of elevated density ($\rho = 10$) is embedded in the right state, centered at $(0.8, 0.5)$ with radius $0.15$.
		The computational domain is $[0, 1]^2$, discretized using a uniform $400 \times 400$ Cartesian mesh. Inflow boundary conditions are imposed on the right boundary, and outflow conditions on the remaining three boundaries. 
		
		The numerical results at time $t = 0.06$ are shown in \Cref{fig:Ex-ShockCloud}. The simulation captures complex flow structures—including bow shocks, reflected shocks, contact discontinuities, and small-scale features—with high resolution. The results are in good agreement with previous studies~\cite{Toth2000,jiang1998nonoscillatory,Balbas2006,WuShu2019}. 
		In addition, \Cref{fig:Ex-ShockcloudTrouble} shows the distribution of troubled cells identified by our shock indicator. Using only the two fast magnetoacoustic wave speeds effectively captures strong discontinuities while keeping the false detection rate low in smooth regions. By contrast, including additional characteristic waves (e.g., the advection wave) leads to over-detection, flagging troubled cells even in relatively smooth areas. This comparison suggests that a fast-wave-only strategy in our indicator offers a balanced and effective design choice for shock detection. 
		As in previous test cases, no negative pressure or density values are observed during the simulation, further confirming the robustness and PP property of our scheme.

		\begin{figure}[!thb]
			\centering		
			\begin{subfigure}{0.48\textwidth}
				\includegraphics[width=\textwidth]{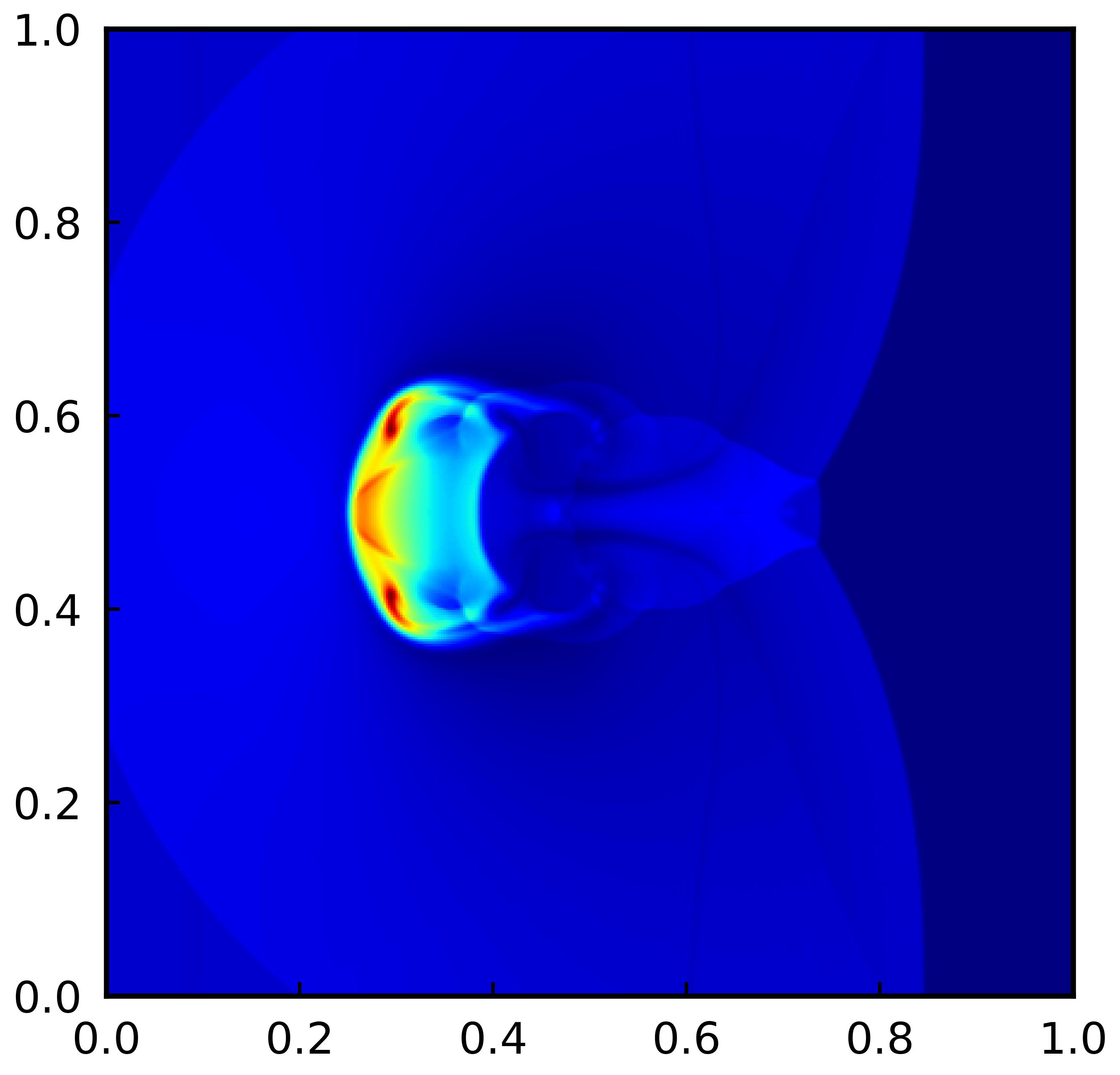}
			\end{subfigure}
			\hfill
			\begin{subfigure}{0.48\textwidth}
				\includegraphics[width=\textwidth]{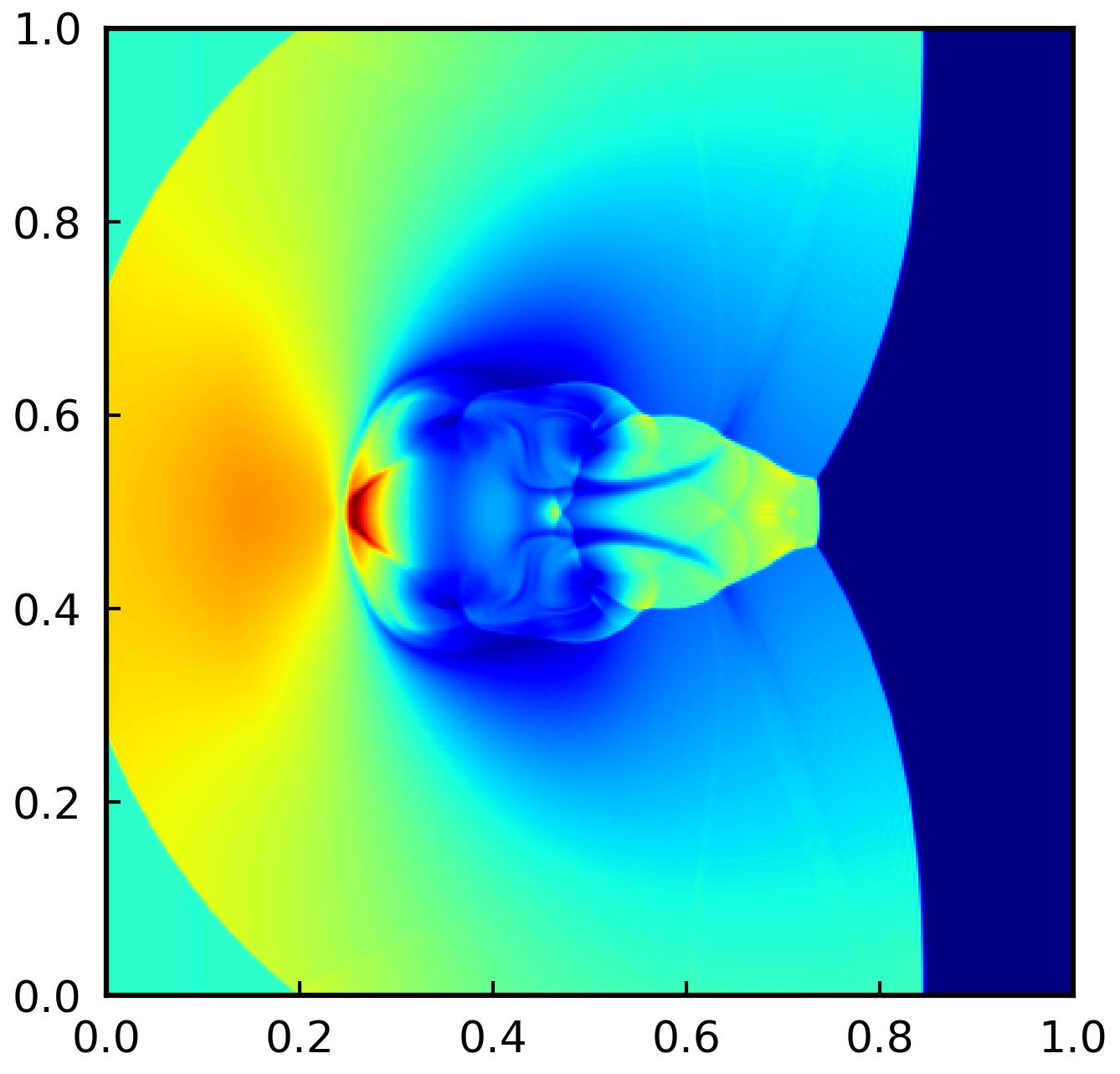}
			\end{subfigure}
			
			\begin{subfigure}{0.48\textwidth}
				\includegraphics[width=\textwidth]{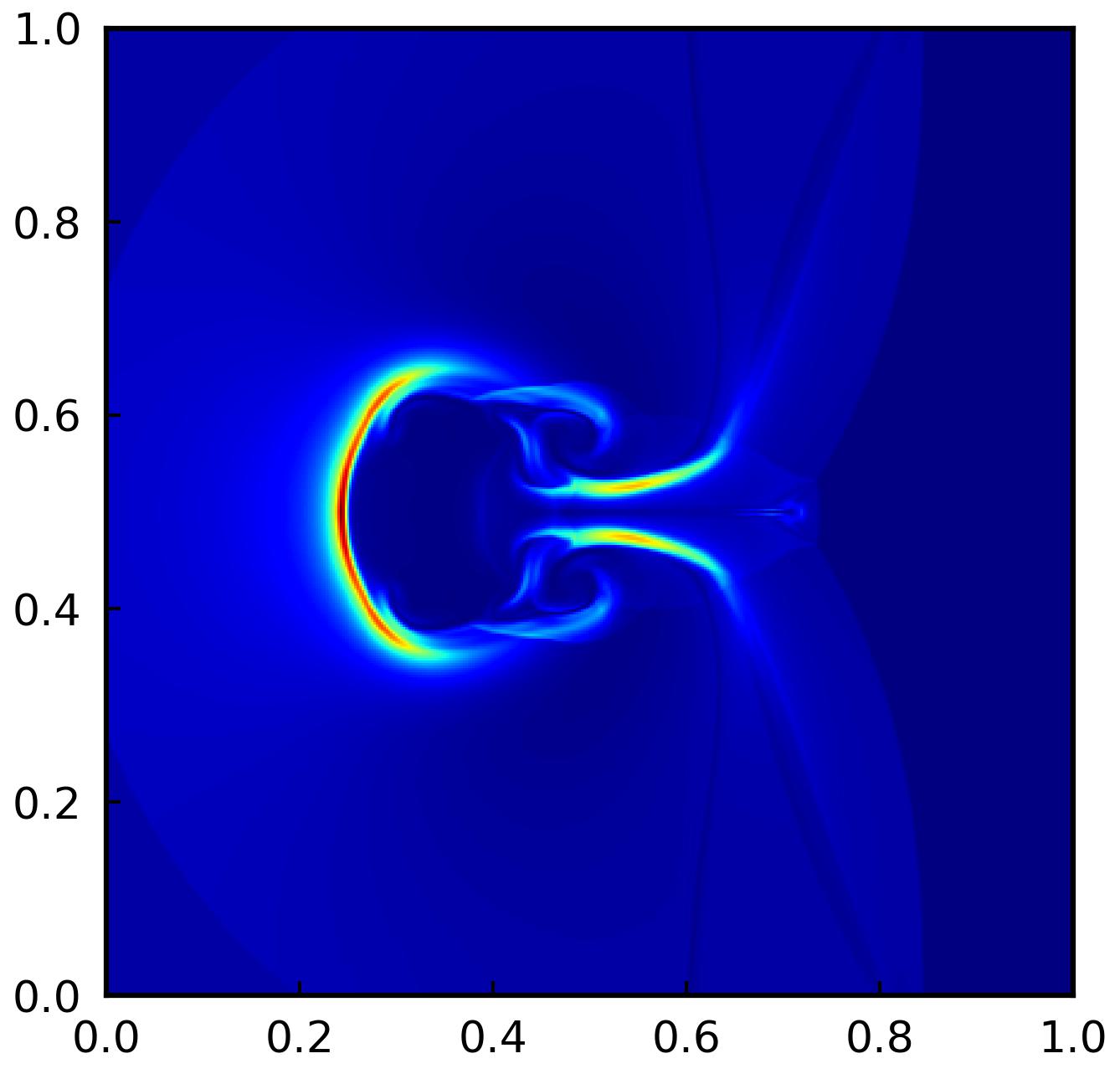}
			\end{subfigure}
			\hfill
			\begin{subfigure}{0.48\textwidth}
				\includegraphics[width=\textwidth]{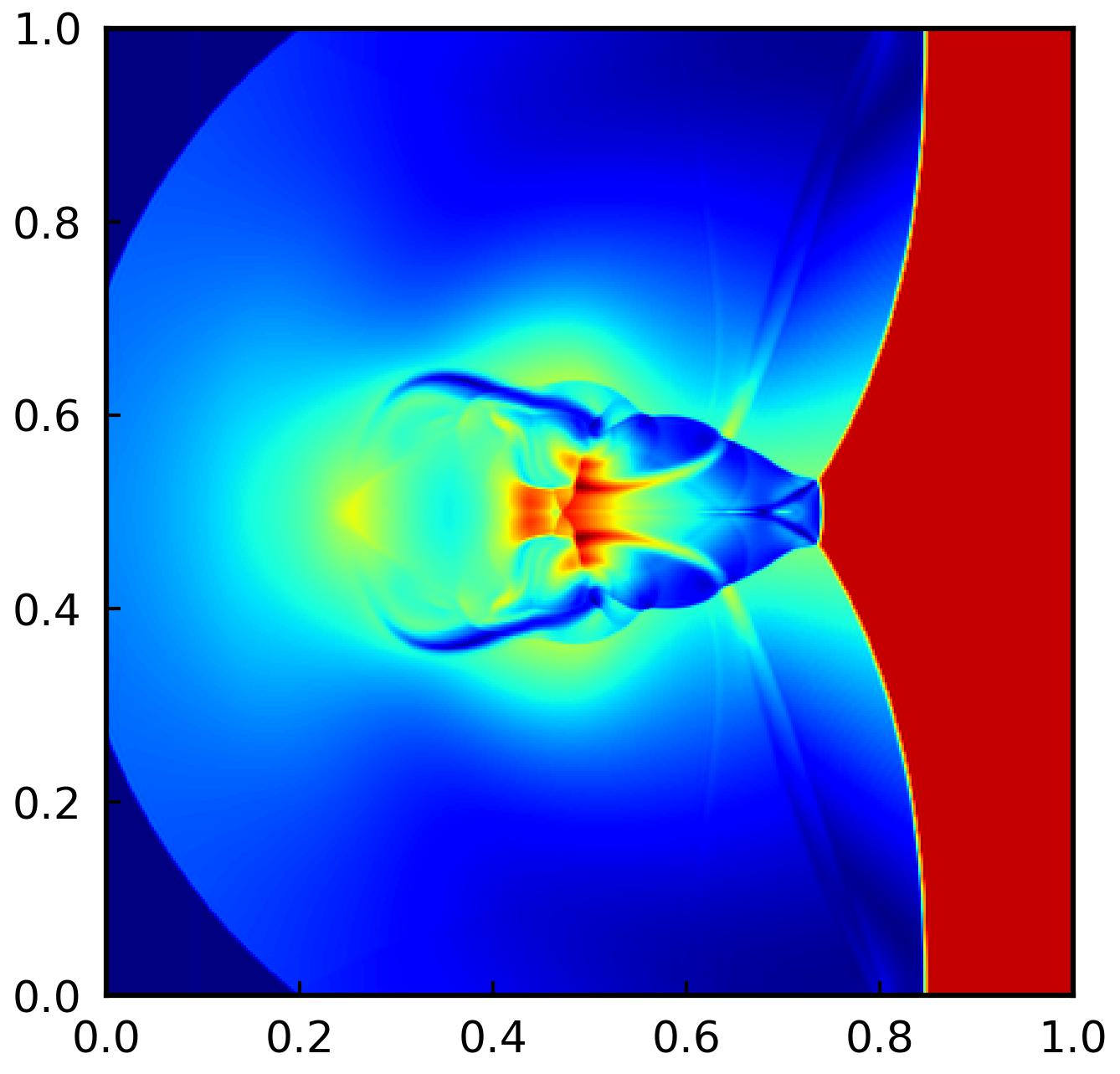}
			\end{subfigure}
			\caption{\Cref{Ex:ShockCloud}: The density (top-left), thermal pressure (top-right), magnetic pressure (bottom-left), and velocity magnitude (bottom-right) for the shock–cloud interaction problem at $t = 0.06$. 
			}
			\label{fig:Ex-ShockCloud}
		\end{figure}
		
		\begin{figure}[!htb]
			\centering		
			\begin{subfigure}{0.48\textwidth}
				\includegraphics[width=\textwidth]{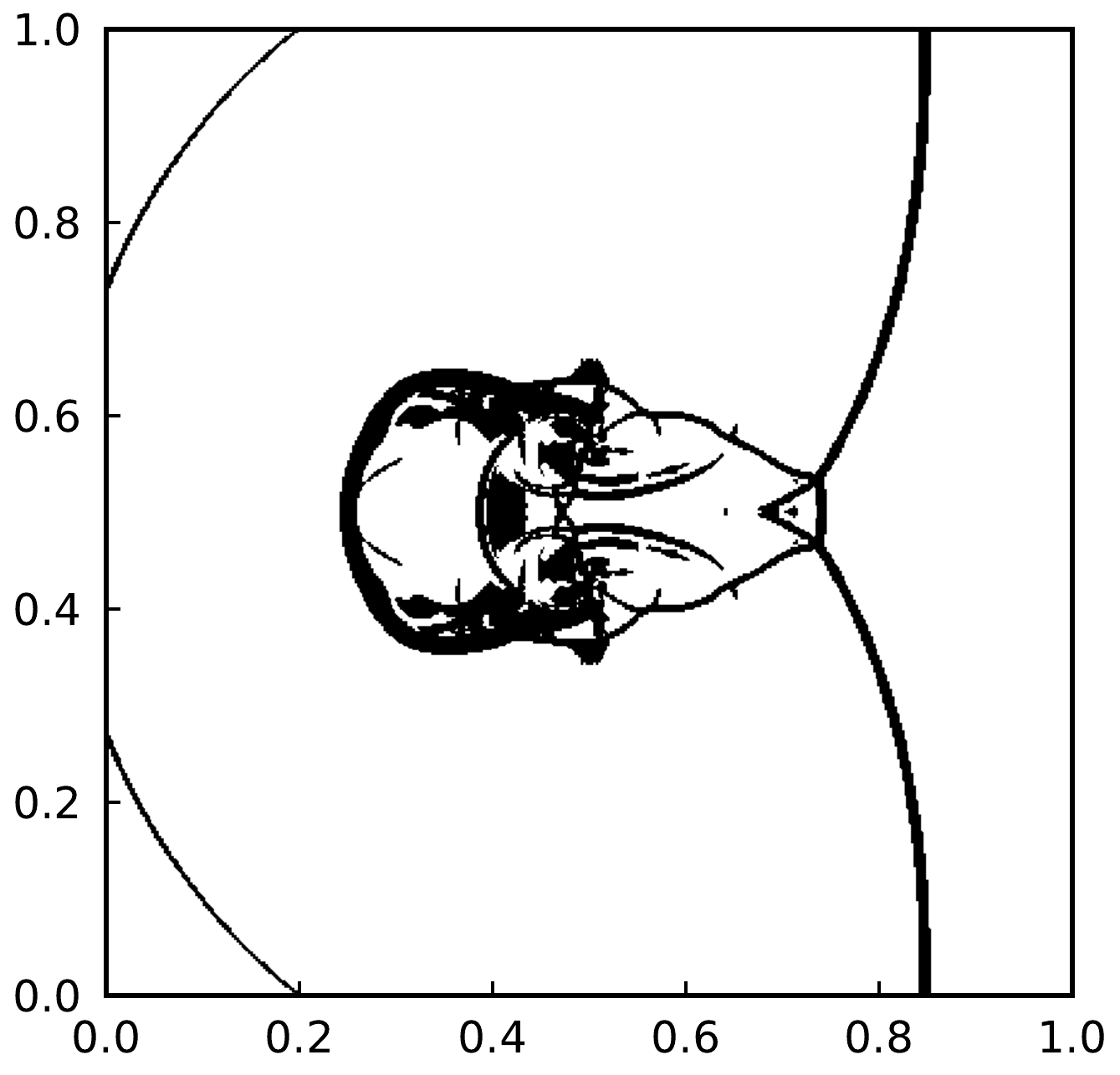}
			\end{subfigure}
			\hfill
			\begin{subfigure}{0.48\textwidth}
				\includegraphics[width=\textwidth]{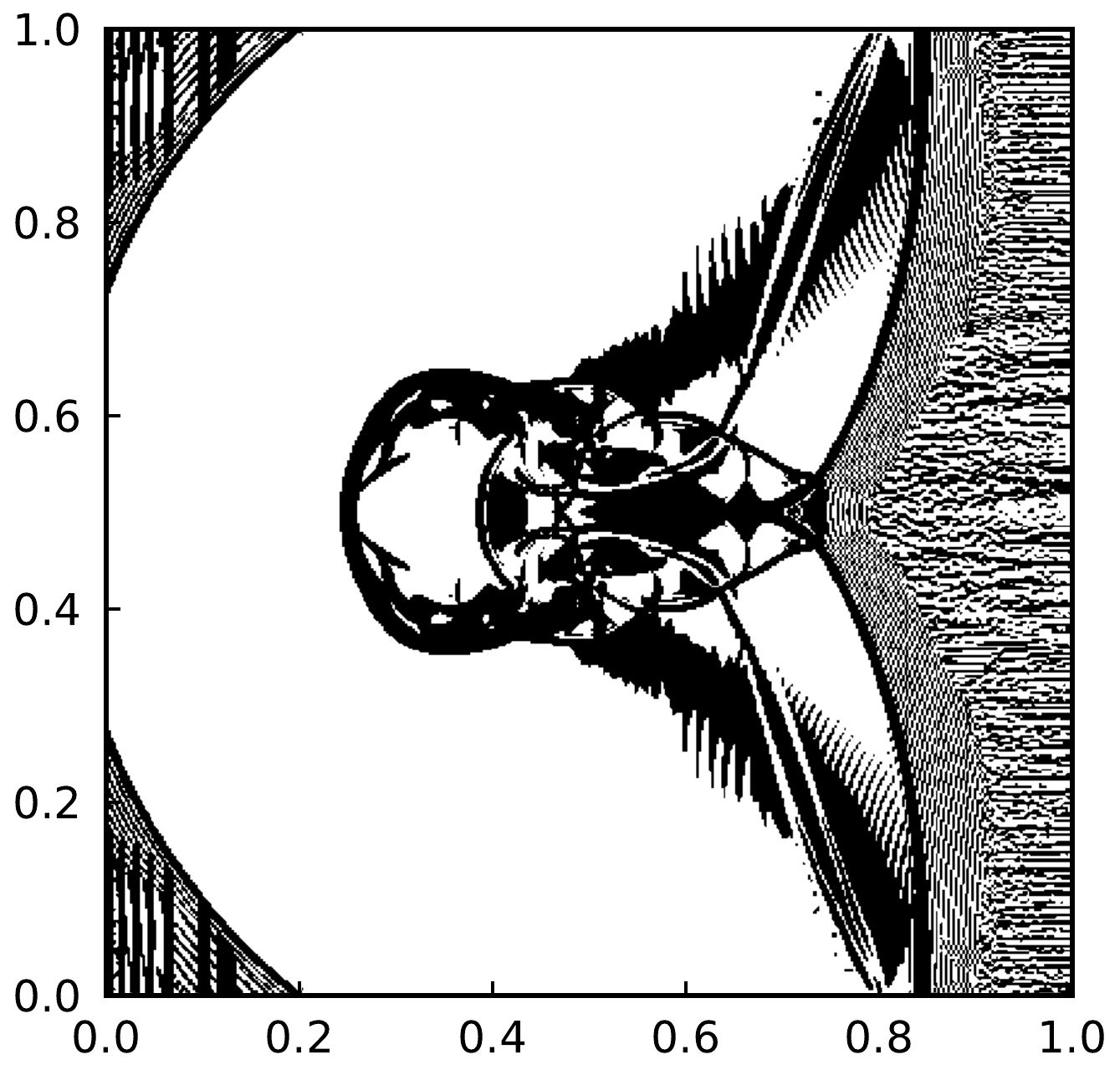}
			\end{subfigure}
			\caption{\Cref{Ex:ShockCloud}: Troubled-cell distributions identified based on two fast wave speeds (left) and three wave speeds (right) at $t = 0.06$.}
			\label{fig:Ex-ShockcloudTrouble}
		\end{figure} 
	\end{expl}

	\begin{figure}[!thb]
		\centering
		\begin{subfigure}{0.32\textwidth}
			\includegraphics[scale=0.32]{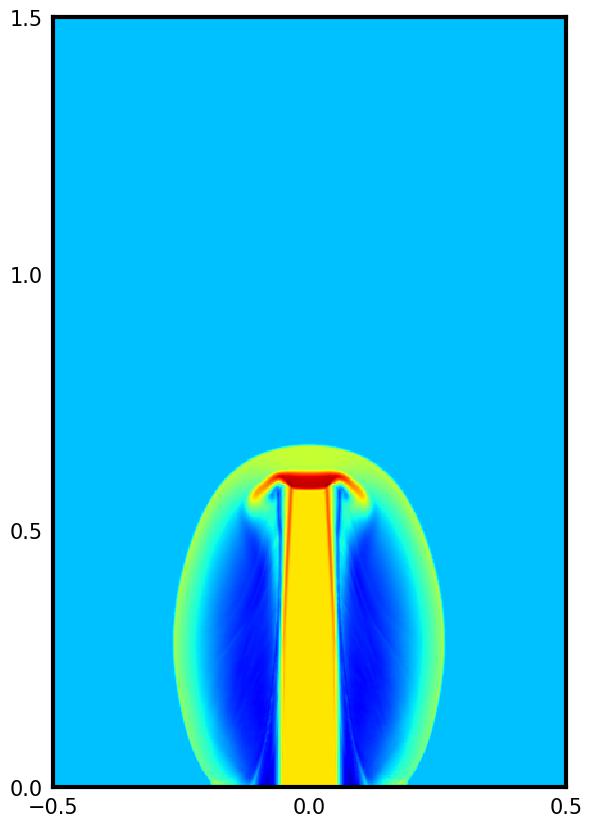}
			%			\caption{ $B_a=0.1$}
		\end{subfigure}
		\hfill
		\begin{subfigure}{0.32\textwidth}
			\includegraphics[scale=0.32]{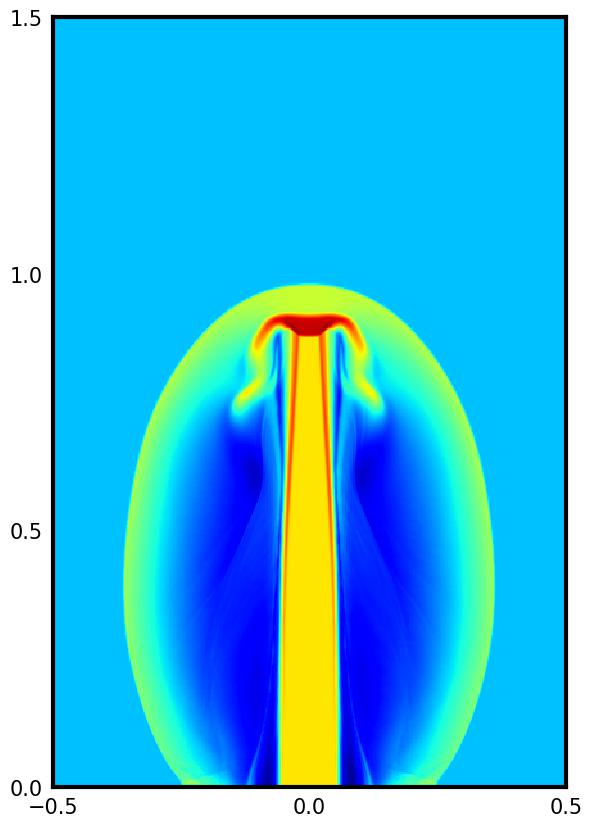}
			%			\caption{ $B_a=0.5$}
		\end{subfigure}
		\hfill
		\begin{subfigure}{0.32\textwidth}
			\includegraphics[scale=0.32]{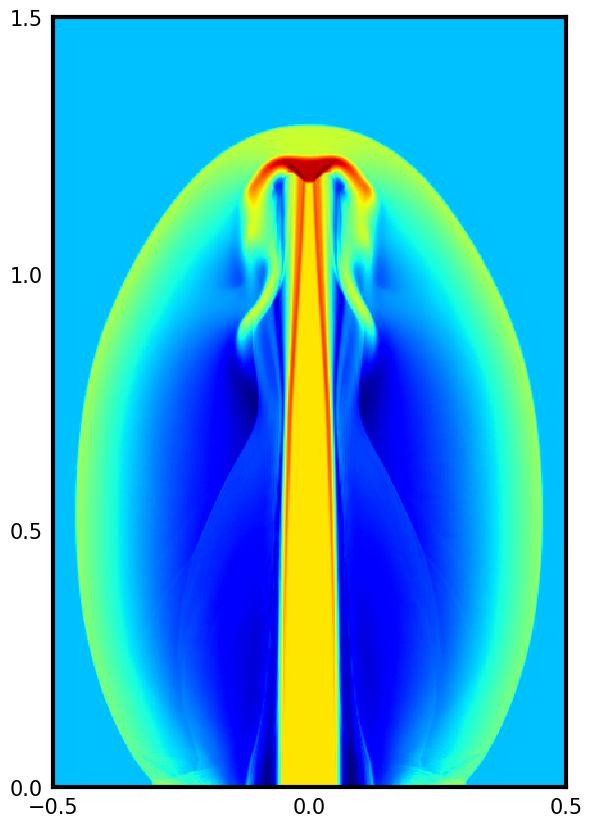}
			%			\caption{ $B_a=2000$}
		\end{subfigure}
		
		\begin{subfigure}{0.32\textwidth}
			\includegraphics[scale=0.32]{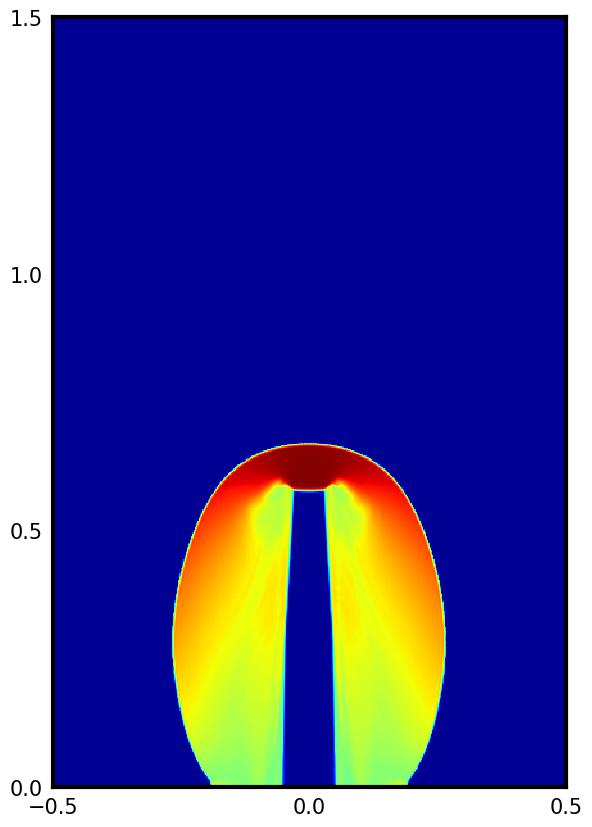}
		\end{subfigure}
		\hfill
		\begin{subfigure}{0.32\textwidth}
			\includegraphics[scale=0.32]{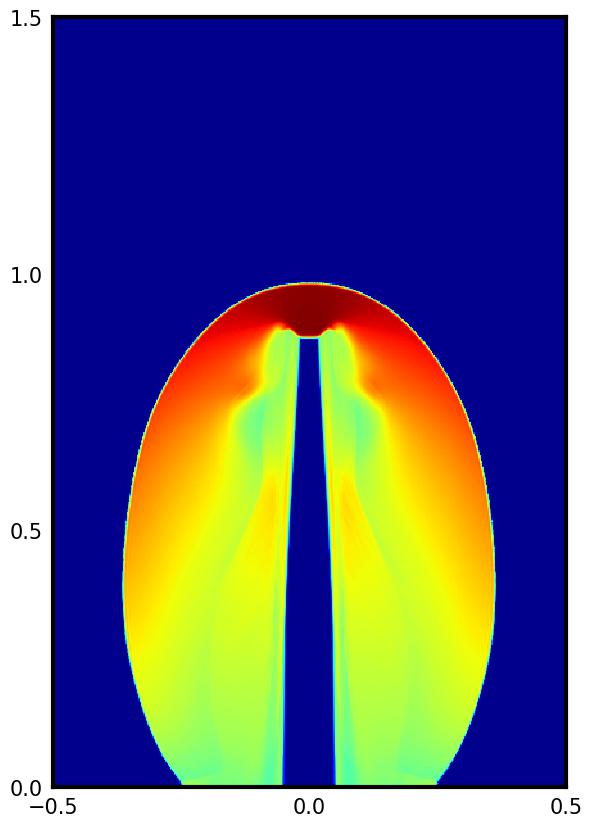}
		\end{subfigure}
		\hfill
		\begin{subfigure}{0.32\textwidth}
			\includegraphics[scale=0.32]{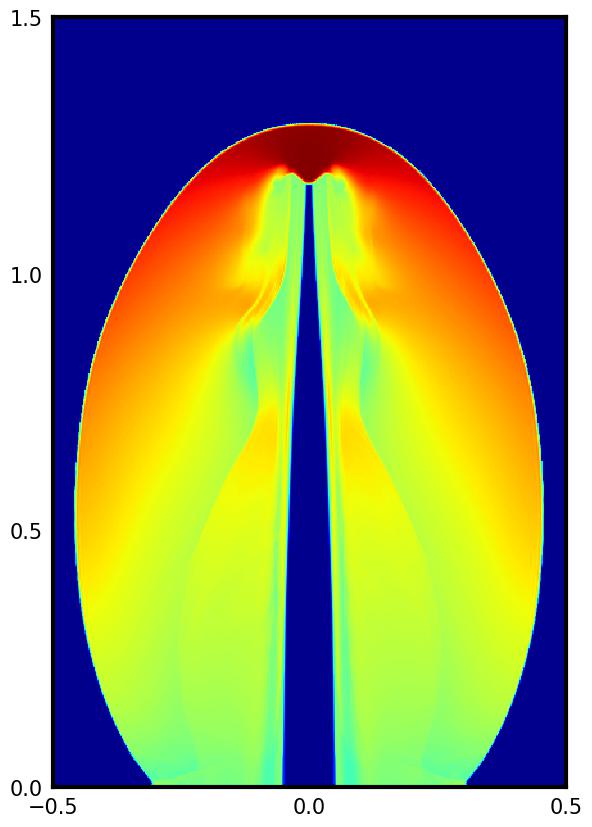}
		\end{subfigure}
		\caption{Mach 800 jet problem with $B_0 = \sqrt{200}$: Density logarithm and pressure logarithm (bottom) at $t = 0.001, 0.0015$, and $0.002$ (from left to right).
		}
		\label{fig:Ex-Jet_800_200}
	\end{figure} 
	
	\begin{figure}[!htb]
		\centering	
		\begin{subfigure}{0.32\textwidth}
			\includegraphics[scale=0.32]{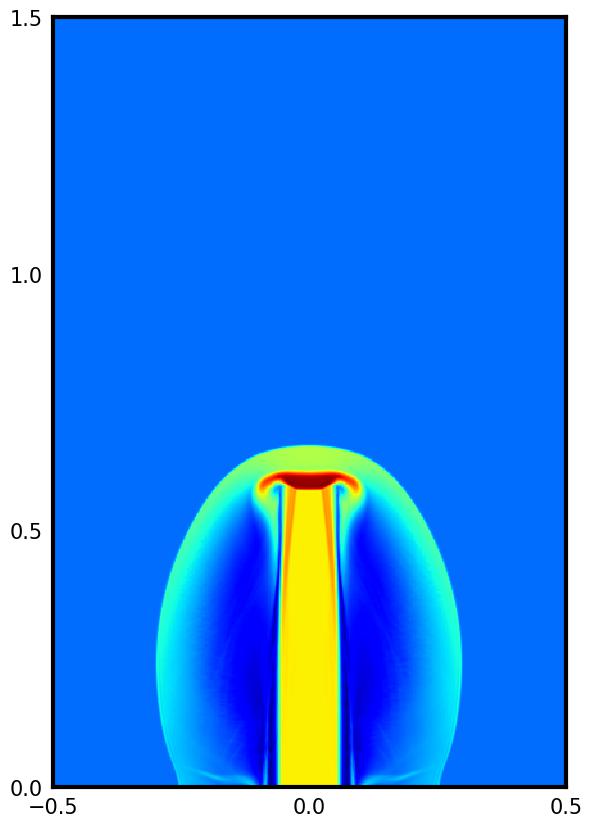}
		\end{subfigure}
		\hfill
		\begin{subfigure}{0.32\textwidth}
			\includegraphics[scale=0.32]{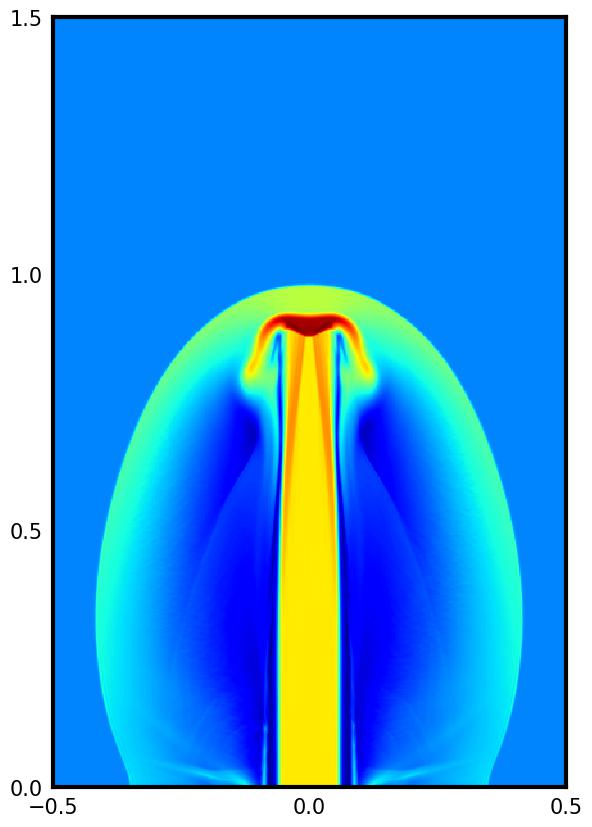}
		\end{subfigure}
		\hfill
		\begin{subfigure}{0.32\textwidth}
			\includegraphics[scale=0.32]{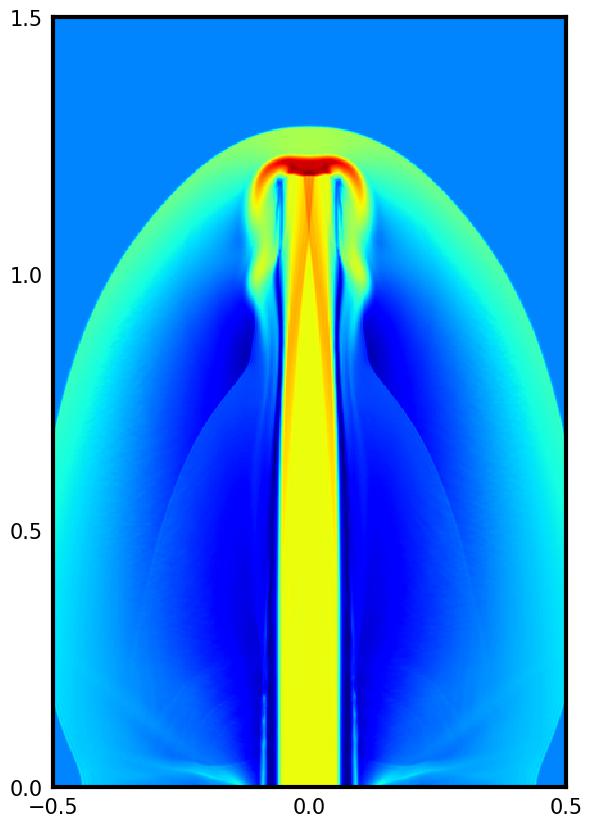}
		\end{subfigure}
		
		\begin{subfigure}{0.32\textwidth}
			\includegraphics[scale=0.32]{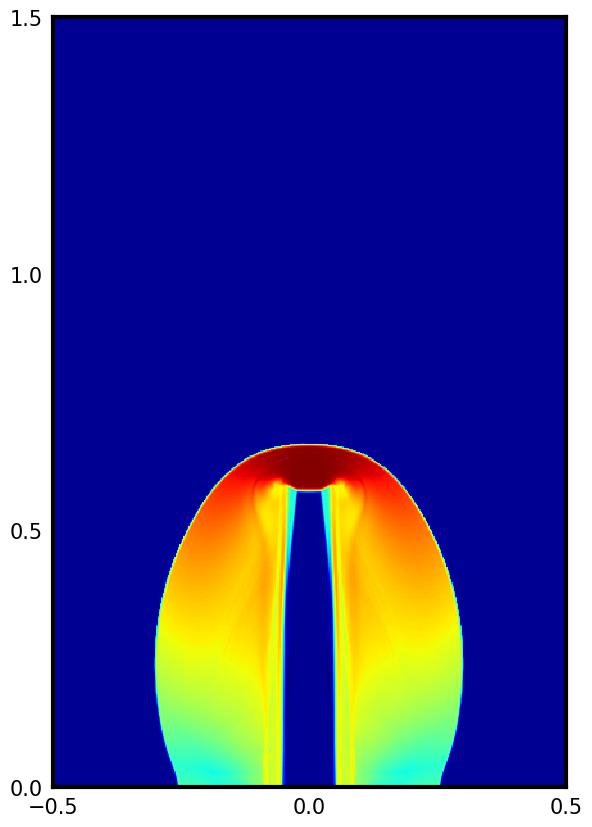}
		\end{subfigure}
		\hfill
		\begin{subfigure}{0.32\textwidth}
			\includegraphics[scale=0.32]{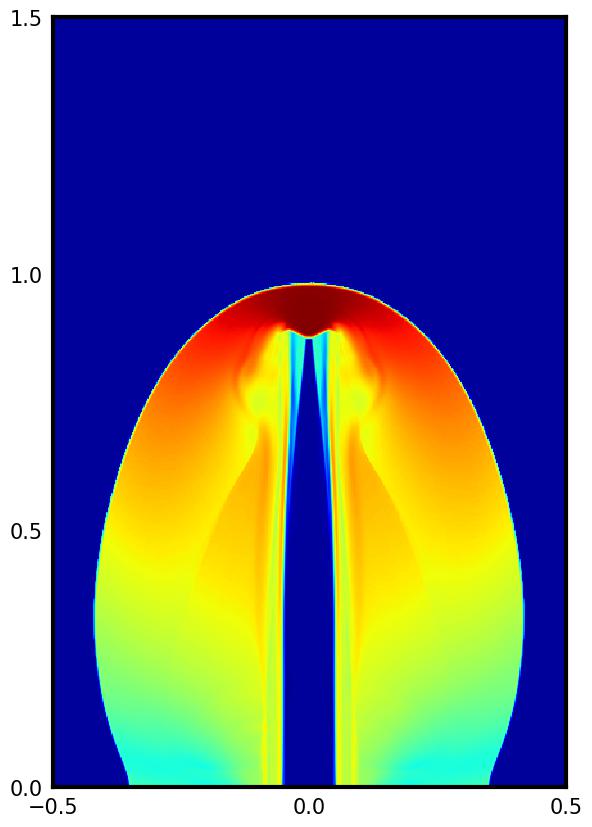}
		\end{subfigure}
		\hfill
		\begin{subfigure}{0.32\textwidth}
			\includegraphics[scale=0.32]{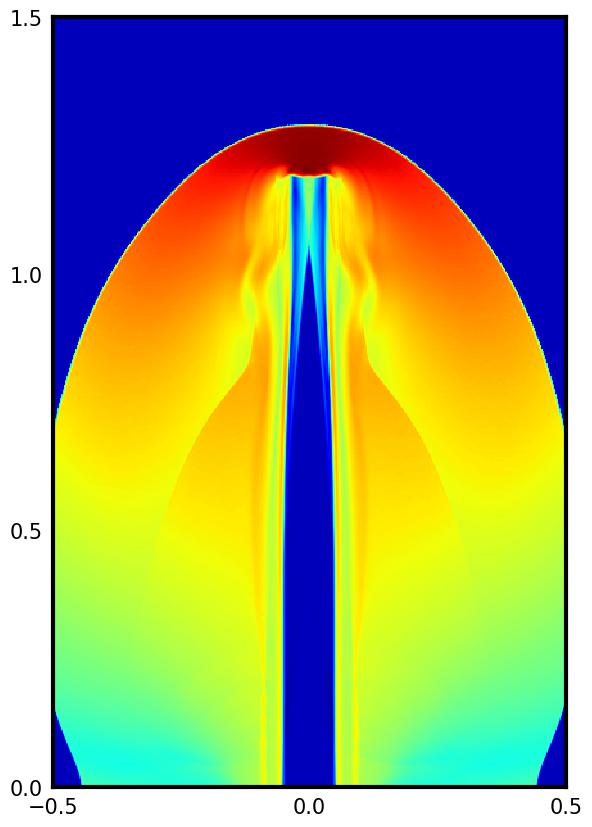}
		\end{subfigure}
		
		\caption{Mach 800 jet with $B_0 = \sqrt{2000}$: Density logarithm (top) and pressure logarithm (bottom) at $t = 0.001, 0.0015$, and $0.002$ (from left to right).
		}
		\label{fig:Ex-Jet_800_2000}
	\end{figure}

	\begin{expl}[Astrophysical Jets]\label{Ex:Jet} \rm
		Our final test case considers the high Mach number MHD jet problem, a widely used PP benchmark introduced in~\cite{WuShu2018,WuShu2019}. This problem presents significant challenges for numerical MHD schemes due to the combined effects of supersonic jet propagation, strong magnetic fields, and high kinetic energy. The interaction of strong shocks, shear layers, and interface instabilities often leads to numerical difficulties—most notably the emergence of nonphysical negative pressure values—making this a stringent test for both robustness and accuracy.

		We consider the Mach 800 jet problem on the computational domain $[-0.5, 0.5] \times [0, 1.5]$, discretized uniformly with $400 \times 600$ cells. The initial condition is specified as
		\[
		(\rho, {\bm v}, {\bm B}, p) = \left(0.1\gamma,\, 0,\, 0,\, 0,\, 0,\, B_0,\, 0,\, 1\right), \qquad \text{with} \quad \gamma = 1.4.
		\]
		At the bottom boundary, a narrow inflow region defined by $|x| < 0.05$ is prescribed, where the jet is injected with the following state:
		\[
		(\rho, {\bm v}, {\bm B}, p) = \left(\gamma,\, 0,\, 800,\, 0,\, 0,\, B_0,\, 0,\, 1\right).
		\]
		All remaining boundaries are treated as outflow.

		Following the setup in~\cite{WuShu2018,WuShu2019}, we examine the performance of the proposed numerical method under three levels of magnetization by varying the magnetic field strength $B_0$:
		\begin{itemize}
			\item[(i)] $B_0 = \sqrt{200}$, corresponding to a plasma-beta $\beta_0 = 10^{-2}$;
			\item[(ii)] $B_0 = \sqrt{2000}$, corresponding to $\beta_0 = 10^{-3}$;
			\item[(iii)] $B_0 = \sqrt{20000}$, corresponding to $\beta_0 = 10^{-4}$.
		\end{itemize}
		As $B_0$ increases (i.e., as the plasma-beta $\beta_0$ decreases), the dynamics become increasingly dominated by magnetic forces. This leads to significantly greater numerical stiffness and a higher risk of nonphysical solutions, posing a stringent test for robustness.
		
		Figures~\ref{fig:Ex-Jet_800_200},~\ref{fig:Ex-Jet_800_2000}, and~\ref{fig:Ex-Jet_800_20000} show the results for the three cases, including logarithmic density contours, pressure contours and the distribution of troubled cells. As seen in these figures, key features such as Mach shocks, shear layers, and contact discontinuities are sharply captured without visible nonphysical oscillations. No negative pressure or density values are observed in any of the cases, demonstrating the robustness and reliability of the proposed scheme under highly magnetized, extreme flow conditions.

		The bottom row of Figure \ref{fig:Ex-Jet_800_20000} displays the distribution of troubled cells detected by our shock indicator. As observed, troubled cells are predominantly concentrated near the jet head, bow shock, and internal shear layers---regions characterized by strong gradients and discontinuities. In contrast, the indicator remains inactive in smooth regions, thereby avoiding unnecessary application of the COE limiting procedure and preserving high-order accuracy where it is not needed. 
		Moreover, the detected troubled-cell patterns exhibit symmetry consistent with the physical structure of the jet, demonstrating that the indicator preserves the inherent symmetry of the solution. These results highlight the accuracy and reliability of our troubled-cell indicator in guiding the selective activation of the COE procedure.

		To further evaluate the robustness of our scheme under even more extreme conditions, we simulate the Mach 2000 and Mach 10,000 jet problems \cite{WuShu2019} with $B_0 = \sqrt{20000}$. The corresponding results are shown in Figures~\ref{fig:Ex-Jet_2000_20000} and~\ref{fig:Ex-Jet_10000_20000}. 
		As the Mach number increases, the jet becomes increasingly narrow and elongated, with distinct flow features emerging at different magnetization levels. Our method accurately captures the Mach stem and associated discontinuities without generating any negative density or pressure values, thereby confirming the scheme’s robustness and its provably PP property. 
		The distributions of troubled cells shown in the second row illustrate that the shock indicator continues to detect discontinuities precisely, while avoiding false positives in smooth regions. Moreover, the axial symmetry of the solution is well preserved in both flow variables and troubled-cell patterns, further demonstrating the physical consistency and reliability of the proposed method.

		\begin{figure}[!htb]
			\centering	
			\begin{subfigure}{0.32\textwidth}
				\includegraphics[scale=0.32]{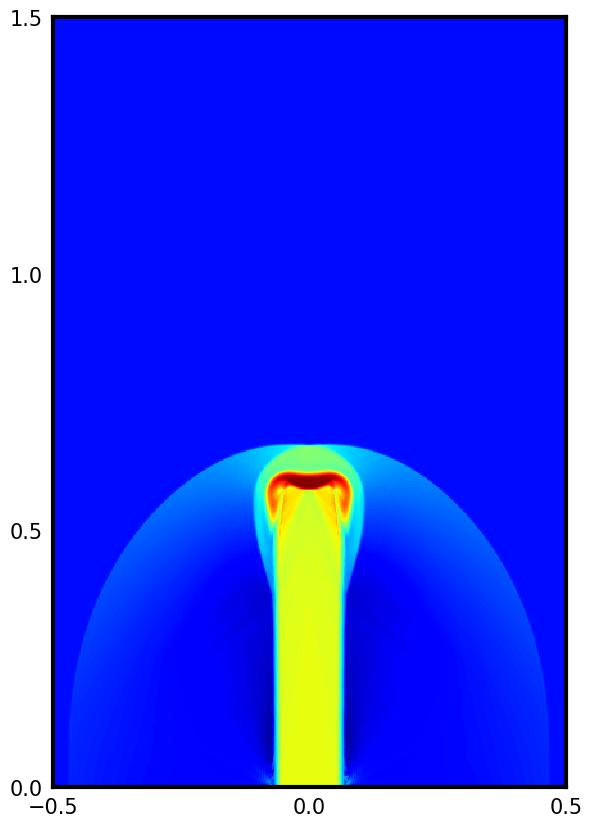}
			\end{subfigure}
			\hfill
			\begin{subfigure}{0.32\textwidth}
				\includegraphics[scale=0.32]{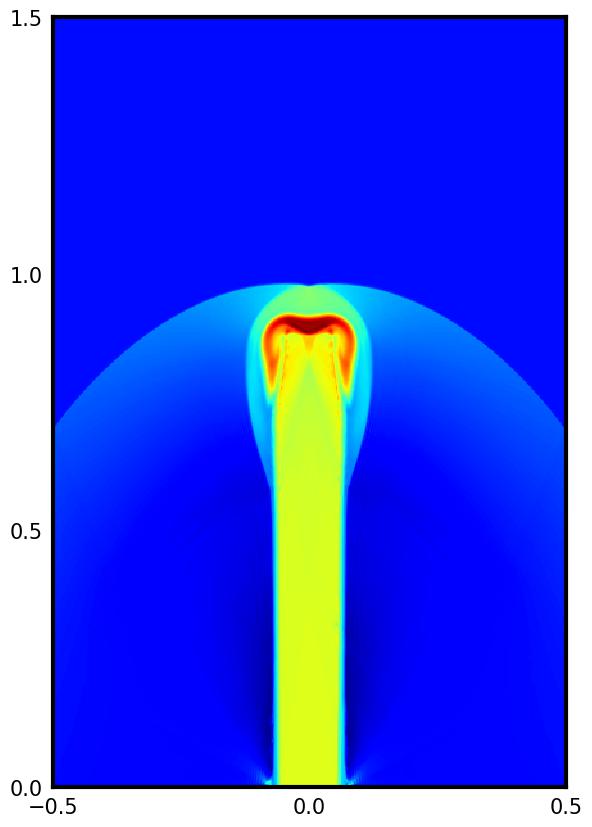}
			\end{subfigure}
			\hfill
			\begin{subfigure}{0.32\textwidth}
				\includegraphics[scale=0.32]{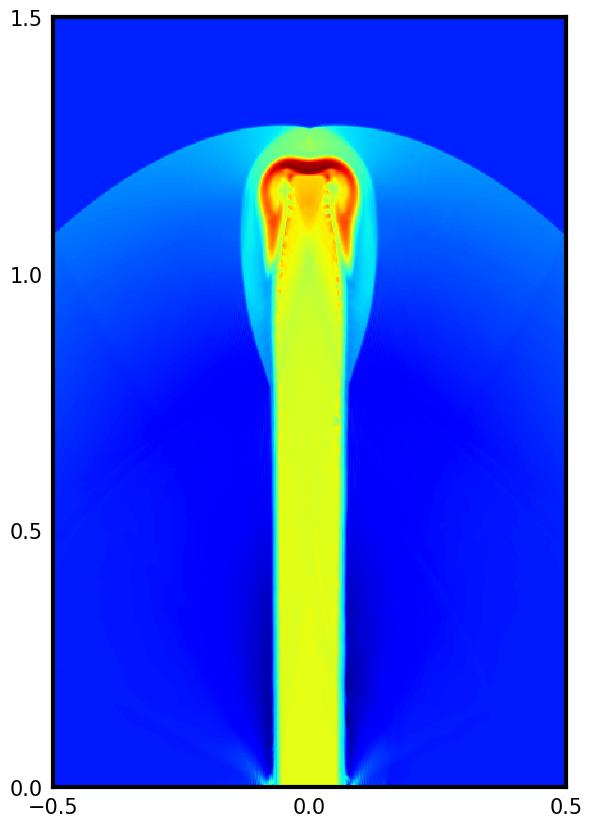}
			\end{subfigure}
			
			\begin{subfigure}{0.32\textwidth}
				\includegraphics[scale=0.32]{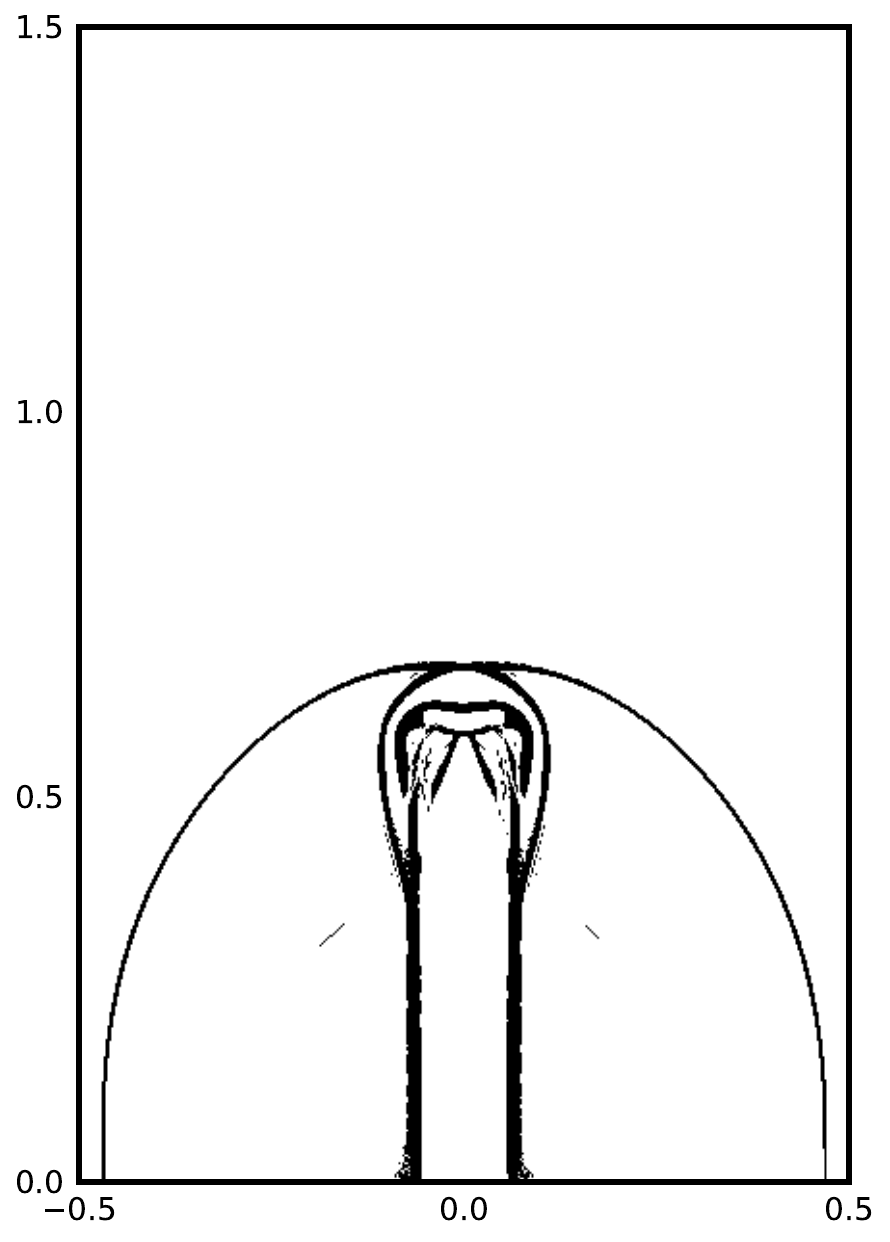}
			\end{subfigure}
			\hfill
			\begin{subfigure}{0.32\textwidth}
				\includegraphics[scale=0.32]{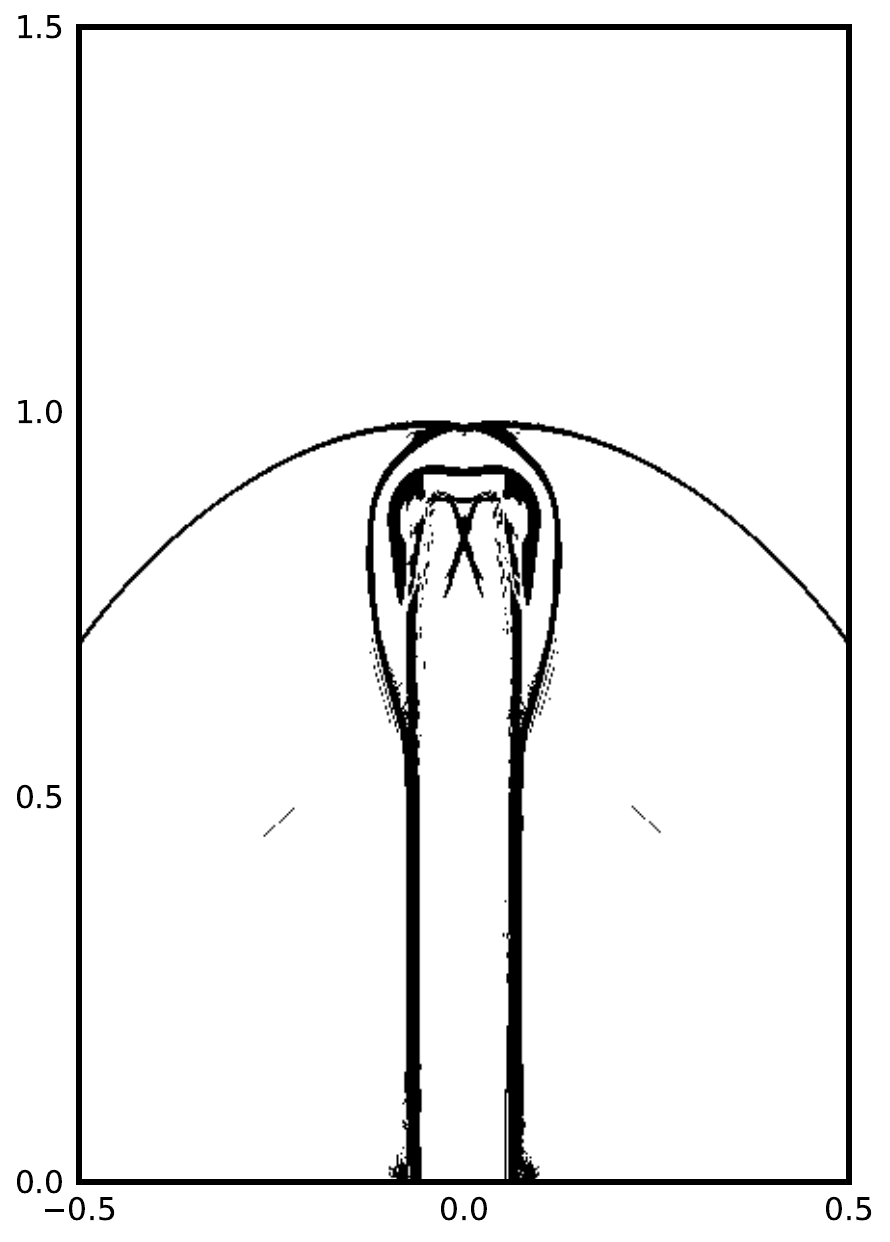}
			\end{subfigure}
			\hfill
			\begin{subfigure}{0.32\textwidth}
				\includegraphics[scale=0.32]{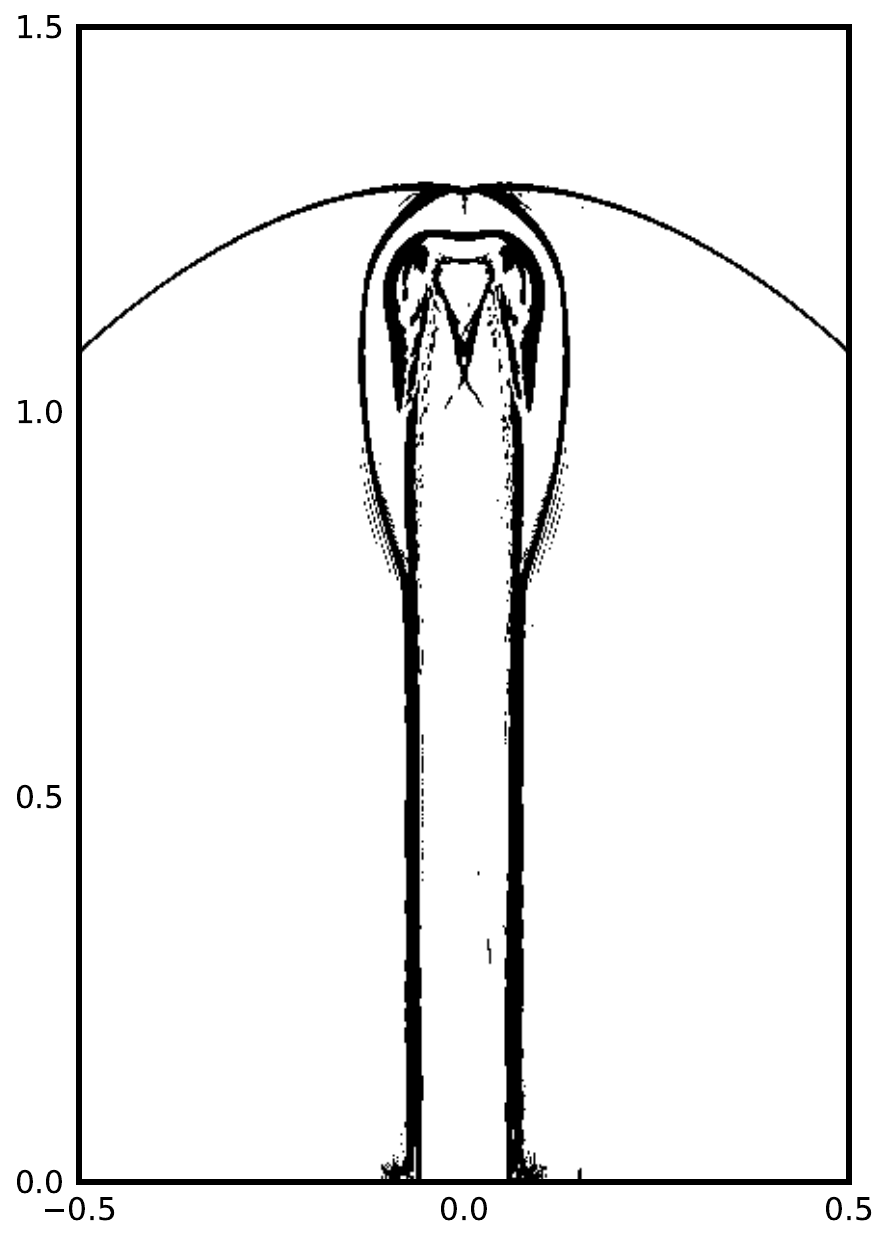}
			\end{subfigure}
			
			\caption{Mach 800 jet with $B_0 = \sqrt{20000}$: Density logarithm (top) and trouble cells(bottom) at $t = 0.001, 0.0015$, and $0.002$ (from left to right).
			}
			\label{fig:Ex-Jet_800_20000}
		\end{figure}

		\begin{figure}[!thb]
			\centering
			\begin{subfigure}{0.32\textwidth}
				\includegraphics[scale=0.32]{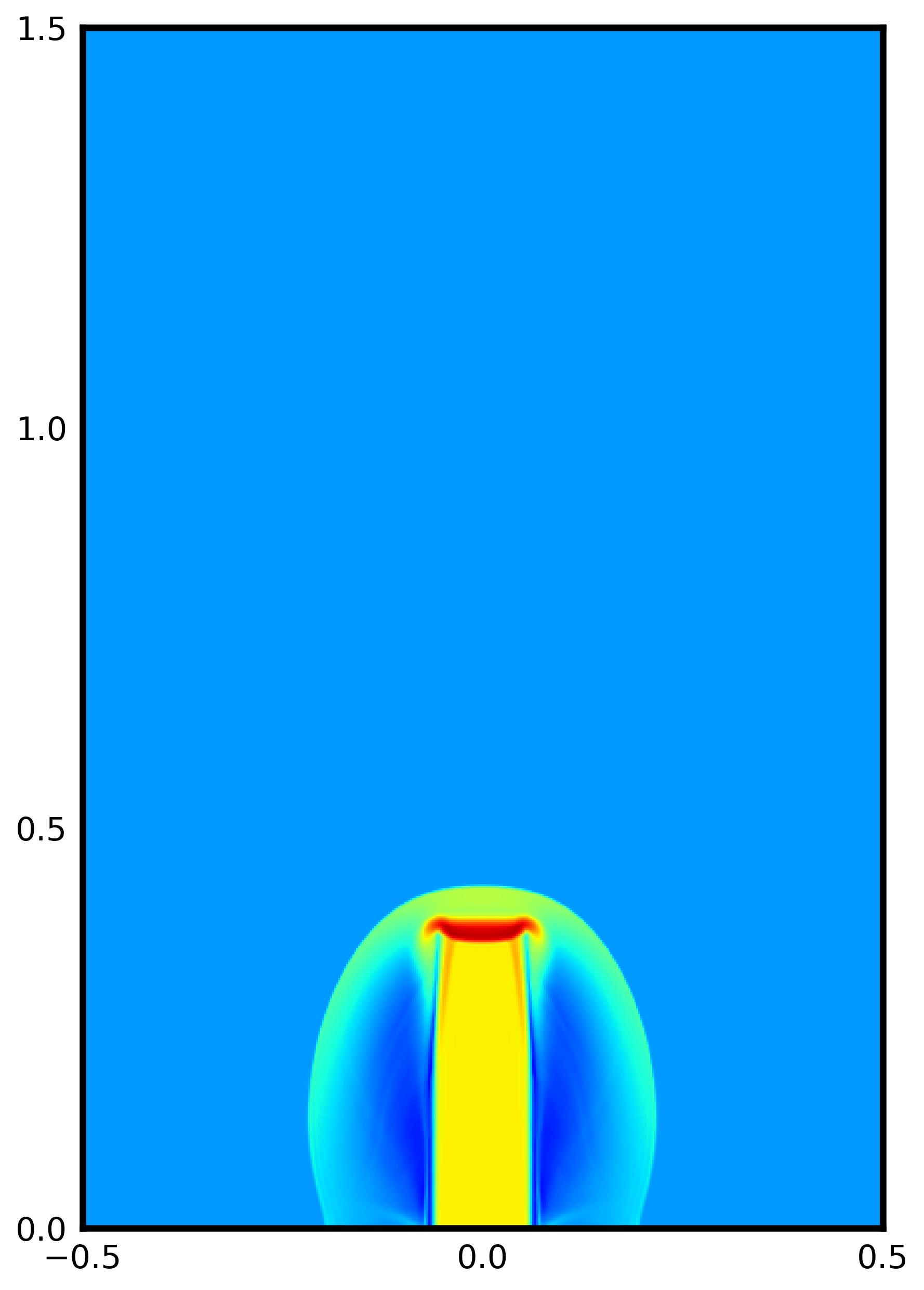}
				%			\caption{ $B_a=0.1$}
			\end{subfigure}
			\hfill
			\begin{subfigure}{0.32\textwidth}
				\includegraphics[scale=0.32]{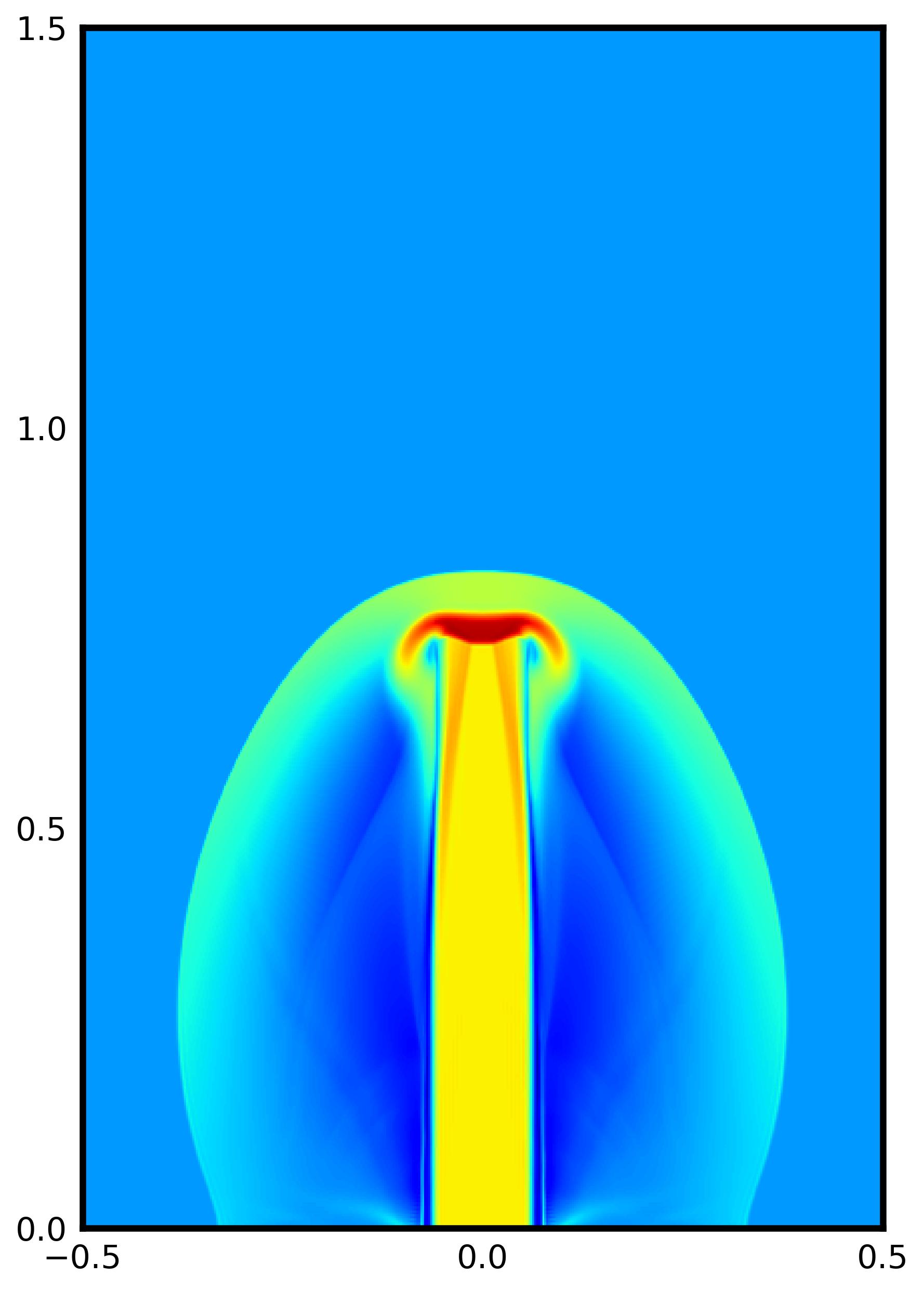}
				%			\caption{ $B_a=0.5$}
			\end{subfigure}
			\hfill
			\begin{subfigure}{0.32\textwidth}
				\includegraphics[scale=0.32]{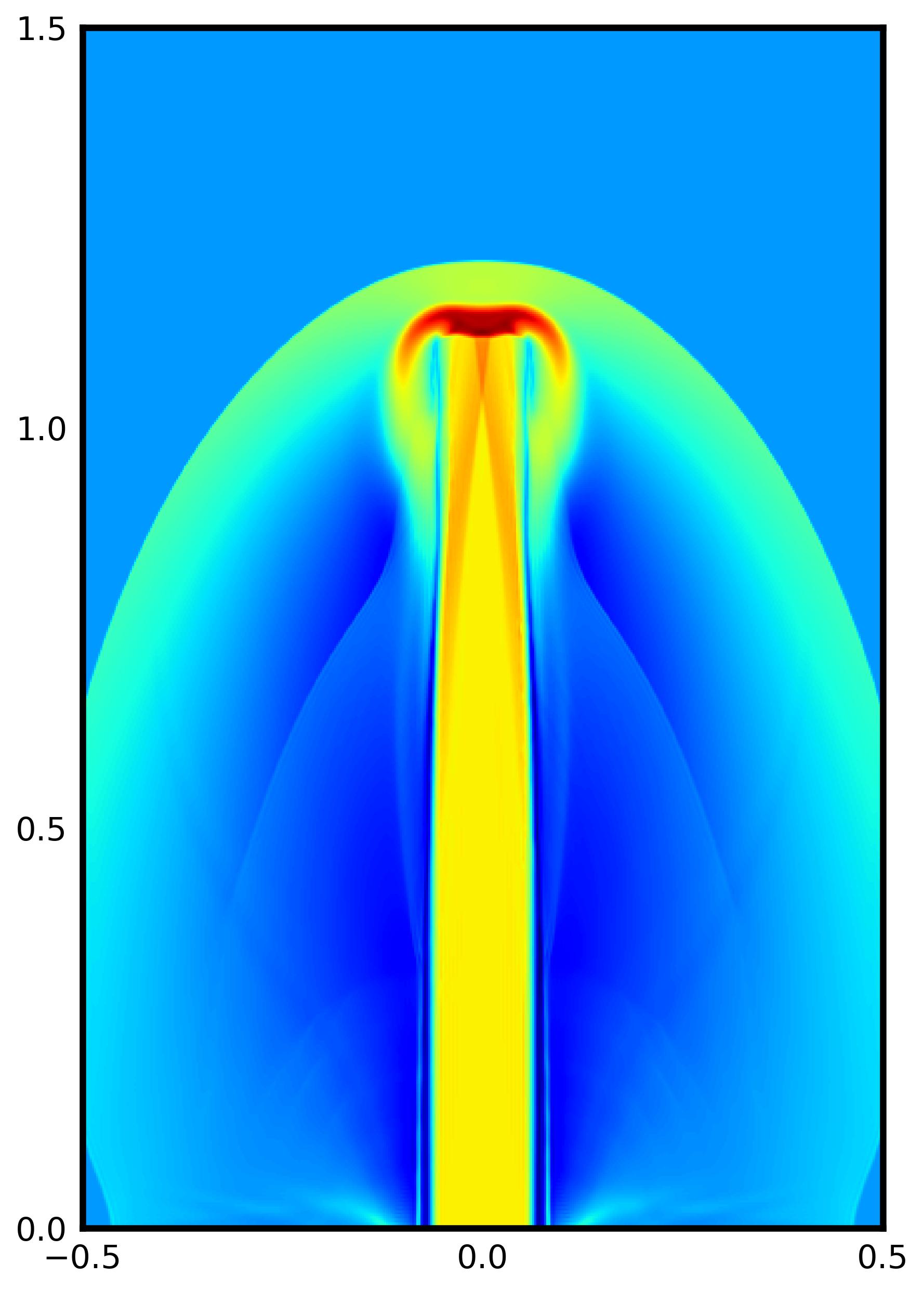}
				%			\caption{ $B_a=2000$}
			\end{subfigure}
			
			\begin{subfigure}{0.32\textwidth}
				\includegraphics[scale=0.32]{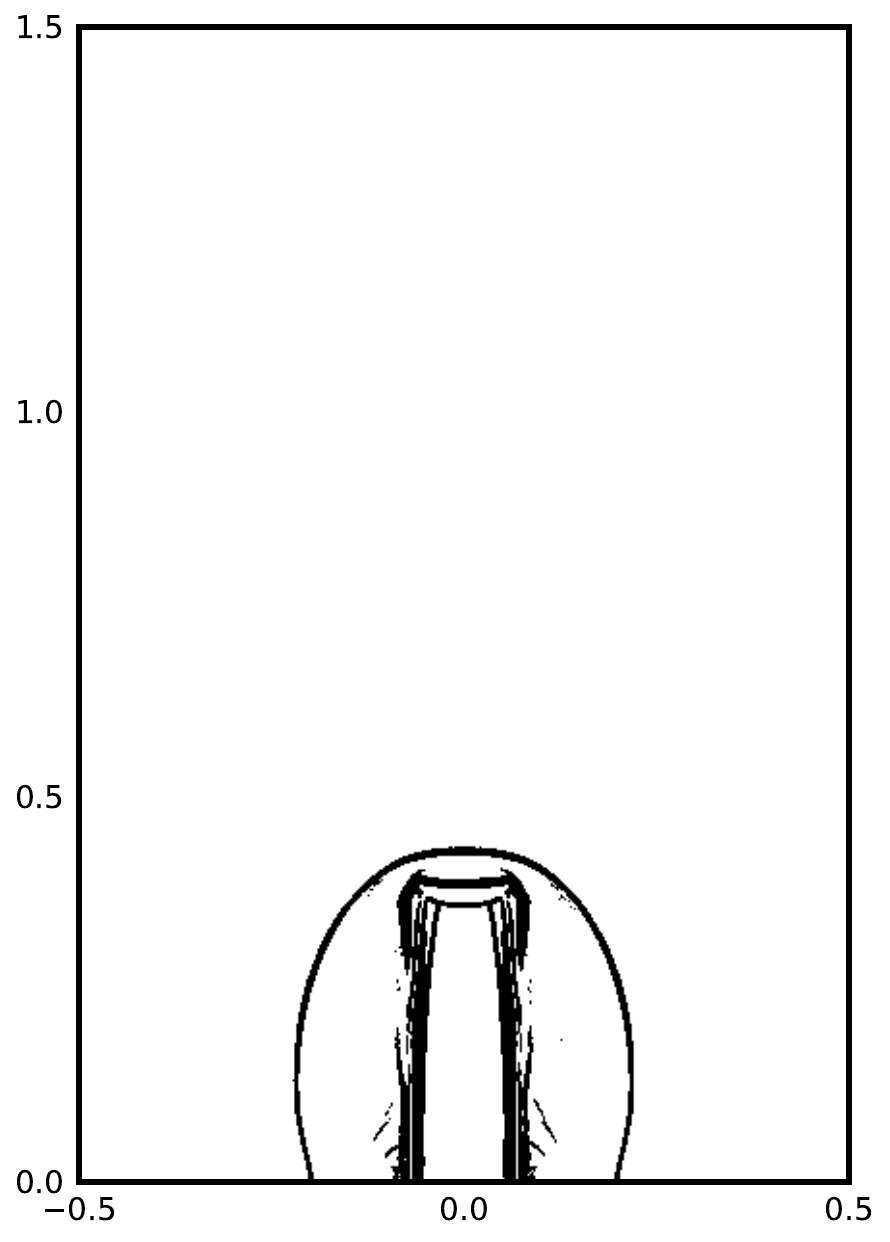} 
				%			\caption{ $B_a=0.1$}
			\end{subfigure}
			\hfill
			\begin{subfigure}{0.32\textwidth}
				\includegraphics[scale=0.32]{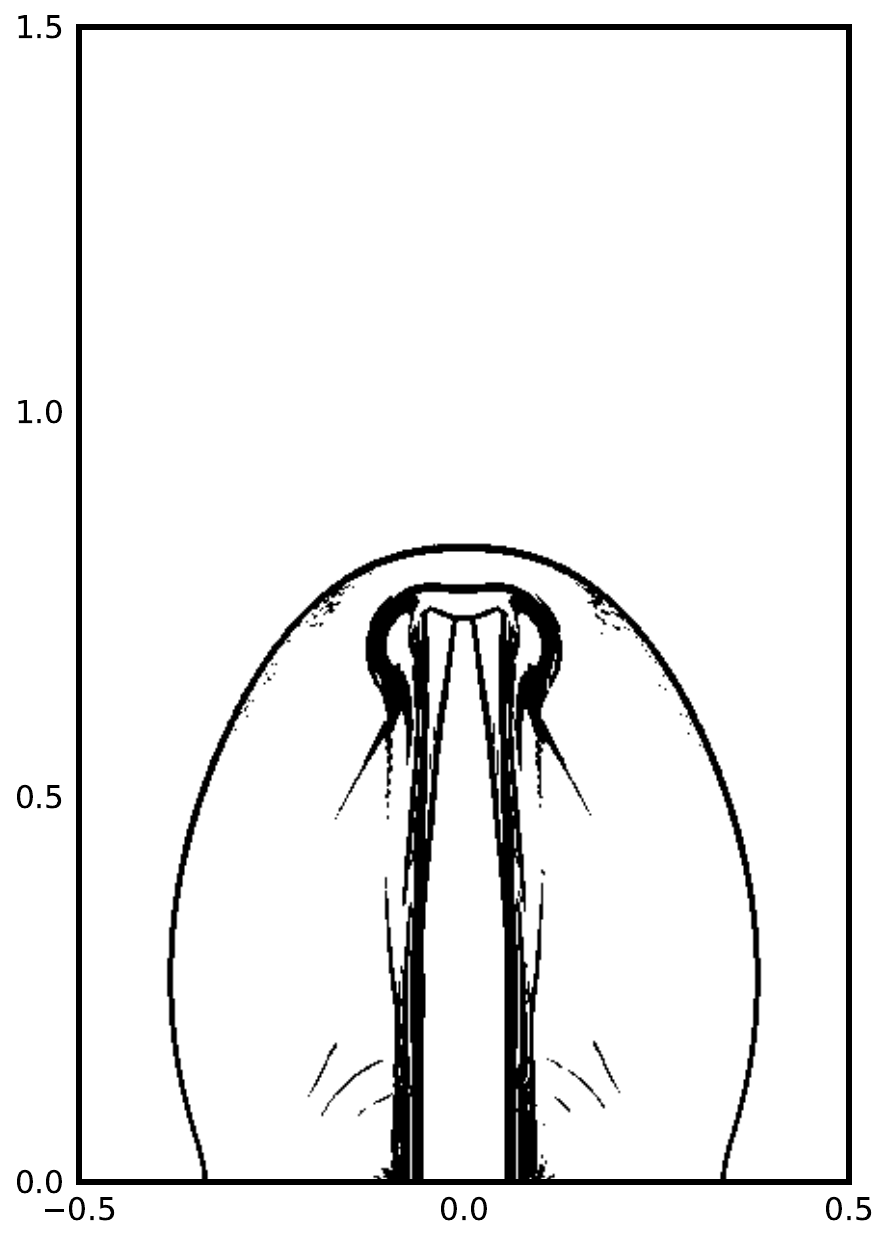}
				%			\caption{ $B_a=0.5$}
			\end{subfigure}
			\hfill
			\begin{subfigure}{0.32\textwidth}
				\includegraphics[scale=0.32]{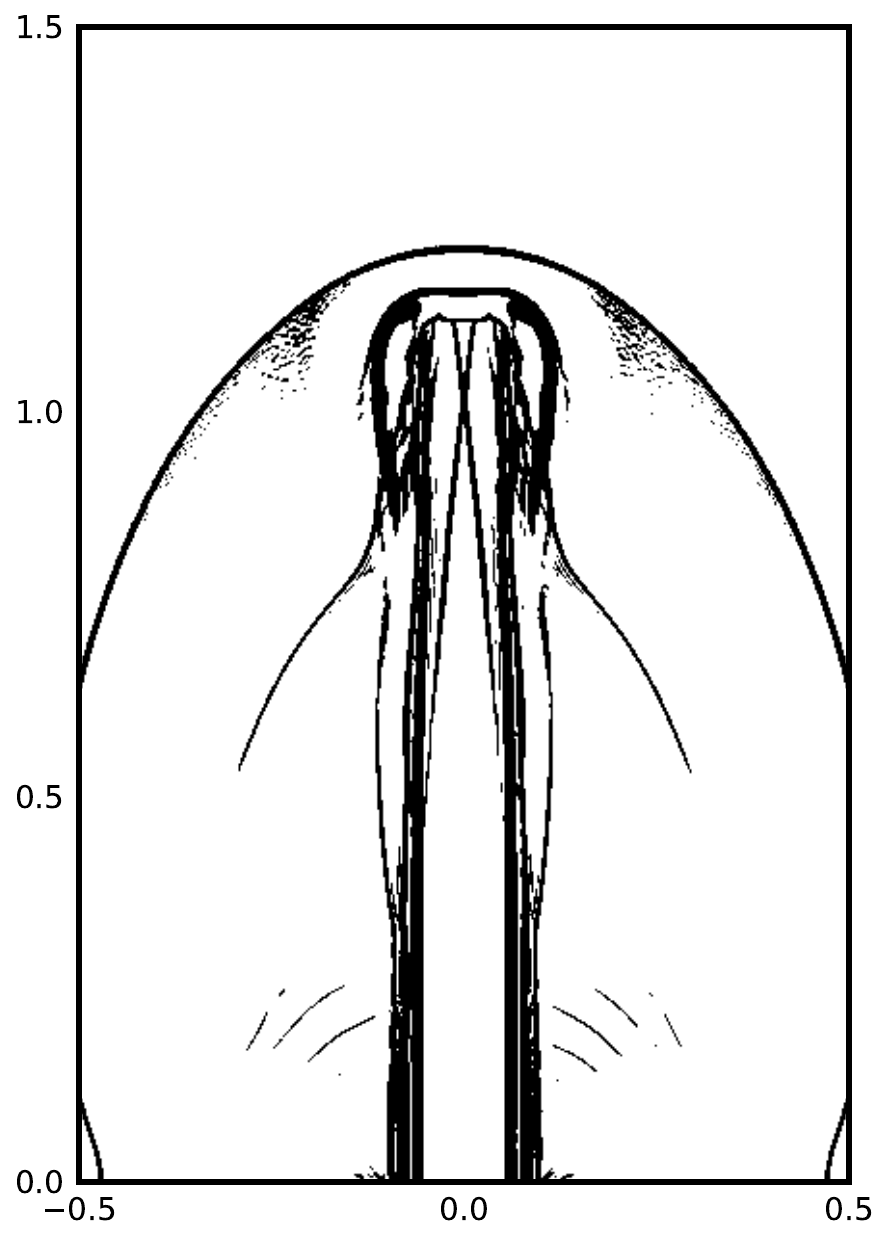}
				%			\caption{ $B_a=2000$}
			\end{subfigure}
			\caption{Mach 2000 jet problem with $B_0 = \sqrt{20000}$: Density logarithm(top) and trouble cells(bottom) at $t = 0.00025, 0.0005$, and $0.00075$ (from left to right).
			}
			\label{fig:Ex-Jet_2000_20000}
		\end{figure} 
		
		\begin{figure}[!thb]
			\centering
			\begin{subfigure}{0.32\textwidth}
				\includegraphics[scale=0.32]{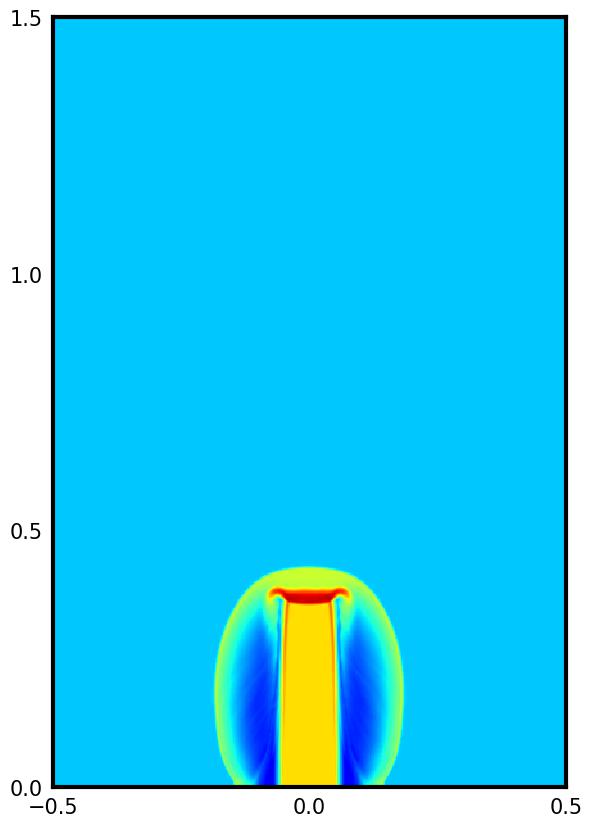}
				%			\caption{ $B_a=0.1$}
			\end{subfigure}
			\hfill
			\begin{subfigure}{0.32\textwidth}
				\includegraphics[scale=0.32]{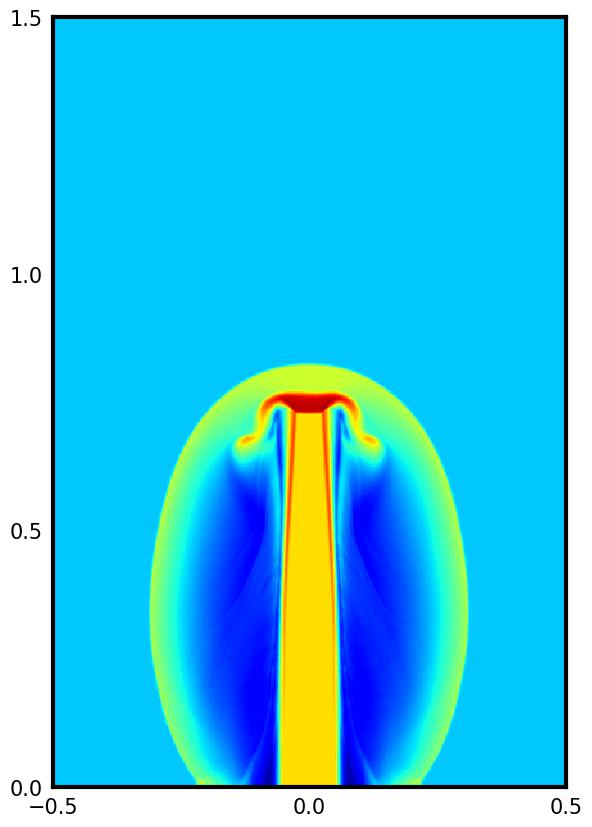}
				%			\caption{ $B_a=0.5$}
			\end{subfigure}
			\hfill
			\begin{subfigure}{0.32\textwidth}
				\includegraphics[scale=0.32]{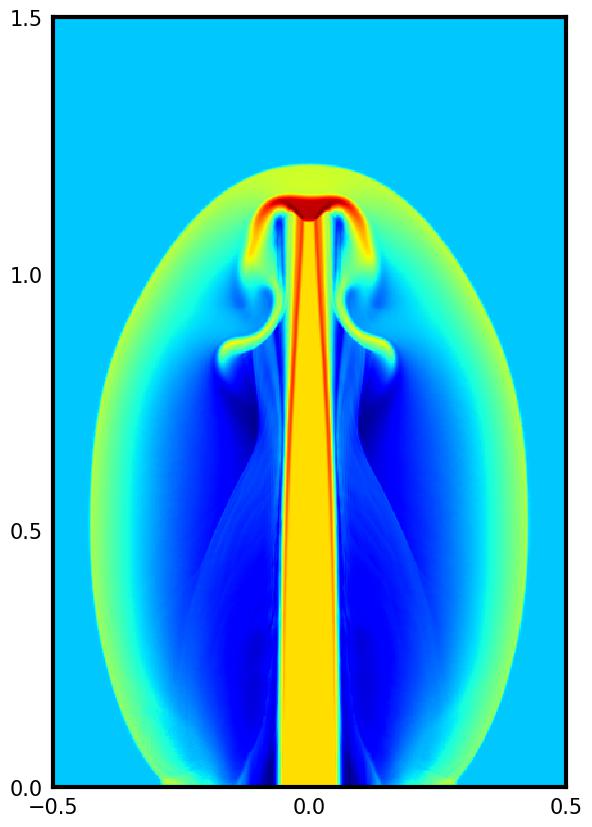}
				%			\caption{ $B_a=2000$}
			\end{subfigure}
			
			\begin{subfigure}{0.32\textwidth}
				\includegraphics[scale=0.32]{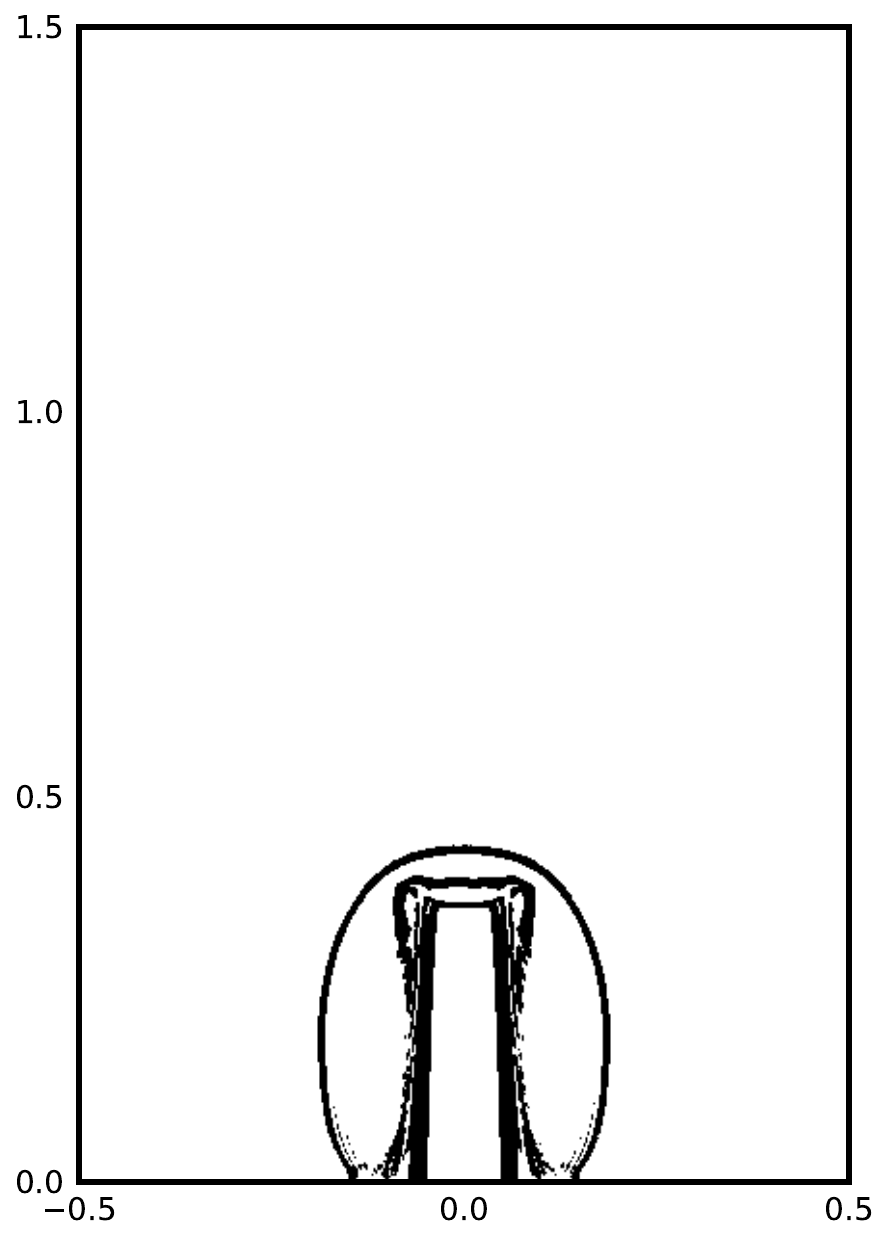}
				%		\caption{ $B_a=0.1$}
			\end{subfigure}
			\hfill
			\begin{subfigure}{0.32\textwidth}
				\includegraphics[scale=0.32]{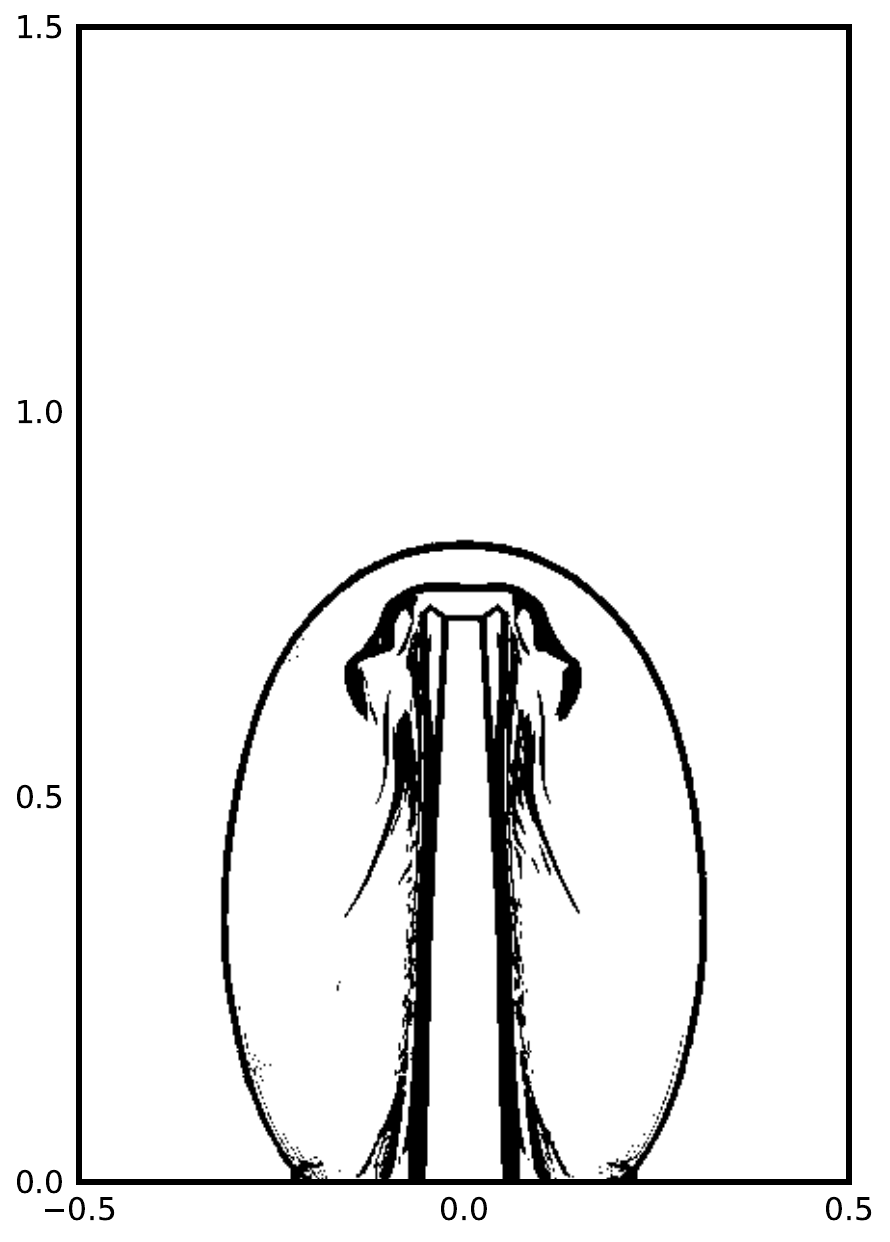}
				%			\caption{ $B_a=0.5$}
			\end{subfigure}
			\hfill
			\begin{subfigure}{0.32\textwidth}
				\includegraphics[scale=0.32]{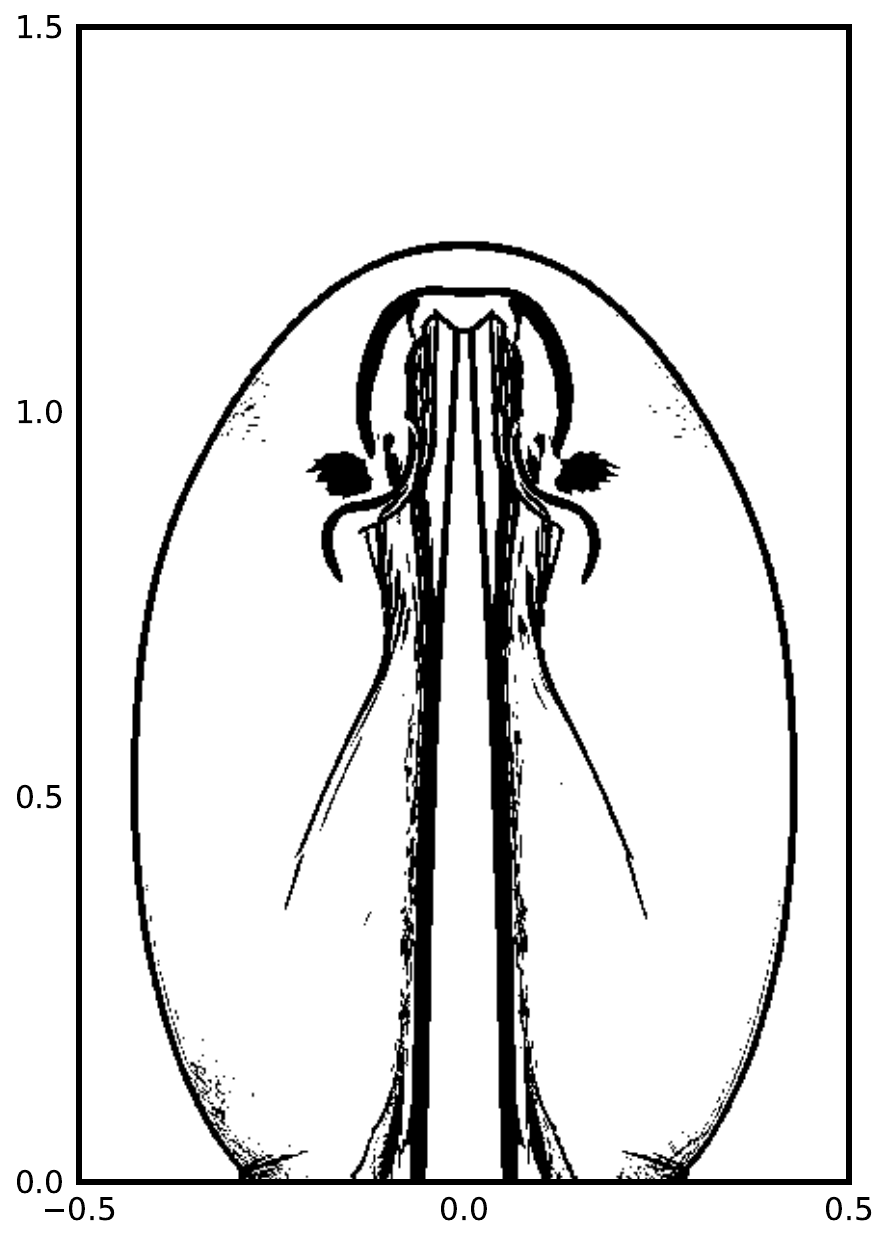}
				%			\caption{ $B_a=2000$}
			\end{subfigure}
			\caption{Mach $10000$ jet problem with $B_0 = \sqrt{20000}$: Density logarithm(top) and trouble cells(bottom) at $t = 0.00005, 0.0001$, and $0.00015$ (from left to right).
			}
			\label{fig:Ex-Jet_10000_20000}
		\end{figure} 		 
	\end{expl}

	\section{Conclusion}\label{conclusion}
	
	We have developed a compact, third-order structure-preserving PAMPA (Point-Average-Moment PolynomiAl-interpreted) scheme for the ideal magnetohydrodynamics (MHD) equations on Cartesian meshes that simultaneously enforces a discrete divergence-free (DDF) magnetic field and rigorously preserves positivity (PP) for both interface point values and cell averages. The method couples a limiter-free, automatically PP point-value update—enabled by a new nonconservative reformulation and a local DDF projection—with a provably PP finite-volume update for cell averages analyzed within the geometric quasi-linearization (GQL) framework. Together, these ingredients yield a coherent active-flux-type methodology with a small stencil and straightforward implementation. To the best of our knowledge, this is the first active-flux-type method for ideal MHD that is rigorously PP for both cell averages and interface point values while maintaining a DDF constraint throughout the evolution. 
	
	On the point-value side, our nonconservative reformulation guarantees that all updated point states remain in the admissible set without any PP limiter; the accompanying cell-local DDF projection maintains a discretely divergence-free magnetic field and is essential to the subsequent PP proof for cell averages. On the cell-average side, the update is proven PP under a mild a~priori positivity condition at a single cell-centered state and relies on four components: (i) a DDF constraint at interface point values, (ii) a PP limiter applied only to the cell-centered value, (iii) a PP numerical flux with properly estimated wave speeds, and (iv) a suitable discretization of the Godunov–Powell source term. In practice, we adopt a PP Lax–Friedrichs flux (while PP HLL fluxes are also admissible) with viscosity chosen according to theoretically established PP estimates. 
	
	To enhance robustness for strong shocks, we introduced a problem-independent, symmetry-preserving shocked-cell indicator based on a Lax-type entropy criterion that uses only two characteristic speeds computed from cell averages, and a convex oscillation elimination (COE) mechanism that blends limited point values and cell averages using a new inter-cell polynomial-difference norm. The indicator focuses limiting near genuine discontinuities and remains inactive in smooth regions, supporting both accuracy and efficiency.  
	
	Extensive numerical experiments—including very low plasma-beta blast waves and high–Mach-number jets—demonstrate that the scheme achieves high-order accuracy, sharply resolves MHD features, avoids nonphysical negative states, and robustly identifies troubled cells with symmetry-consistent patterns. These results validate the effectiveness of the proposed design under highly magnetized, extreme-flow conditions. 
	Finally, we note that although our presentation focuses on the two-dimensional case, the methodology and analysis naturally extend to three dimensions.

% \biboptions{compress}
	\bibliographystyle{model1-num-names}%
	\bibliography{references_article}

\begin{thebibliography}{67}
\expandafter\ifx\csname natexlab\endcsname\relax\def\natexlab#1{#1}\fi
\providecommand{\url}[1]{\texttt{#1}}
\providecommand{\href}[2]{#2}
\providecommand{\path}[1]{#1}
\providecommand{\DOIprefix}{doi:}
\providecommand{\ArXivprefix}{arXiv:}
\providecommand{\URLprefix}{URL: }
\providecommand{\Pubmedprefix}{pmid:}
\providecommand{\doi}[1]{\href{http://dx.doi.org/#1}{\path{#1}}}
\providecommand{\Pubmed}[1]{\href{pmid:#1}{\path{#1}}}
\providecommand{\bibinfo}[2]{#2}
\ifx\xfnm\relax \def\xfnm[#1]{\unskip,\space#1}\fi
%Type = Inproceedings
\bibitem[{Eymann and Roe(2011{\natexlab{a}})}]{Eymann2011active}
\bibinfo{author}{T.~A. Eymann}, \bibinfo{author}{P.~L. Roe},
\newblock \bibinfo{title}{Active flux schemes},
\newblock in: \bibinfo{booktitle}{49th AIAA Aerospace Sciences Meeting
  including the New Horizons Forum and Aerospace Exposition, Orlando, Florida},
  \bibinfo{year}{2011}{\natexlab{a}}. \DOIprefix\doi{10.2514/6.2011-382}.
%Type = Inproceedings
\bibitem[{Eymann and Roe(2011{\natexlab{b}})}]{Eymann2012active}
\bibinfo{author}{T.~A. Eymann}, \bibinfo{author}{P.~L. Roe},
\newblock \bibinfo{title}{Active flux schemes for systems},
\newblock in: \bibinfo{booktitle}{20th AIAA Computational Fluid Dynamics
  Conference, Honolulu, Hawaii}, \bibinfo{year}{2011}{\natexlab{b}}.
  \DOIprefix\doi{10.2514/6.2011-3840}.
%Type = Inproceedings
\bibitem[{Eymann and Roe(2013)}]{Eymann2013multi}
\bibinfo{author}{T.~A. Eymann}, \bibinfo{author}{P.~L. Roe},
\newblock \bibinfo{title}{Multidimensional active flux schemes},
\newblock in: \bibinfo{booktitle}{21st AIAA Computational Fluid Dynamics
  Conference, San Diego, CA}, \bibinfo{year}{2013}.
  \DOIprefix\doi{10.2514/6.2013-2940}.
%Type = Article
\bibitem[{Barsukow et~al.(2019)Barsukow, Hohm, Klingenberg, and
  Roe}]{Wasilij2019active}
\bibinfo{author}{W.~Barsukow}, \bibinfo{author}{J.~Hohm},
  \bibinfo{author}{C.~Klingenberg}, \bibinfo{author}{P.~L. Roe},
\newblock \bibinfo{title}{The active flux scheme on cartesian grids and its low
  mach number limit},
\newblock \bibinfo{journal}{J. Sci. Comput.} \bibinfo{volume}{81}
  (\bibinfo{year}{2019}) \bibinfo{pages}{594--622}.
%Type = Article
\bibitem[{Helzel et~al.(2019)Helzel, Kerkmann, and Scandurra}]{Helzel2019ADER}
\bibinfo{author}{C.~Helzel}, \bibinfo{author}{D.~Kerkmann},
  \bibinfo{author}{L.~Scandurra},
\newblock \bibinfo{title}{A new {ADER} method inspired by the active flux
  method},
\newblock \bibinfo{journal}{J. Sci. Comput.} \bibinfo{volume}{80}
  (\bibinfo{year}{2019}) \bibinfo{pages}{1463--1497}.
%Type = Inproceedings
\bibitem[{Abgrall et~al.(2025)Abgrall, Lin, and Liu}]{abgrall2023active11271}
\bibinfo{author}{R.~Abgrall}, \bibinfo{author}{J.~Lin},
  \bibinfo{author}{Y.~Liu},
\newblock \bibinfo{title}{Active flux for triangular meshes for compressible
  flows problems},
\newblock in: \bibinfo{booktitle}{Beijing Journal of Pure and Applied
  Mathematics}, volume~\bibinfo{volume}{2}, \bibinfo{year}{2025}.
  \DOIprefix\doi{10.4310/bpam.250414223907}.
%Type = Article
\bibitem[{Abgrall and Liu(2024)}]{Remi2024newapp}
\bibinfo{author}{R.~Abgrall}, \bibinfo{author}{Y.~Liu},
\newblock \bibinfo{title}{A new approach for designing well-balanced schemes
  for the shallow water equations: {A} combination of conservative and
  primitive formulations},
\newblock \bibinfo{journal}{SIAM J. Sci. Comput.} \bibinfo{volume}{46}
  (\bibinfo{year}{2024}) \bibinfo{pages}{A3375--A3400}.
%Type = Article
\bibitem[{Abgrall et~al.(2025)Abgrall, Barsukow, and
  Klingenberg}]{AbgrallAF2025}
\bibinfo{author}{R.~Abgrall}, \bibinfo{author}{W.~Barsukow},
  \bibinfo{author}{C.~Klingenberg},
\newblock \bibinfo{title}{A semi-discrete active flux method for the {Euler}
  equations on cartesian grids},
\newblock \bibinfo{journal}{J. Sci. Comput.} \bibinfo{volume}{102}
  (\bibinfo{year}{2025}) \bibinfo{pages}{36}.
%Type = Article
\bibitem[{Abgrall(2023)}]{Abgrall2023Combination}
\bibinfo{author}{R.~Abgrall},
\newblock \bibinfo{title}{A combination of residual distribution and the active
  flux formulations or a new class of schemes that can combine several writings
  of the same hyperbolic problem: Application to the {1D} {Euler} equations},
\newblock \bibinfo{journal}{Commun. Appl. Math. Comput.} \bibinfo{volume}{5}
  (\bibinfo{year}{2023}) \bibinfo{pages}{370--402}.
%Type = Article
\bibitem[{Abgrall and Barsukow(2023{\natexlab{a}})}]{Abgrall2023Extensions}
\bibinfo{author}{R.~Abgrall}, \bibinfo{author}{W.~Barsukow},
\newblock \bibinfo{title}{Extensions of active flux to arbitrary order of
  accuracy},
\newblock \bibinfo{journal}{ESAIM: M2AN} \bibinfo{volume}{57}
  (\bibinfo{year}{2023}{\natexlab{a}}) \bibinfo{pages}{991--1027}.
%Type = Article
\bibitem[{Abgrall and Barsukow(2023{\natexlab{b}})}]{Abgrall2023hybrid}
\bibinfo{author}{R.~Abgrall}, \bibinfo{author}{W.~Barsukow},
\newblock \bibinfo{title}{A hybrid finite element-finite volume method for
  conservation laws},
\newblock \bibinfo{journal}{Appl. Math. Comput.} \bibinfo{volume}{447}
  (\bibinfo{year}{2023}{\natexlab{b}}) \bibinfo{pages}{127846}.
%Type = Article
\bibitem[{Liu(2025)}]{liu2025wellbalance}
\bibinfo{author}{Y.~Liu},
\newblock \bibinfo{title}{A well-balanced {Point-Average-Moment
  PolynomiAl-interpreted (PAMPA)} method for shallow water equations with
  horizontal temperature gradients},
\newblock \bibinfo{journal}{arXiv preprint}  (\bibinfo{year}{2025}).
  \bibinfo{note}{\texttt{arXiv:2409.12606}}.
%Type = Article
\bibitem[{Liu and Barsukow(2025)}]{liu2025arbitrarily}
\bibinfo{author}{Y.~Liu}, \bibinfo{author}{W.~Barsukow},
\newblock \bibinfo{title}{An arbitrarily high-order fully well-balanced hybrid
  finite element-finite volume method for a one-dimensional blood flow model},
\newblock \bibinfo{journal}{arXiv preprint}  (\bibinfo{year}{2025}).
  \bibinfo{note}{\texttt{arXiv:2409.12606}}.
%Type = Article
\bibitem[{Barsukow et~al.(2025)Barsukow, Chandrashekar, Klingenberg, and
  Lechner}]{barsukow2025generalized}
\bibinfo{author}{W.~Barsukow}, \bibinfo{author}{P.~Chandrashekar},
  \bibinfo{author}{C.~Klingenberg}, \bibinfo{author}{L.~Lechner},
\newblock \bibinfo{title}{A generalized active flux method of arbitrarily high
  order in two dimensions},
\newblock \bibinfo{journal}{arXiv preprint}  (\bibinfo{year}{2025}).
  \bibinfo{note}{\texttt{arXiv:2502.05101}}.
%Type = Article
\bibitem[{Balsara and Spicer(1999)}]{Balsara1999A}
\bibinfo{author}{D.~S. Balsara}, \bibinfo{author}{D.~S. Spicer},
\newblock \bibinfo{title}{A staggered mesh algorithm using high order {Godunov}
  fluxes to ensure solenoidal magnetic fields in magnetohydrodynamic
  simulations},
\newblock \bibinfo{journal}{J. Comput. Phys.} \bibinfo{volume}{149}
  (\bibinfo{year}{1999}) \bibinfo{pages}{270--292}.
%Type = Article
\bibitem[{Brackbill and Barnes(1980)}]{Brackbill1980The}
\bibinfo{author}{J.~U. Brackbill}, \bibinfo{author}{D.~C. Barnes},
\newblock \bibinfo{title}{The effect of nonzero $\nabla\cdot\mathbf{B}$ on the
  numerical solution of the magnetohydrodynamic equations},
\newblock \bibinfo{journal}{J. Comput. Phys.} \bibinfo{volume}{35}
  (\bibinfo{year}{1980}) \bibinfo{pages}{426--430}.
%Type = Article
\bibitem[{Li and Shu(2005)}]{Li2005}
\bibinfo{author}{F.~Li}, \bibinfo{author}{C.-W. Shu},
\newblock \bibinfo{title}{Locally divergence-free discontinuous {Galerkin}
  methods for {MHD} equations},
\newblock \bibinfo{journal}{J. Sci. Comput.} \bibinfo{volume}{22}
  (\bibinfo{year}{2005}) \bibinfo{pages}{413--442}.
%Type = Article
\bibitem[{T$\acute{\text{o}}$th(2000)}]{Toth2000}
\bibinfo{author}{G.~T$\acute{\text{o}}$th},
\newblock \bibinfo{title}{The $\nabla \cdot \boldsymbol{B}=0$ constraint in
  shock-capturing magnetohydrodynamics codes},
\newblock \bibinfo{journal}{J. Comput. Phys.} \bibinfo{volume}{161}
  (\bibinfo{year}{2000}) \bibinfo{pages}{605--652}.
%Type = Article
\bibitem[{Helzel et~al.(2011)Helzel, Rossmanith, and
  Taetz}]{helzel2011unstaggered}
\bibinfo{author}{C.~Helzel}, \bibinfo{author}{J.~A. Rossmanith},
  \bibinfo{author}{B.~Taetz},
\newblock \bibinfo{title}{An unstaggered constrained transport method for the
  {3D} ideal magnetohydrodynamic equations},
\newblock \bibinfo{journal}{J. Comput. Phys.} \bibinfo{volume}{230}
  (\bibinfo{year}{2011}) \bibinfo{pages}{3803--3829}.
%Type = Article
\bibitem[{Evans and Hawley(1988)}]{Evans1988Simulation}
\bibinfo{author}{C.~R. Evans}, \bibinfo{author}{J.~F. Hawley},
\newblock \bibinfo{title}{Simulation of magnetohydrodynamic flows - {A}
  constrained transport method},
\newblock \bibinfo{journal}{Astrophys. J.} \bibinfo{volume}{332}
  (\bibinfo{year}{1988}) \bibinfo{pages}{659--677}.
%Type = Article
\bibitem[{Londrillo and Del~Zanna(2004)}]{Londrillo2004divergence}
\bibinfo{author}{P.~Londrillo}, \bibinfo{author}{L.~Del~Zanna},
\newblock \bibinfo{title}{On the divergence-free condition in {G}odunov-type
  schemes for ideal magnetohydrodynamics: the upwind constrained transport
  method},
\newblock \bibinfo{journal}{J. Comput. Phys.} \bibinfo{volume}{195}
  (\bibinfo{year}{2004}) \bibinfo{pages}{17--48}.
%Type = Article
\bibitem[{Gardiner and Stone(2005)}]{GARDINER2005509}
\bibinfo{author}{T.~A. Gardiner}, \bibinfo{author}{J.~M. Stone},
\newblock \bibinfo{title}{An unsplit {G}odunov method for ideal {MHD} via
  constrained transport},
\newblock \bibinfo{journal}{J. Comput. Phys.} \bibinfo{volume}{205}
  (\bibinfo{year}{2005}) \bibinfo{pages}{509--539}.
%Type = Article
\bibitem[{Christlieb et~al.(2015)Christlieb, Liu, Tang, and
  Xu}]{Christlieb2015PP}
\bibinfo{author}{A.~J. Christlieb}, \bibinfo{author}{Y.~Liu},
  \bibinfo{author}{Q.~Tang}, \bibinfo{author}{Z.~Xu},
\newblock \bibinfo{title}{Positivity-preserving finite difference weighted
  {ENO} schemes with constrained transport for ideal magnetohydrodynamic
  equations},
\newblock \bibinfo{journal}{SIAM J. Sci. Comput.} \bibinfo{volume}{37}
  (\bibinfo{year}{2015}) \bibinfo{pages}{A1825--A1845}.
%Type = Article
\bibitem[{Dedner et~al.(2002)Dedner, Kemm, Kr$\ddot{\text{o}}$ner, Munz,
  Schnitzer, and Wesenberg}]{Dedner2002Hyperbolic}
\bibinfo{author}{A.~Dedner}, \bibinfo{author}{F.~Kemm},
  \bibinfo{author}{D.~Kr$\ddot{\text{o}}$ner}, \bibinfo{author}{C.-D. Munz},
  \bibinfo{author}{T.~Schnitzer}, \bibinfo{author}{M.~Wesenberg},
\newblock \bibinfo{title}{Hyperbolic divergence cleaning for the {MHD}
  equations},
\newblock \bibinfo{journal}{J. Comput. Phys.} \bibinfo{volume}{175}
  (\bibinfo{year}{2002}) \bibinfo{pages}{645--673}.
%Type = Article
\bibitem[{Balsara(2009)}]{BALSARA20095040}
\bibinfo{author}{D.~S. Balsara},
\newblock \bibinfo{title}{Divergence-free reconstruction of magnetic fields and
  {WENO} schemes for magnetohydrodynamics},
\newblock \bibinfo{journal}{J. Comput. Phys.} \bibinfo{volume}{228}
  (\bibinfo{year}{2009}) \bibinfo{pages}{5040--5056}.
%Type = Article
\bibitem[{Li et~al.(2011)Li, Xu, and Yakovlev}]{Li2011}
\bibinfo{author}{F.~Li}, \bibinfo{author}{L.~Xu},
  \bibinfo{author}{S.~Yakovlev},
\newblock \bibinfo{title}{Central discontinuous {Galerkin} methods for ideal
  {MHD} equations with the exactly divergence-free magnetic field},
\newblock \bibinfo{journal}{J. Comput. Phys.} \bibinfo{volume}{230}
  (\bibinfo{year}{2011}) \bibinfo{pages}{4828--4847}.
%Type = Article
\bibitem[{Fu et~al.(2018)Fu, Li, and Xu}]{Fu2018}
\bibinfo{author}{P.~Fu}, \bibinfo{author}{F.~Li}, \bibinfo{author}{Y.~Xu},
\newblock \bibinfo{title}{Globally divergence-free discontinuous {Galerkin}
  methods for ideal magnetohydrodynamic equations},
\newblock \bibinfo{journal}{J. Sci. Comput.} \bibinfo{volume}{77}
  (\bibinfo{year}{2018}) \bibinfo{pages}{1621--1659}.
%Type = Article
\bibitem[{Balsara et~al.(2021)Balsara, Kumar, and
  Chandrashekar}]{Balsara2021Globally}
\bibinfo{author}{D.~S. Balsara}, \bibinfo{author}{R.~Kumar},
  \bibinfo{author}{P.~Chandrashekar},
\newblock \bibinfo{title}{Globally divergence-free {DG} scheme for ideal
  compressible {MHD}},
\newblock \bibinfo{journal}{C. A. M. C. S.} \bibinfo{volume}{16}
  (\bibinfo{year}{2021}) \bibinfo{pages}{59--98}.
%Type = Techreport
\bibitem[{Powell(1994)}]{Powell1994}
\bibinfo{author}{K.~G. Powell}, \bibinfo{title}{An Approximate {Riemann} Solver
  for Magnetohydrodynamics (That Works in More Than One Dimension)},
  \bibinfo{type}{Technical Report}, Institute for Computer Applications in
  Science and Engineering (ICASE), \bibinfo{year}{1994}.
%Type = Inproceedings
\bibitem[{Powell et~al.(1995)Powell, Roe, Myong, Gombosi, and
  Zeeuw}]{Powell1995AnUS}
\bibinfo{author}{K.~G. Powell}, \bibinfo{author}{P.~L. Roe},
  \bibinfo{author}{R.~S. Myong}, \bibinfo{author}{T.~I.~I. Gombosi},
  \bibinfo{author}{D.~D. Zeeuw},
\newblock \bibinfo{title}{An upwind scheme for magnetohydrodynamics},
\newblock in: \bibinfo{booktitle}{12th Computational Fluid Dynamics
  Conference}, \bibinfo{year}{1995}, pp. \bibinfo{pages}{661--674}.
  \DOIprefix\doi{10.2514/6.1995-1704}.
%Type = Article
\bibitem[{Bouchut et~al.(2007)Bouchut, Klingenberg, and Waagan}]{Bouchut2007}
\bibinfo{author}{F.~Bouchut}, \bibinfo{author}{C.~Klingenberg},
  \bibinfo{author}{K.~Waagan},
\newblock \bibinfo{title}{A multiwave approximate {Riemann} solver for ideal
  {MHD} based on relaxation. {I}: theoretical framework},
\newblock \bibinfo{journal}{Numer. Math.} \bibinfo{volume}{108}
  (\bibinfo{year}{2007}) \bibinfo{pages}{7--42}.
%Type = Article
\bibitem[{Bouchut et~al.(2010)Bouchut, Klingenberg, and Waagan}]{Bouchut2010A}
\bibinfo{author}{F.~Bouchut}, \bibinfo{author}{C.~Klingenberg},
  \bibinfo{author}{K.~Waagan},
\newblock \bibinfo{title}{A multiwave approximate {Riemann} solver for ideal
  {MHD} based on relaxation {II}: numerical implementation with 3 and 5 waves},
\newblock \bibinfo{journal}{Numer. Math.} \bibinfo{volume}{115}
  (\bibinfo{year}{2010}) \bibinfo{pages}{647--679}.
%Type = Article
\bibitem[{Janhunen(2000)}]{JANHUNEN2000A}
\bibinfo{author}{P.~Janhunen},
\newblock \bibinfo{title}{A positive conservative method for
  magnetohydrodynamics based on {HLL} and {Roe} methods},
\newblock \bibinfo{journal}{J. Comput. Phys.} \bibinfo{volume}{160}
  (\bibinfo{year}{2000}) \bibinfo{pages}{649--661}.
%Type = Article
\bibitem[{Waagan(2009)}]{Waagan2009}
\bibinfo{author}{K.~Waagan},
\newblock \bibinfo{title}{A positive {MUSCL-Hancock} scheme for ideal
  magnetohydrodynamics},
\newblock \bibinfo{journal}{J. Comput. Phys.} \bibinfo{volume}{228}
  (\bibinfo{year}{2009}) \bibinfo{pages}{8609--8626}.
%Type = Article
\bibitem[{Waagan et~al.(2011)Waagan, Federrath, and
  Klingenberg}]{WAAGAN20113331}
\bibinfo{author}{K.~Waagan}, \bibinfo{author}{C.~Federrath},
  \bibinfo{author}{C.~Klingenberg},
\newblock \bibinfo{title}{A robust numerical scheme for highly compressible
  magnetohydrodynamics: {N}onlinear stability, implementation and tests},
\newblock \bibinfo{journal}{J. Comput. Phys.} \bibinfo{volume}{230}
  (\bibinfo{year}{2011}) \bibinfo{pages}{3331--3351}.
%Type = Article
\bibitem[{Xiong et~al.(2016)Xiong, Qiu, and Xu}]{Xiong2016parametrized}
\bibinfo{author}{T.~Xiong}, \bibinfo{author}{J.-M. Qiu},
  \bibinfo{author}{Z.~Xu},
\newblock \bibinfo{title}{Parametrized positivity preserving flux limiters for
  the high order finite difference {WENO} scheme solving compressible {E}uler
  equations},
\newblock \bibinfo{journal}{J. Sci. Comput.} \bibinfo{volume}{67}
  (\bibinfo{year}{2016}) \bibinfo{pages}{1066--1088}.
%Type = Article
\bibitem[{Zhang and Shu(2010)}]{ZHANG20108918}
\bibinfo{author}{X.~Zhang}, \bibinfo{author}{C.-W. Shu},
\newblock \bibinfo{title}{On positivity-preserving high order discontinuous
  {Galerkin} schemes for compressible {Euler} equations on rectangular meshes},
\newblock \bibinfo{journal}{J. Comput. Phys.} \bibinfo{volume}{229}
  (\bibinfo{year}{2010}) \bibinfo{pages}{8918--8934}.
%Type = Article
\bibitem[{Balsara(2012)}]{BALSARA2012Self}
\bibinfo{author}{D.~S. Balsara},
\newblock \bibinfo{title}{Self-adjusting, positivity preserving high order
  schemes for hydrodynamics and magnetohydrodynamics},
\newblock \bibinfo{journal}{J. Comput. Phys.} \bibinfo{volume}{231}
  (\bibinfo{year}{2012}) \bibinfo{pages}{7504--7517}.
%Type = Article
\bibitem[{Cheng et~al.(2013)Cheng, Li, Qiu, and Xu}]{CHENG2013255}
\bibinfo{author}{Y.~Cheng}, \bibinfo{author}{F.~Li}, \bibinfo{author}{J.~Qiu},
  \bibinfo{author}{L.~Xu},
\newblock \bibinfo{title}{Positivity-preserving {DG} and central {DG} methods
  for {MHD} equations},
\newblock \bibinfo{journal}{J. Comput. Phys.} \bibinfo{volume}{238}
  (\bibinfo{year}{2013}) \bibinfo{pages}{255--280}.
%Type = Article
\bibitem[{Christlieb et~al.(2016)Christlieb, Feng, Seal, and
  Tang}]{CHRISTLIEB2016218}
\bibinfo{author}{A.~J. Christlieb}, \bibinfo{author}{X.~Feng},
  \bibinfo{author}{D.~C. Seal}, \bibinfo{author}{Q.~Tang},
\newblock \bibinfo{title}{A high-order positivity-preserving single-stage
  single-step method for the ideal magnetohydrodynamic equations},
\newblock \bibinfo{journal}{J. Comput. Phys.} \bibinfo{volume}{316}
  (\bibinfo{year}{2016}) \bibinfo{pages}{218--242}.
%Type = Article
\bibitem[{Wu(2018)}]{Wu2018SINUpositivity}
\bibinfo{author}{K.~Wu},
\newblock \bibinfo{title}{Positivity-preserving analysis of numerical schemes
  for ideal magnetohydrodynamics},
\newblock \bibinfo{journal}{SIAM J. Numer. Anal.} \bibinfo{volume}{56}
  (\bibinfo{year}{2018}) \bibinfo{pages}{2124--2147}.
%Type = Article
\bibitem[{Wu et~al.(2023)Wu, Jiang, and Shu}]{Wu2023provab}
\bibinfo{author}{K.~Wu}, \bibinfo{author}{H.~Jiang}, \bibinfo{author}{C.-W.
  Shu},
\newblock \bibinfo{title}{Provably positive central discontinuous {Galerkin}
  schemes via geometric quasilinearization for ideal {MHD} equations},
\newblock \bibinfo{journal}{SIAM J. Numer. Anal.} \bibinfo{volume}{61}
  (\bibinfo{year}{2023}) \bibinfo{pages}{250--285}.
%Type = Article
\bibitem[{Wu and Shu(2018)}]{WuShu2018}
\bibinfo{author}{K.~Wu}, \bibinfo{author}{C.-W. Shu},
\newblock \bibinfo{title}{A provably positive discontinuous {G}alerkin method
  for multidimensional ideal magnetohydrodynamics},
\newblock \bibinfo{journal}{SIAM J. Sci. Comput.} \bibinfo{volume}{40}
  (\bibinfo{year}{2018}) \bibinfo{pages}{B1302--B1329}.
%Type = Article
\bibitem[{{Wu} and {Shu}(2019)}]{WuShu2019}
\bibinfo{author}{K.~{Wu}}, \bibinfo{author}{C.-W. {Shu}},
\newblock \bibinfo{title}{Provably positive high-order schemes for ideal
  magnetohydrodynamics: analysis on general meshes},
\newblock \bibinfo{journal}{Numer. Math.} \bibinfo{volume}{142}
  (\bibinfo{year}{2019}) \bibinfo{pages}{995--1047}.
%Type = Article
\bibitem[{Ding and Wu(2024)}]{DingWu2024SISCMHD}
\bibinfo{author}{S.~Ding}, \bibinfo{author}{K.~Wu},
\newblock \bibinfo{title}{A new discretely divergence-free
  positivity-preserving high-order finite volume method for ideal {MHD}
  equations},
\newblock \bibinfo{journal}{SIAM J. Sci. Comput.} \bibinfo{volume}{46}
  (\bibinfo{year}{2024}) \bibinfo{pages}{A50--A79}.
%Type = Article
\bibitem[{Liu and Wu(2025)}]{liu2025structure}
\bibinfo{author}{M.~Liu}, \bibinfo{author}{K.~Wu},
\newblock \bibinfo{title}{Structure-preserving oscillation-eliminating
  discontinuous {G}alerkin schemes for ideal {MHD} equations: {L}ocally
  divergence-free and positivity-preserving},
\newblock \bibinfo{journal}{J. Comput. Phys.} \bibinfo{volume}{527}
  (\bibinfo{year}{2025}) \bibinfo{pages}{113795}.
%Type = Article
\bibitem[{Peng et~al.(2025)Peng, Sun, and Wu}]{peng2023oedg}
\bibinfo{author}{M.~Peng}, \bibinfo{author}{Z.~Sun}, \bibinfo{author}{K.~Wu},
\newblock \bibinfo{title}{{OEDG: Oscillation-eliminating discontinuous Galerkin
  method for hyperbolic conservation laws}},
\newblock \bibinfo{journal}{Math. Comput.} \bibinfo{volume}{94}
  (\bibinfo{year}{2025}) \bibinfo{pages}{1147--1198}.
%Type = Article
\bibitem[{Wu and Tang(2017)}]{WuTangM3AS}
\bibinfo{author}{K.~Wu}, \bibinfo{author}{H.~Tang},
\newblock \bibinfo{title}{Admissible states and physical-constraints-preserving
  schemes for relativistic magnetohydrodynamic equations},
\newblock \bibinfo{journal}{Math. Models Methods Appl. Sci.}
  \bibinfo{volume}{27} (\bibinfo{year}{2017}) \bibinfo{pages}{1871--1928}.
%Type = Article
\bibitem[{Wu and Shu(2021)}]{Wu2021RMHD}
\bibinfo{author}{K.~Wu}, \bibinfo{author}{C.-W. Shu},
\newblock \bibinfo{title}{Provably physical-constraint-preserving discontinuous
  {Galerkin} methods for multidimensional relativistic {MHD} equations},
\newblock \bibinfo{journal}{Numer. Math.} \bibinfo{volume}{148}
  (\bibinfo{year}{2021}) \bibinfo{pages}{699--741}.
%Type = Article
\bibitem[{Chudzik et~al.(2021)Chudzik, Helzel, and
  Kerkmann}]{Chudzik2021125501}
\bibinfo{author}{E.~Chudzik}, \bibinfo{author}{C.~Helzel},
  \bibinfo{author}{D.~Kerkmann},
\newblock \bibinfo{title}{The cartesian grid active flux method: Linear
  stability and bound preserving limiting},
\newblock \bibinfo{journal}{Appl. Math. Comput.} \bibinfo{volume}{393}
  (\bibinfo{year}{2021}) \bibinfo{pages}{125501}.
%Type = Article
\bibitem[{Duan et~al.(2025)Duan, Barsukow, and Klingenberg}]{Duan2025active}
\bibinfo{author}{J.~Duan}, \bibinfo{author}{W.~Barsukow},
  \bibinfo{author}{C.~Klingenberg},
\newblock \bibinfo{title}{Active flux methods for hyperbolic conservation
  laws--flux vector splitting and bound-preservation},
\newblock \bibinfo{journal}{SIAM J. Sci. Comput.} \bibinfo{volume}{47}
  (\bibinfo{year}{2025}) \bibinfo{pages}{A811--A837}.
%Type = Article
\bibitem[{Abgrall et~al.(2025{\natexlab{a}})Abgrall, Jiao, Liu, and
  Wu}]{abgrall2024boundpreserving}
\bibinfo{author}{R.~Abgrall}, \bibinfo{author}{M.~Jiao},
  \bibinfo{author}{Y.~Liu}, \bibinfo{author}{K.~Wu},
\newblock \bibinfo{title}{Bound preserving {Point-Average-Moment
  PolynomiAl-interpreted (PAMPA)} scheme: one-dimensional case},
\newblock \bibinfo{journal}{Commun. Comput. Phys.}
  (\bibinfo{year}{2025}{\natexlab{a}}) \bibinfo{pages}{in press}.
%Type = Article
\bibitem[{Abgrall et~al.(2025{\natexlab{b}})Abgrall, Jiao, Liu, and
  Wu}]{abgrall2024novel}
\bibinfo{author}{R.~Abgrall}, \bibinfo{author}{M.~Jiao},
  \bibinfo{author}{Y.~Liu}, \bibinfo{author}{K.~Wu},
\newblock \bibinfo{title}{A novel and simple invariant-domain-preserving
  framework for {PAMPA} scheme: 1{D} case},
\newblock \bibinfo{journal}{SIAM J. Sci. Comput.}
  (\bibinfo{year}{2025}{\natexlab{b}}) \bibinfo{pages}{in press}.
%Type = Article
\bibitem[{Duan et~al.(2025)Duan, Chandrashekar, and
  Klingenberg}]{duan2025AFMHD}
\bibinfo{author}{J.~Duan}, \bibinfo{author}{P.~Chandrashekar},
  \bibinfo{author}{C.~Klingenberg},
\newblock \bibinfo{title}{Active flux for ideal magnetohydrodynamics: {A}
  positivity-preserving scheme with the {Godunov-Powell} source term},
\newblock \bibinfo{journal}{arXiv preprint arXiv:2506.04857}
  (\bibinfo{year}{2025}).
%Type = Article
\bibitem[{Wu(2018)}]{Wu2018Positivity}
\bibinfo{author}{K.~Wu},
\newblock \bibinfo{title}{Positivity-preserving analysis of numerical schemes
  for ideal magnetohydrodynamics},
\newblock \bibinfo{journal}{SIAM J. Numer. Anal.} \bibinfo{volume}{56}
  (\bibinfo{year}{2018}) \bibinfo{pages}{2124--2147}.
%Type = Article
\bibitem[{Wu and Shu(2023)}]{Wu2023Geometric}
\bibinfo{author}{K.~Wu}, \bibinfo{author}{C.-W. Shu},
\newblock \bibinfo{title}{Geometric quasilinearization framework for analysis
  and design of bound-preserving schemes},
\newblock \bibinfo{journal}{SIAM Rev.} \bibinfo{volume}{65}
  (\bibinfo{year}{2023}) \bibinfo{pages}{1031--1073}.
%Type = Article
\bibitem[{Feng and Liu(2021)}]{LiuFeng2021}
\bibinfo{author}{Y.~Feng}, \bibinfo{author}{T.~Liu},
\newblock \bibinfo{title}{A {characteristic}-{featured} shock wave indicator on
  unstructured grids based on training an artificial neuron},
\newblock \bibinfo{journal}{J. Comput. Phys.} \bibinfo{volume}{443}
  (\bibinfo{year}{2021}) \bibinfo{pages}{110446}.
%Type = Article
\bibitem[{Ray and Hesthaven(2019)}]{DEEPRAY2019}
\bibinfo{author}{D.~Ray}, \bibinfo{author}{J.~S. Hesthaven},
\newblock \bibinfo{title}{{Detecting} {troubled}-{cells} on {two}-{dimensional}
  unstructured grids using a neural network},
\newblock \bibinfo{journal}{J. Comput. Phys.} \bibinfo{volume}{397}
  (\bibinfo{year}{2019}) \bibinfo{pages}{108845}.
%Type = Inproceedings
\bibitem[{Godunov(1972)}]{Godunov1972SymmetricFO}
\bibinfo{author}{S.~K. Godunov},
\newblock \bibinfo{title}{Symmetric form of the magnetohydrodynamic equation},
\newblock \bibinfo{year}{1972}, pp. \bibinfo{pages}{26--34}.
%Type = Article
\bibitem[{Cui et~al.(2024)Cui, Ding, and Wu}]{Cui2024On}
\bibinfo{author}{S.~Cui}, \bibinfo{author}{S.~Ding}, \bibinfo{author}{K.~Wu},
\newblock \bibinfo{title}{On optimal cell average decomposition for high-order
  bound-preserving schemes of hyperbolic conservation laws},
\newblock \bibinfo{journal}{SIAM J. Numer. Anal.} \bibinfo{volume}{62}
  (\bibinfo{year}{2024}) \bibinfo{pages}{775--810}.
%Type = Article
\bibitem[{Guillet et~al.(2019)Guillet, Pakmor, Springel, Chandrashekar, and
  Klingenberg}]{Guillet2019}
\bibinfo{author}{T.~Guillet}, \bibinfo{author}{R.~d. Pakmor},
  \bibinfo{author}{V.~Springel}, \bibinfo{author}{P.~Chandrashekar},
  \bibinfo{author}{C.~Klingenberg},
\newblock \bibinfo{title}{High-order magnetohydrodynamics for astrophysics with
  an adaptive mesh refinement discontinuous {Galerkin} scheme},
\newblock \bibinfo{journal}{M. N. of the R. A. S.} \bibinfo{volume}{485}
  (\bibinfo{year}{2019}) \bibinfo{pages}{4209--4246}.
%Type = Article
\bibitem[{Jiang and chin Wu(1999)}]{jiang1999high}
\bibinfo{author}{G.-S. Jiang}, \bibinfo{author}{C.~chin Wu},
\newblock \bibinfo{title}{A high-order {WENO} finite difference scheme for the
  equations of ideal magnetohydrodynamics},
\newblock \bibinfo{journal}{J. Comput. Phys.} \bibinfo{volume}{150}
  (\bibinfo{year}{1999}) \bibinfo{pages}{561--594}.
%Type = Article
\bibitem[{Balsara and Spicer(1999)}]{BalsaraSpicer1999}
\bibinfo{author}{D.~S. Balsara}, \bibinfo{author}{D.~S. Spicer},
\newblock \bibinfo{title}{A staggered mesh algorithm using high order {Godunov}
  fluxes to ensure solenoidal magnetic fields in magnetohydrodynamic
  simulations},
\newblock \bibinfo{journal}{J. Comput. Phys.} \bibinfo{volume}{149}
  (\bibinfo{year}{1999}) \bibinfo{pages}{270--292}.
%Type = Article
\bibitem[{Li and Xu(2012)}]{Li2012}
\bibinfo{author}{F.~Li}, \bibinfo{author}{L.~Xu},
\newblock \bibinfo{title}{Arbitrary order exactly divergence-free central
  discontinuous {Galerkin} methods for ideal {MHD} equations},
\newblock \bibinfo{journal}{J. Comput. Phys.} \bibinfo{volume}{231}
  (\bibinfo{year}{2012}) \bibinfo{pages}{2655--2675}.
%Type = Article
\bibitem[{Dai and Woodward(1998)}]{Dai1998}
\bibinfo{author}{W.~Dai}, \bibinfo{author}{P.~R. Woodward},
\newblock \bibinfo{title}{A simple finite difference scheme for
  multidimensional magnetohydrodynamical equations},
\newblock \bibinfo{journal}{J. Comput. Phys.} \bibinfo{volume}{142}
  (\bibinfo{year}{1998}) \bibinfo{pages}{331--369}.
%Type = Article
\bibitem[{Jiang and Tadmor(1998)}]{jiang1998nonoscillatory}
\bibinfo{author}{G.-S. Jiang}, \bibinfo{author}{E.~Tadmor},
\newblock \bibinfo{title}{Nonoscillatory central schemes for multidimensional
  hyperbolic conservation laws},
\newblock \bibinfo{journal}{SIAM J. Sci. Comput.} \bibinfo{volume}{19}
  (\bibinfo{year}{1998}) \bibinfo{pages}{1892--1917}.
%Type = Article
\bibitem[{Balb\'{a}s and Tadmor(2006)}]{Balbas2006}
\bibinfo{author}{J.~Balb\'{a}s}, \bibinfo{author}{E.~Tadmor},
\newblock \bibinfo{title}{Nonoscillatory central schemes for one- and
  two-dimensional magnetohydrodynamics equations. {II}: High-order semidiscrete
  schemes},
\newblock \bibinfo{journal}{SIAM J. Sci. Comput.} \bibinfo{volume}{28}
  (\bibinfo{year}{2006}) \bibinfo{pages}{533--560}.

\end{thebibliography}
	
\end{document}